
\documentclass[11pt,a4paper]{amsart}
\usepackage[all]{xy}
\SelectTips{cm}{}
\addtolength{\textwidth}{2cm}
\calclayout

\usepackage{amscd,amssymb,amsopn,amsmath,amsthm,graphics,mathrsfs,accents}
\usepackage[colorlinks=true,linkcolor=blue,citecolor=blue]{hyperref}

\newtheorem{theorem}{Theorem}[section]
\newtheorem{proposition}[theorem]{Proposition}
\newtheorem{lemma}[theorem]{Lemma}
\newtheorem{corollary}[theorem]{Corollary}
 
\newtheorem*{itheorem}{Theorem}
\newtheorem*{iproposition}{Proposition}
\newtheorem*{icorollary}{Corollary}

\theoremstyle{definition}
\newtheorem{definition}[theorem]{Definition}
\newtheorem{example}[theorem]{Example}
\newtheorem{remark}[theorem]{Remark}
\newtheorem{construction}[theorem]{Construction}
\newtheorem{chunk}[subsection]{}

\newtheorem*{setup}{Setup}

\DeclareMathOperator{\Map}{Map}
\DeclareMathOperator{\Flat}{Flat}
\DeclareMathOperator{\Inj}{Inj}
\DeclareMathOperator{\inc}{inc}
\DeclareMathOperator{\Proj}{Proj}
\DeclareMathOperator{\can}{can}
\DeclareMathOperator{\Ext}{Ext}
\DeclareMathOperator{\ac}{ac}
\DeclareMathOperator{\tac}{tac}
\DeclareMathOperator{\qc}{qc}
\DeclareMathOperator{\K}{\mathbf{K}}
\DeclareMathOperator{\coh}{coh}
\DeclareMathOperator{\pac}{pac}
\DeclareMathOperator{\Hom}{Hom}
\DeclareMathOperator{\modd}{mod}

\DeclareMathOperator{\Thick}{\mathbf{Thick}}
\DeclareMathOperator{\Perf}{\mathbf{P}}
\DeclareMathOperator{\Coperf}{\mathbf{I}}

\DeclareMathOperator{\ssg}{Ssg}
\DeclareMathOperator{\gsg}{Gsg}
\DeclareMathOperator{\Qco}{Qco}
\DeclareMathOperator{\Ker}{Ker}
\DeclareMathOperator{\Spec}{Spec}
\DeclareMathOperator{\Loc}{Loc}

\begin{document}
\newcommand{\kproj}[1]{\K(\Proj #1)}
\newcommand{\kprojl}[2]{\K_{#1}(\Proj #2)}
\newcommand{\kproju}[2]{\K^{#1}(\Proj #2)}
\newcommand{\kprojul}[3]{\K^{#1}_{#2}(\Proj #3)}
\newcommand{\kinj}[1]{\K(\Inj #1)}
\newcommand{\kinju}[2]{\K^{#1}(\Inj #2)}
\newcommand{\kinjl}[2]{\K_{#1}(\Inj #2)}
\newcommand{\kprof}[1]{\mathbf{N}(\Flat #1)}
\newcommand{\kprofu}[2]{\mathbf{N}^{#1}(\Flat #2)}
\newcommand{\kprofl}[2]{\mathbf{N}_{#1}(\Flat #2)}
\newcommand{\kproful}[3]{\mathbf{N}^{#1}_{#2}(\Flat #3)}
\newcommand{\kflat}[1]{\K(\Flat #1)}
\newcommand{\kflatl}[2]{\K_{#1}(\Flat #2)}
\newcommand{\cat}[1]{\mathcal{#1}}
\newcommand{\qder}[1]{\mathbf{D}(#1)}
\newcommand{\qderl}[2]{\mathbf{D}_{#1}(#2)}
\newcommand{\qderu}[2]{\mathbf{D}^{#1}(#2)}
\newcommand{\qderul}[3]{\mathbf{D}^{#1}_{#2}(#3)}
\newcommand{\md}[1]{\mathscr{#1}}
\newcommand{\xlto}[1]{\stackrel{#1}\lto}
\newcommand{\mf}[1]{\mathfrak{#1}}
\newcommand{\lto}{\longrightarrow}
\newcommand{\qcmod}[1]{\Qco(#1)}
\newcommand{\qcmodo}[1]{\Qco#1}
\newcommand{\cmod}[1]{\coh(#1)}
\newcommand{\cmodn}[1]{\coh#1}
\newcommand{\ppe}[1]{\kflatl{\pac}{#1}}

\def\l{\,|\,}
\def\shom{\mathscr{H}\! om}
\def\qhom{\mathscr{H}\! om_{\qc}}
\def\ab{\bold{Ab}}
\def\holim{\underrightarrow{holim}}
\def\rdev{\mathbb{R}}
\def\ndev{\mathbb{N}}
\def\SSg{\mathbf{D}_{\ssg}^b}
\def\NSg{\mathbf{D}_{\gsg}^b}

\title{Totally Acyclic Complexes over Noetherian Schemes}

\author{Daniel Murfet}
\address{Hausdorff Center for Mathematics, University of Bonn}
\email{murfet@math.uni-bonn.de}

\author{Shokrollah Salarian}
\address{Department of Mathematics, University of Isfahan, P.O.Box:
       81746-73441, Isfahan, Iran}
\email{Salarian@ipm.ir}
\thanks{This research was in part supported by a grant from the University of Isfahan (No. 861125). The second author thanks the Center of Excellence for Mathematics (University of Isfahan).}

\begin{abstract}
We define a notion of total acyclicity for complexes of flat quasi-coherent sheaves over a semi-separated noetherian scheme, generalising complete flat resolutions over a ring. By studying these complexes as objects of the pure derived category of flat sheaves we extend several results about totally acyclic complexes of projective modules to schemes; for example, we prove that a scheme is Gorenstein if and only if every acyclic complex of flat sheaves is totally acyclic. Our formalism also removes the need for a dualising complex in several known results for rings, including J\o rgensen's proof of the existence of Gorenstein projective precovers.
\end{abstract}

\maketitle

\tableofcontents

\section{Introduction}

The derived category of a ring $A$ is the result of formally inverting quasi-isomorphisms in the category of cochain complexes of $A$-modules and cochain maps. The importance of this construction arises from the fact that the standard homological resolutions (projective, injective, flat, etc.) of an $A$-module are isomorphic objects in the derived category, making this a natural setting for homological algebra.

In commutative algebra and the singularity theory of algebraic varieties, there arise modules with a nonstandard resolution known as a ``complete resolution'', and one would like to understand these resolutions in a categorical context akin to the derived category. To be precise, a \emph{complete projective resolution} of a finitely generated module $M$ over a commutative noetherian ring $A$ consists of the following data:
\begin{itemize}
\item An acyclic complex of projective $A$-modules $P$ such that $\Hom_A(P,Q)$ is acyclic for every projective $A$-module $Q$. Such complexes $P$ are said to be \emph{totally acyclic}.
\item An isomorphism $S \cong \Ker(\partial: P^0 \lto P^1)$, where $S$ is some syzygy of $M$.
\end{itemize}
Those modules with complete projective resolutions are said to have \emph{finite G-dimension}, and the study of such modules is known as \emph{Gorenstein homological algebra}; see \cite{AuslanderBridger69,Christensen00,Enochs06}. Many modules admit complete projective resolutions, including all finitely generated modules over Gorenstein rings. Just as the derived category is the natural setting for homological algebra, the homotopy category $\kprojl{\tac}{A}$ of totally acyclic complexes of projective $A$-modules is the correct framework for the homological algebra of complete resolutions. This triangulated category has been studied by several authors, including Buchweitz \cite{Buchweitz}, Beligiannis \cite{Belig00}, J\o rgensen \cite{Jorgensen01, Jorgensen07}, Iyengar and Krause \cite{Krause06} and Xiao-Wu Chen \cite{Chen07}. There is one feature that we need to mention here: the complete projective resolution of an $A$-module $M$ is unique up to isomorphism in this category (that is, the complete resolution is unique up to homotopy equivalence). 

The purpose of this paper is to introduce a triangulated category of totally acyclic complexes of flat sheaves, which plays the role of $\kprojl{\tac}{A}$ for any noetherian scheme. In subsequent work we will show how one can use this formalism to generalise aspects of Gorenstein homological algebra to schemes. The main obstacle in developing the theory is the lack of projective quasi-coherent sheaves over non-affine schemes, which we solve using a recent construction of Neeman \cite{Neeman08}, developed in the PhD thesis \cite{Murfet07} of the first author. The key insight is that one should construct the global theory using complexes of flat, rather than projective, quasi-coherent sheaves.

\begin{setup} In this article $X$ denotes a semi-separated noetherian scheme. The definition of semi-separatedness is recalled in Section \ref{section:notation}. Unless otherwise specified ``sheaf'' means a \textbf{quasi-coherent} sheaf of modules over $X$. All rings are commutative.
\end{setup}

We define a complex of flat sheaves $F$ to be \emph{N-totally acyclic} if it is acyclic and $F \otimes \md{I}$ is acyclic for every injective sheaf $\md{I}$. This generalises a notion introduced for complexes of flat modules by Enochs, Jenda and Torrecillas \cite{Enochs93}. As we will see shortly, these complexes are totally acyclic in a natural sense, once the correct definition of morphisms between complexes of flat sheaves is chosen: the ``N'' in the nomenclature relates to this choice. Next, in order to make this class of N-totally acyclic complexes into a category, we have to specify the morphism sets.

If we take homotopy-equivalence classes of cochain maps as morphisms, we obtain the full subcategory $\kflatl{\tac}{X}$ of N-totally acyclic complexes in the homotopy category $\kflat{X}$ of flat sheaves. This category, however, is the wrong one for our purposes. In our category of N-totally acyclic complexes, we want the generalisation of the complete resolutions described above to be unique (as in, two complete resolutions of the same sheaf are isomorphic) and this fails to hold in $\kflatl{\tac}{X}$.

To force the standard resolutions of homological algebra to be unique, one inverts the quasi-isomorphisms to form the derived category. Something similar applies here; the wrinkle is that we only invert those quasi-isomorphisms $f: F \lto F'$ with the property that $f \otimes \md{A}: F \otimes \md{A} \lto F' \otimes \md{A}$ is a quasi-isomorphism for every sheaf $\md{A}$. Morphisms with this property are called \emph{pure quasi-isomorphisms}. A complex of sheaves $F$ is said to be \emph{pure acyclic} if it is acyclic, and $F \otimes \cat{A}$ is acyclic for every sheaf $\md{A}$. The result of formally inverting all the pure quasi-isomorphisms in $\kflat{X}$ is known as the \emph{pure derived category of flat sheaves}. Formally, this category is the Verdier quotient
\[
\kprof{X} := \kflat{X}/\kflatl{\pac}{X},
\]
where $\kflatl{\pac}{X}$ denotes the full subcategory of pure acyclic complexes in $\kflat{X}$. This construction was developed in \cite{Neeman08, NeemanAdj, Murfet07} and we recall the relevant details in Section \ref{section:review_of_pure}. In this paper, the main category of interest is the full triangulated subcategory $\kprofl{\tac}{X}$ of N-totally acyclic complexes in $\kprof{X}$. Alternatively, since there is an obvious inclusion $\kflatl{\pac}{X} \subseteq \kflatl{\tac}{X}$, we can write 
\[
\kprofl{\tac}{X} = \kflatl{\tac}{X}/\kflatl{\pac}{X}.
\]
The objects of this triangulated category are N-totally acyclic complexes of flat sheaves, and the morphisms are homotopy equivalence classes of cochain maps with pure quasi-isomorphisms formally inverted. This is the ``correct'' triangulated category of N-totally acyclic complexes, as the following theorems attest. 

In what follows we use $[-,-]$ to denote morphism sets in $\kprof{X}$. Our first theorem uses these morphism sets to express N-total acyclicity for complexes of flat sheaves in a form closer to the definitions of total acyclicity for complexes of projective and injective modules over a ring. Recall that a sheaf $\md{C}$ is said to be \emph{cotorsion} if $\Ext^1(\md{P}, \md{C})$ vanishes for every flat sheaf $\md{P}$ (all $\Ext$s are in the category of quasi-coherent sheaves). The class of cotorsion flat sheaves will play a crucial role in this article, and their basic properties are discussed in Section \ref{section:cotorsion}. In our triangulated categories, $\Sigma$ denotes the suspension.

\begin{itheorem} For a complex $F$ of flat sheaves the following are equivalent:
\begin{itemize}
\item[(i)] $F$ is acyclic, and $F \otimes \md{I}$ is acyclic for every injective sheaf $\md{I}$.
\item[(ii)] $F$ is acyclic, and $\Hom(F, \md{C})$ is acyclic for every cotorsion flat sheaf $\md{C}$.
\item[(iii)] $F$ is left and right orthogonal in $\kprof{X}$ to shifts of flat sheaves. That is, for any flat sheaf $\md{P}$ and $i \in \mathbb{Z}$ we have
\[
0 = [F, \Sigma^i \md{P}] = [\Sigma^i \md{P}, F].
\]
\item[(iv)] $F$ is acyclic, and $\Map(F, \md{P})$ is acyclic for every flat sheaf $\md{P}$, where $\Map(-,-)$ denotes the internal Hom in $\kprof{X}$.
\end{itemize}
\end{itheorem}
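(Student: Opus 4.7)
The plan is to pair up the four conditions: (iii) and (iv) form a formal pair linked by the construction of the internal Hom, while (i) and (ii) form a character-duality pair relating $F\otimes\md{I}$ to $\Hom(F,\md{C})$. After establishing each internal equivalence, I would close the loop by proving (ii)$\Rightarrow$(iii) using cotorsion envelopes.

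For (iii)$\Leftrightarrow$(iv), the point is that $\Map(-,-)$ is set up so that its cohomology sheaves compute morphism sets in $\kprof{X}$: acyclicity of $\Map(F,\md{P})$ corresponds verbatim to the right-orthogonality $[F,\Sigma^{i}\md{P}]=0$ appearing in (iii). Matching the remaining data requires showing that plain acyclicity of $F$ is equivalent to the left-orthogonality $[\Sigma^{i}\md{P},F]=0$ for every flat $\md{P}$ and every $i$. In one direction, pure quasi-isomorphisms preserve cohomology sheaves, so acyclicity of $F$ survives passage to $\kprof{X}$; in the other, the class of flat sheaves (viewed as stalk complexes) is rich enough to probe the cohomology of any complex of flats in the pure derived category.

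The equivalence (i)$\Leftrightarrow$(ii) I would derive from character duality. The character dual $(-)^{+}$ of an injective sheaf is a cotorsion flat sheaf, and the standard tensor-Hom adjunction yields a natural isomorphism $\Hom(F,\md{I}^{+})\cong (F\otimes\md{I})^{+}$; since $(-)^{+}$ is faithfully exact, this instantly gives the equivalence for cotorsion flats of the form $\md{I}^{+}$. The real work, and the main obstacle of the proof, is extending (i)$\Rightarrow$(ii) to \emph{every} cotorsion flat sheaf $\md{C}$. For this I would invoke the structural classification recalled in Section \ref{section:cotorsion}, which presents $\md{C}$ locally at each point as a product of completed free modules identifiable with character duals of injective hulls; the local-global comparison, together with exactness properties of $\Hom(F,-)$ applied to such products, would propagate the acyclicity of $\Hom(F,\md{I}^{+})$ to all of $\md{C}$.

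Finally, the bridge (ii)$\Rightarrow$(iii) relies on cotorsion envelopes: each flat sheaf $\md{P}$ sits in a pure exact sequence $0\to\md{P}\to\md{C}\to\md{Q}\to 0$ with $\md{C}$ cotorsion flat and $\md{Q}$ flat, and iterating produces a resolution of $\md{P}$ by cotorsion flats inside $\kprof{X}$. Applying $\Hom(F,-)$ and using (ii) on each term then forces $[F,\Sigma^{i}\md{P}]=0$, while the left-orthogonality follows from the acyclicity clause of (ii) by the argument already used in the (iii)$\Leftrightarrow$(iv) step. In summary, the only substantive hurdle is the structural identification of cotorsion flat sheaves with duals of injectives; every other implication becomes essentially formal once the internal Hom and cotorsion envelope machinery from Sections \ref{section:review_of_pure} and \ref{section:cotorsion} is in place.
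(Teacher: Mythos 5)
There is a genuine structural gap: your implications run (i)$\Leftrightarrow$(ii), (iii)$\Leftrightarrow$(iv) and (ii)$\Rightarrow$(iii), but nothing returns from (iii) or (iv) to (i) or (ii), so the four conditions are not shown to be equivalent. The missing arrow is not formal. The paper closes the cycle with (iii)$\Rightarrow$(i): acyclicity of $F$ comes from Lemma \ref{lemma:hom_d_acyclic}, and for an injective sheaf $\md{I}$ and injective cogenerator $\md{E}$ one uses that $\md{C}=\qhom(\md{I},\md{E})$ is cotorsion flat to compute
\[
H^i\Hom(F\otimes\md{I},\md{E})\cong \Hom_{\kflat{X}}(F,\Sigma^i\md{C})\cong [F,\Sigma^i\md{C}]=0,
\]
whence $F\otimes\md{I}$ is acyclic. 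Note that the middle identification already requires Lemma \ref{lemma:bounded_below_cot_orthog} and Remark \ref{remark:iso_kprof_iff_he} (cotorsion flat sheaves are right orthogonal to pure acyclics, so Homs into them are unchanged by passing to $\kprof{X}$); the same point is silently needed in your (ii)$\Rightarrow$(iii) step, where you must also pass from the individual terms of the cotorsion flat resolution to the whole bounded below complex via a homotopy limit of truncations.

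Two further points. First, your plan for the ``main obstacle'' in (i)$\Rightarrow$(ii) --- identifying a cotorsion flat sheaf pointwise with products of completions and propagating acyclicity by a local-global comparison --- is not what Section \ref{section:cotorsion} supplies and would not work as stated: $\Hom(F,\md{C})$ is a complex of \emph{global} Hom groups, and a stalkwise description of $\md{C}$ does not control its acyclicity. The tool the paper actually provides is Proposition \ref{prop:cotorsion_flat_resolution}(iii), the global statement that every cotorsion flat sheaf is a direct summand of $\qhom(\md{I},\md{I}')$ with $\md{I},\md{I}'$ injective; with this, (i)$\Rightarrow$(ii) is a single adjunction $\Hom(F,\qhom(\md{I},\md{I}'))\cong\Hom(F\otimes\md{I},\md{I}')$ and there is no local-global issue. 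Second, the claimed ``verbatim'' match between (iii) and (iv) is inaccurate: by Lemma \ref{lemma:hom_d_acyclic}, acyclicity of $\Map(F,\md{P})$ amounts to $[\Sigma^i(\md{Q}\otimes F),\md{P}]=0$ for \emph{every} flat $\md{Q}$, which is a priori stronger than the case $\md{Q}=\cat{O}_X$ appearing in (iii); this is exactly why the paper proves (iv) by a separate argument (reducing to cotorsion flat targets and taking homotopy limits) rather than by a formal comparison with (iii).
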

\emph{Proof.} See Theorem \ref{theorem:description_tac}.
\vspace{0.3cm}

The category $\kprofl{\tac}{X}$ is locally modelled on the homotopy category $\kprojl{\tac}{A}$, in the following sense:  Neeman proves in \cite{Neeman08} that when $X = \Spec(A)$ is affine, the composite of canonical functors
\begin{equation}\label{eq:local_kprof2}
\kproj{A} \xlto{\inc} \kflat{A} \xlto{\can} \kflat{A}/\kflatl{\pac}{A} = \kprof{X}
\end{equation}
is an equivalence, where $\kproj{A}$ is the homotopy category of projective $A$-modules. We prove in Section \ref{subsection:two_kinds} that if $A$ has finite Krull dimension then a complex of projective $A$-modules is totally acyclic if and only if it is N-totally acyclic, and it follows that (\ref{eq:local_kprof2}) restricts to an equivalence
\begin{equation}\label{eq:local_kprof}
\kprojl{\tac}{A} \xlto{\sim} \kprofl{\tac}{X}.
\end{equation}
In general, we view the globally defined category $\kprofl{\tac}{X}$ as being glued together from the homotopy categories of totally acyclic complexes of projective modules over the affine open subsets of $X$. This leads to a general principle: if a statement of Gorenstein homological algebra can be expressed intrinsically in $\kprojl{\tac}{A}$, then it is reasonable to expect a generalisation to schemes using the above framework. As examples of the technique, we generalise results of J\o rgensen \cite{Jorgensen07} and Iyengar-Krause \cite{Krause06}.\\

Let $A$ denote a noetherian ring with a dualising complex. J\o rgensen proves in \cite{Jorgensen07} that the inclusion $\kprojl{\tac}{A} \lto \kproj{A}$ has a right adjoint, and using this adjoint he proves that Gorenstein projective precovers exist over $A$. For a scheme, the category $\kprof{X}$ plays the role of $\kproj{A}$, and an argument analogous to J\o rgensen's yields:

\begin{itheorem} The inclusion $\kprofl{\tac}{X} \lto \kprof{X}$ has a right adjoint. 
\end{itheorem}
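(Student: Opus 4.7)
My plan is to apply Brown representability in its well-generated form: any coproduct-preserving triangulated functor out of a well-generated triangulated category admits a right adjoint. Applied to the inclusion $i\colon \kprofl{\tac}{X} \lto \kprof{X}$, this reduces the theorem to two claims: $i$ preserves set-indexed coproducts, and $\kprofl{\tac}{X}$ is well-generated as a triangulated category.

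Coproduct preservation is immediate from description (i) of the preceding theorem. A complex $F$ of flat sheaves is N-totally acyclic precisely when $F$ is acyclic and $F \otimes \md{I}$ is acyclic for every injective sheaf $\md{I}$; both conditions are manifestly stable under arbitrary direct sums, since acyclicity commutes with direct sums and $\otimes$ distributes over them. Hence $\kprofl{\tac}{X}$ is closed under coproducts formed in $\kprof{X}$.

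The main obstacle is the second claim. The ambient category $\kprof{X}$ is well-generated by the Neeman--Murfet analysis of the pure derived category \cite{Neeman08, NeemanAdj, Murfet07}; I would promote this to $\kprofl{\tac}{X}$ by a L\"owenheim--Skolem argument. Fix a regular cardinal $\lambda$ large enough to accommodate a generating set of injective sheaves on $X$ together with the structure sheaf $\mathcal{O}_X$. The key step is to show that every N-totally acyclic complex $F$ is a $\lambda$-filtered colimit of its $\lambda$-small N-totally acyclic flat subcomplexes. Starting from any $\lambda$-small flat subcomplex $F_0 \subseteq F$, one iteratively enlarges $F_0$ inside $F$, at each stage adjoining local sections that witness the acyclicity of $F_0$ and of $F_0 \otimes \md{I}$ across the chosen generating set of injectives, while preserving flatness. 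Because at most $\lambda$-many such witnesses are required per stage, after $\lambda$ stages the union is an N-totally acyclic flat subcomplex of $F$ of cardinality $<\lambda$ containing $F_0$. Isomorphism classes of $\lambda$-small N-totally acyclic complexes then form a set of $\lambda$-presentable generators for $\kprofl{\tac}{X}$.

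Granted these two ingredients, Brown representability produces the right adjoint. The analogy with J\o rgensen's argument is the following: he obtained compact generation of $\kprojl{\tac}{A}$ from a dualising complex, whereas the global flat formulation here needs only well-generation, which is available without any dualising hypothesis on $X$.
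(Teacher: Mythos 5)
Your overall strategy (coproduct preservation plus Brown representability for well-generated categories) is legitimate in principle, and your first claim is fine: closure of $\kprofl{\tac}{X}$ under coproducts is indeed immediate from the tensor characterisation. But the proof has a genuine gap at its crux, namely the well-generation of $\kprofl{\tac}{X}$. Note that in this paper well-generation of $\kprofl{\tac}{X}$ is obtained only as a \emph{consequence} of Theorem \ref{theorem:2} (see Corollary \ref{corollary:ktac_wellgen}, which combines the theorem with Krause's result that the kernel of a coproduct-preserving homological functor on a compactly generated category is well-generated). So you are resting the entire argument on a statement at least as strong as the theorem itself, and your independent justification of it is only a sketch. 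Two specific steps in that sketch do not go through as written. First, ``enlarging $F_0$ inside $F$ while preserving flatness'' is delicate: a subsheaf of a flat sheaf need not be flat, so you would need to work with pure subcomplexes with flat quotients (a deconstructibility argument in the style of Neeman's analysis of $\kflat{A}$), and you would further need the small subcomplexes to be N-totally acyclic \emph{with flat, N-totally acyclic quotient}, or the filtration is useless. Second, and more seriously, producing a set of $\lambda$-small subcomplexes whose filtered union is $F$ does not by itself make $\kprofl{\tac}{X}$ well-generated: this category is a Verdier quotient $\kflatl{\tac}{X}/\kflatl{\pac}{X}$, and well-generation requires exhibiting a perfect ($\beta$-perfect) generating set in the triangulated sense, with its smallness conditions on maps into coproducts, in the quotient. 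That passage is a substantial theorem, not a formality, and it is exactly the part you have omitted.

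The paper's route avoids all of this. By the Matlis-type classification of injective sheaves on a noetherian scheme, N-total acyclicity can be tested against the \emph{set} of indecomposable injectives $E(x)$ (Lemma \ref{lemma:acyclic_iff_tensor_indecomp}). Setting $\md{I} = \bigoplus_{x \in X} E(x)$, the functor
\[
H(F) = H^0(F) \oplus H^0(F \otimes \md{I})
\]
is a single coproduct-preserving homological functor on the \emph{compactly generated} category $\kprof{X}$ whose kernel is exactly $\kprofl{\tac}{X}$, and Margolis's theorem (Theorem \ref{theorem:margolis}) then hands you the right adjoint to the inclusion of the kernel. The only nontrivial inputs are the compact generation of $\kprof{X}$ and the reduction of the class of test injectives to a set. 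If you want to salvage your approach, you should either supply a genuine proof of well-generation of $\kflatl{\tac}{X}$ (and of the quotient) along deconstructibility lines, or simply replace the well-generation step by the kernel-of-a-homological-functor observation, at which point you have reproduced the paper's argument.
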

\emph{Proof.} See Theorem \ref{theorem:2}.
\vspace{0.3cm}

Given any complex $F$ of flat sheaves, the theorem provides a triangle
\[
T \lto F \lto S \lto \Sigma T
\]
in $\kprof{X}$, where $T$ is N-totally acyclic and $S$ is right orthogonal to N-totally acyclic complexes. Such a triangle has several applications; in particular, it allows one to develop Tate cohomology for noetherian schemes following Krause \cite{Krause05} and J\o rgensen \cite{Jorgensen07}, and it can also be used to develop a theory of Gorenstein dimension for arbitrary sheaves. These topics will be developed in sequels to the present paper.

Iyengar and Krause prove in \cite{Krause06} that a noetherian ring $A$ with a dualising complex is Gorenstein if and only if every acyclic complex of projective $A$-modules is totally acyclic. The generalisation to noetherian schemes reads as follows:

\begin{itheorem} The scheme $X$ is Gorenstein if and only if every acyclic complex of flat sheaves is N-totally acyclic.
\end{itheorem}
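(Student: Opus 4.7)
The plan is to establish both implications by local reduction to the affine case, invoking the Iyengar--Krause characterization of Gorenstein rings.

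For the forward direction, assume $X$ is Gorenstein and let $F$ be an acyclic complex of flat sheaves with $\md{I}$ an injective sheaf. Acyclicity of $F \otimes \md{I}$ can be checked on affine open subsets, so restrict to $U = \Spec(A)$; because $X$ is Gorenstein, $A$ is a Gorenstein noetherian ring. By Iwanaga's theorem, every injective $A$-module has finite flat dimension, so there is a bounded flat resolution $G \to \md{I}|_U$. The totalization $F|_U \otimes G$ is acyclic (each column $F|_U \otimes G^i$ is the tensor of an acyclic complex of flats with a flat module, hence acyclic, and the complex is bounded in the horizontal direction), and a standard comparison using the boundedness of $G$ yields a quasi-isomorphism $F|_U \otimes G \to F|_U \otimes \md{I}|_U$. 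Hence $F \otimes \md{I}$ is acyclic and $F$ is N-totally acyclic.

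For the backward direction, assume every acyclic complex of flat sheaves is N-totally acyclic. Since being Gorenstein is a local property, it suffices to prove that for each affine open $U = \Spec(A)$ the ring $A$ is Gorenstein. Given an acyclic complex $P$ of projective $A$-modules, the plan is to show that $P$ is totally acyclic in the sense of Iyengar--Krause and then invoke their theorem locally. To apply the global hypothesis to $P$, extend the corresponding complex of sheaves on $U$ to an acyclic complex $F$ of flat sheaves on $X$ whose restriction to $U$ is purely quasi-isomorphic to it. By assumption $F$ is N-totally acyclic, and restricting back to $U$ and combining with the equivalence \eqref{eq:local_kprof} yields that $P$ is totally acyclic as a complex of projective $A$-modules. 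One then applies the Iyengar--Krause theorem over $A$, which is legitimate since any Gorenstein noetherian ring admits a dualising complex (itself, suitably shifted), to deduce that $A$ is Gorenstein; carrying this out at each affine open gives that $X$ is Gorenstein.

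The main obstacle is the extension step in the backward direction: producing an acyclic complex of flat sheaves on $X$ whose restriction to $U$ recovers the given complex of projectives up to pure quasi-isomorphism. The pushforward $j_\ast$ along the affine open immersion $j: U \lto X$ preserves quasi-coherence by semi-separatedness but not flatness, so it must be combined with a flat-resolution procedure compatible with the pure derived category. The cotorsion/flat formalism developed earlier in the paper, and in particular characterisation (ii) of the first displayed theorem which recasts N-total acyclicity in terms of $\Hom$ against cotorsion flats, should provide the correct technical tools for this extension without disturbing the restriction to $U$ in $\kprof{X}$.
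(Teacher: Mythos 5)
Your forward direction is fine and is essentially the paper's argument (reduce to an affine/local Gorenstein ring, use that injectives have finite flat dimension, and tensor with a bounded flat resolution). The backward direction, however, contains a genuine circularity. You propose to apply the Iyengar--Krause theorem to the ring $A$ of an affine open $U = \Spec(A)$, justifying the required dualising-complex hypothesis by the remark that ``any Gorenstein noetherian ring admits a dualising complex (itself, suitably shifted).'' But at that point in the argument you do not yet know that $A$ is Gorenstein --- that is precisely what you are trying to prove --- and an arbitrary noetherian (even local) ring need not admit a dualising complex. So the implication ``every acyclic complex of projective $A$-modules is totally acyclic $\Rightarrow$ $A$ is Gorenstein'' cannot be invoked for $A$ itself. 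This is exactly the obstruction the paper is designed to circumvent: one first reduces to a local ring $(A,\mf{m})$ via the stalkwise characterisation of N-total acyclicity (pushing forward along $\Spec(\cat{O}_{X,x}) \lto X$ and taking stalks), then transfers the hypothesis to the $\mf{m}$-adic completion $\widehat{A}$ using faithful flatness (an acyclic complex of flat $\widehat{A}$-modules is flat and acyclic over $A$, hence N-totally acyclic over $A$ by hypothesis, and $F \otimes_{\widehat{A}} I \cong$ a direct factor of $F \otimes_A I$ up to faithfully flat base change), and only then applies Iyengar--Krause to $\widehat{A}$, which \emph{always} has a dualising complex because it is complete local. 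Gorensteinness descends from $\widehat{A}$ to $A$, completing the proof.

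A secondary remark: the step you flag as the ``main obstacle'' --- extending a complex from $U$ to $X$ --- is not actually an obstacle and needs none of the cotorsion machinery. For the inclusion $j$ of an affine open subset of a semi-separated scheme, $j_*$ is exact on quasi-coherent sheaves and preserves flatness (open immersions are flat, so flat modules over $\Gamma(U,\cat{O}_U)$ restrict to flat modules over the sections of any affine open of $X$); this is the content of Lemma \ref{lemma:inclusion_affine} and its proof. So $j_*$ of your acyclic complex of flat sheaves on $U$ already does the job, with $(j_*F)|_U = F$ on the nose. The difficulty in this theorem lies entirely in the dualising-complex issue above, not in the extension.
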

\emph{Proof.} See Theorem \ref{theorem:gorenstein_iff_actac}.
\vspace{0.3cm}

The reader will note that these two theorems make no assumption about the existence of a dualising complex, so when $X = \Spec(A)$ is affine and of finite Krull dimension, the results improve the aforementioned theorems of J\o rgensen and Iyengar-Krause.

\begin{itheorem} Any module over a noetherian ring of finite Krull dimension admits a Gorenstein projective precover.
\end{itheorem}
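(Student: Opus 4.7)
The plan is to reduce to J\o rgensen's argument in \cite{Jorgensen07}, substituting the preceding theorem (existence of a right adjoint) for the dualising complex he requires. Let $A$ be noetherian of finite Krull dimension, set $X = \Spec(A)$, and fix an $A$-module $M$.

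First I would transfer the right adjoint of Theorem \ref{theorem:2} back to the affine setting. That theorem produces a right adjoint to $\kprofl{\tac}{X} \hookrightarrow \kprof{X}$ with no assumption on $X$ beyond semi-separated noetherian. Neeman's equivalence (\ref{eq:local_kprof2}) identifies $\kproj{A}$ with $\kprof{X}$, and the equivalence (\ref{eq:local_kprof})---where the finite Krull dimension hypothesis enters, via the coincidence of totally acyclic and N-totally acyclic complexes of projectives---identifies $\kprojl{\tac}{A}$ with $\kprofl{\tac}{X}$ compatibly with the inclusions. Transporting the right adjoint across these equivalences yields a right adjoint $R : \kproj{A} \lto \kprojl{\tac}{A}$ of the inclusion.

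Next I would construct a candidate precover. Pick a projective resolution $P$ of $M$, concentrated in degrees $\leq 0$, viewed as an object of $\kproj{A}$. Applying $R$ yields a triangle
\[
T \lto P \lto S \lto \Sigma T
\]
in $\kproj{A}$ with $T$ totally acyclic. Choose a cochain-level representative of the morphism $T \lto P$. Since $T$ is a totally acyclic complex of projectives, the module $G := \Ker(\partial_T^0)$ is Gorenstein projective. The cochain map sends $G$ into $\Ker(\partial_P^0) = P^0$, and composition with the surjection $P^0 \twoheadrightarrow M$ gives a map $\varphi : G \lto M$.

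Finally, to verify the precover property, let $G'$ be any Gorenstein projective module and $\alpha : G' \lto M$ a morphism. Pick a totally acyclic complex $T'$ of projectives with $\Ker(\partial_{T'}^0) = G'$. A standard lifting argument, using the projectivity of each $T'^i$ and the acyclicity of $P \lto M$ in negative degrees, produces a cochain map $T' \lto P$ inducing $\alpha$ in degree zero. The resulting morphism in $\kproj{A}$ has source in $\kprojl{\tac}{A}$, so by the universal property of $R$ it factors through $T \lto P$. Passing to zeroth cocycles exhibits $\alpha$ as a factorisation through $\varphi$, which is the precover property. The main obstacle is the last step---ensuring that homotopy-category factorisations descend to genuine module-level factorisations through $Z^0$---which is precisely J\o rgensen's argument. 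The conceptual improvement here is that the first step now requires no dualising complex, because Theorem \ref{theorem:2} supplies the right adjoint for any semi-separated noetherian scheme.
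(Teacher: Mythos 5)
Your overall strategy is exactly the route the paper indicates in the first sentence of its own proof: transport the right adjoint of Theorem \ref{theorem:2} across Neeman's equivalence $\kproj{A} \xlto{\sim} \kprof{A}$ and its restriction $\kprojl{\tac}{A} \xlto{\sim} \kprofl{\tac}{A}$ from Lemma \ref{lemma:equiv_ktac_ntac} (you correctly locate the use of finite Krull dimension there), and then feed the resulting adjoint into J\o rgensen's precover argument. The paper in fact prefers to bypass $\kprof{X}$ entirely in the affine case: it applies Margolis's Theorem \ref{theorem:margolis} directly to the homological functor $H^0(-)\oplus H^0(-\otimes I)$ on the compactly generated category $\kproj{A}$, whose kernel is $\kprojl{\tac}{A}$ by Lemma \ref{lemma:tac_iff_completeflat}. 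Both routes are legitimate and rest on the same two ingredients.

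However, your explicit version of the precover construction has an off-by-one error that makes your candidate map identically zero. If $P$ is a projective resolution of $M$ concentrated in degrees $\le 0$ and $f\colon T\to P$ is any cochain map with $T$ acyclic, then $Z^0(T)=\Ker(\partial_T^0)=\operatorname{im}(\partial_T^{-1})$, and $f^0$ carries $\operatorname{im}(\partial_T^{-1})$ into $\operatorname{im}(\partial_P^{-1})=\Ker(P^0\twoheadrightarrow M)$; hence your $\varphi\colon G\to M$ vanishes. The precover must instead be induced on cokernels, $\operatorname{Coker}(\partial_T^{-1})\to\operatorname{Coker}(\partial_P^{-1})=M$, where $\operatorname{Coker}(\partial_T^{-1})\cong Z^1(T)$ is still a Gorenstein projective syzygy of $T$. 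The same shift is needed in your verification step: it is the degrees $\le 0$ part of $T'$ that is a projective resolution of $\operatorname{Coker}(\partial_{T'}^{-1})$, which is what one compares with $P\to M$ to obtain a degree-zero cochain map; with your kernel convention the comparison map is forced to have degree $+1$ and cannot ``induce $\alpha$ in degree zero''. Once this is corrected, the remaining delicacy you flag --- that homotopies and the choice of factorisation only change the induced map on syzygies by something factoring through a projective --- is precisely what J\o rgensen's argument in \cite{Jorgensen07} handles, and the paper, like you, simply cites it at that point.
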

\emph{Proof.} See Theorem \ref{theorem:jorgensen}.
\vspace{0.2cm}

\begin{icorollary} A noetherian ring $A$ of finite Krull dimension is Gorenstein if and only if every acyclic complex of projective $A$-modules is totally acyclic.
\end{icorollary}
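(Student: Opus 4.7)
The plan is to derive this corollary from Theorem \ref{theorem:gorenstein_iff_actac} (the scheme-theoretic version above) by specialising to $X = \Spec(A)$ and translating between flat sheaves on $X$ and flat $A$-modules. First I would note that $X$ is semi-separated noetherian, and that $X$ is Gorenstein if and only if $A$ is. Next, using the standard affine equivalence between quasi-coherent sheaves and $A$-modules, together with the fact that on a noetherian affine scheme injective quasi-coherent sheaves are exactly the sheafifications of injective $A$-modules, I would rephrase the condition ``every acyclic complex of flat sheaves on $X$ is N-totally acyclic'' as: for every acyclic complex $F$ of flat $A$-modules and every injective $A$-module $I$, the complex $F \otimes_A I$ is acyclic.

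The bulk of the proof is then to show that this module-theoretic condition is equivalent to every acyclic complex of projective $A$-modules being totally acyclic. For the easy direction, an acyclic complex of projectives $P$ is in particular a complex of flats, so the hypothesis gives that $P \otimes_A I$ is acyclic for every injective $I$, i.e., $P$ is N-totally acyclic; the comparison in Section \ref{subsection:two_kinds} (which uses $\dim A < \infty$) then upgrades this to $P$ being totally acyclic.

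For the converse, I would invoke Neeman's equivalence $\kproj{A} \xlto{\sim} \kprof{X}$ recalled in \eqref{eq:local_kprof2}. Given an acyclic complex $F$ of flats and an injective $I$, essential surjectivity produces a complex $P$ of projectives together with an isomorphism $P \cong F$ in $\kprof{X}$. Such an isomorphism is represented by a roof in $\kflat{A}$ through some complex $G$, with both structural morphisms being pure quasi-isomorphisms, i.e., their mapping cones lie in $\kflatl{\pac}{A}$. Because pure acyclic complexes are acyclic, $P$ is acyclic; by hypothesis and Section \ref{subsection:two_kinds} it is then N-totally acyclic, so $P \otimes_A I$ is acyclic. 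Tensoring the roof with $I$ keeps both cones acyclic, by the very definition of pure acyclicity, so $F \otimes_A I$ and $P \otimes_A I$ are quasi-isomorphic and the claim follows.

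The main subtlety is this last step: one must unpack carefully the meaning of the isomorphism in the Verdier quotient $\kprof{X}$ as a roof and exploit that pure acyclicity is preserved under tensoring with an arbitrary module. Once this is in hand the corollary is a clean combination of Theorem \ref{theorem:gorenstein_iff_actac}, Neeman's affine equivalence, and the finite-Krull-dimension comparison between total and N-total acyclicity for complexes of projectives.
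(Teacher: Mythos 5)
Your proof is correct and follows essentially the same route as the paper, which simply combines Theorem \ref{theorem:gorenstein_iff_actac} with Lemma \ref{lemma:tac_iff_completeflat} and Remark \ref{remark:comparison_flat_proj_tac}; your roof through the Verdier quotient is just an explicit unpacking of the triangle $P \to F \to E \to \Sigma P$ (with $E$ pure acyclic) that the paper obtains from Neeman's adjoint to $\kproj{A} \to \kflat{A}$. One small misattribution: the finite Krull dimension hypothesis is needed for the implication ``totally acyclic $\Rightarrow$ N-totally acyclic'' (Lemma \ref{lemma:tac_iff_completeflat}(i), via Gruson--Jensen, which is what your converse direction actually uses), whereas the implication ``N-totally acyclic $\Rightarrow$ totally acyclic'' invoked in your easy direction holds without any dimension hypothesis.
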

\emph{Proof.} See Corollary \ref{corollary:gorenstein_tac_ac_local}.
\vspace{0.3cm}

Let $\kprofl{\ac}{X}$ denote the full subcategory of acyclic complexes in $\kprof{X}$. This is a triangulated subcategory, and both $\kprofl{\tac}{X}$ and $\kprofl{\ac}{X}/\kprofl{\tac}{X}$ are triangulated categories with arbitrary small coproducts, so it is natural to ask whether these categories are compactly generated.

Let $\qderu{b}{\cmodn{X}}$ denote the bounded derived category of coherent sheaves. A complex of sheaves has \emph{finite flat dimension} (resp. \emph{finite injective dimension}) if it is isomorphic, in the derived category $\qder{X}$ of all sheaves, to a bounded complex of flat sheaves (resp. a bounded complex of injective sheaves). We define full subcategories of $\qderu{b}{\cmodn{X}}$ by
\begin{equation}\label{eq:finite_flat_inj_dim}
\begin{split}
\Perf(X) &:= \{ G \in \qderu{b}{\cmodn{X}} \l G \text{ has finite flat dimension} \},\\
\Coperf(X) &:= \{ G \in \qderu{b}{\cmodn{X}} \l G \text{ has finite injective dimension} \}.
\end{split}
\end{equation}
The objects of $\Perf(X)$ are known in the literature as \emph{perfect complexes} (Lemma \ref{lemma:perfx_are_perfect}). We denote by $\Thick(\Perf(X), \Coperf(X))$ the smallest thick subcategory of $\qderu{b}{\cmodn{X}}$ containing the union $\Perf(X) \cup \Coperf(X)$, and define
\begin{equation}
\begin{split}
\SSg(X) &:= \qderu{b}{\cmodn{X}}/\Thick(\Perf(X), \Coperf(X)),\\
\NSg(X) &:= \Thick(\Perf(X), \Coperf(X))/\Perf(X).
\end{split}
\end{equation}
In the next proposition we assume that $X$ has a dualising complex. This is not a strong restriction: any scheme of finite type over a Gorenstein scheme (and in particular any variety over a field) admits a dualising complex; see \cite{ResiduesDuality,Conrad00}. Given a triangulated category $\cat{T}$, we write $\cat{T}^c$ for the full subcategory of compact objects in $\cat{T}$. Recall that a functor $F: \cat{A} \lto \cat{B}$ is an \emph{equivalence up to direct summands} if it is fully faithful, and every object $B \in \cat{B}$ is isomorphic to a direct summand of $F(A)$ for some $A \in \cat{A}$.

\vspace{0.2cm}

The following gives a generalisation to schemes of \cite[Theorem 5.4]{Krause06}.

\begin{iproposition} Suppose that $X$ has a dualising complex. Then both $\kprofl{\tac}{X}$ and the quotient $\kprofl{\ac}{X}/\kprofl{\tac}{X}$ are compactly generated, and there are equivalences up to direct summands
\begin{gather*}
\SSg(X) \lto \kproful{c}{\tac}{X},\\
\NSg(X) \lto \left( \kprofl{\ac}{X}/\kprofl{\tac}{X} \right)^c.
\end{gather*}
\end{iproposition}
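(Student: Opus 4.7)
The plan is to adapt Iyengar and Krause's proof of \cite[Theorem 5.4]{Krause06} to the pure derived category setting. The two essential ingredients will be the compact generation of $\kprof{X}$, with compacts identified (via the dualising complex $D$) with $\qderu{b}{\cmodn{X}}$, and the characterisation of N-total acyclicity from the first Theorem above.

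The first step, which is the technical heart of the proof and the main obstacle, is to establish compact generation of $\kprof{X}$. Neeman's theorem \cite{Neeman08, Murfet07} handles the affine case, giving $\K(\Proj A)^c \simeq \qderu{b}{\modd A}^{op}$; globalising this using $D$ (so that $\mathrm{R}\shom_X(-, D)$ trades the opposite category for coherent sheaves with flipped homological finiteness) should yield an equivalence
\[
\qderu{b}{\cmodn{X}} \lto \kprof{X}^c
\]
sending a coherent complex $G$ to a flat resolution of $\mathrm{R}\shom_X(G, D)$. Both compactness and generation are local questions, and semi-separatedness together with a Mayer--Vietoris argument over an affine open cover of $X$ should reduce them to the affine case.

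The second step is to examine the Verdier quotient
\[
\kprofl{\ac}{X} \hookrightarrow \kprof{X} \twoheadrightarrow \qder{X},
\]
whose right-hand functor sends a complex of flat sheaves in the pure derived category to its image in $\qder{X}$. The kernel is exactly $\kprofl{\ac}{X}$, and the functor is essentially surjective via K-flat resolution. Since $\qder{X}^c = \Perf(X)$, Brown representability and Neeman's localisation theorem yield that $\kprofl{\ac}{X}$ is compactly generated, with $\kprofl{\ac}{X}^c \simeq \qderu{b}{\cmodn{X}}/\Perf(X)$ up to direct summands.

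The final step is to apply the second Theorem above (the right adjoint to $\kprofl{\tac}{X} \hookrightarrow \kprof{X}$), which restricts to give a Verdier quotient
\[
\kprofl{\tac}{X} \hookrightarrow \kprofl{\ac}{X} \twoheadrightarrow \kprofl{\ac}{X}/\kprofl{\tac}{X}.
\]
By the first Theorem above, an object of $\kprofl{\ac}{X}$ is N-totally acyclic precisely when it is right-orthogonal to shifts of flat sheaves; under the identification of compacts, the flat sheaves correspond via $D$ to $\Coperf(X) \subset \qderu{b}{\cmodn{X}}$. Hence the induced Verdier quotient sequence on compacts is
\[
\NSg(X) \hookrightarrow \qderu{b}{\cmodn{X}}/\Perf(X) \twoheadrightarrow \SSg(X),
\]
yielding the desired equivalences up to direct summands, with the duality provided by $D$ (which fixes $\Thick(\Perf(X), \Coperf(X))$ while exchanging $\Perf(X)$ and $\Coperf(X)$) ensuring that $\SSg(X)$ and $\NSg(X)$ are self-dual and matching the directions of the arrows in the statement.
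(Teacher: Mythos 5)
Your overall architecture is the paper's: identify $\kprofu{c}{X}$ with $\qderu{b}{\cmodn{X}}$ and run the Iyengar--Krause localisation machinery. However, the step that carries the real content is asserted rather than proved. To apply any localisation theorem here you must first exhibit $\kprofl{\tac}{X}$ as the right orthogonal $\cat{C}^{\perp}$ of a \emph{set of compact objects} $\cat{C} \subseteq \kprofu{c}{X}$. Your justification --- that ``under the identification of compacts, the flat sheaves correspond via $D$ to $\Coperf(X)$'' --- does not parse: flat sheaves are not compact in $\kprof{X}$, and the condition cutting out $\kprofl{\tac}{X}$ inside $\kprofl{\ac}{X}$ is the \emph{left} orthogonality $[F,\Sigma^i\md{P}]=0$ to flats (not a right orthogonality, which is what characterises acyclicity). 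Converting this left orthogonality into a right orthogonality against compacts is exactly the paper's Proposition \ref{prop:right_orth_isktac}: one transports the problem across the equivalence $-\otimes D\colon \kprof{X}\to\kinj{X}$, under which $\Thick(\Flat X)$ and $\Thick(\Inj X)$ correspond (Proposition \ref{prop:tac_iff_twoac}), and then uses the commutativity of the Grothendieck duality square (\ref{eq:grothendieck_duality_2}) to rewrite $\Hom_{\kinj{X}}(\mathbf{i}G, F\otimes D)$ as $\Hom_{\kprof{X}}(\Phi\rdev\qhom(G,D),F)$ for $G$ perfect, identifying ${}^{\perp}(\Flat X)$ with $(\Phi\Coperf(X))^{\perp}$. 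Without this computation there is no set of compacts to feed into the localisation theorem, and this is the one place where the dualising complex is genuinely used.

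Second, the localisation theorem you invoke is the wrong one. Since $\kprofl{\tac}{X}=\cat{C}^{\perp}$ is a right orthogonal, it is not generated by objects compact in $\kprof{X}$ or in $\kprofl{\ac}{X}$, so Neeman--Ravenel--Thomason does not apply positionally to the sequence $\kprofl{\tac}{X}\hookrightarrow\kprofl{\ac}{X}\to\kprofl{\ac}{X}/\kprofl{\tac}{X}$. One needs the dual form (\cite[Proposition 1.7]{Krause06}, recalled in Construction \ref{remark:general_construction}): the compacts of $\cat{C}^{\perp}$ are obtained by applying the \emph{left} adjoint of the inclusion to $\kprofu{c}{X}$, giving $(\cat{C}^{\perp})^c \simeq \kprofu{c}{X}/\Thick(\cat{C})$ up to summands, while the compacts of $\cat{B}^{\perp}/\cat{C}^{\perp}$ (with $\cat{B}=\Phi\Perf(X)$) are $\Thick(\cat{C})/\Thick(\cat{B})$. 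This swaps subcategory and quotient: the \emph{sub}category $\kprofl{\tac}{X}$ receives the \emph{quotient} $\SSg(X)$, and the quotient $\kprofl{\ac}{X}/\kprofl{\tac}{X}$ receives $\NSg(X)$. Your displayed sequence $\NSg(X)\hookrightarrow\qderu{b}{\cmodn{X}}/\Perf(X)\twoheadrightarrow\SSg(X)$, matched positionally with $\kprofl{\tac}{X}\hookrightarrow\kprofl{\ac}{X}\twoheadrightarrow\kprofl{\ac}{X}/\kprofl{\tac}{X}$, would assign $\NSg(X)$ to $\kproful{c}{\tac}{X}$ --- the reverse of the claim. You should also be explicit that the statement as given (without $(-)^{\textrm{op}}$) relies on the self-duality of $\Thick(\Perf(X),\Coperf(X))$ under $\rdev\qhom(-,D)$, which exchanges $\Perf(X)$ and $\Coperf(X)$ (Lemma \ref{lemma:coperfect}).
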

\emph{Proof.} See Proposition \ref{prop:compacts_ktac} and Proposition \ref{prop:compacts_dgac}.
\vspace{0.3cm}

Finally, we interpret $\SSg(X)$ and $\NSg(X)$ as invariants of certain singularities, in the same sense that $\qderu{b}{\coh X}/\Perf(X)$ is an invariant of all singularities \cite{Orlov04}. We call $x \in X$ a \emph{symmetric singularity} if $\cat{O}_{X,x}$ is non-regular, and there exists a non-contractible totally acyclic complex of projective $\cat{O}_{X,x}$-modules (equivalently, there exists a non-free Gorenstein projective $\cat{O}_{X,x}$-module). Every Gorenstein singularity is a symmetric singularity, but not every symmetric singularity is Gorenstein (e.g.\ Example \ref{example:non_gorenstein_symmetric}).

\begin{iproposition} Let $U \subseteq X$ be an open subset containing every symmetric singularity of $X$. Then the restriction functor defines an equivalence
\[
(-)|_U: \kprofl{\tac}{X} \xlto{\sim} \kprofl{\tac}{U}.
\]
When $X$ has a dualising complex this induces an equivalence $\SSg(X) \cong \SSg(U)$.
\end{iproposition}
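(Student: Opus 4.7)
The strategy is to reduce the restriction equivalence to a local vanishing statement at the points of $Z := X \setminus U$, and then to derive the $\SSg$-equivalence formally from the preceding proposition. By hypothesis every $x \in Z$ satisfies $\kprojl{\tac}{\cat{O}_{X,x}} = 0$: if $\cat{O}_{X,x}$ is regular this is because every totally acyclic complex of projectives over a regular local ring of finite Krull dimension is contractible, and otherwise it is the very definition of ``not a symmetric singularity.''

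To see that $(-)|_U$ is fully faithful, it suffices to check that its kernel is trivial (then apply this to mapping cones). Given $F \in \kprofl{\tac}{X}$ with $F|_U$ pure acyclic on $U$, cover $X$ by affine opens $V_i = \Spec(A_i)$. Via the local equivalence (\ref{eq:local_kprof}), each $F|_{V_i}$ is represented by a totally acyclic complex $P_i$ of projective $A_i$-modules. For every prime $\mathfrak{p} \in V_i \cap Z$, the localization $(P_i)_{\mathfrak{p}}$ is contractible by the vanishing above; combined with the pure acyclicity of $P_i$ on $V_i \cap U$, the cycles of $P_i$ are flat at every prime of $A_i$, hence flat globally, so $P_i$ is pure acyclic and $F|_{V_i} \cong 0$ in $\kprof{V_i}$. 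Since pure acyclicity is Zariski-local (Section \ref{section:review_of_pure}), $F \cong 0$ in $\kprof{X}$. For essential surjectivity, given $G \in \kprofl{\tac}{U}$, I would cover $X$ by affine opens $V_i$ and lift $G|_{V_i \cap U}$ to an object $F_i \in \kprofl{\tac}{V_i}$ using the affine case of the proposition, which follows from the fully faithful step above together with a direct extension of totally acyclic projective complexes across primes in $V_i \cap Z$, one prime at a time, using $\kprojl{\tac}{(A_i)_{\mathfrak{p}}} = 0$. Uniqueness of the lifts (fully faithfulness applied on each overlap $V_i \cap V_j$, which is again semi-separated noetherian with all its symmetric singularities contained in $U$) forces the $F_i$ to agree on overlaps, and they patch to a global $F \in \kprofl{\tac}{X}$ with $F|_U \cong G$.

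For the final assertion, note that if $X$ has a dualising complex then so does $U$, since the restriction of a dualising complex is dualising. The preceding proposition then gives equivalences up to direct summands $\SSg(X) \to \kproful{c}{\tac}{X}$ and $\SSg(U) \to \kproful{c}{\tac}{U}$, and the restriction equivalence already established preserves compact objects; the induced equivalence on compact subcategories yields $\SSg(X) \cong \SSg(U)$ after idempotent completion.

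I expect the essential-surjectivity step to be the main obstacle. Constructing the local lifts $F_i$ requires an honest affine version of the equivalence rather than mere fully faithfulness, and patching them in $\kprofl{\tac}{-}$ needs a careful gluing argument with actual cochain representatives on triple overlaps, because morphism sets in the pure derived category are themselves Verdier quotients and abstract triangulated gluing is notoriously delicate.
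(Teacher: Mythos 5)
There is a genuine gap, in fact two. First, your reduction of fully faithfulness to ``the kernel of $(-)|_U$ is trivial'' is not valid: a triangulated functor with zero kernel is conservative, but not in general full or faithful, and passing to mapping cones only shows that $(-)|_U$ reflects isomorphisms. The implication ``zero kernel $\Rightarrow$ equivalence'' is only available once you know that $(-)|_U$ is, up to equivalence, a Verdier quotient functor on $\kprofl{\tac}{X}$. That is precisely what the paper's Proposition \ref{prop:recollement_tac_support} supplies: restriction has a fully faithful right adjoint $\ndev f_*|_{\tac}$, so $(\ndev\Gamma_Z|_{\tac}, \ndev f_*|_{\tac})$ is a colocalisation sequence and, by Remark \ref{remark:on_coloc_seq}, $(-)|_U$ identifies $\kprofl{\tac}{U}$ with $\kprofl{\tac}{X}/\kprofl{Z,\tac}{X}$. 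The hypothesis on $U$ forces $\kprofl{Z,\tac}{X}=0$ (stalkwise: such a complex is pure acyclic at points of $U$ and lies in $\kprofl{\tac}{\cat{O}_{X,z}}=0$ at points of $Z$, and pure acyclicity is a stalk-local condition by \cite[Proposition 3.4]{Murfet07}), whence the equivalence. Your kernel computation itself is essentially right, though routing it through $\kprojl{\tac}{A_i}$ on affine opens quietly uses Lemma \ref{lemma:equiv_ktac_ntac}, which needs $A_i$ of finite Krull dimension; the stalkwise argument avoids this since noetherian local rings always have finite dimension.

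Second, your essential surjectivity step does not go through, as you yourself suspect: objects of triangulated categories do not glue over open covers, and agreement of the local lifts $F_i$ on overlaps up to (non-canonical) isomorphism is not enough to produce a global object. The adjoint $\ndev f_*$ is again the missing ingredient — it preserves N-total acyclicity (Lemma \ref{lemma:direct_image_tac}) and the counit $(\ndev f_* G)|_U \to G$ is an isomorphism, so essential surjectivity is immediate with no patching at all. Finally, for the last assertion your conclusion ``$\SSg(X)\cong\SSg(U)$ after idempotent completion'' is weaker than what is claimed: the paper obtains a genuine equivalence by noting that fully faithfulness follows from Proposition \ref{prop:compacts_ktac} together with the equivalence on compacts, while essential surjectivity of $\SSg(X)\to\SSg(U)$ comes from the standard fact that a bounded complex of coherent sheaves on $U$ extends to one on $X$.
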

\emph{Proof.} See Proposition \ref{prop:invariance_ntac}.
\vspace{0.1cm}

\begin{iproposition} Let $U \subseteq X$ be an open subset containing every singularity of $X$ that is not Gorenstein. Then the restriction functor defines an equivalence
\[
(-)|_U: \kprofl{\ac}{X}/\kprofl{\tac}{X} \xlto{\sim} \kprofl{\ac}{U}/\kprofl{\tac}{U}.
\]
When $X$ has a dualising complex this induces an equivalence up to direct summands
\[
\NSg(X) \lto \NSg(U).
\]
\end{iproposition}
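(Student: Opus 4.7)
The proof should follow the template of Proposition \ref{prop:invariance_ntac}, with Theorem \ref{theorem:gorenstein_iff_actac} now playing the role of the local vanishing used there. Write $Z := X \setminus U$; the hypothesis guarantees that for each $z \in Z$ the local ring $\mathcal{O}_{X,z}$ is Gorenstein (either regular, or a Gorenstein singularity), so $\Spec(\mathcal{O}_{X,z})$ is a Gorenstein scheme and Theorem \ref{theorem:gorenstein_iff_actac} applied to $\Spec(\mathcal{O}_{X,z})$ tells us that every acyclic complex of flat $\mathcal{O}_{X,z}$-modules is N-totally acyclic.

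Restriction $(-)|_U$ clearly preserves acyclicity, and preserves N-total acyclicity since restrictions of injective quasi-coherent sheaves on a noetherian scheme to an open subscheme are again injective; hence it induces a well-defined triangulated functor between the Verdier quotients. For faithfulness, suppose $F \in \kprofl{\ac}{X}$ has $F|_U \in \kprofl{\tac}{U}$. For each $x \in X$, the stalk $F_x$ is an acyclic complex of flat $\mathcal{O}_{X,x}$-modules which is moreover N-totally acyclic: this is immediate from $F|_U$ being N-tac if $x \in U$, and from the vanishing above if $x \in Z$. Consequently, for any injective sheaf $\md{I}$ on $X$ the stalks $(F \otimes \md{I})_x = F_x \otimes \md{I}_x$ are acyclic (since $\md{I}_x$ is an injective $\mathcal{O}_{X,x}$-module), so $F \otimes \md{I}$ is acyclic and $F$ is N-totally acyclic on $X$.

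For fullness and essential surjectivity I would follow the corresponding part of the proof of Proposition \ref{prop:invariance_ntac}, using an adjoint of restriction on the pure derived category (as developed in \cite{NeemanAdj, Murfet07}) together with the local Gorenstein vanishing to modify the adjoint of a given $G \in \kprofl{\ac}{U}$ by an N-totally acyclic correction supported towards $Z$, producing $F \in \kprofl{\ac}{X}$ with $F|_U \cong G$ in the target quotient. An analogous strategy lifts morphisms; the key input is that at every $z \in Z$, any discrepancy between a candidate extension and its pullback is killed by Theorem \ref{theorem:gorenstein_iff_actac} applied locally.

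The second statement concerning $\NSg$ follows formally from the first: the established equivalence of triangulated categories with coproducts preserves coproducts and hence restricts to an equivalence up to direct summands on compact objects; combined with the identification of these compacts with $\NSg$ from Proposition \ref{prop:compacts_dgac} (noting that $U$ inherits a dualising complex from $X$), this gives the claimed equivalence up to direct summands $\NSg(X) \to \NSg(U)$. The main obstacle in the plan is cleanly handling the extension/lifting step for fullness and essential surjectivity, since one has to track how the relevant adjoint interacts with the Verdier quotient and with the stalkwise Gorenstein vanishing; faithfulness and the reduction to $\NSg$ are essentially formal given Theorem \ref{theorem:gorenstein_iff_actac}.
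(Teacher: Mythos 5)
Your local computation is exactly the one the paper uses: at every $z \in Z = X \setminus U$ the ring $\cat{O}_{X,z}$ is Gorenstein, so by Theorem \ref{theorem:gorenstein_iff_actac} every acyclic complex of flat $\cat{O}_{X,z}$-modules is N-totally acyclic, and combining this with Lemma \ref{lemma:total_acyc_pointwise} shows that any acyclic complex on $X$ whose restriction to $U$ is N-totally acyclic (in the paper, pure acyclic) is already N-totally acyclic on $X$. The treatment of $\NSg$ via Proposition \ref{prop:compacts_dgac} is also correct. But there is a genuine gap where you yourself flag one: fullness and essential surjectivity are never established. Note also that your ``faithfulness'' step only shows that the kernel of the induced functor is zero, and for an arbitrary triangulated functor a trivial kernel implies neither faithfulness nor that the functor is an equivalence; that inference is only valid once you know the functor is a Verdier quotient functor.

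The paper supplies precisely this missing ingredient, and not by the ad hoc ``N-totally acyclic correction'' you sketch. Proposition \ref{prop:recollement_actac_support} shows that the pair $(\overline{\ndev}\Gamma_Z, \overline{\ndev} f_*)$ is a colocalisation sequence: the right adjoint $\ndev f_*$ of restriction (from the colocalisation sequence (\ref{eq:local_cohom_recoll}) of \cite[Theorem 9.3]{Murfet07}) preserves acyclicity and N-total acyclicity (Lemma \ref{lemma:direct_image_tac}), descends to a fully faithful functor between the quotients, and exhibits $(-)|_U: \kprofl{\ac}{X}/\kprofl{\tac}{X} \lto \kprofl{\ac}{U}/\kprofl{\tac}{U}$ as, up to equivalence, the Verdier quotient by $\cat{S} := \kprofl{Z,\ac}{X}/\kprofl{Z,\tac}{X}$. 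Your stalkwise Gorenstein argument then shows $\cat{S} = 0$, and the equivalence follows with no separate lifting of objects or morphisms. So the strategy is right and the key computation is in place, but the categorical mechanism that converts it into an equivalence is the colocalisation sequence, which your proposal does not actually invoke or prove.
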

\emph{Proof.} See Proposition \ref{prop:invariance_nsg}.

\section{Background and notation}\label{section:notation}

\begin{chunk}\label{chunk:schemes}\textbf{Schemes.} A scheme $X$ is said to be \emph{semi-separated} if there exists an open covering $\{ V_\alpha \}_{\alpha \in \Lambda}$ of $X$ such that the $V_\alpha$, and all the pairwise intersections $V_\alpha \cap V_\beta$, are affine; see \cite{ThomasonTrobaugh} and \cite{Tarrio08}. Any separated scheme is semi-separated. Throughout this article $X$ denotes a semi-separated noetherian scheme, all sheaves are quasi-coherent, and $\qcmod{X}$ denotes the category of (quasi-coherent) sheaves on $X$. An unadorned $\Hom(-,-)$ means morphism sets in this category. For any sheaf $\md{F}$ consider the colimit preserving functor $- \otimes \md{F}: \qcmod{X} \lto \qcmod{X}$. This functor has a right adjoint (by the adjoint functor theorem) which we denote by
\[
\qhom(\md{F},-): \qcmod{X} \lto \qcmod{X}.
\]
This defines the internal function object for the closed monoidal structure on $\qcmod{X}$. When $\md{F}, \md{G}$ are sheaves with $\md{F}$ coherent $\shom(\md{F}, \md{G})$ is quasi-coherent, and there is a canonical isomorphism $\qhom(\md{F}, \md{G}) \cong \shom(\md{F}, \md{G})$. 

Since $X$ is noetherian the abelian category $\qcmod{X}$ is locally noetherian and an injective object in $\qcmod{X}$ is just a quasi-coherent sheaf injective in the larger category of all sheaves of $\cat{O}_X$-modules; see \cite[II \S 7]{ResiduesDuality}. It follows that Ext groups are the same in both categories.
\end{chunk}

\begin{chunk}\label{section:background_triangulated_categories} \textbf{Triangulated categories.} The basic references are \cite{NeemanBook,Verdier96}. For material on well-generated triangulated categories, see \cite{NeemanBook,Krause01,Krause08}. Our terminology for localisation sequences and recollements follows \cite{Krause05}, see also \cite{BBD,Verdier96} and \cite[\S 9.2]{NeemanBook}. All functors between triangulated categories are triangulated functors, and all natural transformations of such functors are understood to be compatible with the triangulated structure. A \emph{triangulated category with coproducts} is a triangulated category in which arbitrary small coproducts exist.

A triangulated subcategory $\cat{S}$ of a triangulated category $\cat{T}$ is always assumed to contain any object of $\cat{T}$ isomorphic to an object of $\cat{S}$. A \emph{thick} subcategory is a triangulated subcategory closed under retracts, and a \emph{localising} (resp. \emph{colocalising}) subcategory is a triangulated subcategory closed under arbitrary small coproducts (resp. products). Any localising subcategory of a triangulated category with countable coproducts is automatically thick, by \cite[Proposition 1.6.8]{NeemanBook}. Given a triangulated category $\cat{T}$ with coproducts, an object $C \in \cat{T}$ is \emph{compact} if for every set-indexed family $\{ X_i \}_{i \in I}$ in $\cat{T}$, the canonical map $\oplus_{i \in I} \Hom_{\cat{T}}(C, X_i) \lto \Hom_{\cat{T}}(C, \oplus_{i \in I} X_i)$ is an isomorphism. The compact objects form a thick subcategory $\cat{T}^c$ of $\cat{T}$. The category $\cat{T}$ is \emph{compactly generated} if there is a set $\cat{S}$ of compact objects such that for any $X \in \cat{T}$, if $\Hom_{\cat{T}}(\Sigma^n S, X) = 0$ for all $S \in \cat{S}$ and $n \in \mathbb{Z}$, then $X = 0$.

Given an additive category $\cat{A}$ we write $\K(\cat{A})$ for the homotopy category of $\cat{A}$, which is the triangulated category whose objects are cochain complexes in $\cat{A}$ and whose morphisms are homotopy equivalence classes of cochain maps. Applying this construction to $\qcmod{X}$ and to the full subcategories $\Flat(X)$ and $\Inj(X)$ of flat and injective sheaves, respectively, defines triangulated categories $\K(X) = \K(\qcmodo{X})$, $\kflat{X}$ and $\kinj{X}$. Similarly, given a ring $A$ one defines $\K(A), \kflat{A}, \kinj{A}$ and $\kproj{A}$.

We write $\qder{X}$ for the (unbounded) derived category of $\qcmod{X}$. This category has small Homs and is compactly generated; see \cite{Neeman96, Tarrio00, Tarrio08}. One defines the tensor product $- \otimes^{\mathbb{L}} -$ and internal function object $\rdev\qhom(-,-)$ in $\qder{X}$ using the unbounded $\K$-flat and $\K$-injective resolutions of Spaltenstein \cite{Spalt88}, see \cite{Tarrio08,LipmanNotes} and \cite[\S 6.1]{Murfet07}. This makes $\qder{X}$ into a closed symmetric monoidal category.

Since $X$ is noetherian, the full subcategory of $\qder{X}$ consisting of complexes with bounded coherent cohomology is equivalent to the bounded derived category $\qderu{b}{\cmodn{X}}$ of coherent sheaves, and we freely confuse these two categories.
\end{chunk}

\begin{chunk}\label{remark:orthogonals} \textbf{Orthogonals and Verdier Quotients.} Given a class $\cat{C}$ of objects in a triangulated category $\cat{T}$, we write $\Thick(\cat{C})$ for the smallest thick subcategory of $\cat{T}$ containing $\cat{C}$. The full subcategories
\begin{align*}
\cat{C}^{\perp} &= \{ Y \in \cat{T} \l \Hom_{\cat{T}}(\Sigma^n X, Y) = 0 \text{ for all } X \in \cat{C} \text{ and } n \in \mathbb{Z} \}\\
{}^{\perp} \cat{C} &= \{ Y \in \cat{T} \l \Hom_{\cat{T}}(Y, \Sigma^n X) = 0 \text{ for all } X \in \cat{C} \text{ and } n \in \mathbb{Z} \}
\end{align*}
are called the \emph{right orthogonal} and \emph{left orthogonal} of $\cat{C}$, respectively. One checks that $\cat{C}^{\perp}$ is a colocalising subcategory of $\cat{T}$, equal to $\Thick(\cat{C})^{\perp}$. Similarly, ${}^{\perp} \cat{C}$ is a localising subcategory, equal to ${}^{\perp} \Thick(\cat{C})$. Given a triangulated subcategory $\cat{S} \subseteq \cat{T}$ and objects $M,N \in \cat{T}$ there is a canonical map
\[
\Hom_{\cat{T}}(M,N) \lto \Hom_{\cat{T}/\cat{S}}(M,N),
\]
which is an isomorphism if either $M \in {}^{\perp} \cat{S}$ or $N \in \cat{S}^{\perp}$ \cite[Lemma 9.1.5]{NeemanBook}. Let $\cat{S} \subseteq \cat{Q}$ be two thick subcategories of $\cat{T}$. The induced functor $\cat{Q}/\cat{S} \lto \cat{T}/\cat{S}$ is fully faithful and the essential image is the full subcategory of $\cat{T}/\cat{S}$ whose objects are those of $\cat{S}$. We will therefore typically identify $\cat{Q}/\cat{S}$ with a triangulated subcategory of $\cat{T}/\cat{S}$. Note that there is a canonical isomorphism $(\cat{T}/\cat{S})/(\cat{Q}/\cat{S}) \xlto{\sim} \cat{T}/\cat{Q}$.
\end{chunk}

\begin{chunk}\label{section:total_acyclicity} \textbf{Total acyclicity.} Given a triangulated category $\cat{T}$ and a class $\cat{C}$ of objects in $\cat{T}$, objects of the intersection $\cat{C}^{\perp} \cap {}^{\perp} \cat{C} \subseteq \cat{T}$ are said to be \emph{totally acyclic} (relative to $\cat{C}$). Let $A$ be a ring. Set $\cat{T} = \kproj{A}$ and let $\cat{C}$ be the class $\Proj(A)$ of projective $A$-modules. Then the objects of
\begin{align*}
\kprojl{\tac}{A} &:= (\Proj A)^{\perp} \cap {}^{\perp}(\Proj A)
\end{align*}
are the complexes of projective $A$-modules $P$ which are acyclic, and have the property that $\Hom_A(P,Q)$ is acyclic for every projective $A$-module $Q$. If we take $\cat{T} = \kinj{A}$ and $\cat{C} = \Inj(A)$ then the objects of
\[
\kinjl{\tac}{A} := (\Inj A)^{\perp} \cap {}^{\perp}(\Inj A)
\]
are the complexes of injective $A$-modules $I$ which are acyclic, and have the property that $\Hom_A(J, I)$ is acyclic for every injective $A$-module $J$. The analogue for complexes of flat $A$-modules is more subtle, and is the subject of Theorem \ref{theorem:description_tac}.
\end{chunk}

\begin{chunk}\label{section:review_of_pure} \textbf{The pure derived category of flat sheaves.} Let $(\cat{A}, \otimes)$ be an abelian category together with the structure of a symmetric monoidal category. A complex $F$ of objects in $\cat{A}$ is said to be \emph{pure acyclic} if it is acyclic and $F \otimes Y$ is acyclic for every $Y \in \cat{A}$.

Taking $\cat{A} = \qcmod{X}$ with the usual monoidal structure, this defines the class of pure acyclic complexes of sheaves. By \cite[Proposition 3.4]{Murfet07} a complex $F$ of flat sheaves is pure acyclic if and only if it is acyclic and every syzygy $Z^n(F)$ is flat. The pure acyclic complexes form a localising subcategory $\kflatl{\pac}{X}$ of $\kflat{X}$. The notation in the next definition reflects Neeman's contribution in establishing the fundamental theorems about the quotient $\kflat{X}/\kflatl{\pac}{X}$ for affine schemes $X$.

\begin{definition} The \emph{pure derived category of flat sheaves} on $X$ is the Verdier quotient
\[
\kprof{X} := \kflat{X}/\kflatl{\pac}{X}.
\]
The first author took a long time to settle on the notation for this category, which is referred to as the ``mock homotopy category of projectives'' and denoted $\K_m(\Proj X)$ in \cite{Murfet07}. He apologises for any confusion!
\end{definition}

The properties of $\kprof{X}$ are developed in \cite{Murfet07}, which is underpinned by Neeman's theorems in the affine case \cite{Neeman08, NeemanAdj}. We briefly recall the relevant results. The first thing to observe is that the construction is reasonable: $\kprof{X}$ is a triangulated category with coproducts and small Homs \cite[Theorem 3.16]{Murfet07}. 

Let $A$ be a ring. The homotopy category $\kproj{A}$ is a triangulated subcategory of $\kflat{A}$, and Neeman proves in \cite{Neeman08} that the right orthogonal of this subcategory is the full subcategory of pure acyclic complexes; that is,
\[
\kproj{A}^{\perp} = \kflatl{\pac}{A}.
\]
He proceeds to prove that the composite
\[
\kproj{A} \xlto{\inc} \kflat{A} \xlto{\can} \kflat{A}/\kflatl{\pac}{A}
\]
is an equivalence. This equivalence forms the basis for \cite{Murfet07}, since it suggests that the quotient $\kprof{X}$ may serve as a replacement for $\kproj{A}$ over non-affine schemes, where projective sheaves are rare. Neeman proves that if $A$ is coherent (in particular, if $A$ is noetherian) then $\kproj{A}$ is compactly generated. This was first proven, under an additional hypothesis, by J\o rgensen \cite{Jorgensen} who also proves that there is an equivalence
\[
\qderu{b}{\modd A}^{\textrm{op}} \xlto{\sim} \kproju{c}{A}.
\]
Let us now return to the general case. The category $\kprof{X}$ is compactly generated \cite[Theorem 4.10]{Murfet07} and closed symmetric monoidal with unit $\cat{O}_X$ \cite[\S 6]{Murfet07}. We denote the internal Hom by $\Map(-,-)$. Taking $\K$-flat resolutions defines a fully faithful functor $\bold{f}(-): \qderu{b}{\coh X} \lto \kprof{X}$ \cite[Remark 5.9]{Murfet07} and the composite
\[
\xymatrix@C+1pc{
\qderu{b}{\coh X}^{\textrm{op}} \ar[r]^{\bold{f}(-)^{\textrm{op}}} & \kprof{X}^{\textrm{op}} \ar[rr]^{\Map(-,\cat{O}_X)} & & \kprof{X}
}
\]
induces an equivalence \cite[Theorem 7.4]{Murfet07}
\[
\Phi: \qderu{b}{\coh X}^{\textrm{op}} \xlto{\sim} \kprofu{c}{X}.
\]
That is, $\Phi(G) = \Map(\bold{f}(G), \cat{O}_X)$. When $X$ has enough locally free sheaves of finite rank (e.g.\,$X$ is a quasi-projective variety) $\Phi$ sends a bounded complex $G$ of coherent sheaves to the dual $\shom(P,\cat{O}_X)$ of a resolution $P$ of $G$ by locally free sheaves of finite rank.
\end{chunk}

\section{Cotorsion flat sheaves}\label{section:cotorsion}

\begin{definition}\label{definition:cotorsion} A sheaf $\md{S}$ is called \emph{cotorsion} if $\Ext^1(\md{F}, \md{S}) = 0$ for any flat sheaf $\md{F}$. We say that $\md{S}$ is \emph{cotorsion flat} if it is both cotorsion and flat; the class of such sheaves is closed under finite direct sums and direct summands. 
\end{definition}

\begin{lemma}\label{lemma_appendix_qhomcotorsion} Let $\md{F}, \md{E}$ be sheaves with $\md{E}$ injective. Then
\begin{itemize}
\item[(i)] If $\md{F}$ is injective then  $\qhom(\md{F}, \md{E})$ is cotorsion flat.
\item[(ii)] If $\md{F}$ is flat then $\qhom(\md{F}, \md{E})$ is injective.
\end{itemize}
\end{lemma}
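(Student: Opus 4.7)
The plan is to dispatch (ii) first as a direct adjunction exercise, then split (i) into the cotorsion and flatness halves. For (ii), the tensor--hom adjunction identifies $\Hom(-,\qhom(\md{F},\md{E}))$ with $\Hom(-\otimes\md{F},\md{E})$, and this composite is exact: $-\otimes\md{F}$ is exact because $\md{F}$ is flat, and $\Hom(-,\md{E})$ is exact because $\md{E}$ is injective. Thus $\qhom(\md{F},\md{E})$ represents an exact functor on $\qcmod{X}$, and so is injective.

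For the cotorsion half of (i), set $\md{C}:=\qhom(\md{F},\md{E})$ and consider an arbitrary extension $0\to\md{C}\to\md{Y}\to\md{P}\to 0$ with $\md{P}$ flat. Tensoring with $\md{F}$ preserves exactness because $\mathrm{Tor}_1(\md{P},\md{F})=0$ by flatness of $\md{P}$, and applying $\Hom(-,\md{E})$ then preserves exactness by injectivity of $\md{E}$. The tensor--hom adjunction rewrites the result as a short exact sequence
\[
0 \to \Hom(\md{P},\md{C}) \to \Hom(\md{Y},\md{C}) \to \Hom(\md{C},\md{C}) \to 0,
\]
and any preimage of $1_{\md{C}}$ splits the extension; hence $\Ext^1(\md{P},\md{C}) = 0$ for every flat $\md{P}$. (Note that injectivity of $\md{F}$ is not needed for this step.)

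Flatness of $\md{C}$ in case (i) is the substantive content, and this is where the injectivity hypothesis on $\md{F}$ is essential. The plan is to reduce to the affine case using the semi-separated affine cover, since flatness of a quasi-coherent sheaf can be checked on affine opens and restrictions of injective quasi-coherent sheaves to affine opens remain injective in the noetherian setting (see \ref{chunk:schemes}). One is then reduced to showing that $\Hom_A(F,E)$ is a flat $A$-module whenever $F,E$ are injective modules over the noetherian ring $A$. Using the Matlis decomposition $F = \bigoplus_{\mf{p}} E(A/\mf{p})^{(\alpha_{\mf{p}})}$ turns $\Hom_A(F,E)$ into a product of modules of the form $\Hom_A(E(A/\mf{p}),E)$; a support argument (based on the fact that any $a\in\mf{q}\setminus\mf{p}$ acts invertibly on $E(A/\mf{p})$ but locally nilpotently on $E(A/\mf{q})$) shows that $\Hom_A(E(A/\mf{p}),E(A/\mf{q}))$ vanishes unless $\mf{q}\subseteq\mf{p}$, and in the surviving case this Hom is a completion of a localization of $A$, hence flat. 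Since $A$ is noetherian (in particular coherent) products of flat $A$-modules are flat, and the argument is complete.

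The principal obstacle is clearly the flatness assertion in (i): both (ii) and the cotorsion half of (i) are essentially formal consequences of the tensor--hom adjunction, whereas flatness of $\qhom(\md{F},\md{E})$ requires the Matlis structure theory for injective modules over a noetherian ring, together with a careful check that the formation of $\qhom(\md{F},\md{E})$ is compatible with restriction to affine opens in the semi-separated cover.
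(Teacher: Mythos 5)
Your handling of (ii) and of the cotorsion half of (i) is correct and agrees with the paper, which compresses your extension-splitting argument into the adjunction isomorphism $\Ext^1(\md{P},\qhom(\md{F},\md{E}))\cong\Ext^1(\md{P}\otimes\md{F},\md{E})=0$; you are also right that injectivity of $\md{F}$ is not used there. The gap is in the flatness half of (i), precisely at the reduction to the affine case. Your reduction needs the identification $\qhom(\md{F},\md{E})|_V\cong\qhom(\md{F}|_V,\md{E}|_V)$ for affine opens $V$, which you defer as a ``careful check''; but this check fails, because $\qhom$ with a non-coherent source does not commute with restriction. Already over $\Spec(\mathbb{Z})$ with $\md{F}=\md{E}=\widetilde{\mathbb{Q}/\mathbb{Z}}$ one has $\qhom(\md{F},\md{E})=\widetilde{\widehat{\mathbb{Z}}}=\widetilde{\prod_q\mathbb{Z}_q}$, whose restriction to $D(2)$ is $\bigl(\prod_q\mathbb{Z}_q\bigr)[1/2]\cong\mathbb{Q}_2\times\prod_{q\neq 2}\mathbb{Z}_q$, while $\qhom(\md{F}|_{D(2)},\md{E}|_{D(2)})$ is $\prod_{q\neq 2}\mathbb{Z}_q$; the canonical comparison map kills the factor $\mathbb{Q}_2$. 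The paper's \v{C}ech argument therefore runs in the opposite direction: one resolves the \emph{target} $\md{E}$ by the \v{C}ech complex built from sheaves $f_*(\md{E}|_V)$ with $V$ affine (this resolution is degreewise split exact since all terms are injective), applies the compatibility $\qhom(\md{F},f_*\md{G})\cong f_*\qhom(\md{F}|_V,\md{G})$ of Lemma \ref{lemma_appendix_a_1} --- which is a genuine adjunction identity, unlike restriction-compatibility --- and then uses that $f_*$ from an affine open of a semi-separated scheme preserves flatness. Only after exhibiting $\qhom(\md{F},\md{E})$ as a direct summand of a finite sum of such pushforwards does the affine computation enter.

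Your sketch of the affine case also contains an error: the vanishing criterion is backwards. For a noetherian ring $A$ one has $\Hom_A(E(A/\mf{p}),E(A/\mf{q}))=0$ unless $\mf{p}\subseteq\mf{q}$, not unless $\mf{q}\subseteq\mf{p}$; over $\mathbb{Z}$, $\Hom(\mathbb{Q},\mathbb{Z}(p^{\infty}))\neq 0$ while $\Hom(\mathbb{Z}(p^{\infty}),\mathbb{Q})=0$. The ``invertible versus locally nilpotent'' argument as you state it would wrongly predict $\Hom(\mathbb{Q},\mathbb{Z}(p^{\infty}))=0$: it only shows the image is $a$-divisible and $a$-power-torsion, which is no contradiction (witness $\mathbb{Z}(p^{\infty})$ itself). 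In addition, $\Hom$ out of $E(A/\mf{p})$ does not commute with the infinite direct sum decomposition of $E$, since $E(A/\mf{p})$ is not finitely generated. The paper sidesteps all of this by citing the affine statement as well known; if you wish to include a proof of it, it needs to be rebuilt along the standard lines (Enochs) rather than by this termwise decomposition.
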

\begin{proof}
$(i)$ To prove flatness one uses a \v{C}ech argument to reduce to the affine case, which is well-known; see \cite[Lemma 8.7]{Murfet07}. For flat $\md{P}$, we have $\Ext^1(\md{P}, \qhom(\md{F}, \md{E})) \cong \Ext^1(\md{P} \otimes \md{F}, \md{E}) = 0$, so $\qhom(\md{F}, \md{E})$ is cotorsion. The proof of $(ii)$ is similar.
\end{proof}

We will see in Section \ref{section:tac} that cotorsion flat sheaves play the same role in $\kprof{X}$ that injective sheaves play in the derived category, namely, any flat sheaf can be resolved by cotorsion flat sheaves, and any morphism in $\kprof{X}$ whose codomain is a bounded below complex of cotorsion flat sheaves can be uniquely lifted to a homotopy equivalence class of cochain maps. To prove these two statements we will need the next proposition, which generalises to schemes a standard result in commutative algebra.

The proof of the proposition is delayed until the end of the section, since we must first develop some formal properties of $\qhom(-,-)$. Note that given a pair of sheaves $\md{F},\md{E}$ there is a canonical morphism $\alpha: \md{F} \lto \qhom(\qhom(\md{F}, \md{E}),\md{E})$. Indeed, such a standard morphism exists in any closed monoidal category; it is the adjoint partner of the twisted counit
\[
\md{F} \otimes \qhom(\md{F}, \md{E}) \cong \qhom(\md{F}, \md{E}) \otimes \md{F} \xlto{\varepsilon} \md{E}.
\]
Moreover, if $\md{F}$ is flat and $\md{E}$ is injective, then the sheaf $\qhom(\qhom(\md{F},\md{E}),\md{E})$ is cotorsion flat, by Lemma \ref{lemma_appendix_qhomcotorsion}.

\begin{proposition}\label{prop:cotorsion_flat_resolution} Given a flat sheaf $\md{F}$ and injective cogenerator $\md{E}$ for $\qcmod{X}$, the canonical morphism $\alpha$ fits into an exact sequence
\[
0 \lto \md{F} \xlto{\alpha} \qhom(\qhom(\md{F},\md{E}),\md{E}) \lto \md{F}' \lto 0
\]
with $\md{F}'$ a flat sheaf. Moreover, the following are equivalent:
\begin{itemize}
\item[(i)] $\md{F}$ is cotorsion.
\item[(ii)] $\alpha$ is a split monomorphism.
\item[(iii)] $\md{F}$ is a direct summand of $\qhom(\md{I}, \md{I}')$ for some injective sheaves $\md{I}, \md{I}'$.
\end{itemize}
\end{proposition}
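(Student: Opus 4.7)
The plan is to first establish the short exact sequence, after which the equivalences (i)--(iii) will fall out formally. Abbreviate $\md{F}^\ast := \qhom(\md{F}, \md{E})$, so $\alpha : \md{F} \to \md{F}^{\ast\ast}$. By Lemma~\ref{lemma_appendix_qhomcotorsion}(ii), $\md{F}^\ast$ is injective, and then Lemma~\ref{lemma_appendix_qhomcotorsion}(i) makes $\md{F}^{\ast\ast}$ cotorsion flat; in particular it is flat. I would prove that $\alpha$ is a \emph{pure} monomorphism, meaning that $\md{G} \otimes \alpha$ is injective for every sheaf $\md{G}$. Granting this, the candidate sequence is exact and, tensoring with an arbitrary $\md{G}$, using flatness of $\md{F}^{\ast\ast}$, one obtains $\operatorname{Tor}_1(\md{F}', \md{G}) = 0$ for all $\md{G}$, so the cokernel $\md{F}'$ is flat.

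The heart of the argument is purity. In the closed symmetric monoidal category $\qcmod{X}$, the standard triangle identity for the dualisation $(-)^\ast = \qhom(-, \md{E})$ gives $\alpha^\ast \circ \alpha_{\md{F}^\ast} = \operatorname{id}_{\md{F}^\ast}$, so $\alpha^\ast : \md{F}^{\ast\ast\ast} \to \md{F}^\ast$ is a split epimorphism. Applying $\Hom(\md{G}, -)$ and then invoking the tensor--hom adjunction $\Hom(\md{G}, \qhom(-, \md{E})) \cong \Hom(\md{G} \otimes -, \md{E})$ converts this split surjection into
\[
\Hom(\md{G} \otimes \md{F}^{\ast\ast}, \md{E}) \twoheadrightarrow \Hom(\md{G} \otimes \md{F}, \md{E}),
\]
which is the map induced by $\md{G} \otimes \alpha$. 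Because $\md{E}$ is an injective cogenerator, the functor $\Hom(-, \md{E})$ is faithful exact, so surjectivity of $\Hom(h, \md{E})$ forces $h$ to be injective; applied with $h = \md{G} \otimes \alpha$ this yields purity. The main obstacle is unwinding the adjunction so that the correct map is identified on the nose; the rest of the steps are routine.

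The equivalence (i) $\Leftrightarrow$ (ii) $\Leftrightarrow$ (iii) then follows formally. For (iii) $\Rightarrow$ (i), Lemma~\ref{lemma_appendix_qhomcotorsion}(i) gives $\qhom(\md{I}, \md{I}')$ cotorsion flat, and the class of cotorsion flat sheaves is closed under direct summands. For (ii) $\Rightarrow$ (iii), a splitting of $\alpha$ exhibits $\md{F}$ as a summand of $\qhom(\md{F}^\ast, \md{E})$, and $\md{F}^\ast$ is injective. For (i) $\Rightarrow$ (ii), apply $\Ext^1(\md{F}', -)$ to the short exact sequence from the first part: since $\md{F}$ is cotorsion and $\md{F}'$ is flat, $\Ext^1(\md{F}', \md{F}) = 0$, so the sequence splits. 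It is worth noting that this proof relies only on the closed monoidal structure on $\qcmod{X}$ together with the existence of an injective cogenerator, so no \v{C}ech reduction to the affine case is required.
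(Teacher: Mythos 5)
Your proof is correct, and for the key step --- purity of $\alpha$ --- it takes a genuinely different and more formal route than the paper. The paper proves purity (Lemma \ref{lemma:pureinjective_preenvelope}) by testing $\md{N}\otimes\alpha$ against coherent sheaves $\md{N}$ and identifying $\md{N}\otimes\alpha_{\md{F}}$ with $\alpha_{\md{N}\otimes\md{F}}$ via the isomorphism $\md{N}\otimes\qhom(\md{F},\md{I})\cong\qhom(\qhom(\md{N},\md{F}),\md{I})$ of Lemma \ref{lemma:standard_iso_1}, whose proof in turn requires a \v{C}ech resolution reducing to the affine case. You instead use only the triangle identity $\qhom(\alpha_{\md{F}},\md{E})\circ\alpha_{\qhom(\md{F},\md{E})}=\mathrm{id}$ for the self-adjoint dualisation $\qhom(-,\md{E})$, naturality of the tensor--hom adjunction, and the fact that $\Hom(-,\md{E})$ is faithful exact for an injective cogenerator $\md{E}$; this correctly identifies $\Hom(\md{G}\otimes\alpha,\md{E})$ with the split surjection $\Hom(\md{G},\alpha^{\ast})$ and hence forces $\md{G}\otimes\alpha$ to be a monomorphism. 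Your argument works in any closed symmetric monoidal Grothendieck category with an injective cogenerator and would let one bypass Lemmas \ref{lemma_appendix_a_1}--\ref{lemma:pureinjective_preenvelope} entirely for the purposes of this proposition; what the paper's heavier route buys is the isomorphism of Lemma \ref{lemma:standard_iso_1} itself, which is a statement of independent use. Your treatment of the cokernel also differs mildly but harmlessly: the paper localises at stalks and cites the fact that the cokernel of a pure monomorphism of flat modules is flat, whereas you run the $\operatorname{Tor}$ long exact sequence globally (which is fine, since $\qcmod{X}$ has enough flats so that $\operatorname{Tor}$ is defined and balanced). The equivalences (i)--(iii) are handled exactly as in the paper.
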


We split the proof of the proposition into a series of lemmas.

\begin{lemma}\label{lemma_appendix_a_1} Let $f: U \lto X$ be the inclusion of an open subset. Given sheaves $\md{F}$ on $X$ and $\md{G}$ on $U$, there is a natural isomorphism $\qhom(\md{F}, f_*\md{G}) \xlto{\sim} f_* \qhom(\md{F}|_U, \md{G})$.
\end{lemma}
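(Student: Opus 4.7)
The plan is to establish the isomorphism via Yoneda by showing that both sides corepresent the same functor $\qcmod{X}^{\mathrm{op}} \lto \ab$. The tools I need are (a) the adjunction $f^* \dashv f_*$ between $\qcmod{U}$ and $\qcmod{X}$, which holds because $X$ is semi-separated noetherian (so $U$ inherits these properties and $f_*$ preserves quasi-coherence), (b) the fact that for an open immersion $f^* = (-)|_U$ is strong monoidal, i.e.\ $(\md{A} \otimes \md{F})|_U \cong \md{A}|_U \otimes \md{F}|_U$ naturally, and (c) the defining adjunction of $\qhom$ as right adjoint to $- \otimes -$ in both $\qcmod{X}$ and $\qcmod{U}$.

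Given these ingredients, for an arbitrary test sheaf $\md{A}$ on $X$ I would chain the natural isomorphisms
\begin{align*}
\Hom_X(\md{A}, \qhom(\md{F}, f_*\md{G}))
&\cong \Hom_X(\md{A} \otimes \md{F}, f_*\md{G}) \\
&\cong \Hom_U((\md{A} \otimes \md{F})|_U, \md{G}) \\
&\cong \Hom_U(\md{A}|_U \otimes \md{F}|_U, \md{G}) \\
&\cong \Hom_U(\md{A}|_U, \qhom(\md{F}|_U, \md{G})) \\
&\cong \Hom_X(\md{A}, f_*\qhom(\md{F}|_U, \md{G})).
\end{align*}
Since this chain is natural in $\md{A}$, Yoneda's lemma produces the desired natural isomorphism $\qhom(\md{F}, f_*\md{G}) \xlto{\sim} f_*\qhom(\md{F}|_U, \md{G})$.

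There is essentially no obstacle here beyond bookkeeping. The one point that deserves attention is that all the $\qhom$ objects and the $f_*$ functor really do land in the quasi-coherent category; this is guaranteed by the semi-separated noetherian hypothesis on $X$, which was already used to set up $\qhom$ as an internal Hom on $\qcmod{X}$ in Section \ref{section:notation}. Once that is acknowledged, the argument is a formal consequence of the adjunctions involved, with the strong monoidality of restriction being the only geometric input.
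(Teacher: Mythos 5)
Your argument is exactly the paper's proof: the same chain of adjunction isomorphisms, natural in the test sheaf $\md{A}$, followed by Yoneda (you merely split the middle step into $f^*\dashv f_*$ plus strong monoidality of restriction, which the paper compresses into one line). No differences of substance.
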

\begin{proof}
For a sheaf $\md{A}$ on $X$ we have
\begin{align*}
\Hom(\md{A}, \qhom(\md{F}, f_*\md{G})) &\cong \Hom(\md{A} \otimes \md{F}, f_*\md{G})\\
&\cong \Hom(\md{A}|_U \otimes \md{F}|_U, \md{G})\\
&\cong \Hom(\md{A}|_U, \qhom(\md{F}|_U, \md{G}))\\
&\cong \Hom(\md{A}, f_*\qhom(\md{F}|_U, \md{G}))
\end{align*}
which yields a natural isomorphism of the desired form.
\end{proof}

\begin{lemma}\label{lemma_projection_morphism} Let $f: U \lto X$ be the inclusion of an affine open subset. Given a coherent sheaf $\md{N}$ on $X$ and any sheaf $\md{F}$ on $U$, there is a natural isomorphism
\[
\beta: \md{N} \otimes f_*(\md{F}) \lto f_*(\md{N}|_U \otimes \md{F})
\]
\end{lemma}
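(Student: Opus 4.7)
The plan is to first construct the morphism $\beta$ canonically from the closed monoidal structure, then verify it is an isomorphism by a local computation using the semi-separated hypothesis.

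For the construction of $\beta$, I would use the adjunction $f^{*} \dashv f_{*}$. A morphism $\md{N}\otimes f_{*}\md{F}\lto f_{*}(\md{N}|_{U}\otimes\md{F})$ corresponds under this adjunction to a morphism $f^{*}(\md{N}\otimes f_{*}\md{F})\cong \md{N}|_{U}\otimes f^{*}f_{*}\md{F}\lto \md{N}|_{U}\otimes\md{F}$, and the natural choice is to take $\md{N}|_{U}\otimes \varepsilon$, where $\varepsilon\colon f^{*}f_{*}\md{F}\lto\md{F}$ is the counit. Since $f$ is an open immersion, $\varepsilon$ is an isomorphism, so $\beta$ is \emph{a priori} natural in both variables. (Equivalently, $\beta$ is the standard projection morphism associated with any morphism of ringed spaces.)

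To show $\beta$ is an isomorphism I would argue locally on $X$. Let $V\subseteq X$ be any affine open. By semi-separatedness of $X$ (\ref{chunk:schemes}), the intersection $W:=V\cap U$ is affine, and for the inclusion $g\colon W\lto V$ obtained by restriction there are canonical isomorphisms $(f_{*}\md{F})|_{V}\cong g_{*}(\md{F}|_{W})$ and $(f_{*}(\md{N}|_{U}\otimes\md{F}))|_{V}\cong g_{*}(\md{N}|_{W}\otimes\md{F}|_{W})$, because pushforward along an open immersion commutes with restriction to an open subset. Under these identifications, $\beta|_{V}$ becomes the corresponding projection morphism for the affine-to-affine open immersion $g$ applied to $\md{N}|_{V}$ and $\md{F}|_{W}$. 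It therefore suffices to treat the case where $X=\Spec(A)$ and $U=\Spec(B)$ are both affine, with associated ring map $A\lto B$.

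In the affine case the verification is elementary. If $\md{N}$ corresponds to the $A$-module $N$ and $\md{F}$ to the $B$-module $F$, then $f_{*}\md{F}$ is $F$ regarded as an $A$-module, $\md{N}|_{U}$ is $N\otimes_{A}B$, and both $\md{N}\otimes f_{*}\md{F}$ and $f_{*}(\md{N}|_{U}\otimes\md{F})$ are the quasi-coherent sheaf associated to $N\otimes_{A}F$ (viewed as an $A$-module). One checks that $\beta$ corresponds to the canonical isomorphism $N\otimes_{A}F\xlto{\sim}(N\otimes_{A}B)\otimes_{B}F$.

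The main obstacle, such as it is, is bookkeeping: making sure that $\beta$ defined via the adjunction really agrees, after restriction to an affine $V\subseteq X$, with the projection morphism for the base change $g\colon V\cap U\lto V$. This is a routine check that the counit $\varepsilon$ is compatible with restriction to opens. I note that coherence of $\md{N}$ is not actually needed for the truth of the statement, though presumably it is the form in which later sections will apply it.
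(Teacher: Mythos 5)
Your proposal is correct, and the overall shape (construct the canonical projection morphism, reduce to the affine case via semi-separatedness) matches the paper's. The difference is in how the affine case is settled. The paper keeps $\md{F}$ and $f_*$ abstract: it chooses a finite presentation $\cat{O}_X^{\oplus m} \to \cat{O}_X^{\oplus n} \to \md{N} \to 0$ over affine $X$ (this is where coherence of $\md{N}$ is used), applies both functors, and compares the two right-exact rows using exactness of $f_*$ for the affine morphism $f$; the case $\md{N}$ free is then trivial. You instead pass all the way to modules and identify both sides with $N \otimes_A F$ via the associativity isomorphism $N \otimes_A F \cong (N \otimes_A B) \otimes_B F$. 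Your route is slightly more elementary and, as you observe, shows that coherence of $\md{N}$ is not actually needed — quasi-coherence suffices, since the module identity holds for arbitrary $N$; the paper's presentation argument is what imports the coherence hypothesis. The only price you pay is the bookkeeping you already flag: checking that the abstractly defined $\beta$ restricts to the affine-level comparison map, which is routine since the counit of $f^* \dashv f_*$ is compatible with restriction to opens. One cosmetic remark: naturality of $\beta$ does not depend on the counit being an isomorphism (the projection morphism is natural for any morphism of ringed spaces), so that clause in your construction is harmless but superfluous.
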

\begin{proof}
The existence of such a natural morphism is clear, and in proving that $\beta$ is an isomorphism we may reduce to the case where $X$ is affine, so that there is an exact sequence $\cat{O}_X^{\oplus m} \lto \cat{O}_X^{\oplus n} \lto \md{N} \lto 0$ for some integers $m,n \ge 0$. From exactness of the functor $f_*: \qcmod{U} \lto \qcmod{X}$ (here we use the fact that $f$ is affine) we obtain a commutative diagram with exact rows
\[
\xymatrix{
\cat{O}_X^{\oplus m} \otimes f_*(\md{F}) \ar[d]_{\wr} \ar[r] & \cat{O}_X^{\oplus n} \otimes f_*(\md{F}) \ar[d]_{\wr} \ar[r] & \md{N} \otimes f_*(\md{F}) \ar[d]^{\beta} \ar[r] & 0\\
f_*( \cat{O}_X^{\oplus m}|_U \otimes \md{F}) \ar[r] & f_*( \cat{O}_X^{\oplus n}|_U \otimes \md{F} ) \ar[r] & f_*(\md{N}|_U \otimes \md{F}) \ar[r] & 0
}
\]
in which the first two columns are isomorphisms. It follows that $\beta$ is an isomorphism.
\end{proof}

\begin{lemma}\label{lemma:standard_iso_1} Given sheaves $\md{N}, \md{F}, \md{I}$ there is a natural morphism
\begin{equation}\label{eq:standard_iso_1}
\md{N} \otimes \qhom(\md{F}, \md{I}) \lto \qhom(\qhom(\md{N}, \md{F}), \md{I}).
\end{equation}
When $\md{N}$ is coherent and $\md{I}$ is injective this is an isomorphism.
\end{lemma}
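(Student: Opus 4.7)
The natural morphism comes for free from the closed symmetric monoidal structure on $\qcmod{X}$. Applying the $\otimes$-$\qhom$ adjunction, specifying a morphism $\md{N} \otimes \qhom(\md{F}, \md{I}) \lto \qhom(\qhom(\md{N}, \md{F}), \md{I})$ is the same as specifying a morphism $\md{N} \otimes \qhom(\md{F}, \md{I}) \otimes \qhom(\md{N}, \md{F}) \lto \md{I}$, and one takes the composite of the two evaluation maps $\md{N} \otimes \qhom(\md{N}, \md{F}) \xlto{\varepsilon} \md{F}$ and $\md{F} \otimes \qhom(\md{F}, \md{I}) \xlto{\varepsilon} \md{I}$ (after the obvious symmetric reshuffle of factors). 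Naturality in all three variables is automatic from the universal property of the counit.

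For the isomorphism claim under the hypotheses that $\md{N}$ is coherent and $\md{I}$ is injective, the plan is to reduce to the affine case. Both sides of \eqref{eq:standard_iso_1} are quasi-coherent, so it suffices to verify that the map is an isomorphism after restriction to each affine open $U = \Spec A$ of a cover. Coherence restricts to coherence, and injective quasi-coherent sheaves restrict to injective quasi-coherent sheaves on opens of a noetherian scheme, since by the remark in \ref{chunk:schemes} they are injective among all $\cat{O}_X$-modules and extension-by-zero is exact. The formation of $\otimes$ commutes with restriction trivially; the inner $\qhom(\md{N}, -)$ commutes with restriction because coherence of $\md{N}$ allows us to replace it with $\shom(\md{N}, -)$, and the outer $\qhom(-, \md{I})$ is handled by an appeal to Lemma \ref{lemma_appendix_a_1}. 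Thus we are reduced to the classical module-theoretic statement: for a noetherian ring $A$, a finitely generated $A$-module $N$, an arbitrary $A$-module $F$ and an injective $A$-module $I$, the canonical map $N \otimes_A \Hom_A(F, I) \lto \Hom_A(\Hom_A(N, F), I)$ is an isomorphism.

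This last statement I would prove by choosing a finite presentation $A^{\oplus m} \lto A^{\oplus n} \lto N \lto 0$, which exists because $A$ is noetherian. Both sides agree tautologically on $N = A$, and therefore by additivity on $N = A^{\oplus n}$. The left-hand side is right exact in $N$ because tensor product is; on the right-hand side, $\Hom_A(-, F)$ converts the presentation into a left-exact sequence and then the exact contravariant functor $\Hom_A(-, I)$ (exact because $I$ is injective) converts it into a right-exact sequence. A five-lemma comparison in the resulting commutative diagram with exact rows now forces the map to be an isomorphism for $N$. I expect the main obstacle to be the reduction to affines: outside our hypotheses $\qhom$ need not commute with restriction at all, and one must check carefully that coherence of $\md{N}$ and injectivity of $\md{I}$ together make the two particular combinations of $\qhom$ that appear here restrict compatibly, so that both sides are genuinely computed by the affine formulas on each member of the cover.
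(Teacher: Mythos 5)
The construction of the natural morphism via the two counits is exactly the paper's argument and is fine, as is your treatment of the affine case (finite presentation of $N$, right-exactness of both sides in $N$ using injectivity of $I$, and a five-lemma comparison). The gap is in the reduction to the affine case. You propose to restrict the map to the members of an affine cover and you need, in particular, that $\qhom(\md{A}, \md{I})|_U \cong \qhom(\md{A}|_U, \md{I}|_U)$ for the two outer $\qhom$'s that occur (with first argument $\md{F}$, resp.\ $\qhom(\md{N},\md{F})$, both arbitrary quasi-coherent sheaves). This is false in general, even for $\md{I}$ injective: already over $X = \Spec(\mathbb{Z})$ with $U = \Spec(\mathbb{Z}[1/2])$, $\md{A}$ corresponding to $\bigoplus_{n\ge 1}\mathbb{Z}$ and $\md{I}$ to $E(\mathbb{Z}/2)$, one has $\qhom(\md{A},\md{I})|_U \neq 0$ (an infinite product of $2$-torsion modules of unbounded exponent does not die after inverting $2$) while $\qhom(\md{A}|_U, \md{I}|_U) = 0$. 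The point is that $\qhom$ is defined by adjunction against $\otimes$, so on affines it is an infinite $\Hom$ of modules, which does not localise unless the first argument is finitely presented; coherence of $\md{N}$ rescues the \emph{inner} $\qhom(\md{N},\md{F})$ (as you say, via $\shom$) but does nothing for the outer ones. Your appeal to Lemma \ref{lemma_appendix_a_1} is a misapplication: that lemma computes $\qhom(\md{F}, f_*\md{G})$ as a direct image, i.e.\ it concerns $\qhom$ \emph{into} a pushforward, not the restriction of a $\qhom$ to an open subset.

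The paper's proof circumvents exactly this obstacle. Instead of restricting, it resolves $\md{I}$ by its \v{C}ech complex, whose terms are sums of sheaves $f_*(\md{J})$ with $f: V \lto X$ the inclusion of an affine intersection and $\md{J}$ injective on $V$; since all terms are injective the resolution splits into split short exact sequences, reducing the claim to $\md{I} = f_*(\md{J})$. For such $\md{I}$ both sides of \eqref{eq:standard_iso_1} can be rewritten as $f_*$ of the corresponding expressions on $V$, using Lemma \ref{lemma_appendix_a_1} together with the projection formula of Lemma \ref{lemma_projection_morphism} (which needs $\md{N}$ coherent and $f$ affine), and only then does one land in the affine case you treat at the end. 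To repair your argument you would need to replace the restriction step by this pushforward mechanism, or otherwise justify the commutation of $\qhom(-,\md{I})$ with restriction, which, as above, cannot be done.
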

\begin{proof}
The standard morphism (\ref{eq:standard_iso_1}) exists in any symmetric closed monoidal category: the counit $\varepsilon$ of the adjunction between the tensor and Hom provides a natural morphism (ignoring the intervention of twists)
\[
\qhom(\md{F}, \md{I}) \otimes \md{N} \otimes \qhom(\md{N}, \md{F}) \xlto{1 \otimes \varepsilon} \qhom(\md{F}, \md{I}) \otimes \md{F} \xlto{\varepsilon} \md{I}
\]
which is the adjoint partner of (\ref{eq:standard_iso_1}). Assume that $\md{N}$ is coherent and $\md{I}$ is injective and let $\mf{U} = \{ U_0, \ldots, U_d \}$ be an affine open cover of $X$. Consider the \v{C}ech resolution
\begin{equation}\label{eq:cech}
0 \lto \md{I} \lto \md{C}^0(\mf{U}, \md{I}) \lto \cdots \lto \md{C}^d(\mf{U}, \md{I}) \lto 0
\end{equation}
of $\md{I}$, where the sheaf $\md{C}^p(\mf{U}, \md{I})$ is the direct sum, over all sequences $i_0 < \cdots < i_p$ in the set $\{ 0,\ldots,d \}$ of length $p$, of sheaves $f_*(\md{I}|_{U_{i_0} \cap \cdots \cap U_{i_p}})$ where $f: U_{i_0} \cap \cdots \cap U_{i_p} \lto X$ denotes the inclusion. For any choice of sequence the functor $f_*$ has an exact left adjoint, and therefore sends injective sheaves to injective sheaves. Hence (\ref{eq:cech}) is an exact sequence of injective sheaves, which decomposes into a series of short (split) exact sequences.

Applying the functors $\md{N} \otimes \qhom(\md{F}, -)$ and $\qhom(\qhom(\md{N}, \md{F}), -)$ to these split exact sequences, we reduce to proving the lemma for injective sheaves of the form $f_*(\md{I})$ for some inclusion $f: U \lto X$ of an affine open subset, and $\md{I}$ an injective sheaf on $U$. But in that case we have isomorphisms
\begin{align*}
\md{N} \otimes \qhom(\md{F}, f_*\md{I}) &\cong \md{N} \otimes f_*\qhom(\md{F}|_U, \md{I}) &&\text{(Lemma \ref{lemma_appendix_a_1})}\\
&\cong f_*( \md{N}|_U \otimes \qhom(\md{F}|_U, \md{I})) &&\text{(Lemma \ref{lemma_projection_morphism})}
\end{align*}
and
\begin{align*}
\qhom(\qhom(\md{N}, \md{F}), f_*\md{I}) &\cong f_*\qhom(\qhom(\md{N}, \md{F})|_U, \md{I}) &&\text{(Lemma \ref{lemma_appendix_a_1})}\\
&\cong f_* \qhom(\qhom(\md{N}|_U, \md{F}|_U), \md{I}) &&\text{($\star$)}
\end{align*}
whence we reduce to the case where $X$ is affine, which is well-known (taking a presentation, one reduces to the case of $\md{N}$ finite-free, which is obvious). In the step marked $(\star)$ we use the fact that $\md{N}$ is coherent, so $\qhom(\md{N}, \md{F}) \cong \shom(\md{N}, \md{F})$ and consequently $\qhom(\md{N}, \md{F})|_U \cong \qhom(\md{N}|_U, \md{F}|_U)$.
\end{proof}

\begin{remark}\label{remark:global_sections_qhom} Given sheaves $\md{F}$ and $\md{G}$ there is an isomorphism
\begin{align*}
\Gamma(X, \qhom(\md{F}, \md{G})) &\cong \Hom(\cat{O}_X, \qhom(\md{F}, \md{G}))\\
&\cong \Hom(\cat{O}_X \otimes \md{F}, \md{G}) \cong \Hom(\md{F}, \md{G}).
\end{align*}
\end{remark}

A monomorphism $\phi: \md{F} \lto \md{F}'$ of sheaves is \emph{pure} if $\phi \otimes 1: \md{F} \otimes \md{A} \lto \md{F}' \otimes \md{A}$ is a monomorphism for every sheaf $\md{A}$. See \cite{Stenstrom} for the basic properties of purity.

\begin{lemma}\label{lemma:pureinjective_preenvelope} Let $\md{E}$ be an injective cogenerator for $\qcmod{X}$. For a sheaf $\md{F}$, the canonical morphism $\alpha: \md{F} \lto \qhom(\qhom(\md{F}, \md{E}), \md{E})$ is a pure monomorphism.
\end{lemma}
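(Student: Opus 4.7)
The plan is to exhibit $\qhom(\alpha, \md{E})$ as a split epimorphism via a standard triangle identity, and then exploit the fact that $\md{E}$ is an injective cogenerator to deduce that $\alpha$ remains a monomorphism after tensoring with an arbitrary sheaf.

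Write $D(-) = \qhom(-, \md{E})$, so that $\alpha = \alpha_{\md{F}}: \md{F} \lto DD\md{F}$ is the double-dualization map. In any closed symmetric monoidal category, $\alpha_{\md{G}}$ is the adjoint partner of the twisted evaluation $\md{G} \otimes D\md{G} \lto \md{E}$, and a direct chase of the tensor--hom adjunction yields the triangle identity
\[
D(\alpha_{\md{F}}) \circ \alpha_{D\md{F}} = \mathrm{id}_{D\md{F}}.
\]
This says that $D(\alpha) : DDD\md{F} \lto D\md{F}$ is a split epimorphism. For an arbitrary sheaf $\md{A}$, the natural isomorphism $\qhom(- \otimes \md{A}, \md{E}) \cong \qhom(\md{A}, D(-))$ coming from the $\otimes$--$\qhom$ adjunction identifies $\qhom(\alpha \otimes 1_{\md{A}}, \md{E})$ with $\qhom(\md{A}, D(\alpha))$. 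Since split epimorphisms are preserved by every functor, this morphism is again a split epi, and taking global sections (Remark \ref{remark:global_sections_qhom}) yields a surjection
\[
\Hom(DD\md{F} \otimes \md{A}, \md{E}) \lto \Hom(\md{F} \otimes \md{A}, \md{E}).
\]

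To conclude, let $\md{K}$ be the kernel of $\alpha \otimes 1_{\md{A}}$. Because $\md{E}$ is injective, any morphism $\md{K} \lto \md{E}$ extends to $\md{F} \otimes \md{A} \lto \md{E}$; the displayed surjection then factors this extension through $\alpha \otimes 1_{\md{A}}$, which annihilates $\md{K}$ by definition. Hence $\Hom(\md{K}, \md{E}) = 0$, and since $\md{E}$ cogenerates $\qcmod{X}$ we conclude $\md{K} = 0$. Specializing to $\md{A} = \cat{O}_X$ recovers that $\alpha$ itself is a monomorphism, and the general case shows that it is pure. The only subtle point is the triangle identity of the first step: one must not confuse $\alpha$ with the unit of a single adjunction, and the identity has to be derived by carefully unwinding the twisted evaluation map. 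The remainder of the argument is a mechanical transfer through the tensor--hom adjunction and the cogenerator property.
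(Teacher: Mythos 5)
Your proof is correct, and it takes a genuinely different route from the paper's. The paper reduces both claims to testing against coherent sheaves $\md{N}$: it shows $\Hom(\md{N},\alpha)$ is injective by a cogenerator argument, and then identifies $\md{N}\otimes\alpha_{\md{F}}$ with $\alpha_{\md{N}\otimes\md{F}}$ to deduce purity from the monomorphism statement. The engine of that identification is Lemma \ref{lemma:standard_iso_1}, the isomorphism $\md{N}\otimes\qhom(\md{F},\md{I})\cong\qhom(\qhom(\md{N},\md{F}),\md{I})$ for coherent $\md{N}$ and injective $\md{I}$, which itself rests on a \v{C}ech resolution argument; the purity test against coherent $\md{N}$ suffices because $\qcmod{X}$ is locally noetherian. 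Your argument instead runs entirely on the formal self-adjunction of $D=\qhom(-,\md{E})$: the triangle identity $D(\alpha_{\md{F}})\circ\alpha_{D\md{F}}=\mathrm{id}_{D\md{F}}$ is correct and standard (it is exactly the statement that $\alpha_{\md{F}}$ corresponds to $\mathrm{id}_{D\md{F}}$ under $\Hom(\md{F},DD\md{F})\cong\Hom(D\md{F},D\md{F})$), the identification of $\qhom(\alpha\otimes 1_{\md{A}},\md{E})$ with $\qhom(\md{A},D\alpha)$ is just naturality of the $\otimes$--$\qhom$ adjunction, and the kernel-killing step correctly combines injectivity of $\md{E}$ (to extend a map off $\md{K}$) with the factorisation through $\alpha\otimes 1_{\md{A}}$ and the cogenerator property. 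What your approach buys is considerable: it needs neither coherence nor noetherian hypotheses nor Lemma \ref{lemma:standard_iso_1}, it treats arbitrary $\md{A}$ directly without a colimit reduction, and it would work verbatim in any closed symmetric monoidal Grothendieck category with an injective cogenerator. What the paper's approach buys is the explicit identification $\md{N}\otimes\alpha_{\md{F}}\cong\alpha_{\md{N}\otimes\md{F}}$, which packages purity as an instance of the monomorphism statement, at the cost of the geometric input needed for Lemma \ref{lemma:standard_iso_1}.
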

\begin{proof}
We first argue that $\alpha$ is a monomorphism. It suffices to prove that for every coherent sheaf $\md{N}$ the map $\Hom(\md{N}, \alpha)$ is injective. This map can be rewritten, up to isomorphism, as the composite
\begin{align*}
\Phi: \Hom(\md{N},\md{F}) \lto &\Hom(\md{N}, \qhom(\qhom(\md{F}, \md{E}), \md{E}))\\
\cong &\Hom(\md{N} \otimes \qhom(\md{F}, \md{E}), \md{E}) && \text{(Adjunction)}\\
\cong &\Hom(\qhom(\qhom(\md{N}, \md{F}), \md{E}), \md{E}). && \text{(Lemma \ref{lemma:standard_iso_1})}
\end{align*}
Suppose that we are given a nonzero morphism $\psi: \md{N} \lto \md{F}$, which corresponds to a nonzero morphism $\psi': \cat{O}_X \lto \qhom(\md{N}, \md{F})$. The fact that $\psi'$ is nonzero is witnessed by the injective cogenerator: we can find a morphism $\gamma: \qhom(\md{N}, \md{F}) \lto \md{E}$ with $\gamma \circ \psi' \neq 0$. Corresponding to $\gamma$ is a morphism of sheaves
\[
\gamma': \cat{O}_X \lto \qhom(\qhom(\md{N}, \md{F}), \md{E})
\]
and $\Phi(\psi) \circ \gamma': \cat{O}_X \lto \md{E}$ is the nonzero composite $\gamma \circ \psi'$. We conclude that $\Phi(\psi)$ is nonzero, so $\Phi$ is injective and $\alpha$ is a monomorphism. To prove that $\alpha = \alpha_{\md{F}}$ is pure, it is enough to prove that $\md{N} \otimes \alpha_{\md{F}}$ is a monomorphism for every coherent sheaf $\md{N}$. But we can identify $\md{N} \otimes \alpha_{\md{F}}$ with the morphism
\begin{align*}
\md{N} \otimes \md{F} \lto & \;\md{N} \otimes \qhom(\qhom(\md{F}, \md{E}), \md{E})\\
\cong &\; \qhom(\qhom(\md{N}, \qhom(\md{F}, \md{E})),\md{E}) && \text{(Lemma \ref{lemma:standard_iso_1})}\\
\cong &\; \qhom(\qhom(\md{N} \otimes \md{F}, \md{E}), \md{E}), && \text{(Adjunction)}
\end{align*}
which is $\alpha_{\md{N} \otimes \md{F}}$, known by the above to be a monomorphism. Hence $\alpha_{\md{F}}$ is pure.
\end{proof}

\begin{proof}[Proof of Proposition \ref{prop:cotorsion_flat_resolution}]
Given the flat sheaf $\md{F}$ we set $\md{S} = \qhom(\qhom(\md{F}, \md{E}),\md{E})$ where $\md{E}$ is an injective cogenerator. By Lemma \ref{lemma_appendix_qhomcotorsion} the sheaf $\md{S}$ is cotorsion flat, and by Lemma \ref{lemma:pureinjective_preenvelope} the canonical morphism $\alpha: \md{F} \lto \md{S}$ extends to a pure exact sequence
\[
0 \lto \md{F} \xlto{\alpha} \md{S} \lto \md{F}' \lto 0.
\]
It remains to argue that $\md{F}'$ is flat. Given $x \in X$, $\alpha_x: \md{F}_x \lto \md{S}_x$ is a pure monomorphism between flat $\cat{O}_{X,x}$-modules, and it is well-known that in this situation the cokernel $\md{F}'_x$ is flat \cite[I.11.1]{Stenstrom}, so $\md{F}'$ is flat. Finally, to prove $(i) \Rightarrow (ii)$ we observe that if $\md{F}$ is cotorsion then short exact sequence arising from $\alpha$ must split, and to prove $(ii) \Rightarrow (iii)$ and $(iii) \Rightarrow (i)$ we just apply Lemma \ref{lemma_appendix_qhomcotorsion}. 
\end{proof}

We conclude this section with two technical facts, needed in the sequel. The category $\qcmod{X}$ is Grothendieck abelian, and therefore complete. The product in this category is not the ordinary product of presheaves, and to avoid confusion we denote the product in $\qcmod{X}$ of a family $\{ \md{F}_i \}_{i \in I}$ by $\prod^{\qc}_{i \in I} \md{F}_i$. 

\begin{lemma}\label{lemma:cotorsion_flat_closed_products} The class of cotorsion flat sheaves is closed under arbitrary small products in $\qcmod{X}$.
\end{lemma}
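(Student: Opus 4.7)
My plan is to exploit characterization $(iii)$ of cotorsion flat sheaves from Proposition \ref{prop:cotorsion_flat_resolution}: every cotorsion flat sheaf is a direct summand of some $\qhom(\md{I}, \md{I}')$ with $\md{I}, \md{I}'$ injective, and conversely. The strategy is to present the product $\prod^{\qc}_{i \in I} \md{F}_i$ as a direct summand of a sheaf of this form.

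Given a family $\{\md{F}_i\}_{i \in I}$ of cotorsion flat sheaves, $(iii)$ produces for each $i$ injective sheaves $\md{I}_i, \md{I}'_i$ together with a splitting of the injection $\md{F}_i \hookrightarrow \qhom(\md{I}_i, \md{I}'_i)$. Because $X$ is noetherian, $\qcmod{X}$ is locally noetherian, and arbitrary small coproducts of injectives remain injective; set $\md{I} := \bigoplus_{i} \md{I}_i$ and $\md{J} := \bigoplus_{i} \md{I}'_i$, both of which are injective. The canonical inclusion--projection pair realises each $\md{I}'_i$ as a direct summand of $\md{J}$, so applying the additive functor $\qhom(\md{I}_i, -)$ exhibits $\qhom(\md{I}_i, \md{I}'_i)$, and hence $\md{F}_i$, as a direct summand of $\qhom(\md{I}_i, \md{J})$.

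Taking products preserves direct summands, so $\prod^{\qc}_{i \in I} \md{F}_i$ is a direct summand of $\prod^{\qc}_{i \in I} \qhom(\md{I}_i, \md{J})$. Since $- \otimes \md{J}$ preserves coproducts, its right adjoint $\qhom(-,\md{J})$ converts coproducts in its first variable to products in $\qcmod{X}$, yielding
\[
\prod{}^{\qc}_{i \in I} \qhom(\md{I}_i, \md{J}) \;\cong\; \qhom\!\Bigl(\bigoplus_{i \in I} \md{I}_i,\; \md{J}\Bigr) \;=\; \qhom(\md{I}, \md{J}).
\]
The right-hand side is cotorsion flat by $(iii)$ of Proposition \ref{prop:cotorsion_flat_resolution}, and cotorsion flatness passes to direct summands (Definition \ref{definition:cotorsion}), so $\prod^{\qc}_{i \in I} \md{F}_i$ is cotorsion flat, as required.

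The principal obstacle to a more direct argument is that flatness of a product in $\qcmod{X}$ cannot be verified stalkwise: the categorical product in $\qcmod{X}$ differs from the presheaf product and in general does not commute with localisation at a point of $X$. Characterization $(iii)$ together with the adjunction between $-\otimes \md{J}$ and $\qhom(-,\md{J})$ is exactly the device that sidesteps this difficulty, by converting a product one does not control directly into a $\qhom$ sheaf whose cotorsion flatness is transparent.
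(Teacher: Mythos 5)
Your proof is correct and follows essentially the same route as the paper: present $\prod^{\qc}_{i}\md{F}_i$ as a direct summand of a sheaf of the form $\qhom(\md{I},\md{J})$ with $\md{I},\md{J}$ injective, using the fact that coproducts of injectives are injective in the locally noetherian category $\qcmod{X}$ and that $\qhom(-,\md{J})$ sends coproducts to products. The only (harmless) difference is that the paper fixes a single injective cogenerator $\md{E}$ and uses the canonical split monomorphism $\md{F}_i \to \qhom(\qhom(\md{F}_i,\md{E}),\md{E})$, which makes your step of unifying the second arguments via $\md{J}=\bigoplus_i \md{I}'_i$ unnecessary; also, the flatness and cotorsion of $\qhom(\md{I},\md{J})$ is most directly cited from Lemma \ref{lemma_appendix_qhomcotorsion}(i) rather than from condition $(iii)$ of Proposition \ref{prop:cotorsion_flat_resolution}.
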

\begin{proof}
Let $\md{E}$ be an injective cogenerator for $\qcmod{X}$ and $\{ \md{F}_i \}_{i \in I}$ a family of cotorsion flat sheaves. Each $\md{F}_i$ admits, by Proposition \ref{prop:cotorsion_flat_resolution}, a split monomorphism
\[
\md{F}_i \lto \qhom(\qhom(\md{F}_i, \md{E}), \md{E}),
\]
and taking the product in $\qcmod{X}$ we obtain a split monomorphism
\[
\prod^{\qc}_{i \in I} \md{F}_i \lto \prod^{\qc}_{i \in I} \qhom(\qhom(\md{F}_i, \md{E}), \md{E}) \cong \qhom\Big(\bigoplus_{i \in I} \qhom(\md{F}_i, \md{E}), \md{E}\Big).
\]
Any coproduct $\md{I}' = \oplus_{i \in I} \qhom(\md{F}_i, \md{E})$ of injective sheaves is injective, because $\qcmod{X}$ is locally noetherian \cite[V.4.3]{Stenstrom}, whence $\qhom(\md{I}', \md{E})$ is cotorsion flat (Lemma \ref{lemma_appendix_qhomcotorsion}). As a direct summand of a cotorsion flat sheaf, $\prod^{\qc}_{i \in I} \md{F}_i$ is cotorsion flat, as claimed.
\end{proof}

\begin{lemma}\label{lemma:technical_fact_coextendcot} Let $A \lto B$ be a faithfully flat morphism of rings, with $A$ noetherian. If $Q$ is a cotorsion flat $A$-module, then $\Hom_A(B, Q)$ is a cotorsion flat $B$-module.
\end{lemma}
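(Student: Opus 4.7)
The plan is to verify in turn that $M := \Hom_A(B, Q)$ is cotorsion and flat as a $B$-module.

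For the cotorsion property, I would use the adjunction between restriction of scalars and its right adjoint $\Hom_A(B, -)$. Since restriction is exact, the right adjoint preserves injectives. To upgrade the hom-level identity $\Hom_B(N, \Hom_A(B, Q)) \cong \Hom_A(N, Q)$ to an Ext isomorphism, I need $\Hom_A(B, I^\bullet)$ to be an injective resolution of $M$ whenever $Q \to I^\bullet$ is an injective resolution over $A$; that is, $\Ext^i_A(B, Q) = 0$ for $i \geq 1$. The case $i = 1$ follows from $Q$ being cotorsion and $B$ flat over $A$; higher $i$ follow by dimension shifting, using the standard fact that the kernel of a surjection from a projective onto a flat module is again flat. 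Then for any flat $B$-module $F$, which is automatically flat over $A$ because $A \to B$ is flat, one obtains $\Ext^i_B(F, M) \cong \Ext^i_A(F, Q)$; both sides vanish for $i \geq 1$ by the same cotorsion reasoning. In particular $\Ext^1_B(F, M) = 0$, so $M$ is cotorsion as a $B$-module.

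For flatness, I would invoke characterisation (iii) of Proposition \ref{prop:cotorsion_flat_resolution} in its affine case to write $Q$ as a direct summand of $\Hom_A(I, I')$ for some injective $A$-modules $I, I'$. Applying $\Hom_A(B, -)$ exhibits $M$ as a summand of
\[
\Hom_A\bigl(B, \Hom_A(I, I')\bigr) \cong \Hom_A(B \otimes_A I, I') \cong \Hom_B\bigl(B \otimes_A I, \Hom_A(B, I')\bigr),
\]
using the tensor-Hom and restriction-coinduction adjunctions. The module $\Hom_A(B, I')$ is injective over $B$, as the right adjoint of the exact restriction functor. Provided that $B \otimes_A I$ is also injective over $B$ --- a base-change statement which holds when $B$ is noetherian, as is typical in the intended sheaf-theoretic applications --- the module-theoretic form of Lemma \ref{lemma_appendix_qhomcotorsion}(i) applied over $B$ gives that $\Hom_B(B \otimes_A I, \Hom_A(B, I'))$ is cotorsion flat over $B$. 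Flatness passes to direct summands, so $M$ is flat over $B$.

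The principal obstacle in this plan is the injectivity of $B \otimes_A I$ over $B$ when $I$ is an injective $A$-module; this is the step that will call on a noetherian hypothesis for $B$, beyond the noetherianness of $A$ already used to invoke Proposition \ref{prop:cotorsion_flat_resolution} and to power the dimension-shifting argument. Apart from that, the entire argument reduces to formal manipulation of adjunctions together with closure of flat and cotorsion modules under direct summands, and in fact only ordinary flatness of $A \to B$, not faithful flatness, is needed in either verification.
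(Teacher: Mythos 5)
Your verification of the cotorsion property is fine (the paper dismisses it as ``straightforward'', and your dimension-shifting argument showing $\Ext^i_A(B,Q)=0$ for $i\ge 1$, so that $\Hom_A(B,I^\bullet)$ is a $B$-injective resolution of $M$ and $\Ext^i_B(F,M)\cong\Ext^i_A(F,Q)$, is the standard way to make it precise). The flatness half, however, has a genuine gap, and it is exactly the one you flag: you need $B\otimes_A I$ to be injective \emph{as a $B$-module}, and this requires $B$ to be noetherian, a hypothesis the lemma does not make. Only $A$ is assumed noetherian. (The lemma is invoked in Proposition \ref{prop:descends_tac}(ii), where $B$ happens to be noetherian, but as stated it must be proved without that assumption, and base change of injectives along a flat map genuinely fails for non-noetherian targets.) So as written your argument proves a strictly weaker statement.

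The paper's proof sidesteps this by never asking for injectivity over $B$. Writing $Q=\Hom_A(I,J)$ as you do, one has $\Hom_A(B,Q)\cong\Hom_A(B\otimes_A I,J)$, and $B\otimes_A I$ is injective \emph{over $A$}: by Lazard's theorem $B$ is a filtered colimit of finite free $A$-modules, and over the noetherian ring $A$ a filtered colimit of injectives is injective. Hence $H:=\Hom_A(B,Q)$ is cotorsion flat over $A$ by the affine case of Lemma \ref{lemma_appendix_qhomcotorsion}(i). Flatness over $B$ is then extracted by a descent trick that is where faithful flatness enters (so your closing remark that faithful flatness is dispensable does not survive either): for an acyclic complex $T$ of $B$-modules,
\[
(T\otimes_B H)\otimes_A B \;\cong\; T\otimes_B (H\otimes_A B)
\]
is acyclic because $H\otimes_A B$ is a flat $B$-module ($H$ being $A$-flat), and since $A\lto B$ is faithfully flat the functor $-\otimes_A B$ reflects acyclicity, giving that $T\otimes_B H$ is acyclic. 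To repair your proof you would need to replace the $B$-injectivity of $B\otimes_A I$ with an argument of this kind, or restrict the lemma to noetherian $B$.
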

\begin{proof}
It is straightforward to check that $\Hom_A(B, Q)$ is a cotorsion $B$-module, so let us prove only flatness. By Lemma \ref{lemma_appendix_qhomcotorsion} we may assume that $Q = \Hom_A(I,J)$ for some injective $A$-modules $I,J$. There is an isomorphism of $A$-modules
\[
\Hom_A(B, Q) = \Hom_A(B, \Hom_A(I,J)) \cong \Hom_A(B \otimes_A I, J).
\]
Since $B$ is flat over $A$, $B \otimes_A I$ is injective over $A$ (using Lazard's theorem, write $B$ as a direct limit of finite-free modules, and use the fact that $A$ is noetherian to see that direct limits of injectives are injective). It follows that $H := \Hom_A(B,Q)$ is a flat $A$-module. To see that $H$ is flat as a $B$-module, let an acyclic complex $T$ of $B$-modules be given. Then $(T \otimes_B H) \otimes_A B \cong T \otimes_{B} (H \otimes_A B)$ is acyclic, whence $T \otimes_{B} H$ is acyclic, because $A \lto B$ is faithfully flat.
\end{proof}

\section{N-totally acyclic complexes}\label{section:tac}

\begin{definition}\label{defn:total_acyclicity} A complex of flat sheaves $F$ is \emph{N-totally acyclic} if it is acyclic and~$F \otimes \md{I}$ is acyclic for every injective sheaf $\md{I}$. We write $\kflatl{\tac}{X}$ for the full subcategory of N-totally acyclic complexes in the homotopy category $\kflat{X}$ of flat sheaves.

Given a ring $A$ we define a complex of flat $A$-modules $F$ to be N-totally acyclic if the associated complex of quasi-coherent sheaves on $\Spec(A)$ has this property, that is, if $F$ is acyclic and $F \otimes_A I$ is acyclic for every injective $A$-module $I$. 
\end{definition}

Our choice of nomenclature is justified in Theorem \ref{theorem:description_tac}, where we show that N-totally acyclic complexes are precisely those complexes totally acyclic in the category $\kprof{X}$ with respect to the class of flat sheaves. Complexes of flat modules over a ring with this property were first studied by Enochs, Jenda and Torrecillas in \cite{Enochs93} and are referred to as \emph{complete flat resolutions} in \cite{Christensen00}. We begin by recording some basic facts from \cite[II \S 7]{ResiduesDuality} about injective sheaves on noetherian schemes; see also \cite[\S 2.1]{Conrad00}.

\begin{definition}\label{definition:injective_indecomp} Given $x \in X$ let $i_x: \Spec(\cat{O}_{X,x}) \lto X$ be the canonical map, $k(x)$ the residue field of $\cat{O}_{X,x}$ and $J(x)$ the quasi-coherent sheaf on $\Spec(\cat{O}_{X,x})$ associated to the injective envelope of $k(x)$. Then
\[
E(x) := (i_x)_* J(x)
\]
is an indecomposable injective sheaf on $X$.
\end{definition}

It is shown in \cite[II \S 7]{ResiduesDuality} that every injective sheaf $\md{I}$ is isomorphic to a coproduct $\oplus_{x \in X} E(x)^{\oplus \lambda_x}$ for some set of cardinals $\{\lambda_x\}_{x \in X}$, where $E(x)^{\oplus \lambda_x}$ denotes a $\lambda_x$-indexed coproduct of copies of $E(x)$. Moreover, the $\lambda_x$ are independent of the decomposition. It follows that a sheaf $\md{I}$ is injective if and only if $\md{I}_x$ is an injective $\cat{O}_{X,x}$-module for every $x \in X$. Using these facts, it is immediate that:

\begin{lemma}\label{lemma:acyclic_iff_tensor_indecomp} An acyclic complex $F$ of flat sheaves is N-totally acyclic if and only if $F \otimes E(x)$ is acyclic for every $x \in X$.
\end{lemma}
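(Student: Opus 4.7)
The plan is straightforward: one direction is immediate, and the other reduces to the classification of injective sheaves recalled in Definition \ref{definition:injective_indecomp}.

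For the forward implication, if $F$ is N-totally acyclic then by definition $F \otimes \md{I}$ is acyclic for every injective sheaf $\md{I}$, and in particular for $\md{I} = E(x)$ with $x \in X$. So I would dispatch this in a single sentence.

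For the reverse implication, I would use the structure theorem for injective quasi-coherent sheaves on a locally noetherian scheme: any injective $\md{I}$ decomposes as
\[
\md{I} \;\cong\; \bigoplus_{x \in X} E(x)^{\oplus \lambda_x}
\]
for some cardinals $\lambda_x$. Since the tensor product $F \otimes -$ preserves arbitrary coproducts (it is a left adjoint), this gives an isomorphism of complexes
\[
F \otimes \md{I} \;\cong\; \bigoplus_{x \in X} (F \otimes E(x))^{\oplus \lambda_x}.
\]
By hypothesis each summand $F \otimes E(x)$ is acyclic. Then I invoke the fact that $\qcmod{X}$ is Grothendieck abelian, hence filtered colimits (and in particular arbitrary small coproducts) of acyclic complexes are acyclic; this shows $F \otimes \md{I}$ is acyclic.

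There is no real obstacle here: the only things being used are the structure theorem from \cite[II \S 7]{ResiduesDuality} (already cited in the preceding paragraph), the commutation of tensor with coproducts, and exactness of coproducts in a Grothendieck category. I would present the argument in roughly three lines.
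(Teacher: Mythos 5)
Your proposal is correct and is exactly the argument the paper has in mind: the lemma is stated as an immediate consequence of the Matlis-type decomposition $\md{I} \cong \oplus_{x \in X} E(x)^{\oplus \lambda_x}$ recalled just before it, combined with the facts that $F \otimes -$ preserves coproducts and that coproducts are exact in the Grothendieck category $\qcmod{X}$. No difference in approach; the paper simply leaves these three lines unwritten.
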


\begin{lemma}\label{lemma:total_acyc_pointwise} A complex $F$ of flat sheaves on $X$ is N-totally acyclic if and only if $F_x$ is an N-totally acyclic complex of flat $\cat{O}_{X,x}$-modules for every $x \in X$.
\end{lemma}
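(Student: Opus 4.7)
The plan is to reduce both directions to Lemma \ref{lemma:acyclic_iff_tensor_indecomp} and its affine analogue (applied to $\Spec(\cat{O}_{X,x})$), using that the stalk functor on quasi-coherent sheaves is exact and commutes with tensor products, together with the structure theory of injective sheaves recalled in Definition \ref{definition:injective_indecomp}.

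For the easy direction $(\Leftarrow)$, I would first note that acyclicity is stalk-local, so $F_x$ acyclic for every $x$ forces $F$ acyclic. Given an injective sheaf $\md{I}$, the remarks after Definition \ref{definition:injective_indecomp} say each stalk $\md{I}_x$ is an injective $\cat{O}_{X,x}$-module. Since $(F \otimes \md{I})_x \cong F_x \otimes_{\cat{O}_{X,x}} \md{I}_x$ and $F_x$ is N-totally acyclic by hypothesis, this tensor product is acyclic at every $x$, whence $F \otimes \md{I}$ is acyclic.

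For $(\Rightarrow)$, flatness and acyclicity of $F_x$ follow from exactness of stalks and locality of flatness. To upgrade to N-total acyclicity over the local ring $\cat{O}_{X,x}$, I would apply Lemma \ref{lemma:acyclic_iff_tensor_indecomp} to the affine scheme $\Spec(\cat{O}_{X,x})$: it suffices to verify that $F_x \otimes_{\cat{O}_{X,x}} E_{\cat{O}_{X,x}}(k(y))$ is acyclic for every prime of $\cat{O}_{X,x}$, equivalently for every point $y \in X$ generising $x$. The crux is the identification $E(y)_x \cong E_{\cat{O}_{X,x}}(k(y))$ for such $y$: using $E(y) = (i_y)_* J(y)$ and the fact that every open neighbourhood of $x$ contains $y$, the stalk at $x$ is computed to be $E_{\cat{O}_{X,y}}(k(y))$, which coincides with $E_{\cat{O}_{X,x}}(k(y))$ by the standard Matlis-theoretic fact that the injective envelope of $k(y)$ is unaffected by passage between the localisations at $y$ and $x$. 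Granted this identification, $F_x \otimes_{\cat{O}_{X,x}} E_{\cat{O}_{X,x}}(k(y))$ is the stalk at $x$ of the acyclic complex $F \otimes E(y)$, hence acyclic.

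The only nontrivial step is the stalk identification $E(y)_x \cong E_{\cat{O}_{X,x}}(k(y))$ — everything else is routine bookkeeping with stalks, exactness, and the classification of injective sheaves. I would handle it by a short direct computation from the definition of $E(y)$ as a pushforward, rather than appealing to any deeper duality statement.
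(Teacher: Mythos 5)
Your proof is correct and follows essentially the same route as the paper: the backward direction is identical, and your forward direction (reducing to indecomposables over $\cat{O}_{X,x}$ and identifying $E(y)_x \cong E_{\cat{O}_{X,x}}(k(y))$ for generisations $y$ of $x$) is just the explicit justification of the paper's step ``let $\md{I}$ be an injective sheaf on $X$ with stalk $I$ at $x$'', which implicitly requires exactly that every injective $\cat{O}_{X,x}$-module arises as such a stalk. The extra detail you supply is sound and arguably worth having.
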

\begin{proof}
Suppose that $F$ is N-totally acyclic and let $x \in X$ and an injective $\cat{O}_{X,x}$-module $I$ be given. Let $\md{I}$ an injective sheaf on $X$ with stalk $I$ at $x$. By hypothesis $F \otimes \md{I}$ is acyclic, so $F_x \otimes I \cong F_x \otimes \md{I}_x \cong (F \otimes \md{I})_x$ is acyclic, and thus $F_x$ is N-totally acyclic. Conversely if $F$ is stalkwise N-totally acyclic then it must also have this property globally, because given an injective sheaf $\md{I}$ the $\cat{O}_{X,x}$-module $\md{I}_x$ is injective.
\end{proof}

\begin{lemma}\label{lemma:restriction_tac} Let $F$ be a complex of flat sheaves. Then
\begin{itemize}
\item[(i)] If $F$ is N-totally acyclic and $U \subseteq X$ is open, then $F|_U$ is N-totally acyclic.
\item[(ii)] If $\{ V_\alpha \}_{\alpha \in \Lambda}$ is an open cover of $X$ with the property that $F|_{V_{\alpha}}$ is N-totally acyclic for every $\alpha \in \Lambda$, then $F$ is N-totally acyclic.
\end{itemize}
\end{lemma}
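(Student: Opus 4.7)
The plan is to reduce both parts immediately to the stalkwise characterisation provided by Lemma \ref{lemma:total_acyc_pointwise}, after first noting that the restriction of a flat sheaf to an open subscheme is again flat (because flatness is a stalkwise property and $\cat{O}_{U,x} = \cat{O}_{X,x}$ for $x \in U$), so in both parts we are genuinely dealing with complexes of flat sheaves on an open subscheme of $X$.

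For part (i), let $F$ be N-totally acyclic on $X$ and $U \subseteq X$ open. By Lemma \ref{lemma:total_acyc_pointwise} applied to $X$, the stalk $F_x$ is an N-totally acyclic complex of flat $\cat{O}_{X,x}$-modules for every $x \in X$, and in particular for every $x \in U$. Since restriction of quasi-coherent sheaves to an open subscheme preserves stalks, we have $(F|_U)_x = F_x$ for $x \in U$, and $\cat{O}_{U,x} = \cat{O}_{X,x}$. Thus the stalks of $F|_U$ at every point of $U$ are N-totally acyclic, and applying Lemma \ref{lemma:total_acyc_pointwise} to $U$ gives that $F|_U$ is N-totally acyclic.

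For part (ii), suppose $F|_{V_\alpha}$ is N-totally acyclic for each $\alpha \in \Lambda$. Given any $x \in X$, pick $\alpha$ with $x \in V_\alpha$. By Lemma \ref{lemma:total_acyc_pointwise} applied to $V_\alpha$, the stalk $(F|_{V_\alpha})_x$ is an N-totally acyclic complex of flat $\cat{O}_{V_\alpha, x}$-modules, which again equals $F_x$ over $\cat{O}_{X,x}$. Since this holds at every $x \in X$, Lemma \ref{lemma:total_acyc_pointwise} applied to $X$ yields that $F$ is N-totally acyclic.

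There is no real obstacle here: the entire content of the lemma is that N-total acyclicity is a local property, and that fact has already been extracted in Lemma \ref{lemma:total_acyc_pointwise}. The only subtlety worth flagging is the compatibility $\cat{O}_{U,x} = \cat{O}_{X,x}$ and the identification of stalks under restriction, which ensure that the stalkwise criterion really does transfer between $X$ and its open subschemes.
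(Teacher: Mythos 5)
Your argument is correct and is exactly the paper's: the proof there simply says both claims are immediate from Lemma \ref{lemma:total_acyc_pointwise}, which is the stalkwise reduction you carry out. You have merely made explicit the (routine) identification of stalks under restriction to open subschemes.
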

\begin{proof}
Both claims are immediate from Lemma \ref{lemma:total_acyc_pointwise}.
\end{proof}

\begin{lemma}\label{lemma_shokrollah211} A complex $\md{L}$ of locally free sheaves of finite rank is N-totally acyclic if and only if both $\md{L}$ and $\shom(\md{L}, \cat{O}_X)$ are acyclic.
\end{lemma}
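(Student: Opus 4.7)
The plan is to reduce the problem to a statement about complexes of finitely generated free modules over a commutative noetherian local ring, since all three properties in the equivalence are stalkwise: N-total acyclicity is detected on stalks by Lemma \ref{lemma:total_acyc_pointwise}; acyclicity is always a stalkwise property; and since $\md{L}$ is coherent, $\shom(\md{L}, \cat{O}_X)_x \cong \Hom_{\cat{O}_{X,x}}(\md{L}_x, \cat{O}_{X,x})$ at every $x \in X$. Setting $A = \cat{O}_{X,x}$ and writing $L = \md{L}_x$ and $L^\vee = \Hom_A(L,A)$, the lemma reduces to showing that a complex $L$ of finite-rank free $A$-modules is N-totally acyclic if and only if both $L$ and $L^\vee$ are acyclic.

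The engine for both implications is the natural isomorphism $L \otimes_A M \cong \Hom_A(L^\vee, M)$ for any $A$-module $M$, which holds degreewise because each term of $L$ is a finite-rank free module, hence globally as a natural isomorphism of complexes in $M$.

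For the backward direction, assume $L$ and $L^\vee$ are acyclic and let $I$ be an injective $A$-module. Since $I$ is injective, $\Hom_A(-, I)$ is exact, so it sends the acyclic complex $L^\vee$ to the acyclic complex $\Hom_A(L^\vee, I)$; by the duality isomorphism this is precisely $L \otimes_A I$, so $L$ is N-totally acyclic. For the forward direction, $L$ is acyclic by definition, and it remains to prove $L^\vee$ acyclic. Choose an injective cogenerator $E$ of the category of $A$-modules. By N-total acyclicity $L \otimes_A E$ is acyclic, and via the duality this rewrites as $\Hom_A(L^\vee, E)$. Applying the exact functor $\Hom_A(-, E)$ to the tautological short exact sequences $0 \to Z^n \to (L^\vee)^n \to B^{n+1} \to 0$ and $0 \to B^n \to Z^n \to H^n(L^\vee) \to 0$ yields the standard identification $H^n(\Hom_A(L^\vee, E)) \cong \Hom_A(H^{-n}(L^\vee), E)$. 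Acyclicity of the left-hand side and the cogenerator property of $E$ force $H^{-n}(L^\vee) = 0$ for every $n$, completing the proof.

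I do not anticipate a serious obstacle: once one has reduced to stalks and exploited the tensor-hom duality for finite-rank free modules, each direction is essentially a line of computation. The only piece of bookkeeping is the cohomology identity $H^n(\Hom_A(M, E)) \cong \Hom_A(H^{-n}(M), E)$ for $E$ injective, but this follows immediately from exactness of $\Hom_A(-, E)$ applied to the standard filtration of $M$ by cycles and boundaries.
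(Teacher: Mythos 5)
Your proof is correct and follows the same route as the paper: both reduce to stalks at points of $X$ (using Lemma \ref{lemma:total_acyc_pointwise} and the compatibility of $\shom$ with localisation for coherent sheaves) and then settle the case of a complex of finite free modules over a noetherian local ring. The only difference is that the paper simply cites this local statement from Christensen's book, whereas you prove it directly via the duality $L \otimes_A M \cong \Hom_A(L^\vee, M)$ and an injective cogenerator; your argument for that step is sound.
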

\begin{proof}
Both conditions are local, and it is well-known that for a local noetherian ring $A$ and a complex $L$ of finite free $A$-modules, $L$ is N-totally acyclic if and only if both $L$ and $\Hom_A(L,A)$ are acyclic; see \cite[Lemma 5.1.10]{Christensen00}.
\end{proof}

\begin{definition} A coherent sheaf $\md{G}$ is \emph{totally reflexive} if it is isomorphic to some syzygy $\md{G} \cong Z^n(\md{L})$ of an N-totally acyclic complex $\md{L}$ of locally free sheaves of finite rank. More generally, a sheaf $\md{F}$ is \emph{Gorenstein flat} if it is isomorphic to some syzygy of an N-totally acyclic complex of flat sheaves.
\end{definition}

\begin{lemma}\label{lemma:inclusion_affine} Let $f: U \lto X$ be the inclusion of an affine open subset, and let $F$ be an N-totally acyclic complex of flat sheaves on $U$. Then $f_* F$ is an N-totally acyclic complex of flat sheaves on $X$.
\end{lemma}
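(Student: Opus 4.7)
The plan is to verify the three conditions in the definition of N-total acyclicity for $f_* F$: that it is a complex of flat sheaves, that it is acyclic, and that $f_* F \otimes \md{J}$ is acyclic for every injective sheaf $\md{J}$ on $X$. The crucial use of semi-separatedness is that, together with $U$ being affine, it makes $f$ an affine morphism: for any affine open $V \subseteq X$, the intersection $U \cap V$ is again affine. In particular, $f_*: \qcmod{U} \lto \qcmod{X}$ is exact.

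To check flatness I would work on an affine open $V = \Spec(A) \subseteq X$. Writing $U \cap V = \Spec(B)$, the open immersion makes $A \to B$ flat. The complex $F|_{U \cap V}$ corresponds to a complex $M$ of flat $B$-modules, and $(f_* F)|_V$ is obtained from $M$ by restricting scalars from $B$ to $A$. Transitivity of flatness then makes $M$ flat over $A$, so $f_* F$ is termwise flat. Acyclicity of $f_* F$ is then immediate from the exactness of $f_*$.

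For the tensor condition, let $\md{J}$ be an injective sheaf on $X$. The key move is the projection formula
\[
f_* F \otimes_{\cat{O}_X} \md{J} \;\cong\; f_*\bigl(F \otimes_{\cat{O}_U} \md{J}|_U\bigr),
\]
which is valid for any quasi-coherent $\md{J}$ when $f$ is affine; one checks it locally on $V = \Spec(A)$ by identifying both sides with the sheaf corresponding to $M \otimes_A N$, where $N$ is the $A$-module associated to $\md{J}|_V$. The auxiliary ingredient is that $\md{J}|_U$ is again injective on $U$; this follows from the classification of injective sheaves on noetherian schemes as direct sums of the indecomposables $E(x)$ recalled in Definition \ref{definition:injective_indecomp}, together with the fact that injectivity for quasi-coherent sheaves can be checked stalkwise. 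Given this, $F \otimes_{\cat{O}_U} \md{J}|_U$ is acyclic by the N-total acyclicity of $F$, and applying the exact functor $f_*$ delivers acyclicity of $f_* F \otimes \md{J}$.

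The main obstacle is really just the projection formula, but this reduces to the module-level identity $M \otimes_A N \cong M \otimes_B (N \otimes_A B)$ on each affine piece, so no genuine difficulty arises. If one prefers to avoid the projection formula, an alternative is to argue via Lemma \ref{lemma:total_acyc_pointwise}: at points $x \in U$ the stalk $(f_* F)_x = F_x$ is already known to be N-totally acyclic, while at points $x \notin U$ one exhibits $(f_* F)_x$ as the restriction of scalars of an N-totally acyclic complex along a flat ring map and verifies stability of N-total acyclicity under such restriction using the same projection-formula calculation in disguise.
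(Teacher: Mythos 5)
Your proposal is correct and follows essentially the same route as the paper: the paper reduces (via semi-separatedness and the local nature of N-total acyclicity) to the case where $X = \Spec(A)$ and $U = \Spec(B)$ are both affine, and there uses exactly your projection-formula identity $F \otimes_A I \cong F \otimes_B (I \otimes_A B)$ together with the injectivity of $I \otimes_A B$ over $B$. Your global phrasing via the projection formula for the affine morphism $f$, checked on affine opens, is the same computation.
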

\begin{proof}
As $U$ is affine and $X$ is semi-separated, we reduce to the case where $X = \Spec(A)$ and $U = \Spec(B)$ are affine, and $f_*$ is restriction of scalars. If $F$ is an N-totally acyclic complex of flat $B$-modules, and $I$ an injective $A$-module, then $F \otimes_A I \cong F \otimes_B (I \otimes_A B)$. Since $I \otimes_A B$ is an injective $B$-module we deduce that $F \otimes_A I$ is acyclic, so $F$ is N-totally acyclic over $A$.
\end{proof}

We say that a flat ring morphism $\phi: A \lto B$ \emph{satisfies descent for N-total acyclicity} if any N-totally acyclic complex of flat modules over $B$ descends, via restriction of scalars, to an N-totally acyclic complex of flat modules over $A$.

\begin{proposition}\label{prop:descends_tac} If $A$ is a noetherian ring then the following morphisms satisfy descent for N-total acyclicity:
\begin{itemize}
\item[(i)] Any localisation $A \lto S^{-1}A$ at a multiplicatively closed set $S \subseteq A$.
\item[(ii)] Any faithfully flat ring morphism $A \lto B$ with $B$ noetherian.
\end{itemize}
\end{proposition}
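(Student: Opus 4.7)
The plan is, for both cases, to take an N-totally acyclic complex $F$ of flat $B$-modules (with $B=S^{-1}A$ in (i)) and check that the underlying complex of $A$-modules $F_A$ is again N-totally acyclic. Flatness of each term of $F_A$ is immediate from flatness of the structure map $\phi: A \to B$, and acyclicity persists because the differentials are the same maps of abelian groups. Thus the only nontrivial content is to verify that $F_A \otimes_A I$ is acyclic for every injective $A$-module $I$.

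For (i), the key identity I would invoke is the natural isomorphism $F_A \otimes_A I \cong F \otimes_{S^{-1}A} S^{-1}I$, valid because $F$ is already an $S^{-1}A$-module. Since $A$ is noetherian, any injective $A$-module decomposes as $\bigoplus_{\mathfrak{p}} E_A(A/\mathfrak{p})^{(\lambda_{\mathfrak{p}})}$ in the sense of \ref{definition:injective_indecomp}, and a standard computation shows that $S^{-1}E_A(A/\mathfrak{p})$ vanishes when $\mathfrak{p} \cap S \neq \emptyset$ and otherwise equals $E_{S^{-1}A}(S^{-1}A/S^{-1}\mathfrak{p})$. Consequently $S^{-1}I$ is injective over $S^{-1}A$, and acyclicity of $F \otimes_{S^{-1}A} S^{-1}I$ follows from the N-total acyclicity of $F$ over $S^{-1}A$.

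For (ii) the direct tensor approach fails, since $B \otimes_A I$ need not be injective over $B$ for a general faithfully flat extension (even when both rings are noetherian). My plan is to pivot to the cotorsion flat Hom characterization: by the equivalence $(i) \Leftrightarrow (ii)$ of Theorem \ref{theorem:description_tac} (applied in the affine case), it is enough to verify that $\Hom_A(F_A, Q)$ is acyclic for every cotorsion flat $A$-module $Q$. Hom-tensor adjunction delivers
\[
\Hom_A(F_A, Q) \cong \Hom_B(F, \Hom_A(B,Q)),
\]
and Lemma \ref{lemma:technical_fact_coextendcot} identifies $\Hom_A(B,Q)$ as a cotorsion flat $B$-module, using faithful flatness in an essential way. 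Applying Theorem \ref{theorem:description_tac}(ii) over $B$ then yields the required acyclicity.

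The principal obstacle is precisely the asymmetry between the two parts: localization preserves injectivity of $A$-modules, but a general faithfully flat extension does not, so the proof strategies must differ. This is what forces the switch from the tensor characterization to the Hom characterization in case (ii), and the crucial technical input that makes the Hom approach succeed is Lemma \ref{lemma:technical_fact_coextendcot}, whose proof uses faithful flatness of $A \to B$ to conclude flatness of the coextended module.
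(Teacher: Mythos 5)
Your proof is correct and, for part (ii), is essentially the paper's own argument: the same adjunction $\Hom_A(F,Q)\cong\Hom_B(F,\Hom_A(B,Q))$ combined with Lemma \ref{lemma:technical_fact_coextendcot}, where the paper merely unrolls your appeal to Theorem \ref{theorem:description_tac}(i)$\Leftrightarrow$(ii) by working directly with an injective cogenerator $E$ (setting $Q=\Hom_A(I,E)$) and writing the cotorsion flat $B$-module $\Hom_A(B,Q)$ as a direct summand of some $\Hom_B(J,J')$ with $J,J'$ injective. The only caveat is that Theorem \ref{theorem:description_tac} appears after this proposition in the paper; its proof does not depend on descent, so there is no circularity, but a self-contained write-up would either reorder or inline the cogenerator argument as the paper does. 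The paper leaves (i) to the reader, and your argument via the behaviour of the indecomposable injectives $E(A/\mathfrak{p})$ under localisation correctly supplies it.
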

\begin{proof}
We leave the proof of $(i)$ to the reader. $(ii)$ Let $E$ be an injective cogenerator for the category of $A$-modules, $F$ an N-totally acyclic complex of flat $B$-modules, and $I$ an injective $A$-module. Note that the $A$-module $Q = \Hom_A(I,E)$ is cotorsion flat, by Lemma \ref{lemma_appendix_qhomcotorsion}. We must prove that $F \otimes_A I$ is acyclic. But
\[
\Hom_A(F \otimes_A I, E) \cong \Hom_A(F, Q) \cong \Hom_{B}(F, \Hom_A(B, Q)).
\]
By Lemma \ref{lemma:technical_fact_coextendcot}, $\Hom_A(B,Q)$ is a cotorsion flat $B$-module, and by Lemma \ref{lemma_appendix_qhomcotorsion} it must be a direct summand of $\Hom_{B}(J, J')$ for some injective $B$-modules $J,J'$. Thus the complex $\Hom_A(F \otimes_A I, E)$ is a direct summand of
\[
\Hom_{B}(F, \Hom_{B}(J,J')) \cong \Hom_{B}(F \otimes_{B} J, J')
\]
which is acyclic by hypothesis. Since $E$ is an injective cogenerator, we infer that $F \otimes_A I$ is acyclic, and the proof is complete.
\end{proof}

\begin{example} Nontrivial examples of N-totally acyclic complexes can only exist when $X$ is singular: over a regular scheme every acyclic complex of flat sheaves is pure acyclic \cite[Proposition 9.7]{Murfet07}. Let $X$ be the curve $y^2 + x^{n+1} = 0$ defined over an algebraically closed field $k$, with $n$ even. There is an isolated singularity at $s = (0,0)$, and we set
\[
B := k[[x,y]]/(y^2 + x^{n+1}) \cong \widehat{\cat{O}_{X,s}}.
\]
There is a canonical morphism of schemes
\[
j: \Spec(B) \lto \Spec(\cat{O}_{X,s}) \lto X,
\]
and it follows from Lemma \ref{lemma:inclusion_affine} and Proposition \ref{prop:descends_tac} that the direct image functor $j_*$ sends N-totally acyclic complexes of flat $B$-modules to N-totally acyclic complexes of flat sheaves on $X$. Over $B$ there are many interesting N-totally acyclic complexes; for details of the following, see \cite[Chapter 5]{Yoshino90}. For any integer $0 \le \mu \le n/2$ the ideal $I_\mu = (y, x^\mu) \subseteq B$ is a maximal Cohen-Macaulay module over $B$ with complete projective resolution
\[
P_\mu: \cdots \lto B^{\oplus 2} \xlto{\rho} B^{\oplus 2} \xlto{\rho} B^{\oplus 2} \lto \cdots, \qquad \rho = \begin{pmatrix} y & x^\mu \\ x^{n-\mu} & -y\end{pmatrix}.
\]
Any complete projective resolution over $B$ is N-totally acyclic \cite[Proposition 5.1.4]{Christensen00}, so $j_*P_\mu$ is an N-totally acyclic complex of flat sheaves on $X$. This complex cannot be pure acyclic, because $I_\mu$ is the stalk of a syzygy in this complex, and $I_\mu$ is not flat.
\end{example}

\subsection{Total acyclicity in the pure derived category of flat sheaves} In this subsection we explain how N-total acyclicity can be understood as a total acyclicity condition, in the sense of Section \ref{section:total_acyclicity}, in the pure derived category $\kprof{X}$ of flat sheaves. The reader is referred to Section \ref{section:review_of_pure} for background material on this category.

\begin{lemma}\label{lemma:tac_is_triangulated} $\kflatl{\tac}{X}$ is a localising subcategory of $\kflat{X}$.
\end{lemma}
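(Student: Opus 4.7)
The plan is to verify, in order, the following: (a) $\kflatl{\tac}{X}$ is closed under isomorphism in $\kflat{X}$, (b) it is closed under the suspension $\Sigma$ and its inverse, (c) given a distinguished triangle $F \lto G \lto H \lto \Sigma F$ in $\kflat{X}$ with $F, G$ N-totally acyclic, the third vertex $H$ is N-totally acyclic, and (d) $\kflatl{\tac}{X}$ is closed under arbitrary small coproducts. Together these show that $\kflatl{\tac}{X}$ is a triangulated subcategory of $\kflat{X}$ which is closed under small coproducts, i.e.\ a localising subcategory.

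For (a) and (b) there is nothing to do: acyclicity and acyclicity after tensoring with an injective sheaf $\md{I}$ are invariants of the isomorphism class in $\kflat{X}$ (homotopy equivalences remain homotopy equivalences after tensoring with any complex), and they are evidently preserved by shifts. For (c), we may represent the triangle by a mapping cone triangle $F \xlto{f} G \lto C(f) \lto \Sigma F$. Since flatness is closed under finite direct sums in each degree, $C(f)$ is again a complex of flat sheaves. Tensoring termwise with any sheaf $\md{I}$ commutes with the mapping cone construction, yielding a triangle
\[
F \otimes \md{I} \lto G \otimes \md{I} \lto C(f) \otimes \md{I} \lto \Sigma F \otimes \md{I}
\]
in $\K(\qcmodo{X})$. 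The associated long exact sequence in cohomology then shows that if $F \otimes \md{I}$ and $G \otimes \md{I}$ are acyclic so is $C(f) \otimes \md{I}$; taking $\md{I} = \cat{O}_X$ gives acyclicity of $C(f)$ itself, and taking $\md{I}$ to range over all injective sheaves gives N-total acyclicity. Since $H \cong C(f)$ in $\kflat{X}$, part (a) then yields $H \in \kflatl{\tac}{X}$.

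For (d), let $\{F_\lambda\}_{\lambda \in \Lambda}$ be a family of N-totally acyclic complexes. Their coproduct in $\kflat{X}$ is computed termwise, and since coproducts of flat sheaves are flat the result is a complex of flat sheaves. Cohomology commutes with small coproducts, and the tensor product in $\qcmod{X}$ preserves coproducts in each variable, so for any injective $\md{I}$
\[
\Big(\bigoplus_{\lambda} F_\lambda\Big) \otimes \md{I} \;\cong\; \bigoplus_{\lambda} (F_\lambda \otimes \md{I})
\]
is acyclic, being a coproduct of acyclic complexes. Hence $\bigoplus_\lambda F_\lambda \in \kflatl{\tac}{X}$.

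There is no genuine obstacle here; the proof is a mechanical check, the only point worth emphasising being that the mapping cone of a map between complexes of flat sheaves is itself flat termwise, so the triangle computed in $\kflat{X}$ remains a triangle after tensoring with an arbitrary sheaf.
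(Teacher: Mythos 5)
Your proof is correct and follows essentially the same route as the paper's: stability under shift and homotopy equivalence is immediate, the two-out-of-three property comes from tensoring the (mapping cone) triangle with an injective sheaf and using the long exact cohomology sequence, and closure under coproducts follows from exactness of coproducts together with the fact that the tensor product commutes with them. The extra detail you supply (that the mapping cone is termwise flat and that tensoring commutes with forming the cone) is exactly what the paper leaves implicit.
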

\begin{proof}
It is clear that N-total acyclicity is stable under shifts and homotopy equivalence. Given a triangle $F' \lto F \lto F'' \lto \Sigma F'$ in $\kflat{X}$ with $F',F$ N-totally acyclic, and an injective sheaf $\md{I}$, we have a triangle $F' \otimes \md{I} \lto F \otimes \md{I} \lto F'' \otimes \md{I} \lto \Sigma F' \otimes \md{I}$. From the long exact cohomology sequences of these two triangles we may conclude that $F''$ is N-totally acyclic. Finally, coproducts are exact and the tensor product commutes with coproducts, so any coproduct of N-totally acyclic complexes is N-totally acyclic.
\end{proof}

A complex of flat sheaves which is pure acyclic is clearly also N-totally acyclic. Hence, if a complex $F$ of flat sheaves is isomorphic in $\kprof{X}$ to an N-totally acyclic complex, then $F$ is also N-totally acyclic.

\begin{definition}\label{defn:ntac} Let $\kprofl{\tac}{X}$ denote the full subcategory of N-totally acyclic complexes in $\kprof{X}$. This is the essential image of the fully faithful functor
\[
\kflatl{\tac}{X}/\kflatl{\pac}{X} \hookrightarrow \kflat{X}/\kflatl{\pac}{X} = \kprof{X},
\]
so $\kprofl{\tac}{X}$ is a localising subcategory of $\kprof{X}$.
\end{definition}

The next lemma is crucial: it is the source of our interest in cotorsion flat sheaves.

\begin{lemma}\label{lemma:bounded_below_cot_orthog} Any bounded below complex of cotorsion flat sheaves belongs, as an object of $\kflat{X}$, to the orthogonal $\kflatl{\tac}{X}^{\perp}$.
\end{lemma}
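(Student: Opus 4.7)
The plan is to show that every cochain map $\phi: F \lto C$ is null-homotopic whenever $F$ is N-totally acyclic and $C$ is a bounded below complex of cotorsion flat sheaves. Since shifts of such a $C$ retain the same form (and shifts of $F$ remain N-totally acyclic), this suffices to conclude $C \in \kflatl{\tac}{X}^{\perp}$. The strategy is to build a null-homotopy one degree at a time, with the cotorsion-flatness of the terms of $C$ furnishing the extensions needed at each stage.

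The key input is the vanishing
\[
\Ext^1(Z^j F, \md{C}) = 0
\]
for every syzygy $Z^j F$ of an N-totally acyclic complex $F$ and every cotorsion flat sheaf $\md{C}$. To establish this, I would first argue that the Hom-complex $\Hom(F, \md{C})$ of abelian groups is acyclic. By Proposition \ref{prop:cotorsion_flat_resolution} we may write $\md{C}$ as a direct summand of $\qhom(\md{I}, \md{I}')$ for some injective sheaves $\md{I}, \md{I}'$, so it suffices to check this for $\md{C} = \qhom(\md{I}, \md{I}')$. The tensor-Hom adjunction, applied termwise, yields a natural isomorphism of complexes
\[
\Hom(F, \qhom(\md{I}, \md{I}')) \cong \Hom(F \otimes \md{I}, \md{I}'),
\]
and the right-hand complex is acyclic because $F \otimes \md{I}$ is acyclic (by N-total acyclicity of $F$) while $\Hom(-, \md{I}')$ is exact (since $\md{I}'$ is injective). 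The Ext vanishing then follows by dimension shifting through the syzygy sequences $0 \lto Z^j F \lto F^j \lto Z^{j+1} F \lto 0$: applying $\Hom(-, \md{C})$ and using that $\Ext^1(F^j, \md{C}) = 0$ (by flatness of $F^j$ and cotorsionness of $\md{C}$) one identifies the cohomology of $\Hom(F, \md{C})$ with the groups $\Ext^1(Z^j F, \md{C})$, and reads off the claim from acyclicity.

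Given a cochain map $\phi: F \lto C$ with $C^i = 0$ for $i < N$, I would then construct a null-homotopy $h = (h^i: F^i \lto C^{i-1})$ by induction on $i$, starting with $h^i = 0$ for $i \le N$. Suppose $h^i$ has been defined for $i \le j$ so that the homotopy relation holds in all degrees below $j$. A direct calculation using $d\phi^{j-1} = \phi^j d$ and the inductive relation shows that $\psi^j := \phi^j - d h^j: F^j \lto C^j$ satisfies $\psi^j d = 0$, hence vanishes on $B^j F = Z^j F$ and factors through $Z^{j+1} F$. The task is to extend the induced map $Z^{j+1} F \lto C^j$ along the inclusion $Z^{j+1} F \hookrightarrow F^{j+1}$, and the obstruction to doing so lies in $\Ext^1(Z^{j+2} F, C^j)$, which vanishes by the previous step because $C^j$ is cotorsion flat. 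The main obstacle of the proof is the Ext vanishing for the syzygies of $F$; once that is in hand the inductive construction of $h$ proceeds routinely.
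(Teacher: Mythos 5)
Your proof is correct, but it reaches the conclusion by a genuinely different route from the paper. The shared core is the reduction, via Proposition \ref{prop:cotorsion_flat_resolution}, of an arbitrary cotorsion flat sheaf to a summand of $\qhom(\md{I},\md{I}')$, followed by the adjunction $\Hom(F,\qhom(\md{I},\md{I}')) \cong \Hom(F\otimes\md{I},\md{I}')$, whose right-hand side is acyclic by N-total acyclicity of $F$ and injectivity of $\md{I}'$. Where you diverge is in passing from a single cotorsion flat sheaf to a bounded below complex $C$ of them: the paper writes $C$ as a homotopy limit in $\kflat{X}$ of bounded complexes, each a finite extension of shifted cotorsion flat sheaves, and invokes closure of the right orthogonal under shifts, extensions and homotopy limits; you instead convert the acyclicity of $\Hom(F,\md{C})$ into the vanishing $\Ext^1(Z^jF,\md{C})=0$ by dimension shifting (legitimately, since $\Ext^1(F^j,\md{C})=0$ by flatness and cotorsionness), and then build the contracting homotopy degree by degree, with $\Ext^1(Z^{j+2}F,C^j)=0$ killing the obstruction at each stage. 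Your induction is sound: the computation $\psi^j d = d(\phi^{j-1}-h^jd)=d(dh^{j-1})=0$ is exactly what is needed, and the factorisation through $Z^{j+1}F$ uses acyclicity of $F$ correctly. What your approach buys is elementariness and explicitness --- no homotopy limits, and no appeal to the fact that cotorsion flat sheaves are closed under products (Lemma \ref{lemma:cotorsion_flat_closed_products}), which underlies the existence of the paper's homotopy limit inside $\kflat{X}$; moreover your $\Ext^1$-vanishing step is essentially condition (ii) of Theorem \ref{theorem:description_tac}, so you prove that implication en route. What the paper's argument buys is brevity once the triangulated machinery is in place, and it is the pattern reused later in the proof of Theorem \ref{theorem:description_tac}(iv).
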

\begin{proof}
Every bounded below complex of cotorsion flat sheaves is the homotopy limit in $\kflat{X}$ of a sequence of bounded complexes of cotorsion flat sheaves, so it suffices to prove that any cotorsion flat sheaf $\md{C}$ belongs to $\kflatl{\tac}{X}^{\perp}$. By Proposition \ref{prop:cotorsion_flat_resolution} we may assume that $\md{C} = \qhom(\md{I}, \md{I}')$ for injective sheaves $\md{I}, \md{I}'$. In this case, for any N-totally acyclic complex $E$ of flat sheaves, we have
\[
\Hom_{\kflat{X}}(E, \qhom(\md{I}, \md{I}')) \cong \Hom_{\K(X)}(E \otimes \md{I}, \md{I}')
\]
which vanishes, because $E \otimes \md{I}$ is acyclic and $\md{I}'$ is injective.
\end{proof}

\begin{remark}\label{remark:iso_kprof_iff_he} By the previous lemma, any bounded below complex $C$ of cotorsion flat sheaves belongs to $\ppe{X}^{\perp} \supseteq \kflatl{\tac}{X}^{\perp}$. It follows that for any complex $F$ of flat sheaves, the canonical map
\[
\Hom_{\kflat{X}}(F, C) \lto \Hom_{\kprof{X}}(F, C)
\]
is an isomorphism. In particular, two bounded below complexes $C,C'$ of cotorsion flat sheaves are isomorphic in $\kprof{X}$ if and only if they are homotopy equivalent.
\end{remark}

\begin{lemma}\label{lemma:flat_sheaf_is_cotorsion_cpx} Any flat sheaf is isomorphic, in $\kprof{X}$, to a bounded below complex of cotorsion flat sheaves which is unique, up to homotopy equivalence.
\end{lemma}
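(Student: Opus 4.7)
The plan is to obtain the result by splicing together the short exact sequences from Proposition~\ref{prop:cotorsion_flat_resolution} and invoking the orthogonality already established in Lemma~\ref{lemma:bounded_below_cot_orthog}.

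\textbf{Existence.} Fix an injective cogenerator $\md{E}$. Applying Proposition~\ref{prop:cotorsion_flat_resolution} to the flat sheaf $\md{F}^0 := \md{F}$ produces a pure exact sequence
\[
0 \lto \md{F}^0 \xlto{\alpha^0} C^0 \lto \md{F}^1 \lto 0,
\]
with $C^0 = \qhom(\qhom(\md{F}^0,\md{E}),\md{E})$ cotorsion flat and $\md{F}^1$ flat. Iterating, I obtain at each step a pure exact sequence $0 \lto \md{F}^n \lto C^n \lto \md{F}^{n+1} \lto 0$ with $C^n$ cotorsion flat and $\md{F}^{n+1}$ flat. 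Splicing these yields a coresolution
\[
0 \lto \md{F} \lto C^0 \lto C^1 \lto C^2 \lto \cdots
\]
in which every syzygy equals one of the flat sheaves $\md{F}^n$. Let $C$ denote the bounded below complex $(C^0 \to C^1 \to \cdots)$ placed in non-negative degrees; then the coresolution displays a cochain map $\md{F} \to C$ whose mapping cone is an acyclic complex of flat sheaves whose cycles are precisely the $\md{F}^n$ above, all of which are flat. By \cite[Proposition~3.4]{Murfet07}, this cone is therefore pure acyclic, so $\md{F} \to C$ becomes an isomorphism in $\kprof{X}$.

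\textbf{Uniqueness.} Suppose $C$ and $C'$ are two bounded below complexes of cotorsion flat sheaves that are isomorphic to $\md{F}$, and hence to each other, in $\kprof{X}$. By Lemma~\ref{lemma:bounded_below_cot_orthog}, both $C$ and $C'$ lie in $\kflatl{\pac}{X}^{\perp}$, so by Remark~\ref{remark:iso_kprof_iff_he} the canonical maps
\[
\Hom_{\kflat{X}}(C, C') \lto \Hom_{\kprof{X}}(C, C'), \qquad \Hom_{\kflat{X}}(C', C) \lto \Hom_{\kprof{X}}(C', C)
\]
are bijections. Consequently the pair of mutually inverse isomorphisms between $C$ and $C'$ in $\kprof{X}$ lifts uniquely to mutually inverse morphisms in $\kflat{X}$, exhibiting $C$ and $C'$ as homotopy equivalent.

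\textbf{Main obstacle.} The conceptual difficulty is already absorbed into Proposition~\ref{prop:cotorsion_flat_resolution} and Lemma~\ref{lemma:bounded_below_cot_orthog}; what remains is essentially bookkeeping. The only point that deserves care is observing that the splice of iterated pure exact sequences has flat syzygies, so that the mapping cone of $\md{F} \to C$ satisfies the syzygy criterion of \cite[Proposition~3.4]{Murfet07} and thus is pure acyclic rather than merely acyclic --- this is what promotes the coresolution from a quasi-isomorphism in $\K(X)$ to an isomorphism in $\kprof{X}$.
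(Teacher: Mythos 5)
Your argument is correct and follows the same route as the paper: iterate Proposition~\ref{prop:cotorsion_flat_resolution} to build the coresolution, observe that the resulting acyclic complex has flat syzygies and hence is pure acyclic by \cite[Proposition~3.4]{Murfet07}, and deduce uniqueness from Remark~\ref{remark:iso_kprof_iff_he}. The paper phrases the first step via the triangle $\md{F} \lto S \lto E \lto \Sigma\md{F}$ rather than the mapping cone of $\md{F} \to C$, but these are the same argument.
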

\begin{proof}
Given a flat sheaf $\md{F}$ we can, by repeated applications of Proposition \ref{prop:cotorsion_flat_resolution}, produce an exact sequence $E$ of flat sheaves of the form
\[
E: 0 \lto \md{F} \lto S^0 \lto S^1 \lto S^2 \lto \cdots
\]
with every $S^i$ cotorsion and $Z^i(E)$ flat for all $i \in \mathbb{Z}$. Such a complex $E$ is pure acyclic, by \cite[Proposition 3.4]{Murfet07}. From the triangle $\md{F} \lto S \lto E \lto \Sigma \md{F}$ in $\kflat{X}$ we infer the desired isomorphism $\md{F} \cong S$ in $\kprof{X}$. If $S'$ were another bounded below complex of cotorsion flat sheaves isomorphic in $\kprof{X}$ to $\md{F}$, then the isomorphism $S \cong S'$ would lift, by Remark \ref{remark:iso_kprof_iff_he}, to a homotopy equivalence between $S$ and $S'$.
\end{proof}

Let $\Flat(X)$ denote the class of flat sheaves, considered as objects of $\kprof{X}$. In the notation of Section \ref{remark:orthogonals}, $(\Flat X)^{\perp}$ is the full subcategory of $\kprof{X}$ consisting of objects $F$ such that $\Hom_{\kprof{X}}(\Sigma^i \md{P}, F) = 0$ for every flat sheaf $\md{P}$ and $i \in \mathbb{Z}$.

\begin{lemma}\label{lemma:hom_d_acyclic} $\kprofl{\ac}{X} = (\Flat X)^{\perp}$ as subcategories of $\kprof{X}$.
\end{lemma}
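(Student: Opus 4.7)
Both $\kprofl{\ac}{X}$ and $(\Flat X)^{\perp}$ are triangulated subcategories of $\kprof{X}$, so the plan is to prove the two inclusions separately.

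For the inclusion $(\Flat X)^{\perp} \subseteq \kprofl{\ac}{X}$, I would argue contrapositively. If $F$ is not acyclic, some cohomology sheaf $H^n(F)$ is nonzero. Since every quasi-coherent sheaf on the semi-separated noetherian scheme $X$ admits a surjection from a flat sheaf (for instance, a sum of $j_*\cat{O}_U$ for $j$ the inclusion of an affine open), I can find a flat sheaf $\md{P}$ together with a morphism $\phi: \md{P} \to Z^n(F)$ whose induced map $\md{P} \to H^n(F)$ is nonzero. This $\phi$ is a cochain map $\md{P} \to F[n]$ in $\kflat{X}$. To see that its image in $\kprof{X}$ is nonzero, suppose some pure quasi-isomorphism $s: G \to \md{P}$ renders the composite $\phi \circ s: G \to F[n]$ nullhomotopic in $\kflat{X}$. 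Then the induced map $H^0(G) \to H^n(F)$ would be zero; but this map factors as $H^0(G) \xrightarrow{H^0(s)} \md{P} \xrightarrow{H^n(\phi)} H^n(F)$ with $H^0(s)$ an isomorphism (since a pure quasi-isomorphism is in particular a quasi-isomorphism) and $H^n(\phi)$ nonzero, a contradiction. Therefore $\phi$ defines a nonzero element of $\Hom_{\kprof{X}}(\md{P}, F[n])$, contradicting $F \in (\Flat X)^{\perp}$.

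For the reverse inclusion $\kprofl{\ac}{X} \subseteq (\Flat X)^{\perp}$, given $F$ acyclic of flats and $\md{P}$ flat, I must show $\Hom_{\kprof{X}}(\md{P}, F[i]) = 0$ for all $i$. Using Lemma \ref{lemma:flat_sheaf_is_cotorsion_cpx} I would replace $\md{P}$ by a bounded-below complex $S$ of cotorsion flat sheaves to which it is isomorphic in $\kprof{X}$, reducing the problem to showing $\Hom_{\kprof{X}}(S, F[i]) = 0$. A putative nonzero morphism is represented by a roof $S \xleftarrow{s} G \to F[i]$ with $s$ a pure quasi-isomorphism; since $F$ is acyclic the composite $G \to F[i]$ is zero in $\qder{X}$, and the task is to upgrade this to vanishing in $\kprof{X}$ by producing a further pure quasi-isomorphism $h: G' \to G$ trivialising the composite in $\kflat{X}$. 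The cotorsion flat coresolution of $\md{P}$, together with the structural properties of cotorsion flats from Proposition \ref{prop:cotorsion_flat_resolution} (cotorsion flats are summands of $\qhom(\md{I}, \md{I}')$ with $\md{I}, \md{I}'$ injective), should provide the needed lifting of cochain maps through appropriate pure acyclic complexes.

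The main obstacle is this second inclusion: the gap between the localisation at pure quasi-isomorphisms (defining $\kprof{X}$) and at all quasi-isomorphisms (defining $\qder{X}$) must be bridged substantively. Since $F$ is only assumed acyclic (and in general not pure acyclic), it is nonzero in $\kprof{X}$, and the naive cochain-level computation of $\Hom_{\kflat{X}}(\md{P}, F[i])$ yields $\Ext^1$-obstructions of the form $\Ext^1(\md{P}, Z^{i-1}(F))$ which do not automatically vanish. The cotorsion flat coresolution $S$ is the correct tool because bounded-below cotorsion flat complexes behave like "pure-projective" objects in the pure derived category (as signaled by Remark \ref{remark:iso_kprof_iff_he}), but that remark gives information about maps into such complexes rather than out of them, so a dual construction — perhaps via the internal Hom $\Map(-,-)$ or via the duality $\qhom(-,\md{E})$ with an injective cogenerator $\md{E}$ — will likely be required to close the argument.
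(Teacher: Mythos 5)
Your proof of the inclusion $(\Flat X)^{\perp} \subseteq \kprofl{\ac}{X}$ is correct, and it takes a more elementary route than the paper's. The paper first uses the \emph{other} inclusion to identify $\Hom_{\kprof{X}}(\md{P}, \Sigma^i F)$ with $\Hom_{\qder{X}}(\md{P}, \Sigma^i F)$ and then computes with a $\K$-injective resolution of $F$; you only need that a morphism vanishing in the Verdier quotient factors through a pure acyclic (in particular acyclic) complex, hence induces zero on cohomology, together with the existence of a flat epimorphism onto $Z^n(F)$ (this is \cite[Corollary 3.21]{Murfet07}, and is worth citing rather than sketching). That half is fine.

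The inclusion $\kprofl{\ac}{X} \subseteq (\Flat X)^{\perp}$ is a genuine gap, as you essentially concede. The paper does not prove it either: it quotes \cite[Proposition 5.2]{Murfet07}, which states precisely that flat sheaves are left orthogonal in $\kprof{X}$ to acyclic complexes. This is a substantive theorem. In the affine case one replaces the flat module $\md{P}$ by a bounded above projective resolution $\tilde{P}$, whose cone is pure acyclic because the syzygies of a flat module in a projective resolution are again flat; since $\tilde{P}$ is $\K$-projective and in particular left orthogonal to $\ppe{A}$, one gets $\Hom_{\kprof{A}}(\md{P},\Sigma^iF) \cong \Hom_{\K(A)}(\tilde{P},\Sigma^iF)=0$ for acyclic $F$. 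The global case requires gluing such local resolutions, which is the real content of the cited result. Your proposed tool, the cotorsion flat coresolution $S$ of $\md{P}$, cannot do this job: replacing $\md{P}$ by $S$ leaves you computing morphisms \emph{out of} $S$ in the quotient, and bounded below complexes of cotorsion flat sheaves carry no left-orthogonality against $\ppe{X}$ --- Remark \ref{remark:iso_kprof_iff_he} controls maps \emph{into} them, exactly as you observe at the end. What is needed is a replacement of $\md{P}$ lying in ${}^{\perp}\ppe{X}$, i.e.\ a ``$\K$-projective-type'' resolution, and producing one over a non-affine scheme is the work hiding behind this lemma.
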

\begin{proof}
By \cite[Proposition 5.2]{Murfet07} flat sheaves are left orthogonal to acyclic complexes in $\kprof{X}$, so $\kprofl{\ac}{X} \subseteq (\Flat X)^{\perp}$. To prove the reverse inclusion, let $F$ be a complex of flat sheaves right orthogonal, in $\kprof{X}$, to arbitrary shifts of flat sheaves. We have to prove that $F$ is acyclic. 

There is a canonical functor $\kprof{X} \lto \qder{X}$ which is, up to equivalence, the quotient of $\kprof{X}$ by the full subcategory $\kprofl{\ac}{X}$ of acyclic complexes; see \cite[Remark 5.8]{Murfet07}.  For $\md{P} \in \Flat(X)$ and $i \in \mathbb{Z}$ we have
\begin{equation}\label{eq:tac_implie_ac1}
0 = \Hom_{\kprof{X}}(\md{P}, \Sigma^i F) \cong \Hom_{\qder{X}}(\md{P}, \Sigma^i F),
\end{equation}
because $\md{P}$ belongs to the orthogonal ${}^{\perp} \kprofl{\ac}{X}$. Let $F \lto I$ be the $\K$-injective resolution of $F$. We may continue (\ref{eq:tac_implie_ac1}) as follows:
\begin{equation}\label{eq:tac_implies_ac2}
\begin{split}
0 &= \Hom_{\qder{X}}(\md{P}, \Sigma^i F) \cong \Hom_{\qder{X}}(\md{P}, \Sigma^i I)\\
&\cong \Hom_{\K(X)}(\md{P}, \Sigma^i I) \cong H^i \Hom(\md{P}, I).
\end{split}
\end{equation}
Given $n \in \mathbb{Z}$ let $\phi: \md{P} \lto Z^n(I)$ be an epimorphism with $\md{P}$ a flat sheaf; such a morphism exists by \cite[Corollary 3.21]{Murfet07}. It follows from (\ref{eq:tac_implies_ac2}) that $\phi$ factors through the canonical morphism $I^{n-1} \lto Z^n(I)$ and in particular the composite $\md{P} \lto Z^n(I) \lto H^n(I)$ vanishes. Since $\phi$ is an epimorphism we conclude that $H^n(I) = 0$, which implies that both $I$ and $F$ must be acyclic, as claimed.
\end{proof}

\begin{remark}\label{remark:ac_colocalising} It follows from the lemma that $\kprofl{\ac}{X}$ is a colocalising subcategory of $\kprof{X}$. In particular, $\kprofl{\ac}{X}$ is closed under homotopy limits.
\end{remark}

In the next theorem, we write $[-,-]$ for morphism sets in $\kprof{X}$.

\begin{theorem}\label{theorem:description_tac} For a complex $F$ of flat sheaves the following are equivalent:
\begin{itemize}
\item[(i)] $F$ is N-totally acyclic.
\item[(ii)] $F$ is acyclic, and $\Hom(F, \md{C})$ is acyclic for every cotorsion flat sheaf $\md{C}$.
\item[(iii)] $F$ is left and right orthogonal in $\kprof{X}$ to shifts of flat sheaves. That is, for any flat sheaf $\md{P}$ and $i \in \mathbb{Z}$ we have
\[
0 = [F, \Sigma^i \md{P}] = [\Sigma^i \md{P}, F].
\]
\item[(iv)] $F$ is acyclic, and $\Map(F, \md{P})$ is acyclic for every flat sheaf $\md{P}$, where $\Map(-,-)$ denotes the internal Hom in $\kprof{X}$.
\end{itemize}
\end{theorem}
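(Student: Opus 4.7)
The plan is to establish a circle of implications, using Proposition \ref{prop:cotorsion_flat_resolution} to bridge injective sheaves and cotorsion flat sheaves via $\qhom(-,-)$, and Lemma \ref{lemma:flat_sheaf_is_cotorsion_cpx} together with Remark \ref{remark:iso_kprof_iff_he} to translate morphism sets in $\kprof{X}$ into honest Hom complexes when the target is a bounded below complex of cotorsion flat sheaves. For (i)$\Leftrightarrow$(ii), Proposition \ref{prop:cotorsion_flat_resolution} realises every cotorsion flat sheaf as a summand of some $\qhom(\md{I},\md{I}')$ with $\md{I},\md{I}'$ injective, and the tensor-Hom adjunction
\[
\Hom(F, \qhom(\md{I}, \md{I}')) \cong \Hom(F \otimes \md{I}, \md{I}')
\]
immediately sends (i) to (ii). For the converse, choosing $\md{I}'$ to be an injective cogenerator lets the acyclicity of the right-hand side detect the acyclicity of $F \otimes \md{I}$ for every injective $\md{I}$.

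For (ii)$\Leftrightarrow$(iii) I would first note that Lemma \ref{lemma:hom_d_acyclic} identifies the vanishing $[\Sigma^i \md{P}, F] = 0$ with acyclicity of $F$, so that clause of (iii) matches the first clause of (ii). The remaining content is the vanishing $[F, \Sigma^i \md{P}] = 0$. By Lemma \ref{lemma:flat_sheaf_is_cotorsion_cpx} we may replace a flat sheaf $\md{P}$ by a bounded below complex $C$ of cotorsion flat sheaves with $\md{P} \cong C$ in $\kprof{X}$, and Remark \ref{remark:iso_kprof_iff_he} identifies $[F, \Sigma^i C] \cong H^i \Hom_{\K(X)}(F, C)$. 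Specialising $C$ to a single cotorsion flat sheaf immediately proves (iii)$\Rightarrow$(ii); the converse requires upgrading the acyclicity hypothesis from individual cotorsion flat sheaves to bounded below complexes, which is the main technical obstacle. I would handle this by brutal truncation: the quotients $\sigma^{\le b} C$ are bounded complexes of cotorsion flat sheaves, and the short exact sequences $0 \to C^b[-b] \to \sigma^{\le b} C \to \sigma^{\le b-1} C \to 0$ give, by induction on length, that each $\Hom(F, \sigma^{\le b} C)$ is acyclic. Since $C$ is bounded below, applying $\Hom(F,-)$ to the identification $C = \lim_b \sigma^{\le b} C$ expresses $\Hom(F, C)$ as the limit of an inverse system of complexes whose transition maps are surjective in each degree, so the Milnor $\lim\nolimits^1$-sequence combined with the inductive acyclicity forces $\Hom(F, C)$ acyclic.

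The equivalence (iii)$\Leftrightarrow$(iv) is formal via the tensor-internal-Hom adjunction in $\kprof{X}$ together with Lemma \ref{lemma:hom_d_acyclic}. For (iv)$\Rightarrow$(iii), $F$ acyclic is part of (iv), and taking $\md{Q} = \cat{O}_X$ (the unit for $\Map$) in
\[
[\md{Q}, \Sigma^i \Map(F, \md{P})] \cong [\md{Q} \otimes F, \Sigma^i \md{P}]
\]
extracts $[F, \Sigma^i \md{P}] = 0$ from the acyclicity of $\Map(F, \md{P})$ via Lemma \ref{lemma:hom_d_acyclic}. For (iii)$\Rightarrow$(iv), the already-established equivalence (i)$\Leftrightarrow$(iii) lets me treat $F$ as N-totally acyclic; then for any flat sheaf $\md{Q}$ the complex $\md{Q} \otimes F$ is again N-totally acyclic, since $(\md{Q} \otimes F) \otimes \md{I} \cong \md{Q} \otimes (F \otimes \md{I})$ is acyclic whenever $F \otimes \md{I}$ is. Applying (iii) to $\md{Q} \otimes F$ and running the adjunction in reverse yields $[\md{Q}, \Sigma^i \Map(F, \md{P})] = 0$ for every flat $\md{Q}$, which by Lemma \ref{lemma:hom_d_acyclic} means $\Map(F, \md{P})$ is acyclic.
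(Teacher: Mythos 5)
Your proof is correct. For the equivalence of (i), (ii) and (iii) you follow essentially the paper's route: the adjunction $\Hom(F\otimes\md{I},\md{I}')\cong\Hom(F,\qhom(\md{I},\md{I}'))$ together with Proposition \ref{prop:cotorsion_flat_resolution} handles (i)$\Leftrightarrow$(ii); Lemma \ref{lemma:hom_d_acyclic} disposes of the clause $[\Sigma^i\md{P},F]=0$; and the clause $[F,\Sigma^i\md{P}]=0$ is reduced, via Lemma \ref{lemma:flat_sheaf_is_cotorsion_cpx} and Remark \ref{remark:iso_kprof_iff_he}, to acyclicity of $\Hom(F,C)$ for a bounded below complex $C$ of cotorsion flat sheaves -- your explicit induction over brutal truncations followed by the Milnor $\lim\nolimits^1$ sequence is an unpacked (and perfectly valid) version of the paper's homotopy-limit argument. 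Where you genuinely diverge is in (iii)$\Rightarrow$(iv). The paper first proves $\Map(F,\md{C})$ acyclic for cotorsion flat $\md{C}$ and then passes to an arbitrary flat sheaf by writing its cotorsion flat resolution as a homotopy limit of truncations; this step needs closure of cotorsion flat sheaves under products (Lemma \ref{lemma:cotorsion_flat_closed_products}), product-preservation of the quotient $\kflat{X}\to\kprof{X}$, and closure of $\kprofl{\ac}{X}$ under homotopy limits (Remark \ref{remark:ac_colocalising}). Your observation that $\md{Q}\otimes F$ is again N-totally acyclic for every flat $\md{Q}$, so that the already-established implication (i)$\Rightarrow$(iii) applied to $\md{Q}\otimes F$, the adjunction $[\md{Q}\otimes F,\Sigma^i\md{P}]\cong[\md{Q},\Sigma^i\Map(F,\md{P})]$, and Lemma \ref{lemma:hom_d_acyclic} yield acyclicity of $\Map(F,\md{P})$ directly, bypasses all of that auxiliary machinery and gives a shorter, more conceptual proof of the same implication at no extra cost.
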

\begin{proof}
To elaborate on $(iv)$, note that $\kprof{X}$ is a closed monoidal category in which the tensor is the ordinary tensor product of complexes, and the internal Hom, denoted by $\Map(-,-)$, is defined by adjointness. The reader may find the details in \cite[\S 6]{Murfet07}. Alternatively, since we will only use (i)-(iii) in the sequel, $(iv)$ may be safely skipped.

$(i) \Rightarrow (ii)$ In proving that $\Hom(F, \md{C})$ is acyclic we may, by Proposition \ref{prop:cotorsion_flat_resolution}, assume that $\md{C} = \qhom(\md{I}, \md{I}')$ for injective $\md{I}$ and $\md{I}'$. We have an adjunction isomorphism
\[
\Hom(F \otimes \md{I}, \md{I}') \cong \Hom(F, \qhom(\md{I}, \md{I}')) = \Hom(F, \md{C}).
\]
The left hand-side is acyclic by hypothesis, and therefore so is the right hand-side.

$(ii) \Rightarrow (iii)$ Fix a flat sheaf $\md{P}$. From Lemma \ref{lemma:hom_d_acyclic} we deduce vanishing of $[\Sigma^i \md{P}, F]$ for every $i \in \mathbb{Z}$. It remains to prove that $[F, \Sigma^i \md{P}] = 0$. By Lemma \ref{lemma:flat_sheaf_is_cotorsion_cpx} the flat sheaf $\md{P}$ is isomorphic, in $\kprof{X}$, to a bounded below complex $S$ of cotorsion flat sheaves, and we infer from Remark \ref{remark:iso_kprof_iff_he} that there is an isomorphism
\[
[F, \Sigma^i \md{P}] \cong [F, \Sigma^i S] \cong \Hom_{\kflat{X}}(F, \Sigma^i S).
\]
The complex $S$ can be written in $\K(X)$ as the homotopy limit of bounded complexes, each of which is a finite extension of cotorsion flat sheaves, so to prove that $[F, \Sigma^i \md{P}] = 0$ it is enough to show that $\Hom_{\K(X)}(F, \Sigma^i \md{C}) \cong H^i \Hom(F, \md{C})$ vanishes for cotorsion flat $\md{C}$ and $i \in \mathbb{Z}$, which is true by hypothesis.

$(iii) \Rightarrow (i)$ It follows from Lemma \ref{lemma:hom_d_acyclic} that $F$ is acyclic. Let $\md{E}$ be an injective cogenerator for the category of sheaves, and let $\md{I}$ be an injective sheaf. By Lemma \ref{lemma_appendix_qhomcotorsion}, $\md{C} = \qhom(\md{I}, \md{E})$ is cotorsion flat, so using Remark \ref{remark:iso_kprof_iff_he} and the hypothesis gives
\begin{align*}
H^i\Hom(F \otimes \md{I}, \md{E}) &\cong \Hom_{\K(X)}(F \otimes \md{I}, \Sigma^i \md{E})\\
&\cong \Hom_{\kflat{X}}(F, \Sigma^i\md{C})\\
&\cong [F, \Sigma^i \md{C}] = 0
\end{align*}
for any $i \in \mathbb{Z}$. We infer that $F \otimes \md{I}$ is acyclic, whence $F$ is N-totally acyclic.

To conclude, we prove directly that $(i) \Leftrightarrow (iv)$. Firstly, we prove that $F$ is N-totally acyclic if and only if $\Map(F, \md{C})$ is acyclic for every cotorsion flat sheaf $\md{C}$. By Proposition \ref{prop:cotorsion_flat_resolution} we may assume that any such $\md{C}$ is of the form $\qhom(\md{I}, \md{E})$ for an injective sheaf $\md{I}$ and injective cogenerator $\md{E}$. Let $\md{P}$ be any shift of a flat sheaf, and notice that
\begin{align*}
[\md{P}, \Map(F, \md{C})] &\cong [\md{P} \otimes F, \qhom(\md{I},\md{E})] &&\quad \text{(Adjointness)}\\
&\cong \Hom_{\kflat{X}}(\md{P} \otimes F, \qhom(\md{I}, \md{E})) &&\quad \text{(Remark \ref{remark:iso_kprof_iff_he})}\\
&\cong \Hom_{\K(X)}(F \otimes \md{I}, \qhom(\md{P}, \md{E})). &&\quad \text{(Adjointness)}
\end{align*}
If $F$ is N-totally acyclic then $F \otimes \md{I}$ is acyclic, and as $\qhom(\md{P}, \md{E})$ is always injective, we have $[\md{P}, \Map(F, \md{C})] = 0$. Hence $\Map(F, \md{C})$ is acyclic, by Lemma \ref{lemma:hom_d_acyclic}. Conversely, if $\Map(F, \md{C})$ is acyclic for every cotorsion flat sheaf $\md{C}$, then we may take $\md{P} = \cat{O}_X$ in the above to see that $F$ is N-totally acyclic.

It is now clear that $(iv) \Rightarrow (i)$, and it remains to prove the reverse implication. Let $\md{P}$ be a flat sheaf which, by Lemma \ref{lemma:flat_sheaf_is_cotorsion_cpx}, is isomorphic in $\kprof{X}$ to a bounded below complex $S$ of cotorsion flat sheaves. We write $S$ as the homotopy limit, in $\K(X)$, of the sequence of its brutal truncations (denoted here by $\sigma_{\le n}S$)
\[
\cdots \lto \sigma_{\le 2} S \lto \sigma_{\le 1} S \lto \sigma_{\le 0} S.
\]
These are bounded complexes of cotorsion flat sheaves, and by Lemma \ref{lemma:cotorsion_flat_closed_products} any product, in $\K(X)$, of such complexes is a complex of cotorsion flat sheaves, so $S$ is the homotopy limit of the $\sigma_{\le n} S$ in $\kflat{X}$. The Verdier quotient functor
\[
\kflat{X} \lto \kprof{X}
\]
preserves products \cite[Theorem 5.5]{Murfet07} and consequently homotopy limits, so we have, finally, that $S$ is the homotopy limit of the $\sigma_{\le n} S$ in $\kprof{X}$. The functor $\Map(F,-)$ has a left adjoint and thus preserves homotopy limits, so $\Map(F,\md{P})$ is the homotopy limit of the complexes $\Map(F, \sigma_{\le n} S)$. The class of acyclic complexes in $\kprof{X}$ is closed under homotopy limits (Remark \ref{remark:ac_colocalising}), and every $\sigma_{\le n} S$ is a finite extension of cotorsion flat sheaves, so we will be done if we can prove that $\Map(F, \md{C})$ is acyclic for any cotorsion flat sheaf $\md{C}$. But this was accomplished above.
\end{proof}

\begin{remark} If $X = \Spec(A)$ is affine, and $P$ a complex of finitely generated projective $A$-modules, then $\Map(P, A)$ is isomorphic, in $\kprof{A}$, to the ordinary dual complex $\Hom_A(P,A)$ \cite[Corollary 6.13]{Murfet07}. 
\end{remark}

\begin{remark}\label{remark:orthogonals_intersecting} Condition $(iii)$ of Theorem \ref{theorem:description_tac} can be stated as an equality
\[
\kprofl{\tac}{X} = (\Flat X)^{\perp} \cap {}^{\perp}(\Flat X)
\]
of subcategories of $\kprof{X}$. In the terminology of Section \ref{section:total_acyclicity}, a complex of flat sheaves is N-totally acyclic precisely when it is totally acyclic in $\kprof{X}$ relative to the class of flat sheaves. This is analogous to the description, for a ring $A$, of the classes of totally acyclic complexes of projective and injective $A$-modules as intersections
\begin{gather*}
\kprojl{\tac}{A} = (\Proj A)^{\perp} \cap {}^{\perp}(\Proj A),\\
\kinjl{\tac}{A} = (\Inj A)^{\perp} \cap {}^{\perp}(\Inj A)
\end{gather*}
of orthogonals in the respective homotopy categories $\kproj{A}$ and $\kinj{A}$. In fact, this characterisation of $\kinjl{\tac}{A}$ is also valid over non-affine schemes; see \cite{Krause05}.
\end{remark}

\subsection{Two kinds of total acyclicity}\label{subsection:two_kinds} Let $A$ be a noetherian ring. Two notions of total acyclicity apply to a complex of projective $A$-modules: N-total acyclicity as a complex of flat $A$-modules, and total acyclicity as a complex of projective $A$-modules. Typically, these will agree:

\begin{lemma}\label{lemma:tac_iff_completeflat} Let $P$ be a complex of projective $A$-modules. Then
\begin{itemize}
\item[(i)] Suppose that $A$ has finite Krull dimension. If $P$ is totally acyclic, then it is also N-totally acyclic.
\item[(ii)] If $P$ is N-totally acyclic, then $\Hom_A(P,Q)$ is acyclic for every flat $A$-module $Q$. In particular, $P$ is totally acyclic.
\end{itemize}
\end{lemma}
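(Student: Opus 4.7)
My plan is to handle (ii) using the orthogonality structure of the pure derived category, and (i) by a direct dimension-shift using the bounded flat dimension of injective modules.

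For (ii), fix a flat $A$-module $Q$ and regard it as a complex concentrated in degree zero. I claim $Q$ lies in $\kflatl{\tac}{A}^{\perp}$; granted this, $\Hom_{\kflat{A}}(\Sigma^i P, Q)$ vanishes for every $i\in\mathbb{Z}$, and since this group computes the cohomology of the $\Hom$-complex, $\Hom_A(P,Q)$ is acyclic. To establish the claim, apply Lemma \ref{lemma:flat_sheaf_is_cotorsion_cpx} to $Q$: the iterated cotorsion flat resolution of the proof of that lemma produces a triangle
\[
Q \lto S \lto E \lto \Sigma Q
\]
in $\kflat{A}$ with $S$ a bounded-below complex of cotorsion flat modules and $E$ pure acyclic. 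By Lemma \ref{lemma:bounded_below_cot_orthog}, $S$ lies in $\kflatl{\tac}{A}^{\perp}$, so $\Hom_{\kflat{A}}(\Sigma^i P, S)=0$ because $P$ is N-totally acyclic. Since $P$ is a complex of projectives, Neeman's theorem $\kproj{A}^{\perp}=\kflatl{\pac}{A}$ (recalled in Section \ref{section:review_of_pure}) gives $\Hom_{\kflat{A}}(\Sigma^i P, E)=0$. The long exact sequence obtained by applying $\Hom_{\kflat{A}}(\Sigma^i P,-)$ to the triangle then forces $\Hom_{\kflat{A}}(\Sigma^i P, Q)=0$ for every $i$, as required. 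The ``in particular'' clause is immediate, since every projective module is flat.

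For (i), let $d=\dim A<\infty$. By a theorem of Gruson--Raynaud (or Jensen), every injective $A$-module $I$ has flat dimension at most $d$. Set $K^n=Z^n(P)$; acyclicity of $P$ gives short exact sequences $0\to K^n \to P^n \to K^{n+1}\to 0$ with each $P^n$ projective, so the long exact $\operatorname{Tor}$ sequence yields isomorphisms
\[
\operatorname{Tor}_1^A(K^{n+1}, I) \cong \operatorname{Tor}_2^A(K^{n+2}, I) \cong \cdots \cong \operatorname{Tor}_{d+1}^A(K^{n+d+1}, I) = 0
\]
for every $n$. Consequently each sequence $0\to K^n\otimes_A I \to P^n\otimes_A I \to K^{n+1}\otimes_A I \to 0$ remains exact, and splicing these realises $P\otimes_A I$ as an acyclic complex, proving N-total acyclicity. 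The argument uses only that $P$ is an acyclic complex of projectives, so one obtains the slightly stronger statement that any such complex is N-totally acyclic when $A$ has finite Krull dimension.

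The main obstacle is locating the right resolution in (ii): neither Neeman's orthogonality nor Lemma \ref{lemma:bounded_below_cot_orthog} alone kills $\Hom_{\kflat{A}}(\Sigma^i P, Q)$ for a general flat $Q$, but the two vanishing statements apply to complementary terms of the cotorsion flat triangle and together push $Q$ into $\kflatl{\tac}{A}^{\perp}$. Part (i) is then essentially a diagram-chase once Gruson--Raynaud is available.
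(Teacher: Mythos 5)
Your part (ii) is correct and, in substance, the same as the paper's argument: the paper routes the computation through the quotient $\kprof{A}$, using Neeman's result that complexes of projectives lie in ${}^{\perp}\kflatl{\pac}{A}$ to identify $H^i\Hom_A(P,Q)$ with $\Hom_{\kprof{A}}(P,\Sigma^i Q)$ and then quoting Theorem \ref{theorem:description_tac}, whereas you unwind the relevant part of that theorem's proof, applying Neeman's orthogonality and Lemma \ref{lemma:bounded_below_cot_orthog} to the two ends of the cotorsion flat triangle $Q \to S \to E \to \Sigma Q$. One small imprecision: what you actually establish is that $Q$ is right orthogonal to N-totally acyclic complexes of \emph{projectives}, not the announced claim $Q \in \kflatl{\tac}{A}^{\perp}$, since the vanishing of $\Hom_{\kflat{A}}(\Sigma^i P, E)$ uses projectivity of the terms of $P$; but that is all the lemma needs.

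Part (i), however, contains a genuine gap. The statement you attribute to Gruson--Raynaud (or Jensen) --- that every injective $A$-module has flat dimension at most $d=\dim A$ --- is not their theorem and is false in general: their result is that every \emph{flat} module has \emph{projective} dimension at most $d$. For a counterexample to your statement take $A$ a non-Gorenstein Artinian local ring, so $d=0$: a finitely generated module of finite flat dimension over such a ring is free (finite flat dimension equals finite projective dimension for finitely generated modules, and Auslander--Buchsbaum forces projective dimension zero), and $E(A/\mathfrak{m})$ is free only when $A$ is Gorenstein; hence $E(A/\mathfrak{m})$ has infinite flat dimension. Note also that your argument never uses the hypothesis that $P$ is totally acyclic, and your concluding ``stronger statement'' --- that every acyclic complex of projectives is N-totally acyclic whenever $\dim A<\infty$ --- is, by the paper's own Corollary \ref{corollary:gorenstein_tac_ac_local}, equivalent to $A$ being Gorenstein, so it cannot follow from finiteness of the Krull dimension alone. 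The correct argument (the one behind the paper's citation of \cite[Proposition 5.1.4]{Christensen00}) dualises instead: $\Hom_{\mathbb{Z}}(P\otimes_A I,\mathbb{Q}/\mathbb{Z})\cong\Hom_A(P,I^{\vee})$, where the character module $I^{\vee}$ of the injective module $I$ is flat and hence of projective dimension at most $d$ by Gruson--Raynaud--Jensen; total acyclicity of $P$ together with a dimension shift along a finite projective resolution of $I^{\vee}$ (legitimate because each $P^n$ is projective, so $\Hom_A(P,-)$ sends short exact sequences of modules to short exact sequences of complexes) shows that $\Hom_A(P,I^{\vee})$ is acyclic, and faithful injectivity of $\mathbb{Q}/\mathbb{Z}$ then yields acyclicity of $P\otimes_A I$.
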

\begin{proof}
(i) This is well-known; see \cite[Proposition 5.1.4]{Christensen00}\footnote{Note that this result is missing a hypothesis of finite Krull dimension, which is used in the proof.}. The proof in \emph{loc.cit.}\ uses the deep fact, due to Gruson and Jensen \cite[Part II, Corollary 3.2.7]{Raynaud71}, that over rings with finite Krull dimension flat modules have finite projective dimension.

(ii) Neeman has shown in \cite{Neeman08} that any complex of projective $A$-modules $P$ belongs to the orthogonal ${}^{\perp} \ppe{A}$ in $\kflat{A}$. We deduce that, for any flat $A$-module $Q$, there is an isomorphism (see Section \ref{remark:orthogonals})
\[
H^i \Hom_A(P,Q) \cong \Hom_{\kflat{A}}(P, \Sigma^i Q) \cong \Hom_{\kprof{A}}(P, \Sigma^i Q).
\]
This vanishes by Theorem \ref{theorem:description_tac}, so $P$ is totally acyclic. Note that for affine schemes, the proof of Theorem \ref{theorem:description_tac}$(i)\Rightarrow(iii)$ consists of standard facts about cotorsion flat modules; the only new ingredient is the aforementioned result of Neeman from \cite{Neeman08}.
\end{proof}

\begin{remark}\label{remark:comparison_flat_proj_tac} In \cite{Neeman08} Neeman proves that the inclusion $\kproj{A} \lto \kflat{A}$ has a right adjoint, and that the right orthogonal to $\kproj{A}$ consists of pure acyclic complexes. It follows that for any complex $F$ of flat $A$-modules there is a triangle
\[
P \lto F \lto E \lto \Sigma P
\]
in $\kflat{A}$ with $P$ a complex of projective $A$-modules and $E$ pure acyclic. It is clear that $F$ is N-totally acyclic if and only if $P$ is N-totally acyclic and, if $\dim(A) < \infty$, then Lemma \ref{lemma:tac_iff_completeflat} implies that this happens precisely when $P$ is totally acyclic.
\end{remark}

\begin{lemma}\label{lemma:equiv_ktac_ntac} If $A$ has finite Krull dimension then the equivalence
\[
\kproj{A} \xlto{\sim} \kprof{A}
\]
of Neeman \textup{(}see Section \ref{section:notation}\textup{)} restricts to an equivalence
\[
\kprojl{\tac}{A} \xlto{\sim} \kprofl{\tac}{A}.
\]
\end{lemma}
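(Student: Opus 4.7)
The plan is to exploit Neeman's equivalence $\Phi\colon \kproj{A} \xlto{\sim} \kprof{A}$ recalled in Section \ref{section:notation}, and to verify that it restricts appropriately. Since $\Phi$ is already fully faithful, the restriction to $\kprojl{\tac}{A}$ will automatically be fully faithful, so only two things need to be checked: that $\Phi$ carries $\kprojl{\tac}{A}$ into $\kprofl{\tac}{A}$, and that every object of $\kprofl{\tac}{A}$ lies in the essential image of this restriction.

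For the first point, let $P \in \kprojl{\tac}{A}$. By Lemma \ref{lemma:tac_iff_completeflat}(i), which is exactly where the hypothesis $\dim A < \infty$ is used, $P$ is N-totally acyclic as a complex of flat modules, so $P \in \kflatl{\tac}{A}$. Its image in $\kprof{A} = \kflat{A}/\kflatl{\pac}{A}$ therefore lies in $\kprofl{\tac}{A}$, which was defined as the essential image of $\kflatl{\tac}{A}/\kflatl{\pac}{A} \hookrightarrow \kprof{A}$.

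For essential surjectivity, let $F \in \kprofl{\tac}{A}$. By essential surjectivity of $\Phi$ there exists a complex of projectives $P$ with $\Phi(P) \cong F$ in $\kprof{A}$. By Definition \ref{defn:ntac}, $F$ is isomorphic in $\kprof{A}$ to some N-totally acyclic complex $F'$ of flat modules, so the same is true of $P$. By the observation recorded just before Definition \ref{defn:ntac}, a complex of flat modules isomorphic in $\kprof{A}$ to an N-totally acyclic complex is itself N-totally acyclic. Hence $P$, regarded as a complex of flat modules, is N-totally acyclic. Now Lemma \ref{lemma:tac_iff_completeflat}(ii) applies (no Krull dimension hypothesis is required at this step) and yields that $P$ is totally acyclic, so $P \in \kprojl{\tac}{A}$ and $F \cong \Phi(P)$ lies in the essential image, as desired.

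There is no real obstacle: Lemma \ref{lemma:tac_iff_completeflat} has been set up precisely so that parts (i) and (ii) bridge the two notions of total acyclicity in the two directions needed, with the finite Krull dimension hypothesis entering only through part (i).
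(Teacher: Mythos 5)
Your proof is correct and is essentially the paper's argument: the paper's one-line proof cites Lemma \ref{lemma:tac_iff_completeflat} together with Remark \ref{remark:comparison_flat_proj_tac}, and your write-up simply unpacks the same two ingredients (part (i) for the image landing in $\kprofl{\tac}{A}$, part (ii) plus the isomorphism-invariance of N-total acyclicity in $\kprof{A}$ for essential surjectivity). The only cosmetic difference is that you invoke essential surjectivity of Neeman's equivalence directly where the Remark uses the triangle $P \to F \to E \to \Sigma P$; these amount to the same thing since $E$ is pure acyclic.
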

\begin{proof}
This follows from Lemma \ref{lemma:tac_iff_completeflat} and Remark \ref{remark:comparison_flat_proj_tac}.
\end{proof}

\subsection{Existence of adjoints}\label{section:existence_adjoints} Let $\cat{T}$ be a triangulated category with coproducts and $\cat{S}$ a localising subcategory of $\cat{T}$. The theory of Bousfield localisation (see \cite[Ch.\ 9]{NeemanBook}) states that the inclusion $\cat{S} \lto \cat{T}$ has a right adjoint if and only if there exists, for every $M \in \cat{T}$, a triangle $M' \lto M \lto M'' \lto \Sigma M'$ in $\cat{T}$, with $M' \in \cat{S}$ and $M'' \in \cat{S}^{\perp}$. Such a triangle is unique, up to isomorphism, and the right adjoint of the inclusion $\cat{S} \lto \cat{T}$ is defined by sending $M$ to the object $M'$ occurring in this triangle.

Let $A$ be a noetherian ring with a dualising complex. J\o rgensen proves in \cite{Jorgensen07} that the inclusion $\kprojl{\tac}{A} \lto \kproj{A}$ has a right adjoint; that is, for any complex $P$ of projective $A$-modules, there exists a triangle
\[
P' \lto P \lto P'' \lto \Sigma P'
\]
in $\kproj{A}$, with $P'$ totally acyclic, and $P''$ in the orthogonal $\kprojl{\tac}{A}^{\perp}$. J\o rgensen uses this triangle to deduce, among other things, the existence of Gorenstein projective precovers for arbitrary $A$-modules. In this section we return to the full generality of complexes of sheaves on a semi-separated noetherian scheme $X$, and give a generalisation of J\o rgensen's result, with no assumption on the existence of a dualising complex.

Recall that a \emph{homological functor} on a triangulated category $\cat{T}$ is an additive functor $H: \cat{T} \lto \ab$ which sends triangles in $\cat{T}$ to long exact sequences (here $\ab$ denotes the category of abelian groups). If $H$ sends small coproducts in $\cat{T}$ to coproducts in $\ab$, we say that $H$ is \emph{coproduct-preserving}. We define $\Ker(H)$ to be the full subcategory of objects $Q \in \cat{T}$ such that $H(\Sigma^n Q) = 0$ for every $n \in \mathbb{Z}$.

\begin{theorem}[Margolis]\label{theorem:margolis} Let $\cat{T}$ be a compactly generated triangulated category. For any coproduct-preserving homological functor $H$ on $\cat{T}$, the inclusion $\Ker(H) \lto \cat{T}$ has a right adjoint.
\end{theorem}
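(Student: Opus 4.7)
The plan is to verify the Bousfield localization criterion recalled at the start of Section~\ref{section:existence_adjoints}: for each $M \in \cat{T}$, produce a triangle $M' \lto M \lto M'' \lto \Sigma M'$ with $M' \in \Ker(H)$ and $M'' \in \Ker(H)^{\perp}$. The right adjoint is then given by $M \mapsto M'$.

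The first step is routine: I would check that $\Ker(H)$ is a localizing subcategory of $\cat{T}$. It is stable under shifts by the definition of $\Ker(H)$; stable under extensions via the long exact sequence that $H$ extracts from a triangle, using that $H$ is homological; and stable under set-indexed coproducts because $H$ is coproduct-preserving.

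For the approximation triangle I would run a transfinite small object argument. Fix a set $\cat{G}$ of compact generators of $\cat{T}$, and let $\cat{A}$ be the set of finite iterated mapping cones assembled from objects of the form $\Sigma^n G$ with $G \in \cat{G}$ that happen to lie in $\Ker(H)$; each $A \in \cat{A}$ is compact. Setting $M_0 = M$, inductively form $M_{\alpha+1}$ as the cofibre of the universal map $\bigoplus_{A \in \cat{A},\, f\colon A \lto M_\alpha} A \lto M_\alpha$, and take homotopy colimits at limit ordinals. For a regular cardinal $\kappa$ larger than the cardinality of the relevant $\Hom$-sets, compactness of the $A$'s forces $\Hom_{\cat{T}}(A, M_\kappa) = 0$ for every $A \in \cat{A}$. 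Set $M'' := M_\kappa$ and let $M'$ be the fibre of $M \lto M''$; since every successive layer in the tower is a coproduct of objects of $\cat{A} \subseteq \Ker(H)$, and $\Ker(H)$ is closed under extensions and homotopy colimits, $M' \in \Ker(H)$.

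The main obstacle is upgrading the vanishing $\Hom_{\cat{T}}(A, M'') = 0$ for $A \in \cat{A}$ to $\Hom_{\cat{T}}(B, \Sigma^n M'') = 0$ for all $B \in \Ker(H)$ and $n \in \mathbb{Z}$; equivalently, showing that $\cat{A}$ generates $\Ker(H)$ as a localizing subcategory. Here I would use the compact generation of $\cat{T}$ together with the coproduct-preservation of $H$: any $B \in \Ker(H)$ admits a cellular tower whose successive cofibres are coproducts of shifts of compact generators, and coproduct-preservation of $H$, combined with a careful truncation based on the long exact sequence and the fact that the global $H$-groups of $B$ all vanish, allows one to refine the tower so that its layers lie in $\cat{A}$. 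Commuting the resulting homotopy colimit past $\Hom_{\cat{T}}(-, M'')$ then yields $\Hom_{\cat{T}}(B, M'') = 0$, completing the verification.
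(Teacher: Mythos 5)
Your construction of the Bousfield triangle breaks down at the step you yourself flag as the main obstacle, and the gap cannot be repaired along the lines you sketch. Your set $\cat{A}$ is (up to summands) $\cat{T}^c \cap \Ker(H)$, and the small object argument only yields $M'' \in \cat{A}^{\perp} = \Loc(\cat{A})^{\perp}$ with $M' \in \Loc(\cat{A})$. For the localization criterion you need $M'' \in \Ker(H)^{\perp}$, i.e.\ that $\Loc(\cat{A}) = \Ker(H)$ --- but the kernel of a coproduct-preserving homological functor is \emph{not} in general generated as a localizing subcategory by the compact objects of $\cat{T}$ it contains. The proposed fix fails because $H$-acyclicity of $B$ is a global condition that is not inherited by the layers of a cellular tower of $B$, and no truncation procedure forces the layers into $\Ker(H)$; if it did, every such localization would be a finite (compactly generated) localization. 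The classical counterexample is $H = \pi_0(H\mathbb{F}_p \wedge -)$ on the stable homotopy category: Bousfield localization at $H\mathbb{F}_p$ is not finite, so the $H\mathbb{F}_p$-acyclics strictly contain the localizing subcategory generated by the finite acyclics. The paper itself reflects this distinction: Corollary \ref{corollary:ktac_wellgen} deduces only that $\Ker(H) = \kprofl{\tac}{X}$ is \emph{well-generated}, and compact generation is established in Section \ref{section:compact} only under the extra hypothesis of a dualising complex, by an entirely different identification of $\kprofl{\tac}{X}$ as $\cat{C}^{\perp}$ for a set $\cat{C}$ of compacts.

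The paper does not reprove the theorem; it cites Margolis and Krause. The actual argument there is a corrected version of yours: one shows $\Ker(H) = \Loc(\cat{S})$ for a \emph{set} $\cat{S}$ of $\alpha$-compact ($\alpha$-small) objects for a sufficiently large regular cardinal $\alpha$ depending on $H$ (this uses the factorization of $H$ through the universal coproduct-preserving homological functor and a cardinality bound, in the spirit of Bousfield's original lemma that every nonzero $E$-acyclic spectrum contains a nonzero $E$-acyclic subspectrum of bounded size). This makes $\Ker(H)$ well-generated, and then Brown representability for well-generated categories (or equivalently your transfinite killing construction run with $\alpha$-small rather than compact cells) produces the right adjoint. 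So the shape of your argument is right, but insisting on genuinely compact cells is exactly the mistake the cardinal $\alpha$ is there to avoid.
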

\begin{proof}
This was proved by Margolis \cite[\S 7]{Margolis83} in the category of spectra, and the same proof works in a compactly generated triangulated category; see \cite[\S 6]{Krause08}.
\end{proof}

\begin{theorem}\label{theorem:2} There is a coproduct-preserving homological functor on $\kprof{X}$ with kernel $\kprofl{\tac}{X}$, so the inclusion $\kprofl{\tac}{X} \lto \kprof{X}$ has a right adjoint.
\end{theorem}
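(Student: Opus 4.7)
My plan is to apply Margolis's theorem (Theorem~\ref{theorem:margolis}) by constructing a coproduct-preserving homological functor $H \colon \kprof{X} \to \ab$ whose kernel is exactly $\kprofl{\tac}{X}$. The key reduction comes from combining Theorem~\ref{theorem:description_tac}(i) with Lemma~\ref{lemma:acyclic_iff_tensor_indecomp}: a complex of flat sheaves $F$ is N-totally acyclic if and only if $F$ is acyclic and $F \otimes E(x)$ is acyclic for every $x \in X$. Crucially, $\{ E(x) : x \in X \}$ is indexed by a \emph{set}, which is what makes a single explicit homological functor possible.

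For each $x \in X$ and $n \in \mathbb{Z}$ I would consider the two functors $F \mapsto H^n(F)$ and $F \mapsto H^n(F \otimes E(x))$ from $\kflat{X}$ to $\qcmod{X}$. Both send pure acyclic complexes to zero (by definition a pure acyclic $F$ satisfies $F \otimes \md{A}$ acyclic for every sheaf $\md{A}$), so they descend to the Verdier quotient $\kprof{X} = \kflat{X}/\kflatl{\pac}{X}$. They are coproduct-preserving, since cohomology and $-\otimes E(x)$ both commute with coproducts, and they are homological, since every triangle in $\kprof{X}$ is isomorphic to the image of a degree-wise split short exact sequence of complexes of flat sheaves, which remains exact after tensoring with $E(x)$ and thus produces a long exact cohomology sequence. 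To land in $\ab$ as Margolis requires, compose with an exact, coproduct-preserving, faithful functor $K \colon \qcmod{X} \to \ab$; a concrete choice is
\[
K(\md{F}) := \bigoplus_{\alpha} \Gamma(U_\alpha, \md{F})
\]
for a (finite, by noetherianity) affine open cover $\{U_\alpha\}$ of $X$. Exactness uses that each $U_\alpha$ is affine; coproduct-preservation uses quasi-compactness; and faithfulness follows from the fact that a quasi-coherent sheaf is zero iff its sections over an affine cover vanish.

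With these pieces in hand I would define
\[
H(F) := \bigoplus_{n \in \mathbb{Z}} K\bigl( H^n F \bigr) \;\oplus\; \bigoplus_{\substack{n \in \mathbb{Z}\\ x \in X}} K\bigl( H^n(F \otimes E(x)) \bigr).
\]
A direct sum of coproduct-preserving homological functors is again of the same type, so $H$ satisfies the hypotheses of Theorem~\ref{theorem:margolis}; and $H(F) = 0$ iff $F$ is acyclic and $F \otimes E(x)$ is acyclic for every $x$, which by Lemma~\ref{lemma:acyclic_iff_tensor_indecomp} is precisely the condition $F \in \kprofl{\tac}{X}$. Margolis then immediately produces the right adjoint. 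The only mildly delicate step is the bookkeeping that confirms $F \mapsto F \otimes E(x)$ and the resulting cohomology functors descend correctly through the Verdier localization defining $\kprof{X}$; the mathematical content is essentially the point-indexed characterization of N-total acyclicity already established in Lemma~\ref{lemma:acyclic_iff_tensor_indecomp}.
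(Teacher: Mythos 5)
Your proposal is correct and follows essentially the same route as the paper: characterise N-total acyclicity via the set of indecomposable injectives $E(x)$ (Lemma \ref{lemma:acyclic_iff_tensor_indecomp}), observe that $H^n(-)$ and $H^n(-\otimes E(x))$ kill pure acyclic complexes and hence descend to coproduct-preserving homological functors on the compactly generated category $\kprof{X}$, and apply Theorem \ref{theorem:margolis}. The only differences are cosmetic: the paper uses just $H^0$ of $F$ and of $F \otimes \bigl(\bigoplus_x E(x)\bigr)$ (relying on the convention that $\Ker(H)$ quantifies over all shifts), whereas you sum over all $n$ and are slightly more scrupulous about landing in $\ab$ via sections over an affine cover.
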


\begin{proof} Using the notation of Definition \ref{definition:injective_indecomp}, take the coproduct $\md{I} = \bigoplus_{x \in X} E(x)$ of the set of indecomposable injective sheaves on $X$. The tensor product of an injective sheaf with a flat sheaf is injective, so there is a functor $- \otimes \md{I}: \kflat{X} \lto \kinj{X}$. It is straightforward to check that this functor vanishes on pure acyclic complexes; see \cite[Lemma 8.2]{Murfet07}. By definition $\kprof{X}$ is the Verdier quotient of $\kflat{X}$ by the full subcategory of pure acyclic complexes, so there is an induced functor
\[
- \otimes \md{I}: \kprof{X} \lto \kinj{X}.
\]
In particular, $H^0(- \otimes \md{I})$ gives a well-defined homological functor on $\kprof{X}$. Taking cohomology of objects in $\kprof{X}$ is also well-defined, so
\begin{gather*}
H: \kprof{X} \lto \ab,\\
H(F) := H^0( F ) \oplus H^0( F \otimes \md{I} ) = H^0(F) \oplus \bigoplus_{x \in X} H^0( F \otimes E(x) )
\end{gather*}
is a homological functor on $\kprof{X}$. By Lemma \ref{lemma:acyclic_iff_tensor_indecomp} the kernel of $H$ is $\kprofl{\tac}{X}$. The category $\kprof{X}$ is compactly generated \cite[Theorem 4.10]{Murfet07} so it follows from Theorem \ref{theorem:margolis} that the inclusion $\kprofl{\tac}{X} \lto \kprof{X}$ has a right adjoint.
\end{proof}

\begin{theorem}\label{theorem:jorgensen} If $A$ is a noetherian ring of finite Krull dimension then the inclusion
\[
\kprojl{\tac}{A} \lto \kproj{A}
\]
has a right adjoint, and every $A$-module has a Gorenstein projective precover.
\end{theorem}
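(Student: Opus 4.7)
The proof splits into two parts; I address each in turn.

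\emph{Existence of the right adjoint.} Taking $X = \Spec(A)$ in Theorem \ref{theorem:2} yields a right adjoint to the inclusion $\kprofl{\tac}{A} \hookrightarrow \kprof{A}$. The hypothesis that $A$ has finite Krull dimension is exactly what activates Lemma \ref{lemma:equiv_ktac_ntac}, which states that Neeman's equivalence $\kproj{A} \xlto{\sim} \kprof{A}$ restricts to an equivalence $\kprojl{\tac}{A} \xlto{\sim} \kprofl{\tac}{A}$. Transporting the adjunction across this equivalence produces a right adjoint to $\kprojl{\tac}{A} \hookrightarrow \kproj{A}$.

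\emph{Gorenstein projective precovers.} For this I would adapt the argument of J\o rgensen \cite[\S 3]{Jorgensen07}. Given an $A$-module $M$, choose a projective resolution $\tilde{P}$ and view it in $\kproj{A}$ as a complex concentrated in non-positive degrees with $H^0(\tilde{P}) = M$. The right adjoint just established yields, by Bousfield localisation, a triangle
\[
T \lto \tilde{P} \lto C \lto \Sigma T
\]
in $\kproj{A}$, with $T$ totally acyclic and $C \in \kprojl{\tac}{A}^{\perp}$. From this triangle J\o rgensen extracts a Gorenstein projective module $G$ (built from suitable syzygies of $T$ together with terms of $\tilde{P}$) equipped with a surjection $\phi: G \to M$. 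Verification of the precover property rests on the orthogonality $\Hom_{\kproj{A}}(T', C) = 0$ for $T' \in \kprojl{\tac}{A}$: given any Gorenstein projective $G'$ with $G' = Z^0(T')$ and any map $\psi: G' \to M$, standard projective lifting represents $\psi$ by a chain map $T' \to \tilde{P}$, and applying $\Hom_{\kproj{A}}(T', -)$ to the triangle forces this map to factor through $T$, which after passing to appropriate syzygies yields a lift of $\psi$ along $\phi$.

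The main obstacle for this proof was establishing the right adjoint in the absence of a dualising complex, and that is precisely what Theorem \ref{theorem:2} accomplishes. Once the adjoint is in hand, the construction of the precover is formal, so the theorem properly generalises \cite[Theorem 3.2]{Jorgensen07} by relaxing the dualising-complex hypothesis to finite Krull dimension.
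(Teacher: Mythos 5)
Your proposal is correct, and its first half is precisely the route the paper acknowledges in the opening sentence of its proof: transport the adjunction of Theorem \ref{theorem:2} across the equivalence of Lemma \ref{lemma:equiv_ktac_ntac}, with finite Krull dimension entering exactly where you say it does. The paper, however, then remarks that in the affine case the category $\kprof{A}$ is superfluous and instead gives a direct argument inside $\kproj{A}$: it considers the coproduct-preserving homological functor $H^0(-) \oplus H^0(- \otimes I)$ on $\kproj{A}$, where $I = \oplus_{\mf{p} \in \Spec(A)} E(A/\mf{p})$, observes that its kernel is the class of N-totally acyclic complexes, which coincides with $\kprojl{\tac}{A}$ by Lemma \ref{lemma:tac_iff_completeflat} (again using finite Krull dimension), and then applies Margolis' Theorem \ref{theorem:margolis} together with J{\o}rgensen's result that $\kproj{A}$ is compactly generated. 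The direct argument buys a self-contained proof that never leaves the homotopy category of projectives; your version buys uniformity with the scheme-theoretic Theorem \ref{theorem:2} at the cost of routing through the pure derived category. For the precover statement the paper simply defers to J{\o}rgensen's argument, which is exactly what you sketch; your outline of how the Gorenstein projective precover is extracted from the Bousfield triangle is consistent with it (the paper cites \S 2 of J{\o}rgensen's article rather than \S 3, a citation detail only).
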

\begin{proof}
The result follows from Theorem \ref{theorem:2} and the identifications of Lemma $\ref{lemma:equiv_ktac_ntac}$, but in the affine case the category $\kprof{X}$ is superfluous, so let us give a direct argument. Set $I = \oplus_{\mf{p} \in \Spec(A)} E(A/\mf{p})$ and consider the homological functor
\[
H^0(-) \oplus H^0(- \otimes I): \kproj{A} \lto \ab.
\]
The kernel is precisely the subcategory of N-totally acyclic complexes, which by Lemma \ref{lemma:tac_iff_completeflat} agrees with the subcategory $\kprojl{\tac}{A}$. Since $\kproj{A}$ is compactly generated \cite[Theorem 2.4]{Jorgensen}, we conclude by Theorem \ref{theorem:margolis} that the inclusion $\kprojl{\tac}{A} \lto \kproj{A}$ has a right adjoint. It now follows from the clever argument of J\o rgensen in \cite[\S 2]{Jorgensen07} (see also \cite[\S 5.6]{Buchweitz}) that Gorenstein projective precovers exist for $A$.
\end{proof}

\begin{remark} The proof of the Theorem \ref{theorem:jorgensen} is very similar to the proof of J\o rgensen's \cite[Proposition 1.9]{Jorgensen07}. We are able to avoid the hypothesis of a dualising complex in \emph{loc.cit.}\ by using Neeman's \cite{Neeman08} and working with N-totally acyclic complexes of flat modules, which are more suited to the argument than totally acyclic complexes of projectives (ultimately, because the defining condition of the former involves the tensor product, while the latter is defined using Hom).
\end{remark}

\begin{corollary}\label{corollary:ktac_wellgen} $\kprofl{\tac}{X}$ is well-generated.
\end{corollary}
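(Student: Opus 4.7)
The plan is to deduce the result directly from the machinery already assembled, without any further geometric input. The key inputs are: $\kprof{X}$ is compactly generated (and in particular well-generated) by \cite[Theorem 4.10]{Murfet07}; and by Theorem \ref{theorem:2} the subcategory $\kprofl{\tac}{X}$ arises as the kernel of a coproduct-preserving homological functor $H : \kprof{X} \lto \ab$, and the inclusion $\kprofl{\tac}{X} \hookrightarrow \kprof{X}$ admits a right adjoint.

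I would invoke the following general principle of Krause: in a well-generated triangulated category, the kernel of a coproduct-preserving cohomological (equivalently homological, after shifting) functor to $\ab$ is itself a well-generated localising subcategory. This is proved by expressing such a kernel as the localising subcategory generated by a set of $\alpha$-compact objects (for some regular cardinal $\alpha$), using Brown representability for the dual to produce enough objects witnessing the vanishing condition, and then applying Neeman's result \cite[Theorem 7.2.1]{NeemanBook} that a localising subcategory of a well-generated triangulated category generated by a set is well-generated. The existence of the right adjoint, already supplied by Theorem \ref{theorem:2}, means we are precisely in the situation to which this principle applies.

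Step-by-step, the plan is: first, recall that $\kprof{X}$ is well-generated; second, identify $\kprofl{\tac}{X}$ with $\Ker(H)$ via the functor $H$ built in the proof of Theorem \ref{theorem:2}; third, quote the cited theorem of Krause/Neeman to conclude. Given the length of the chain of intervening theorems already established (existence of the homological functor $H$, compact generation of $\kprof{X}$, the right adjoint from Theorem \ref{theorem:margolis}), this should be a very short argument.

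The main obstacle is locating and citing the precise form of the ambient theorem: namely, that kernels of coproduct-preserving homological functors on well-generated triangulated categories are well-generated. Everything else is an immediate consequence of results already in this paper.
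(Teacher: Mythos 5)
Your proposal is correct and follows exactly the paper's own argument: $\kprof{X}$ is compactly generated, $\kprofl{\tac}{X}$ is the kernel of the coproduct-preserving homological functor $H$ from Theorem \ref{theorem:2}, and the conclusion is the general result of Krause (the paper cites \cite[Theorem 6.10]{Krause08}) that such kernels are well-generated. The right adjoint is not actually needed as an input for this step, but otherwise your route and the paper's coincide.
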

\begin{proof}
The category $\kprof{X}$ is compactly generated by \cite[Theorem  4.10]{Murfet07}, and Theorem \ref{theorem:2} writes $\kprofl{\tac}{X}$ as the kernel of a homological functor on $\kprof{X}$, so the claim follows from \cite[Theorem 6.10]{Krause08}.
\end{proof}

\begin{corollary}\label{corollary:adjoint_ktacflat} The inclusion $\kflatl{\tac}{X} \lto \kflat{X}$ has a right adjoint.
\end{corollary}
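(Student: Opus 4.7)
Given $F \in \kflat{X}$, the plan is to construct a Bousfield triangle $T \lto F \lto S \lto \Sigma T$ with $T \in \kflatl{\tac}{X}$ and $S \in \kflatl{\tac}{X}^\perp$; by the discussion in Section \ref{section:existence_adjoints}, this suffices. First I would apply Theorem \ref{theorem:2} to $\pi F \in \kprof{X}$ to produce a triangle
\[
\bar{T} \lto \pi F \lto \bar{S} \lto \Sigma \bar{T}
\]
in $\kprof{X}$ with $\bar{T} \in \kprofl{\tac}{X}$ and $\bar{S} \in \kprofl{\tac}{X}^\perp$; the task is then to lift this to $\kflat{X}$.

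The key additional ingredient is a right adjoint $q: \kprof{X} \to \kflat{X}$ to the Verdier quotient $\pi$, fully faithful with essential image $\kflatl{\pac}{X}^\perp$; in the affine case this comes from Neeman's adjoint-functor theorems \cite{Neeman08,NeemanAdj}, and in the scheme setting from \cite{Murfet07}. Applying $q$ yields a triangle $q\bar{T} \lto q\pi F \lto q\bar{S}$ in $\kflat{X}$. Then $q\bar{T} \in \kflatl{\tac}{X}$, since $\pi(q\bar{T}) = \bar{T} \in \kprofl{\tac}{X}$ and $\kflatl{\tac}{X} = \pi^{-1}(\kprofl{\tac}{X})$ (any complex of flat sheaves isomorphic in $\kprof{X}$ to an N-totally acyclic one is itself N-totally acyclic, by the discussion preceding Definition \ref{defn:ntac}). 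Similarly $q\bar{S} \in \kflatl{\tac}{X}^\perp$: on the one hand $q\bar{S} \in \kflatl{\pac}{X}^\perp$ by fully-faithfulness of $q$, and on the other $\pi(q\bar{S}) = \bar{S} \in \kprofl{\tac}{X}^\perp$, so for any $U \in \kflatl{\tac}{X}$ the natural map (an isomorphism by Section \ref{remark:orthogonals}, since $q\bar{S}$ lies in $\kflatl{\pac}{X}^\perp$)
\[
\Hom_{\kflat{X}}(U, q\bar{S}) \xrightarrow{\sim} \Hom_{\kprof{X}}(\pi U, \bar{S})
\]
has target zero.

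It remains to splice $F$ into the picture. The unit $\eta_F: F \to q\pi F$ of the adjunction $\pi \dashv q$ satisfies $\pi(\mathrm{cone}\,\eta_F) = 0$, so $\mathrm{cone}\,\eta_F \in \kflatl{\pac}{X} \subseteq \kflatl{\tac}{X}$. Applying the octahedral axiom to the composite $F \xrightarrow{\eta_F} q\pi F \lto q\bar{S}$ produces the desired Bousfield triangle $T \lto F \lto q\bar{S} \lto \Sigma T$ in which $T$ fits into an auxiliary triangle $\Sigma^{-1}(\mathrm{cone}\,\eta_F) \lto T \lto q\bar{T}$, exhibiting it as an extension in the triangulated subcategory $\kflatl{\tac}{X}$. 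The hardest step is confirming the right adjoint $q$ at the stated level of generality; once this is in hand, the rest is routine octahedral bookkeeping.
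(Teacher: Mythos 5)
Your argument is correct and rests on the same two inputs as the paper's own proof: the fully faithful right adjoint to $\pi: \kflat{X} \lto \kprof{X}$ from \cite[Theorem 3.16]{Murfet07}, and Theorem \ref{theorem:2}. The paper merely packages your explicit octahedral construction of the Bousfield triangle into a single citation of \cite[Lemma 3.2]{Krause05}, applied to the composite quotient $\kflat{X} \lto \kprof{X}/\kprofl{\tac}{X}$, whose kernel is exactly $\kflatl{\tac}{X}$.
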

\begin{proof}
By \cite[Theorem 3.16]{Murfet07} and Theorem \ref{theorem:2} the quotient functors
\[
\kflat{X} \lto \kprof{X}, \text{ and } \kprof{X} \lto \kprof{X}/\kprofl{\tac}{X}
\]
have fully faithful right adjoints, so their composite has a fully faithful right adjoint; since $\kflatl{\tac}{X}$ is the kernel of this composite, the inclusion $\kflatl{\tac}{X} \lto \kflat{X}$ must have a right adjoint by \cite[Lemma 3.2]{Krause05}.
\end{proof}

\begin{remark} The reader may wonder if Theorem \ref{theorem:2} can be applied, over a scheme, to prove the existence of some kind of precover. In a sequel \cite{MurfShokGor} to the present paper we use a modified form of J\o rgensen's argument \cite[\S 2]{Jorgensen07}, together with Theorem \ref{theorem:2}, to construct Gorenstein flat precovers.
\end{remark}

\subsection{Gorensteinness of schemes} Given a noetherian ring $A$ with dualising complex, Iyengar and Krause have proven that $A$ is Gorenstein if and only if every acyclic complex of projective $A$-modules is totally acyclic; see \cite[Corollary 5.5]{Krause06}. The next result gives the generalisation to schemes, plus an improvement on Iyengar and Krause's result in the affine case: we remove the dualising complex hypothesis.

In short, flat modules behave better than projectives, so we are able to pass from a local ring (which may not have a dualising complex) to its completion, which always admits a dualising complex; we then apply Iyengar and Krause's result to the completion.

\begin{theorem}\label{theorem:gorenstein_iff_actac} The scheme $X$ is Gorenstein if and only if every acyclic complex of flat sheaves is N-totally acyclic.
\end{theorem}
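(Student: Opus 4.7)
The plan is to reduce the statement stalkwise to a local-ring assertion, pass to completions (which always admit dualising complexes), and then invoke Iyengar--Krause. By Lemma~\ref{lemma:total_acyc_pointwise} N-total acyclicity is a stalk condition, and $X$ is Gorenstein iff each $\cat{O}_{X,x}$ is a Gorenstein local ring. The ``each stalk implies global'' half of the stalkwise reformulation follows directly from Lemma~\ref{lemma:total_acyc_pointwise}; for the converse, suppose $G$ is an acyclic complex of flat $A_{\mathfrak{p}} = \cat{O}_{X,x}$-modules, with $x$ lying in an affine open $U = \Spec A \subseteq X$. Every flat $A_{\mathfrak{p}}$-module is flat over $A$, so $G$ underlies a quasi-coherent flat acyclic complex $\tilde G$ on $U$ whose stalk at $x$ is $G$. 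Pushing forward along $j\colon U \hookrightarrow X$ (which is affine because $X$ is semi-separated) preserves exactness, and for any affine open $V \subseteq X$, the restriction $(j_*\tilde G)|_V$ corresponds to the $\cat{O}_X(V)$-module given by $\tilde G(U\cap V)$: since open immersions are flat the ring map $\cat{O}_X(V) \to \cat{O}_X(U\cap V)$ is flat, and $\tilde G$ is flat on $U\cap V$, so $j_*\tilde G$ is a globally flat acyclic complex on $X$ with $(j_*\tilde G)_x = G$. If every acyclic flat complex on $X$ is N-tac, Lemma~\ref{lemma:total_acyc_pointwise} forces $G$ to be N-tac too.

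We are thus reduced to the affine local statement: for a noetherian local ring $A$, $A$ is Gorenstein iff every acyclic flat $A$-complex is N-tac over $A$. Let $\hat A$ be the $\mathfrak{m}$-adic completion. Then $\hat A$ is noetherian local, admits a dualising complex (by Cohen's structure theorem it is a quotient of a regular local ring), and is Gorenstein iff $A$ is. The key step is the equivalence: every acyclic flat $A$-complex is N-tac over $A$ iff every acyclic flat $\hat A$-complex is N-tac over $\hat A$. For the forward direction, given $F$ acyclic flat over $\hat A$, view it as an acyclic flat $A$-complex; by hypothesis $F$ is N-tac over $A$. Any injective $\hat A$-module $J$ is also injective over $A$ (since $A \to \hat A$ is flat), so $F \otimes_A J$ is acyclic; because $J$ is injective over $\hat A$, the $\hat A$-linear surjection $\hat A \otimes_A J \twoheadrightarrow J$ has an $\hat A$-linear section, realising $F \otimes_{\hat A} J$ as an $\hat A$-module summand of $F \otimes_{\hat A}(\hat A \otimes_A J) = F \otimes_A J$ and hence acyclic. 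For the reverse direction, given $F$ acyclic flat over $A$, the complex $F \otimes_A \hat A$ is acyclic flat over $\hat A$, hence N-tac over $\hat A$ by hypothesis, and therefore N-tac over $A$ by Proposition~\ref{prop:descends_tac}(ii). For any injective $A$-module $I$, $(F \otimes_A \hat A) \otimes_A I \cong (F \otimes_A I) \otimes_A \hat A$ is acyclic, and faithful flatness of $A \to \hat A$ yields acyclicity of $F \otimes_A I$.

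Finally, apply Iyengar and Krause's theorem to the complete ring $\hat A$: since $\hat A$ has a dualising complex, $\hat A$ is Gorenstein iff every acyclic complex of projective $\hat A$-modules is totally acyclic; combined with Remark~\ref{remark:comparison_flat_proj_tac} and Lemma~\ref{lemma:tac_iff_completeflat} (using that every noetherian local ring has finite Krull dimension) this converts into the statement that every acyclic flat $\hat A$-complex is N-tac over $\hat A$. Chaining together the equivalences yields the local assertion, which in turn implies the theorem via the stalkwise reduction. The main obstacle is the equivalence between $A$- and $\hat A$-level N-total acyclicity; the tensor-product form of N-total acyclicity and the splitting argument for injective $\hat A$-modules are exactly what allow us to sidestep the assumption of a dualising complex on $A$ itself, yielding the promised improvement on \cite[Corollary~5.5]{Krause06}.
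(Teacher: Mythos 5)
Your overall strategy coincides with the paper's: reduce stalkwise, push forward along the inclusion of (the spectrum of) a local ring to globalise, pass to the $\mf{m}$-adic completion $\widehat{A}$ to gain a dualising complex, and invoke Iyengar--Krause together with Lemma \ref{lemma:tac_iff_completeflat} and Remark \ref{remark:comparison_flat_proj_tac}. The stalkwise reduction, the reverse direction of your completion equivalence, and the final chaining are all fine. However, there is one genuinely flawed step, and it sits in the direction of the completion equivalence that the hard implication of the theorem actually needs (namely: ``every acyclic flat $A$-complex is N-tac'' $\Rightarrow$ ``every acyclic flat $\widehat{A}$-complex is N-tac,'' which is what feeds into Iyengar--Krause to conclude $\widehat{A}$ is Gorenstein). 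You claim that the $\widehat{A}$-linear surjection $\widehat{A} \otimes_A J \twoheadrightarrow J$ splits $\widehat{A}$-linearly ``because $J$ is injective over $\widehat{A}$.'' This has the universal property of injectives backwards: injectivity of $J$ splits monomorphisms \emph{out of} $J$, not epimorphisms \emph{onto} $J$ (that would require a projectivity-type hypothesis). The evident section $x \mapsto 1 \otimes x$ is only $A$-linear, so it cannot be tensored over $\widehat{A}$ against $F$, and no $\widehat{A}$-linear section is supplied by your hypotheses.

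The gap is local and easily repaired, using exactly the device you already deploy in the other direction: faithfully flat descent. From $F \otimes_{\widehat{A}} \bigl(\widehat{A} \otimes_A J\bigr) \cong F \otimes_A J$ one gets $\widehat{A} \otimes_A \bigl(F \otimes_{\widehat{A}} J\bigr) \cong F \otimes_A J$, which is acyclic since $J$ is injective over $A$ and $F$ is N-tac over $A$; faithful flatness of $A \to \widehat{A}$ then forces $F \otimes_{\widehat{A}} J$ to be acyclic. This is precisely the computation in the paper's proof. With that substitution your argument is correct and matches the published one (the paper handles the forward implication of the theorem more directly, via finite flat dimension of injectives over a Gorenstein local ring, but your route through the completion also works for that half).
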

\begin{proof}
Suppose that $X$ is a Gorenstein scheme. Proving that every acyclic complex $F$ of flat sheaves is N-totally acyclic is a local question, so we may assume that $X = \Spec(A)$ is the spectrum of a local Gorenstein ring and $F$ an acyclic complex of flat $A$-modules. Every injective $A$-module $I$ has finite flat dimension, so by a standard argument $F \otimes_A I$ is acyclic and consequently $F$ is N-totally acyclic.

For the converse, let a point $x \in X$ and an acyclic complex $F$ of flat $\cat{O}_{X,x}$-modules be given. With $j: \Spec(\cat{O}_{X,x}) \lto X$ the canonical morphism, $j_* F$ is an acyclic complex of flat sheaves on $X$, which is N-totally acyclic by hypothesis. We conclude that $F \cong (j_* F)_x$ is N-totally acyclic; that is, every acyclic complex of $\cat{O}_{X,x}$-modules is N-totally acyclic. The upshot is that in proving the converse we may again assume that $X = \Spec(A)$ is the spectrum of a local noetherian ring $(A,\mf{m})$. Let $\widehat{A}$ denote the $\mf{m}$-adic completion.

An acyclic complex $F$ of flat $\widehat{A}$-modules is acyclic, and thus N-totally acyclic, as a complex of flat $A$-modules. Any injective $\widehat{A}$-module  $I$ is injective over $A$, so $F \otimes_A I$ is acyclic, from which we deduce that
\[
\widehat{A} \otimes_A (I \otimes_{\widehat{A}} F) \cong (I \otimes_A \widehat{A}) \otimes_{\widehat{A}} F \cong I \otimes_A F
\]
is acyclic. It follows that $F \otimes_{\widehat{A}} I$ is acyclic, as $\widehat{A}$ is faithfully flat. In particular, we may argue using Lemma \ref{lemma:tac_iff_completeflat} that every acyclic complex of projective $\widehat{A}$-modules is totally acyclic over $\widehat{A}$. Any complete local ring has a dualising complex, so from Iyengar and Krause's \cite[Corollary 5.5]{Krause06} we infer that $\widehat{A}$ is Gorenstein, whence $A$ is Gorenstein.
\end{proof}

\begin{corollary}\label{corollary:gorenstein_tac_ac_local} A noetherian ring $A$ of finite Krull dimension is Gorenstein if and only if every acyclic complex of projective $A$-modules is totally acyclic.
\end{corollary}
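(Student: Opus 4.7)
The plan is to derive the corollary by combining Theorem \ref{theorem:gorenstein_iff_actac} (which characterises Gorensteinness via N-total acyclicity of acyclic flat complexes) with the comparison between total acyclicity of projective complexes and N-total acyclicity of flat complexes, where the finite Krull dimension hypothesis enters.

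For the forward direction, I would assume $A$ is Gorenstein and take an arbitrary acyclic complex $P$ of projective $A$-modules. Viewing $P$ as an acyclic complex of flats, Theorem \ref{theorem:gorenstein_iff_actac} applied to $X=\Spec(A)$ tells us $P$ is N-totally acyclic. Lemma \ref{lemma:tac_iff_completeflat}(ii) then yields that $P$ is totally acyclic as a complex of projectives (no finite Krull dimension needed in this direction).

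For the converse, I would assume every acyclic complex of projective $A$-modules is totally acyclic, and aim to verify the hypothesis of Theorem \ref{theorem:gorenstein_iff_actac}. Given an acyclic complex $F$ of flat $A$-modules, Remark \ref{remark:comparison_flat_proj_tac} produces a triangle $P \lto F \lto E \lto \Sigma P$ in $\kflat{A}$ with $P$ a complex of projectives and $E$ pure acyclic. Since pure acyclic complexes are acyclic and $F$ is acyclic, $P$ is acyclic, hence totally acyclic by hypothesis. Here is where finite Krull dimension is essential: by Lemma \ref{lemma:tac_iff_completeflat}(i), total acyclicity of $P$ upgrades to N-total acyclicity. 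Since $E$ is pure acyclic (so trivially N-totally acyclic) and N-totally acyclic complexes form a triangulated subcategory of $\kflat{A}$ (Lemma \ref{lemma:tac_is_triangulated}), the triangle forces $F$ to be N-totally acyclic as well. Applying Theorem \ref{theorem:gorenstein_iff_actac} then yields that $A$ is Gorenstein.

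There is no real obstacle — the corollary is essentially a bookkeeping exercise once Theorem \ref{theorem:gorenstein_iff_actac}, Lemma \ref{lemma:tac_iff_completeflat}, and Remark \ref{remark:comparison_flat_proj_tac} are in hand. The only point requiring slight care is to remember that the finite Krull dimension assumption is needed exactly once, in the converse direction, to pass from total acyclicity of $P$ to N-total acyclicity via Gruson--Jensen's theorem on the finite projective dimension of flat modules.
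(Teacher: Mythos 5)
Your proposal is correct and follows exactly the route the paper intends: its proof of the corollary is precisely the one-line combination of Theorem \ref{theorem:gorenstein_iff_actac}, Lemma \ref{lemma:tac_iff_completeflat} and Remark \ref{remark:comparison_flat_proj_tac}, which you have simply spelled out in full. Your placement of the finite Krull dimension hypothesis (needed only to upgrade total acyclicity to N-total acyclicity in the converse direction) matches the paper's use of it.
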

\begin{proof}
This follows from the theorem, together with Lemma \ref{lemma:tac_iff_completeflat} and Remark \ref{remark:comparison_flat_proj_tac}.
\end{proof}

\section{Two localisation sequences}\label{section:two_loc_seq}

In this section we study the Verdier quotient
\[
\qderl{\bold{G}}{X} := \kflat{X}/\kflatl{\tac}{X}.
\]
This is a triangulated category with coproducts, sitting between\footnote{In a sense which will be made precise in Remark \ref{remark:dg_sits_inbetween}.} the pure derived category of flat sheaves $\kprof{X}$ and the derived category $\qder{X}$. The relationship between $\qderl{\bold{G}}{X}$ and $\qder{X}$ involves the Gorenstein condition, as we will see in Corollary \ref{corollary:amusing} and various results of the next section; this explains the ``$\bold{G}$'' in the notation.

Our first task is to define some notation, to be used throughout the rest of this article. Both $\kprof{X}$ and $\qderl{\bold{G}}{X}$ are defined as Verdier quotients of $\kflat{X}$, and we write
\[
\pi: \kflat{X} \lto \kprof{X}, \quad \pi': \kflat{X} \lto \qderl{\bold{G}}{X}
\]
for the respective quotient functors. The composite
\[
\pi'': \kflat{X} \xlto{\inc} \K(X) \xlto{\can} \qder{X}
\]
vanishes on pure acyclic and totally acyclic complexes, and $\pi'$ vanishes on pure acyclic complexes. Using the universal property of the Verdier quotients, we obtain functors
\begin{align*}
\mu &: \kprof{X} \lto \qder{X},\\
\nu &: \qderl{\bold{G}}{X} \lto \qder{X},\\
\omega &: \kprof{X} \lto \qderl{\bold{G}}{X},
\end{align*}
which are, respectively, unique such that $\mu \circ \pi = \pi''$, $\nu \circ \pi' = \pi''$ and $\omega \circ \pi = \pi'$. Finally, we write $j: \kprofl{\tac}{X} \lto \kprof{X}$ for the inclusion.

To say that a pair $(G: \cat{T} \lto \cat{T}'',F: \cat{T}' \lto \cat{T})$ of triangulated functors is a \emph{localisation sequence} means that $F$ is fully faithful, that (up to equivalence) $G$ is the Verdier quotient of $\cat{T}$ by the image of $F$, and that $F$ has a right adjoint (equivalently, $G$ has a right adjoint). The functors $G,F$ and their adjoints are typically arranged in a diagram of the type given below; see \cite[Definition 3.1]{Krause05} for further details.

\begin{proposition}\label{prop:locseq_dg} The pair $(\omega, j)$ is a localisation sequence
\[
\xymatrix@C+1pc{
\kprofl{\tac}{X} \ar@<-0.8ex>[r]_{j} & \kprof{X} \ar@<-0.8ex>[l] \ar@<-0.8ex>[r]_{\omega} & \qderl{\bold{G}}{X} \ar@<-0.8ex>[l]
}
\]
and, in particular, $\omega$ induces an isomorphism $\kprof{X}/\kprofl{\tac}{X} \cong \qderl{\bold{G}}{X}$.
\end{proposition}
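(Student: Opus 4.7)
The plan is simply to verify the three defining conditions of a localisation sequence: that $j$ is fully faithful, that $j$ admits a right adjoint, and that (up to equivalence) $\omega$ realises the Verdier quotient $\kprof{X}/\kprofl{\tac}{X}$. None of these should be deep given what has already been established.

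First, $j: \kprofl{\tac}{X} \lto \kprof{X}$ is fully faithful by construction, since $\kprofl{\tac}{X}$ is the full subcategory of N-totally acyclic complexes inside $\kprof{X}$ (Definition \ref{defn:ntac}). Next, $j$ has a right adjoint: this is precisely the content of Theorem \ref{theorem:2}.

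The only step requiring genuine argument is identifying $\omega$ with the Verdier quotient by $\kprofl{\tac}{X}$. I would first observe that $\omega$ sends objects of $\kprofl{\tac}{X}$ to zero: an object of $\kprofl{\tac}{X}$ is represented by a complex $F \in \kflatl{\tac}{X}$, which becomes zero in $\qderl{\bold{G}}{X} = \kflat{X}/\kflatl{\tac}{X}$, so $\omega(F) = \omega \pi(F) = \pi'(F) = 0$. By the universal property of Verdier quotients, $\omega$ therefore factors through a canonical functor
\[
\bar{\omega}: \kprof{X}/\kprofl{\tac}{X} \lto \qderl{\bold{G}}{X}.
\]
To see that $\bar{\omega}$ is an equivalence, I would invoke the iterated-quotient identification recalled in Section \ref{remark:orthogonals}: for triangulated subcategories $\cat{S} \subseteq \cat{Q} \subseteq \cat{T}$ there is a canonical isomorphism $(\cat{T}/\cat{S})/(\cat{Q}/\cat{S}) \xlto{\sim} \cat{T}/\cat{Q}$. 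Apply this with $\cat{T} = \kflat{X}$, $\cat{S} = \kflatl{\pac}{X}$ and $\cat{Q} = \kflatl{\tac}{X}$; the containment $\cat{S} \subseteq \cat{Q}$ is clear since pure acyclic complexes of flat sheaves are N-totally acyclic. Then $\cat{T}/\cat{S} = \kprof{X}$, $\cat{T}/\cat{Q} = \qderl{\bold{G}}{X}$, and by Definition \ref{defn:ntac} the subcategory $\cat{Q}/\cat{S} = \kflatl{\tac}{X}/\kflatl{\pac}{X}$ is identified with $\kprofl{\tac}{X}$. Chasing through the universal properties, the resulting canonical equivalence $\kprof{X}/\kprofl{\tac}{X} \xlto{\sim} \qderl{\bold{G}}{X}$ is exactly $\bar{\omega}$.

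The main (mild) obstacle is just this bookkeeping: ensuring that the isomorphism produced by the general iterated-quotient lemma coincides with the functor $\bar{\omega}$ constructed from the defining properties of $\omega$. This is a diagram chase using uniqueness in the universal properties of $\pi$, $\pi'$ and the quotient $\kprof{X} \lto \kprof{X}/\kprofl{\tac}{X}$, all applied to $\pi': \kflat{X} \lto \qderl{\bold{G}}{X}$. Combining these three steps with the existence of the right adjoint to $j$ gives precisely the localisation sequence, and the final clause of the proposition is the identification $\kprof{X}/\kprofl{\tac}{X} \cong \qderl{\bold{G}}{X}$ already obtained.
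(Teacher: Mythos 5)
Your proposal is correct and follows essentially the same route as the paper: the identification of $\omega$ with the Verdier quotient $\kprof{X}/\kprofl{\tac}{X}$ via the iterated-quotient isomorphism of Section \ref{remark:orthogonals}, combined with the right adjoint to $j$ supplied by Theorem \ref{theorem:2}. The paper simply compresses the final verification into a citation of \cite[Lemma 3.2]{Krause05}, whereas you check the defining conditions directly; the substance is the same.
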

\begin{proof}
Using Section \ref{remark:orthogonals} we see that $\omega$ is, up to isomorphism, the quotient of $\kprof{X}$ by $\kprofl{\tac}{X}$. The inclusion $j$ of the latter category in the former has a right adjoint, by Theorem \ref{theorem:2}, so $(\omega, j)$ is a localisation sequence by \cite[Lemma 3.2]{Krause05}.
\end{proof}

The statements about well-generation in the next result and Corollary \ref{corollary:acovertac_smallhoms} will not be needed in the sequel, and may be safely skipped. We will see a different argument in the next section which shows that the involved categories are compactly generated when $X$ admits a dualising complex, and this covers most cases of interest.

\begin{corollary}\label{corollary:dg_smallhoms} $\qderl{\bold{G}}{X}$ has small Homs and is well-generated.
\end{corollary}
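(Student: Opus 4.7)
My plan is to apply the localisation sequence of Proposition \ref{prop:locseq_dg}, which identifies $\qderl{\bold{G}}{X}$ with the Verdier quotient $\kprof{X}/\kprofl{\tac}{X}$. Because the inclusion $j$ has a right adjoint by Theorem \ref{theorem:2}, the quotient functor $\omega$ will admit a fully faithful right adjoint $\omega_{\rho}$ whose essential image is the orthogonal $\kprofl{\tac}{X}^{\perp}\subseteq\kprof{X}$ (this is standard Bousfield theory, cf.\ \cite[Ch.\ 9]{NeemanBook}). Since $\kprof{X}$ has small Homs by \cite[Theorem 3.16]{Murfet07}, any full subcategory inherits the same property, and transporting along the equivalence $\omega_{\rho}:\qderl{\bold{G}}{X}\xlto{\sim}\kprofl{\tac}{X}^{\perp}$ will give small Homs for $\qderl{\bold{G}}{X}$.

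For the well-generation claim I would invoke the general principle that the Verdier quotient of a well-generated triangulated category by a localising subcategory generated by a set of objects is again well-generated; this is a consequence of Neeman's Bousfield localisation machinery for well-generated categories (see \cite{NeemanBook} and \cite{Krause08}). In our setting $\kprof{X}$ is compactly generated by \cite[Theorem 4.10]{Murfet07}, hence well-generated, and $\kprofl{\tac}{X}$ is well-generated by Corollary \ref{corollary:ktac_wellgen}, hence in particular generated as a localising subcategory by a set of $\alpha$-compact objects for some regular cardinal $\alpha$. Applying the principle to this pair immediately yields that $\kprof{X}/\kprofl{\tac}{X}\cong\qderl{\bold{G}}{X}$ is well-generated.

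The main obstacle, such as it is, lies in pinning down the sharpest formulation of the quotient theorem in the literature and verifying that its cardinality hypotheses are met by $\kprofl{\tac}{X}$; beyond this bookkeeping the corollary is a formal consequence of Proposition \ref{prop:locseq_dg}, Corollary \ref{corollary:ktac_wellgen} and the cited references.
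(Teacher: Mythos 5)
Your proposal is correct and follows essentially the same route as the paper: small Homs are obtained by embedding $\qderl{\bold{G}}{X}$ into $\kprof{X}$ via the fully faithful right adjoint coming from the localisation sequence of Proposition \ref{prop:locseq_dg}, and well-generation follows from the general theory of quotients of well-generated categories, using \cite[Theorem 4.10]{Murfet07} and Corollary \ref{corollary:ktac_wellgen} exactly as the paper does.
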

\begin{proof}
By the proposition $\omega$ has a fully faithful adjoint, so $\qderl{\bold{G}}{X}$ embeds as a subcategory of $\kprof{X}$. The latter category has small Homs \cite[Theorems 3.16]{Murfet07} so the same is true of $\qderl{\bold{G}}{X}$. We can write $\qderl{\bold{G}}{X}$ as the Verdier quotient of $\kprof{X}$, a compactly generated triangulated category \cite[Theorem 4.10]{Murfet07}, by a well-generated triangulated subcategory $\kprofl{\tac}{X}$ (Proposition \ref{corollary:ktac_wellgen}). It follows from the general theory of well-generated triangulated categories that $\qderl{\bold{G}}{X}$ is well-generated; see \cite{NeemanBook, Krause08}.
\end{proof}

\begin{remark}\label{remark:compact_notation} The inclusion $\kprofl{\ac}{X} \lto \kprof{X}$ induces a fully faithful functor
\[
\iota: \kprofl{\ac}{X}/\kprofl{\tac}{X} \hookrightarrow \kprof{X}/\kprofl{\tac}{X} \cong \qderl{\bold{G}}{X}.
\]
The essential image of $\iota$ is the full subcategory of acyclic complexes in $\qderl{\bold{G}}{X}$. Moreover, using Section \ref{remark:orthogonals} we see that the composite
\[
\kflatl{\ac}{X} \xlto{\pi|_{\ac}} \kprofl{\ac}{X} \xlto{\can} \kprofl{\ac}{X}/\kprofl{\tac}{X}
\]
induces an equivalence $\kflatl{\ac}{X}/\kflatl{\tac}{X} \cong \kprofl{\ac}{X}/\kprofl{\tac}{X}$.
\end{remark}

\begin{proposition}\label{prop:g_locseq} The pair $(\nu, \iota)$ is a localisation sequence
\[
\xymatrix@C+1pc{
\kprofl{\ac}{X}/\kprofl{\tac}{X} \ar@<-0.8ex>[r]_(0.7){\iota} & \qderl{\bold{G}}{X} \ar@<-0.8ex>[l] \ar@<-0.8ex>[r]_{\nu} & \qder{X}. \ar@<-0.8ex>[l]
}
\]
\end{proposition}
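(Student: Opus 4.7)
The plan is to verify the three standing requirements for $(\nu,\iota)$ to be a localisation sequence, then invoke \cite[Lemma 3.2]{Krause05}: $\iota$ is fully faithful; $\nu$ is, up to equivalence, the Verdier quotient of $\qderl{\bold{G}}{X}$ by the essential image of $\iota$; and $\nu$ (equivalently $\iota$) admits a right adjoint. The first requirement is already recorded in Remark \ref{remark:compact_notation}, so two genuine points remain.

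For the Verdier quotient identification, I would apply the identity $(\cat{T}/\cat{S})/(\cat{Q}/\cat{S}) \cong \cat{T}/\cat{Q}$ from Chunk \ref{remark:orthogonals}, with $\cat{T} = \kflat{X}$, $\cat{S} = \kflatl{\tac}{X}$, and $\cat{Q} = \kflatl{\ac}{X}$, to obtain
\[
\qderl{\bold{G}}{X}\big/\bigl(\kflatl{\ac}{X}/\kflatl{\tac}{X}\bigr) \;\cong\; \kflat{X}/\kflatl{\ac}{X}.
\]
The same identity applied with $\cat{S} = \kflatl{\pac}{X}$, combined with the equivalence $\kprof{X}/\kprofl{\ac}{X} \cong \qder{X}$ used in the proof of Lemma \ref{lemma:hom_d_acyclic}, identifies the right-hand side with $\qder{X}$. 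By Remark \ref{remark:compact_notation} the essential image of $\iota$ is precisely $\kflatl{\ac}{X}/\kflatl{\tac}{X}$, and unwinding the defining universal property of $\nu$ shows that it agrees with the induced quotient functor.

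The main obstacle is producing the right adjoint, for which I would appeal to Brown representability. The category $\qderl{\bold{G}}{X}$ is well-generated (Corollary \ref{corollary:dg_smallhoms}), $\qder{X}$ has small Homs, and $\nu$ preserves small coproducts because it factors the coproduct-preserving functor $\pi''$ through the quotient $\pi'$. Neeman's Brown representability theorem for well-generated triangulated categories (see \cite[\S 6]{Krause08} and \cite[Ch.\ 8]{NeemanBook}) therefore supplies a right adjoint $\nu^{\rho}$, after which \cite[Lemma 3.2]{Krause05} completes the proof. A more hands-on alternative, closer in spirit to Corollary \ref{corollary:adjoint_ktacflat}, would be to invoke a right adjoint $\mu^{\rho}$ of the quotient $\mu : \kprof{X} \to \qder{X}$, whose existence from $K$-flat resolutions is part of the framework of \cite{NeemanAdj, Murfet07}, and then set $\nu^{\rho} = \omega \circ \mu^{\rho}$; the adjunction identities reduce to those for $\omega$ and $\mu$ using $\kprofl{\ac}{X}^{\perp} \subseteq \kprofl{\tac}{X}^{\perp}$ together with the morphism identifications from Chunk \ref{remark:orthogonals}.
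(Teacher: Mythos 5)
Your argument is correct, and the first half (identifying $\nu$, up to the equivalence $\kflat{X}/\kflatl{\ac}{X} \cong \qder{X}$, with the iterated Verdier quotient, so that its kernel is the essential image of $\iota$) is the same as the paper's. Where you diverge is in producing the right adjoint. The paper uses exactly your ``hands-on alternative'': since $\omega$ is a Verdier quotient (Proposition \ref{prop:locseq_dg}) and the composite $\nu \circ \omega = \mu$ has a right adjoint by \cite[Theorem 5.5]{Murfet07}, one concludes from \cite[Lemma 5.5]{Tarrio00} that $\nu$ has a right adjoint --- your explicit formula $\nu^{\rho} = \omega \circ \mu^{\rho}$, justified via $\kprofl{\ac}{X}^{\perp} \subseteq \kprofl{\tac}{X}^{\perp}$ and the morphism identifications of \ref{remark:orthogonals}, is precisely what that lemma packages. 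Your primary route, Brown representability for the well-generated category $\qderl{\bold{G}}{X}$ applied to the coproduct-preserving functor $\nu$, is also valid and the references check out; note however that it leans on Corollary \ref{corollary:dg_smallhoms}, hence on the well-generation of $\kprofl{\tac}{X}$ via Margolis--Krause, machinery the paper explicitly flags as skippable. The composite trick is thus lighter and more self-contained; the representability route is more robust (it would apply verbatim to any coproduct-preserving functor out of $\qderl{\bold{G}}{X}$ into a category with small Homs) at the cost of heavier input. Either way the conclusion follows from \cite[Lemma 3.2]{Krause05} as you say.
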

\begin{proof}
The canonical functor
\[
\gamma: \qderl{\bold{G}}{X} = \kflat{X}/\kflatl{\tac}{X} \lto \kflat{X}/\kflatl{\ac}{X}
\]
is a Verdier quotient, and from the equivalence $\delta: \kflat{X}/\kflatl{\ac}{X} \cong \qder{X}$ of \cite[Remark 5.8]{Murfet07} we infer that the composite $\nu = \delta \circ \gamma$ is also a Verdier quotient. To prove that $\nu$ has a right adjoint, we use a trick: consider the pair
\[
\kprof{X} \xlto{\omega} \qderl{\bold{G}}{X} \xlto{\nu} \qder{X}
\]
The composite $\nu \circ \omega = \mu$ has a right adjoint \cite[Theorem 5.5]{Murfet07} and since $\omega$ is a Verdier quotient we may infer from \cite[Lemma 5.5]{Tarrio00} that $\nu$ has a right adjoint, whence the pair $(\nu,\iota)$ is a localisation sequence by \cite[Lemma 3.2]{Krause05}.
\end{proof}

\begin{corollary}\label{corollary:acovertac_smallhoms} $\kprofl{\ac}{X}/\kprofl{\tac}{X}$ has small Homs and is well-generated.
\end{corollary}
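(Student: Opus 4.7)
My plan is to mimic the structure of the proof of Corollary \ref{corollary:dg_smallhoms}, exploiting the localisation sequence $(\nu, \iota)$ of Proposition \ref{prop:g_locseq} in place of $(\omega, j)$ of Proposition \ref{prop:locseq_dg}. The proof should split cleanly into two parts: smallness of Hom sets, which is formal, and well-generation, which requires invoking the general machinery.

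For the smallness of Homs: by Proposition \ref{prop:g_locseq}, $\iota$ is fully faithful, so $\kprofl{\ac}{X}/\kprofl{\tac}{X}$ embeds as a full triangulated subcategory of $\qderl{\bold{G}}{X}$. Its morphism sets are thus inherited from $\qderl{\bold{G}}{X}$, which has small Homs by Corollary \ref{corollary:dg_smallhoms}. This part is immediate.

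For well-generation: the localisation sequence of Proposition \ref{prop:g_locseq} identifies $\kprofl{\ac}{X}/\kprofl{\tac}{X}$ (up to equivalence) with the kernel of the Verdier quotient $\nu: \qderl{\bold{G}}{X} \lto \qder{X}$, with $\nu$ admitting a right adjoint. Now $\qderl{\bold{G}}{X}$ is well-generated by Corollary \ref{corollary:dg_smallhoms}, while $\qder{X}$ is compactly generated (see Section \ref{section:background_triangulated_categories}, via \cite{Neeman96, Tarrio08}). Under these hypotheses, the general theory of well-generated triangulated categories developed in \cite{NeemanBook, Krause08} implies that the kernel of such a localisation functor is again well-generated; invoking this gives the claim.

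The only step that requires care is the last citation: one must confirm that the precise result needed (well-generation passes to kernels of localisation functors between well-generated categories) is available in the references. This is the expected sticking point, but since exactly the same situation arises and is handled in Corollary \ref{corollary:dg_smallhoms}, I expect to simply cite \cite{NeemanBook, Krause08} in the same way. No new mathematical input beyond the localisation sequence of Proposition \ref{prop:g_locseq} and previously established well-generation facts should be required.
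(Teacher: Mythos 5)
Your proposal is correct and matches the paper's own argument: the small-Homs claim is handled identically via the embedding $\iota$ into $\qderl{\bold{G}}{X}$, and your well-generation argument via $\Ker(\nu)$ is precisely one of the two routes the paper explicitly offers (the other being the Verdier quotient of $\kprofl{\ac}{X}$ by $\kprofl{\tac}{X}$), with the same appeal to \cite{NeemanBook, Krause08}.
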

\begin{proof}
The category in question certainly has small Homs, since it embeds in $\qderl{\bold{G}}{X}$. We can prove the well-generation statement in at least two ways: consider the functors
\begin{equation}\label{eq:aconvertac_smallhoms}
\kprofl{\tac}{X} \xlto{\inc} \kprofl{\ac}{X} \text{ and }
\qderl{\bold{G}}{X} \xlto{\nu} \qder{X}.
\end{equation}
The triangulated categories in (\ref{eq:aconvertac_smallhoms}) are well-generated, and $\kprofl{\ac}{X}/\kprofl{\tac}{X}$ is the Verdier quotient of the first functor and the kernel of the second; using either description it follows that the category is well-generated \cite{NeemanBook, Krause08}.
\end{proof}

We conclude this section with the following amusing consequence of the above.

\begin{corollary}\label{corollary:amusing} Consider the pair of functors
\[
\xymatrix@C+1pc{
\kprof{X} \ar[r]^{\omega} & \qderl{\bold{G}}{X} \ar[r]^{\nu} & \qder{X}.
}
\]
The scheme $X$ is Gorenstein if and only if $\nu$ is an equivalence, and $X$ is regular if and only if $\nu \circ \omega$ is an equivalence.
\end{corollary}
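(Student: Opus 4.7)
The plan is to treat the two equivalences separately, leveraging the localisation sequences established in Propositions \ref{prop:locseq_dg} and \ref{prop:g_locseq}. For the first equivalence, Proposition \ref{prop:g_locseq} exhibits $(\nu, \iota)$ as a localisation sequence, so $\nu$ is an equivalence precisely when its kernel $\kprofl{\ac}{X}/\kprofl{\tac}{X}$ vanishes. Via Remark \ref{remark:compact_notation} this happens iff every acyclic complex of flat sheaves is N-totally acyclic, which by Theorem \ref{theorem:gorenstein_iff_actac} is equivalent to $X$ being Gorenstein.

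For the second equivalence, the defining identities give $\nu \circ \omega = \mu$, the canonical functor $\kprof{X} \lto \qder{X}$, which by \cite[Remark 5.8]{Murfet07} is the Verdier quotient with kernel $\kprofl{\ac}{X}$. Hence $\mu$ is an equivalence iff $\kprofl{\ac}{X} = 0$, i.e., every acyclic complex of flat sheaves is pure acyclic. The direction ``$X$ regular $\Rightarrow \kprofl{\ac}{X} = 0$'' is \cite[Proposition 9.7]{Murfet07}.

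The hard part will be the converse. If $\kprofl{\ac}{X} = 0$, then both $\kprofl{\tac}{X} = 0$ (as $\tac \subseteq \ac$) and $\kprofl{\ac}{X}/\kprofl{\tac}{X} = 0$, so by the first equivalence $X$ is already Gorenstein; the remaining issue is to upgrade Gorenstein to regular. The plan is to argue by contradiction: supposing some stalk $A = \cat{O}_{X,x}$ is Gorenstein but not regular, $k(x)$ has infinite projective dimension, so its $(\dim A)$-th syzygy $N$ in a minimal free resolution is a non-free maximal Cohen--Macaulay $A$-module. Since $A$ is Gorenstein, $N$ admits a non-contractible complete projective resolution $P$, which by Lemma \ref{lemma:tac_iff_completeflat}(i) (using $\dim A < \infty$) is N-totally acyclic as a complex of flat $A$-modules; the complex $P$ is not pure acyclic because $N$ is not flat. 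Choosing an affine open $U = \Spec(B) \ni x$ with $A = B_{\mf{p}}$, Proposition \ref{prop:descends_tac}(i) ensures $P$ remains N-totally acyclic after restriction of scalars to $B$, and Lemma \ref{lemma:inclusion_affine} then pushes it forward to an N-totally acyclic complex of flat sheaves on $X$. Its stalk at $x$ is $P$, which is not pure acyclic, so this complex is a nonzero object of $\kprofl{\tac}{X}$, contradicting our assumption. The delicate point is checking that the terms of $P$ genuinely remain flat over $B$, which holds because $B \to A$ is flat and so free $A$-modules (and their summands) are $B$-flat.
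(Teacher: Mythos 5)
Your proposal is correct. The first claim (Gorenstein $\Leftrightarrow$ $\nu$ an equivalence) and the identification $\nu\circ\omega=\mu$ with kernel $\kprofl{\ac}{X}$, together with the implication ``$X$ regular $\Rightarrow \kprofl{\ac}{X}=0$'', follow the paper exactly. Where you diverge is the converse ``$\kprofl{\ac}{X}=0 \Rightarrow X$ regular'': the paper simply invokes \cite[Proposition 9.7]{Murfet07} at a non-regular point $x$ to produce an acyclic, non-pure-acyclic complex of flat $\cat{O}_{X,x}$-modules and pushes it forward with $j_*$, landing a nonzero object of $\kprofl{\ac}{X}$. You instead first harvest Gorensteinness from the already-proved first claim (using $\kprofl{\tac}{X}\subseteq\kprofl{\ac}{X}$), then build an explicit witness: the complete resolution of a non-free maximal Cohen--Macaulay syzygy of $k(x)$, which by Lemma \ref{lemma:tac_iff_completeflat}(i) is N-totally acyclic and lands a nonzero object of the smaller category $\kprofl{\tac}{X}$ after descent (Proposition \ref{prop:descends_tac}(i)) and pushforward (Lemma \ref{lemma:inclusion_affine}). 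Both routes are valid; yours is longer and imports the standard fact that MCM modules over Gorenstein local rings admit complete resolutions (not proved in this paper, though consistent with its references), but it avoids the full strength of \cite[Proposition 9.7]{Murfet07} for this direction and in fact proves the stronger statement $\kprofl{\tac}{X}\neq 0$ at any non-regular point. The details you flag (flatness of the terms over $B$, the stalk of the pushforward recovering $P$ because $P$ is already a complex of $B_{\mf{p}}$-modules, and non-pure-acyclicity via the non-flat syzygy $N$) all check out.
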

\begin{proof}
A Verdier quotient functor is an equivalence if and only if its kernel is the zero category. Since $\Ker(\nu) \cong \kprofl{\ac}{X}/\kprofl{\tac}{X}$ the first claim follows from Theorem \ref{theorem:gorenstein_iff_actac}. The composite $\nu \circ \omega$ is just the canonical functor $\mu: \kprof{X} \lto \qder{X}$, and by \cite[Theorem 5.5]{Murfet07} the functor $\mu$ is an equivalence if and only if $\kprofl{\ac}{X} = 0$. 

It remains to argue that this happens precisely when $X$ is regular. If $X$ is regular, then from \cite[Proposition 9.11]{Murfet07} with $U = \emptyset$ we infer that $\kprofl{\ac}{X} = 0$. For the converse, suppose that $\cat{O}_{X,x}$ is not regular for some $x \in X$. By \cite[Proposition 9.7]{Murfet07} there exists an acyclic complex $F$ of flat $\cat{O}_{X,x}$-modules which is not pure acyclic; applying $j_*$, where $j: \Spec(\cat{O}_{X,x}) \lto X$ is canonical, we obtain an acyclic complex $j_*F$ of flat sheaves which is not pure acyclic, whence $\kprofl{\ac}{X} \neq 0$.
\end{proof}

\section{Compact objects}\label{section:compact}

Assuming the existence of a dualising complex, we prove that both $\kprofl{\tac}{X}$ and $\kprofl{\ac}{X}/\kprofl{\tac}{X}$ are compactly generated and give a description in both cases of the subcategories of compact objects. In light of the properties of N-total acyclicity already established, the argument is a routine generalisation of \cite{Krause06} to schemes. One novelty is the treatment, in Section \ref{section:without_dualising}, of some schemes without a dualising complex.

\begin{setup} In this section we assume that $X$ admits a dualising complex $D$, which will be supposed to be a bounded complex of injective sheaves; see \cite{ResiduesDuality, Conrad00}.
\end{setup}

Let us first explain the role of the dualising complex, and Grothendieck duality. Recall that $D$ has coherent cohomology, and ``dualising by $D$'' defines an equivalence \cite{ResiduesDuality}
\begin{equation}\label{eq:defining_prop_dualising_cpx}
\rdev\qhom(-,D): \qderu{b}{\coh X}^{\textrm{op}} \xlto{\sim} \qderu{b}{\coh X}.
\end{equation}
Next we recall the main result of \cite[\S 8]{Murfet07}, which builds on earlier work of J\o rgensen \cite{Jorgensen}, Krause \cite{Krause05} and Neeman \cite{Neeman08}. The tensor product of an injective sheaf with a flat sheaf is injective, and a pure acyclic complex of flat sheaves tensored with an injective is contractible, so tensoring complexes of flat sheaves with $D$ defines a functor
\begin{equation}\label{eq:grothendieck_duality}
- \otimes\, D: \kprof{X} \lto \kinj{X},
\end{equation}
which turns out to be an equivalence with quasi-inverse $\qhom(D,-)$. The connection with Grothendieck duality is via two canonical equivalences:
\begin{align}
\Phi : \qderu{b}{\cmodn{X}}^{\textrm{op}} \xlto{\sim} \kprofu{c}{X}&, && \text{\cite[Theorem 7.4]{Murfet07}}\label{eq:two_equivalences_compacts}\\
\mathbf{i}(-): \qderu{b}{\cmodn{X}} \xlto{\sim} \kinju{c}{X}&. && \label{eq:two_equivalences_compacts_2} \text{\cite[Proposition 2.3]{Krause05}}
\end{align}
The functor $\mathbf{i}(-)$ sends a complex to its injective resolution, while $\Phi$ is slightly more complicated; a full description is given in Section \ref{section:review_of_pure}. The equivalences (\ref{eq:two_equivalences_compacts}) and (\ref{eq:two_equivalences_compacts_2}) fit into a diagram, commutative up to natural equivalence
\begin{equation}\label{eq:grothendieck_duality_2}
\xymatrix@C+4pc{
\kprofu{c}{X} \ar[r]^{- \otimes D}_{\sim} & \kinju{c}{X}\\
\qderu{b}{\cmodn{X}}^{\textrm{op}} \ar[u]_{\wr}^{\Phi}\ar[r]_{\rdev\qhom(-,D)}^{\sim} & \qderu{b}{\cmodn{X}}. \ar[u]^{\wr}_{\mathbf{i}(-)}
}
\end{equation}
In this sense, (\ref{eq:grothendieck_duality}) can be viewed as an analogue of Grothendieck duality for unbounded complexes. Inverting the rows of (\ref{eq:grothendieck_duality_2}), and noting that the bottom row is self-dual, one obtains for any $G \in \qderu{b}{\coh X}$ an isomorphism $\qhom(D, \bold{i}(G)) \cong \Phi \rdev\qhom(G,D)$.

The first proposition of this section uses (\ref{eq:grothendieck_duality}) to compare total acyclicity for complexes of flat and injective sheaves. This comparison was first established for noetherian rings by Iyengar and Krause in \cite{Krause06}. Recall that a complex $J$ of injective sheaves is \emph{totally acyclic} if it is acyclic, and $\Hom(\md{I},J)$ is acyclic for every injective sheaf $\md{I}$. We write $\kinjl{\tac}{X}$ for the full subcategory of totally acyclic complexes in $\kinj{X}$.

\begin{proposition}\label{prop:tac_iff_twoac} A complex $F$ of flat sheaves is N-totally acyclic if and only if both $F$ and $F \otimes D$ are acyclic, and this happens precisely when $F \otimes D$ is a totally acyclic complex of injective sheaves. Thus \textup{(}\ref{eq:grothendieck_duality}\textup{)} restricts to an equivalence
\[
- \otimes D: \kprofl{\tac}{X} \xlto{\sim} \kinjl{\tac}{X}.
\]
\end{proposition}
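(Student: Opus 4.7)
The plan is to establish the three-way equivalence in the proposition (N-total acyclicity of $F$; acyclicity of both $F$ and $F \otimes D$; total acyclicity of $F \otimes D$ as a complex of injective sheaves), from which the restricted equivalence follows at once: (\ref{eq:grothendieck_duality}) will carry $\kprofl{\tac}{X}$ isomorphically onto $\kinjl{\tac}{X}$, with quasi-inverse $\qhom(D, -)$. First I would observe that $F \otimes D$ is automatically a complex of injective sheaves, since $D$ is bounded with injective entries and the tensor of a flat with an injective is injective, so each $(F \otimes D)^n$ is a finite sum of injectives.

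The implication that $F$ N-totally acyclic gives $F$ and $F \otimes D$ both acyclic is a straightforward bicomplex argument: every column $F \otimes D^q$ is acyclic (since $D^q$ is injective and $F$ is N-totally acyclic), and with $D$ bounded the totalisation is acyclic. The reverse implication is the main technical step. Given $F$ and $F \otimes D$ acyclic and $\md{I}$ an injective sheaf, I would use the counit $\epsilon_{\md{I}}\colon \qhom(D, \md{I}) \otimes D \to \md{I}$ of the adjoint equivalence (\ref{eq:grothendieck_duality}), which is an isomorphism in $\kinj{X}$ and therefore a homotopy equivalence of complexes. Tensoring with $F$ preserves homotopies, so by associativity and commutativity of the tensor,
\[
F \otimes \md{I} \;\simeq\; (F \otimes D) \otimes \qhom(D, \md{I})
\]
in $\K(X)$. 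Now $\qhom(D, \md{I})$ is a bounded complex of cotorsion flat sheaves by Lemma \ref{lemma_appendix_qhomcotorsion}, and tensoring the acyclic complex $F \otimes D$ with any flat sheaf preserves acyclicity; the right-hand side is thus the totalisation of a finite bicomplex with acyclic columns, hence acyclic. This shows $F \otimes \md{I}$ is acyclic for every injective $\md{I}$, so $F$ is N-totally acyclic.

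For the equivalence with total acyclicity of $F \otimes D$, I would transfer the $\Hom$ condition across $- \otimes D$. Using adjointness, for injective $\md{I}$,
\[
\Hom_{\K(X)}(\md{I}, \Sigma^n(F \otimes D)) \;\cong\; \Hom_{\kprof{X}}(\qhom(D, \md{I}), \Sigma^n F),
\]
and $\qhom(D, \md{I})$ is a bounded complex of flats. If $F$ is acyclic then $F \in (\Flat X)^\perp$ in $\kprof{X}$ by Lemma \ref{lemma:hom_d_acyclic}, and right orthogonality to individual flats extends to bounded complexes of flats via the long exact sequences of triangles, forcing the right-hand side to vanish. Thus $\Hom(\md{I}, F \otimes D)$ is acyclic for every injective $\md{I}$, which together with $F \otimes D$ acyclic is exactly total acyclicity in $\kinj{X}$. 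The converse runs symmetrically: for flat $\md{P}$, the identification $[\md{P}, \Sigma^n F]_{\kprof{X}} \cong [\md{P} \otimes D, \Sigma^n(F \otimes D)]_{\kinj{X}}$ together with the extension of total acyclicity of $F \otimes D$ to the bounded complex of injectives $\md{P} \otimes D$ forces this to vanish, whence $F$ is acyclic by Lemma \ref{lemma:hom_d_acyclic} again. The main obstacle throughout is the step from acyclicity of $F$ and $F \otimes D$ to full N-total acyclicity, where recognising the role of the counit of (\ref{eq:grothendieck_duality}) to rewrite $F \otimes \md{I}$ is the crucial insight.
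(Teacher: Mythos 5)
Your proof is correct, and it establishes the same three-way equivalence as the paper by a route that overlaps with, but is not identical to, the paper's. The paper argues entirely at the level of orthogonals: since $\qhom(D,\md{I})$ is a bounded complex of cotorsion flat sheaves and $\md{P}\otimes D$ a bounded complex of injectives (Lemma \ref{lemma_appendix_qhomcotorsion}), the equivalence $-\otimes D$ identifies $\Thick(\Flat X)\subseteq\kprof{X}$ with $\Thick(\Inj X)\subseteq\kinj{X}$, hence identifies $(\Flat X)^{\perp}\cap{}^{\perp}(\Flat X)$ with $(\Inj X)^{\perp}\cap{}^{\perp}(\Inj X)$; Theorem \ref{theorem:description_tac}(iii) and Lemma \ref{lemma:hom_d_acyclic} then translate these orthogonality conditions into the stated acyclicity conditions. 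Your treatment of the equivalence between total acyclicity of $F\otimes D$ and acyclicity of $F$ and $F\otimes D$ is exactly this orthogonality transport, carried out one side at a time (and your extension of orthogonality from single flats or injectives to bounded complexes via truncation triangles is the explicit form of the statement $\cat{C}^{\perp}=\Thick(\cat{C})^{\perp}$ from Section \ref{remark:orthogonals}). Where you genuinely diverge is the implication from ``$F$ and $F\otimes D$ acyclic'' to N-total acyclicity: instead of routing through the orthogonality characterisation of Theorem \ref{theorem:description_tac}(iii), you tensor the counit homotopy equivalence $\qhom(D,\md{I})\otimes D\to\md{I}$ with $F$ to rewrite $F\otimes\md{I}$, up to homotopy, as $(F\otimes D)\otimes\qhom(D,\md{I})$, and conclude by exactness of tensoring with flat sheaves. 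This gives a self-contained, hands-on argument for the key implication that only uses the definition of N-total acyclicity; the paper's version is shorter, but only because Theorem \ref{theorem:description_tac} has already absorbed the work. Both proofs ultimately rest on the same two inputs, namely the equivalence (\ref{eq:grothendieck_duality}) and the fact that $-\otimes D$ and $\qhom(D,-)$ exchange bounded complexes of flats and of injectives, so I would count yours as a valid variant rather than a fundamentally new argument.
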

\begin{proof}
Let $\Flat(X)$ and $\Inj(X)$ be the classes of flat and injective sheaves, respectively, and consider the thick subcategories generated by these classes:
\begin{equation}\label{eq:tac_iff_twoac}
\Thick(\Flat X) \subseteq \kprof{X}, \text{ and } \Thick(\Inj X) \subseteq \kinj{X}.
\end{equation}
Because $- \otimes D$ sends $\Flat(X)$ into $\Thick(\Inj X)$, and $\qhom(D,-)$ sends $\Inj(X)$ into $\Thick(\Flat X)$, the thick subcategories in (\ref{eq:tac_iff_twoac}) are identified by the equivalence of (\ref{eq:grothendieck_duality}). This equivalence therefore identifies the subcategory
\[
(\Flat X)^{\perp} \cap {}^{\perp}(\Flat X) \subseteq \kprof{X}
\]
with the subcategory
\[
(\Inj X)^{\perp} \cap {}^{\perp}(\Inj X) \subseteq \kinj{X}.
\]
But by Theorem \ref{theorem:description_tac} the former subcategory is $\kprofl{\tac}{X}$ and the latter is $\kinjl{\tac}{X}$ by definition. Hence a complex $F$ of flat sheaves is N-totally acyclic if and only if $F \otimes D$ is totally acyclic. For the second claim, note that ${}^{\perp}(\Inj X) = \kinjl{\ac}{X}$, so a complex of flat sheaves $F$ belongs to ${}^{\perp}(\Flat X)$ if and only if $F \otimes D$ is acyclic. By Lemma \ref{lemma:hom_d_acyclic}
\[
\kprofl{\tac}{X} = (\Flat X)^{\perp} \cap {}^{\perp}(\Flat X) = \kprofl{\ac}{X} \cap \{ F \l F \otimes D \text{ is acyclic} \},
\]
which completes the proof.
\end{proof}

\begin{remark} Since the equivalence $\kprof{X} \cong \kinj{X}$ identifies the totally acyclic complexes on both sides, there is an equivalence $\qderl{\bold{G}}{X} \cong \kinj{X}/\kinjl{\tac}{X}$, and we are therefore justified in writing $\qderl{\bold{G}}{X}$ rather than $\qderl{\bold{G}}{\Flat X}$ or $\qderl{\bold{G}}{\Inj X}$.
\end{remark}

Next we recall a general construction. In the form given below, the result is \cite[Proposition 1.7]{Krause06}. One of the main inputs is the Neeman-Ravenel-Thomason localisation theorem, so the reader should also consult \cite{Neeman92,NeemanBook}.

\begin{construction}\label{remark:general_construction} Let $\cat{T}$ be a compactly generated triangulated category and $\cat{C}$ a class of compact objects in $\cat{T}$. Then $\cat{C}^{\perp}$ and $\cat{T}/(\cat{C}^{\perp})$ are compactly generated. Since $\cat{C}^{\perp}$ is localising and colocalising in $\cat{T}$ it follows that the inclusion $I: \cat{C}^{\perp} \lto \cat{T}$ has both a left adjoint $I_\lambda$ and right adjoint $I_\rho$ \cite[Theorem 4.1]{Neeman96}. Hence there is a recollement
\[
\xymatrix@C+2pc{
\cat{C}^{\perp} \ar[r]|{I} & \cat{T} \ar@<1.2ex>[l]\ar@<-1.2ex>[l] \ar[r]|(0.4){\can} & \cat{T}/(\cat{C}^{\perp}), \ar@<1.2ex>[l] \ar@<-1.2ex>[l]
}
\]
together with an equivalence up to direct summands
\[
\bold{s}: \cat{T}^c/\Thick(\cat{C}) \lto (\cat{C}^{\perp})^c
\]
and an equivalence
\[
\bold{t}: \Thick(\cat{C}) \xlto{\sim} (\cat{T}/\cat{C}^{\perp})^c.
\]
Let us describe how these equivalences are constructed.
The functor $I_\lambda$ has a coproduct preserving right adjoint, and therefore preserves compactness, by \cite[Theorem 5.1]{Neeman96}. The restriction $I_\lambda|_{\cat{T}^c}: \cat{T}^c \lto (\cat{C}^{\perp})^c$ vanishes on $\Thick(\cat{C})$, and the induced functor $\bold{s}: \cat{T}^c/\Thick(\cat{C}) \lto (\cat{C}^{\perp})^c$ is the desired equivalence up to direct summands.

The equivalence $\bold{t}$ is constructed as follows: let $\Loc(\cat{C})$ denote the smallest localising subcategory of $\cat{T}$ containing $\cat{C}$. This subcategory is compactly generated by the objects of $\cat{C}$, so $\Thick(\cat{C}) = \Loc(\cat{C})^c$. By the general theory of Bousfield localisation
\[
\Loc(\cat{C}) = {}^{\perp}(\Loc(\cat{C})^{\perp}) = {}^{\perp} (\cat{C}^{\perp}),
\]
and it follows that the composite
\[
\Loc(\cat{C}) \xlto{\inc} \cat{T} \xlto{\can} \cat{T}/(\cat{C}^{\perp})
\]
is an equivalence, which restricts to an equivalence $\bold{t}: \Thick(\cat{C}) = \Loc(\cat{C})^c \cong (\cat{T}/\cat{C}^{\perp})^c$.
\end{construction}

Following Iyengar and Krause \cite{Krause06} we apply Construction \ref{remark:general_construction} as follows: any thick subcategory $\cat{E} \subseteq \qderu{b}{\coh X}$ corresponds, under the equivalence $\Phi$ of (\ref{eq:two_equivalences_compacts}), to a thick subcategory $\cat{C} = \Phi \cat{E}$ of $\kprofu{c}{X}$. For a certain choice of thick subcategory $\cat{E}$, we will identify $\cat{C}^{\perp}$ with $\kprofl{\tac}{X}$, and use Construction \ref{remark:general_construction} to study $\kprofl{\tac}{X}$ and the quotient $\qderl{\bold{G}}{X} \cong \kprof{X}/\kprofl{\tac}{X}$.

\begin{definition}\label{definition:ssg} As described in the introduction, $\Perf(X)$ and $\Coperf(X)$ denote the full subcategories of $\qderu{b}{\cmodn{X}}$ consisting of the complexes of finite flat and finite injective dimension, respectively. It is clear that $\Coperf(X)$ is a triangulated subcategory.
\end{definition}

A complex of sheaves is called \emph{perfect} if it is locally isomorphic, in the derived category, to a bounded complex of locally free sheaves of finite rank. Recall from \cite{Neeman96} that a complex is perfect if and only if it is compact in $\qder{X}$.

\begin{lemma}\label{lemma:perfx_are_perfect} A bounded complex $G$ of coherent sheaves is perfect if and only if it has finite flat dimension. In particular, $\Perf(X)$ is a triangulated subcategory of $\qderu{b}{\cmodn{X}}$.
\end{lemma}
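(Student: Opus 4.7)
The plan is to prove both implications by passing through the characterisation of finite flat dimension as a local property. The crux is the bridging observation that a bounded complex $G$ of coherent sheaves has finite flat dimension globally if and only if, on some (equivalently, any) affine open cover, each restriction has finite flat dimension. The forward part of this bridge requires work and is where the noetherian (hence quasi-compact) hypothesis on $X$ enters.

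For the implication $\Rightarrow$, if $G$ is perfect then by definition there is an open cover $\{U_i\}$ such that each $G|_{U_i}$ is isomorphic in $\qder{U_i}$ to a bounded complex of locally free sheaves of finite rank, so in particular each $G|_{U_i}$ has finite flat dimension. Since $X$ is noetherian it is quasi-compact, so I may refine to a finite subcover and let $d$ be the maximum of the local flat dimensions. I then choose a resolution $F \lto G$ where $F$ is a bounded-above complex of flat quasi-coherent sheaves; such resolutions exist because $\qcmod{X}$ has enough flats on the semi-separated scheme $X$. The smart cohomological truncation $\tau^{\ge -d}(F)$ differs from $F$ only in degree $-d$, where it has the syzygy $F^{-d}/\mathrm{im}(F^{-d-1})$. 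Flatness of this syzygy is stalk-local and follows on each $U_i$ from the fact that $G|_{U_i}$ has flat dimension at most $d$. Hence $\tau^{\ge -d}(F)$ is a bounded complex of flat sheaves isomorphic to $G$ in $\qder{X}$, so $G \in \Perf(X)$.

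For the implication $\Leftarrow$, being perfect is a local property, so it suffices to show that $G|_U$ is isomorphic in $\qder{U}$ to a bounded complex of locally free sheaves of finite rank for $U = \Spec(A)$ ranging over an affine cover. On such a $U$ the complex $G|_U$ corresponds to a bounded complex of finitely generated $A$-modules, and it inherits finite flat dimension $d$ by restricting the global bounded complex of flats. Using noetherianness of $A$, I build a bounded-above resolution $P_\bullet \lto G|_U$ by finitely generated projective $A$-modules. The $d$-th syzygy is flat by the standard characterisation of flat dimension and finitely generated since $A$ is noetherian and each $P_i$ is; hence it is finitely presented and flat, and therefore projective by Lazard's theorem. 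Replacing $P_d$ by this syzygy yields a bounded complex of finitely generated projective $A$-modules corresponding to a bounded complex of locally free sheaves of finite rank on $U$ quasi-isomorphic to $G|_U$.

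The ``in particular'' claim follows at once from the characterisation by finite flat dimension: given a triangle $G' \lto G \lto G'' \lto \Sigma G'$ in $\qderu{b}{\cmodn{X}}$ with $G'$ and $G''$ in $\Perf(X)$, represent them by bounded complexes of flats $F'$ and $F''$ in $\qder{X}$; lifting the connecting morphism to a map $F'' \lto \Sigma F'$ and forming its mapping cone gives a bounded complex of flats isomorphic to $G$, and closure under shifts is immediate. I expect the main obstacle to be the local-to-global truncation step in the forward direction: it requires producing a global flat resolution that can be simultaneously truncated on every affine open of a finite cover, which works because flatness of the relevant syzygy is stalk-local and quasi-compactness of $X$ supplies a uniform bound $d$.
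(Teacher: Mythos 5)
Your proof is correct and follows essentially the same route as the paper's: establish that finite flat dimension is local by truncating a global bounded-above flat resolution (using quasi-compactness for a uniform bound on the degree where syzygies become flat), then reduce perfection to the affine case, where finite flat dimension and finite projective dimension coincide for bounded complexes of finitely generated modules over a noetherian ring. The paper leaves the affine step and the "in particular" claim as clear, whereas you spell them out (via Lazard and a mapping-cone argument), but the substance is identical.
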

\begin{proof}
Finite flat dimension is a local property: by \cite[Proposition 1.1]{Tarrio97} or \cite[\S 3.2]{Murfet07} there is a bounded above complex $F$ of flat sheaves and a quasi-isomorphism $F \lto G$. Suppose that $G$ is locally of finite flat dimension. Then locally the syzygies of $F$ will be flat, in sufficiently high degrees; since $X$ is quasi-compact, we may find a single integer $N$ such that $Z^i(F)$ is flat for all $i < N$. Hence $G$ has finite flat dimension.

Perfection is also local, so we may reduce to the case of affine $X$. It is clear that a bounded complex of finitely generated modules has finite flat dimension if and only if it has finite projective dimension (i.e., is perfect).
\end{proof}

\begin{lemma}\label{lemma:phi_of_perfect} A complex $F$ of flat sheaves belongs to $\Phi \Perf(X)$ if and only if it is isomorphic, in $\kprof{X}$, to a bounded complex of flat sheaves with coherent cohomology.
\end{lemma}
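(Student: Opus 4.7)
The plan is to use the Grothendieck-type duality diagram (\ref{eq:grothendieck_duality_2}) to transport the condition on $\Phi(G)$ across to the injective side, where it translates into finite injective dimension of $\rdev\qhom(G,D)$, and then to invoke the fact that $\rdev\qhom(-,D)$ interchanges $\Perf(X)$ and $\Coperf(X)$ in $\qderu{b}{\coh X}$. I would establish this interchange directly: for $G \in \Perf(X)$, working on affine opens where $G$ is quasi-isomorphic to a bounded complex $L$ of locally free sheaves of finite rank, $\rdev\qhom(G,D)$ is locally represented by $\shom(L,D)$, a bounded complex of injective sheaves, so $\rdev\qhom(G,D) \in \Coperf(X)$; conversely, for $H \in \Coperf(X)$ with bounded injective resolution $J$, the $\K$-injectivity of $J$ gives $\rdev\qhom(H,D) \cong \qhom(J,D)$, which Lemma \ref{lemma_appendix_qhomcotorsion} exhibits as a bounded complex of cotorsion flat sheaves. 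Combined with the biduality $\rdev\qhom(\rdev\qhom(-,D),D) \cong 1$ on $\qderu{b}{\coh X}$, this makes $\rdev\qhom(-,D)$ restrict to an anti-equivalence between $\Perf(X)$ and $\Coperf(X)$.

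Forward direction: For $G \in \Perf(X)$, the interchange yields $\rdev\qhom(G,D) \in \Coperf(X)$, so a bounded injective resolution $J$ exists, and being $\K$-injective it represents $\bold{i}(\rdev\qhom(G,D))$ in $\kinj{X}$. Commutativity of (\ref{eq:grothendieck_duality_2}) gives $\Phi(G) \otimes D \cong J$ in $\kinj{X}$, and applying the quasi-inverse $\qhom(D,-)$ of the equivalence (\ref{eq:grothendieck_duality}) yields $\Phi(G) \cong \qhom(D,J)$ in $\kprof{X}$. Since $D$ and $J$ are both bounded complexes of injective sheaves, Lemma \ref{lemma_appendix_qhomcotorsion} shows that $\qhom(D,J)$ is a bounded complex of cotorsion flat sheaves; its cohomology is $\rdev\qhom(D,J) \cong \rdev\qhom(G,\rdev\qhom(D,D)) \cong \rdev\qhom(G,\cat{O}_X)$, which is bounded coherent because $G$ is perfect.

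Backward direction: Suppose $F$ is isomorphic in $\kprof{X}$ to a bounded complex $F_0$ of flat sheaves with coherent cohomology. Then $F_0 \otimes D$ is a bounded complex of injective sheaves with bounded coherent cohomology; being $\K$-injective and representing the cohomology object $H_0 \in \qderu{b}{\coh X}$, it is isomorphic in $\kinj{X}$ to $\bold{i}(H_0)$ and hence compact by (\ref{eq:two_equivalences_compacts_2}). Compactness transfers along the equivalence (\ref{eq:grothendieck_duality}), so $F_0$ is compact in $\kprof{X}$; write $F_0 \cong \Phi(H)$ for some $H \in \qderu{b}{\coh X}$. Running (\ref{eq:grothendieck_duality_2}) once more, $\bold{i}(\rdev\qhom(H,D)) \cong F_0 \otimes D$ is a bounded injective complex, so $\rdev\qhom(H,D) \in \Coperf(X)$, and by the interchange $H \in \Perf(X)$. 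Therefore $F \cong \Phi(H) \in \Phi\Perf(X)$.

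The main obstacle is pure bookkeeping: one must verify that bounded complexes of injectives (resp.\ flats) are automatically $\K$-injective (resp.\ $\K$-flat), that $\bold{i}(-)$ applied to a complex of finite injective dimension may be represented by any bounded injective resolution, and that the standard adjunctions used to manipulate $\rdev\qhom$ (in particular $\rdev\qhom(D,D) \cong \cat{O}_X$) carry across the equivalences (\ref{eq:grothendieck_duality}) and (\ref{eq:grothendieck_duality_2}) without mishap.
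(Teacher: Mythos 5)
Your argument is correct, but it takes a genuinely different route from the paper's. The paper stays entirely on the flat side and never touches the dualising complex: by \cite[Lemma 7.8]{Murfet07} the functor $\Phi$ sends a perfect complex $P$ to the $\K$-flat resolution of $\rdev\qhom(P,\cat{O}_X)$, which is again perfect and so has a bounded flat resolution with coherent cohomology; the converse is the $\cat{O}_X$-biduality $F \cong \rdev\qhom(\rdev\qhom(F,\cat{O}_X),\cat{O}_X)$ for the perfect complex $F$ (perfection coming from Lemma \ref{lemma:perfx_are_perfect}) together with the fact that a bounded complex of flat sheaves is its own $\K$-flat resolution. You instead transport everything across $-\otimes D: \kprof{X} \to \kinj{X}$ via the diagram (\ref{eq:grothendieck_duality_2}); this is legitimate in Section \ref{section:compact}, where $D$ is assumed to exist, and has the side benefit of proving the interchange statement of Lemma \ref{lemma:coperfect} along the way and of identifying these complexes among the compacts, but it is longer and makes the lemma appear to depend on $D$ when the paper's proof shows it does not. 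Two small points to tighten. First, in your interchange argument the step from ``$\rdev\qhom(G,D)$ is \emph{locally} a bounded complex of injectives'' to ``$\rdev\qhom(G,D)\in\Coperf(X)$'' silently uses that finite injective dimension is a local property; it is cleaner to take the global bounded flat resolution of $G$ supplied by Lemma \ref{lemma:perfx_are_perfect} and apply Lemma \ref{lemma_appendix_qhomcotorsion}(ii) termwise, which is exactly how the paper proves Lemma \ref{lemma:coperfect}. Second, in the backward direction the detour through compactness of $F_0\otimes D$ is avoidable: once $F_0$ is known to be perfect, $\cat{O}_X$-biduality produces the preimage $H=\rdev\qhom(F_0,\cat{O}_X)\in\Perf(X)$ directly.
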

\begin{proof}
It follows from \cite[Lemma 7.8]{Murfet07} that for any perfect complex $P$, $\Phi(P)$ is the $\K$-flat resolution by flat sheaves of the dual $\rdev\qhom(P,\cat{O}_X)$, which is perfect. But the $\K$-flat resolution of any complex is unique as an object of $\kprof{X}$ \cite[Remark 5.9]{Murfet07}, and any perfect complex has a finite resolution by flat sheaves, so $\Phi(P)$ is isomorphic in $\kprof{X}$ to a bounded complex of flat sheaves with coherent cohomology.

Conversely, let $F$ be a bounded complex of flat sheaves with coherent cohomology. By Lemma \ref{lemma:perfx_are_perfect}, $F$ is a perfect complex, so there is a quasi-isomorphism
\[
F \cong \rdev\qhom(\rdev\qhom(F, \cat{O}_X), \cat{O}_X).
\]
Applying \cite[Lemma 7.8]{Murfet07} for a second time, we find that $\Phi \rdev\qhom(F, \cat{O}_X)$ is the $\K$-flat resolution of $F$. But $F$ itself is $\K$-flat, so it is isomorphic in $\kprof{X}$ to its own $\K$-flat resolution, which is an object of $\Phi \Perf(X)$.
\end{proof}

We proved in Lemma \ref{lemma:hom_d_acyclic} that $\kprofl{\ac}{X}$ is equal, as a subcategory of $\kprof{X}$, to the orthogonal $(\Flat X)^{\perp}$. It is useful to reformulate this in terms of the right orthogonal of a class of \emph{compact} objects.

\begin{lemma}\label{lemma:right_orthog_ac} $\kprofl{\ac}{X} = (\Phi \Perf(X))^{\perp}$ as subcategories of $\kprof{X}$.
\end{lemma}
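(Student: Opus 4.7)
The plan is to invoke Lemma \ref{lemma:hom_d_acyclic} to rewrite $\kprofl{\ac}{X} = (\Flat X)^{\perp}$, and then show that the classes $\Flat(X)$ and $\Phi \Perf(X)$ have the same right orthogonal in $\kprof{X}$ by a double inclusion.

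The easy direction is $(\Flat X)^{\perp} \subseteq (\Phi \Perf X)^{\perp}$: by Lemma \ref{lemma:phi_of_perfect}, every object of $\Phi \Perf(X)$ is isomorphic in $\kprof{X}$ to a bounded complex of flat sheaves and so lies in $\Thick(\Flat X)$, and right orthogonals are unchanged by passage to the thick closure (Section \ref{remark:orthogonals}).

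For the reverse inclusion, let $F \in (\Phi \Perf X)^{\perp}$. The canonical functor $\mu: \kprof{X} \lto \qder{X}$ is, up to equivalence, the Verdier quotient by $\kprofl{\ac}{X}$ (\cite[Remark 5.8]{Murfet07}), so it suffices to prove $\mu F = 0$. Any $G \in \Phi \Perf(X)$ is representable by Lemma \ref{lemma:phi_of_perfect} as a bounded complex of flat sheaves; bounded complexes of flat sheaves lie in ${}^{\perp} \kprofl{\ac}{X}$ by \cite[Proposition 5.2]{Murfet07} and an induction on length using the brutal truncation triangles. Hence the canonical comparison
\[
\Hom_{\kprof{X}}(\Sigma^i G, F) \xlto{\sim} \Hom_{\qder{X}}(\Sigma^i \mu G, \mu F)
\]
is an isomorphism for every $i \in \mathbb{Z}$ (Section \ref{remark:orthogonals}) and vanishes by hypothesis. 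The final ingredient is that $\{\mu G : G \in \Phi \Perf(X)\}$ is a compact generating family of $\qder{X}$: by Lemma \ref{lemma:perfx_are_perfect} each such $\mu G$ is a perfect complex, hence compact in $\qder{X}$, and because perfect complexes are reflexive with respect to $\rdev\qhom(-, \cat{O}_X)$, every perfect $P$ is isomorphic to $\mu \Phi(\rdev\qhom(P, \cat{O}_X))$. Since $\qder{X}$ is compactly generated by perfect complexes \cite{Neeman96}, we conclude $\mu F = 0$ and therefore $F \in \kprofl{\ac}{X}$.

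The main obstacle is the identification of $\mu(\Phi \Perf(X))$ up to isomorphism with $\Perf(X) = \qderu{c}{X}$, which requires the interaction of $\Phi$ with the $\Map$--$\rdev\qhom$ comparison and the reflexivity of perfect complexes; once that point is clarified, the rest is a routine application of the standard yoga of orthogonals in Verdier quotients.
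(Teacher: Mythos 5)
Your proof is correct and follows essentially the same route as the paper: both arguments rest on Lemma \ref{lemma:phi_of_perfect} identifying $\Phi\Perf(X)$ with the $\K$-flat resolutions of perfect complexes, the fact that Homs out of such objects are computed equally in $\kprof{X}$ and $\qder{X}$, and compact generation of $\qder{X}$ by perfect complexes. The only cosmetic difference is that you split off the easy inclusion via Lemma \ref{lemma:hom_d_acyclic}, whereas the paper obtains both inclusions at once from the resulting ``if and only if''.
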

\begin{proof}
Consider the functor $\mu: \kprof{X} \lto \qder{X}$ of Section \ref{section:two_loc_seq}. It is shown in \cite[\S 5]{Murfet07} that $\mu$ is, up to equivalence, the Verdier quotient of $\kprof{X}$ by $\kprofl{\ac}{X}$, and that ${}^{\perp} \kprofl{\ac}{X}$ is the full subcategory of $\K$-flat complexes in $\kprof{X}$. By Lemma \ref{lemma:phi_of_perfect} the objects of $\Phi \Perf(X)$ are precisely the $\K$-flat resolutions of perfect complexes, and consequently for any $F \in \Phi \Perf(X)$ and $F' \in \kprof{X}$ we have an isomorphism
\[
\Hom_{\kprof{X}}(F, F') \cong \Hom_{\qder{X}}(F, F').
\]
It follows that $F'$ belongs to $(\Phi \Perf(X))^{\perp}$ if and only if $\Hom_{\qder{X}}(P, F') = 0$ for every perfect complex $P$. Since the perfect complexes compactly generate $\qder{X}$, we conclude that $\kprofl{\ac}{X} = (\Phi \Perf(X))^{\perp}$.
\end{proof}

As per our conventions (see Section \ref{section:notation}) $\Thick(\Perf(X), \Coperf(X))$ denotes the smallest thick subcategory of $\qderu{b}{\coh X}$ containing $\Perf(X) \cup \Coperf(X)$. The next observation is well-known.

\begin{lemma}\label{lemma:coperfect} A bounded complex of coherent sheaves $G$ belongs to $\Perf(X)$ \textup{(}respectively, $\Coperf(X)$\textup{)} if and only if $\rdev\qhom(G,D)$ belongs to $\Coperf(X)$ \textup{(}respectively, $\Perf(X)$\textup{)}. In particular, the category $\Thick(\Perf(X), \Coperf(X))$ is self-dual.
\end{lemma}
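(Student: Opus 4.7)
The plan is to exploit the fact that $D$ is a bounded complex of injective sheaves, which makes it $\K$-injective, so $\rdev\qhom(G,D)$ may be represented by the ordinary Hom complex $\qhom(G^\bullet, D)$ for a suitable model $G^\bullet$ of $G$. The duality equivalence $\rdev\qhom(-,D)\colon \qderu{b}{\coh X}^{\textrm{op}} \xlto{\sim} \qderu{b}{\coh X}$ of \eqref{eq:defining_prop_dualising_cpx} handles coherence of cohomology automatically and shows that it suffices to prove one implication in each of the two statements; the reverse implications then come from applying $\rdev\qhom(-,D)$ twice, using $G \cong \rdev\qhom(\rdev\qhom(G,D), D)$.

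For the direction $\Perf(X) \to \Coperf(X)$: since finite flat dimension and finite injective dimension are local properties and $X$ is quasi-compact (so a uniform bound suffices), I reduce to an affine open $U = \Spec A$, where $D|_U$ is a bounded complex of injective $A$-modules and $G|_U$ is iso in the derived category to a bounded complex of finitely generated free $A$-modules. Then $\rdev\qhom(G,D)|_U$ is computed by the Hom complex of a bounded complex of finite frees into a bounded complex of injectives, which is a bounded complex of injectives by Lemma \ref{lemma_appendix_qhomcotorsion}(ii). For the direction $\Coperf(X) \to \Perf(X)$: represent $G$ by a bounded complex of injective sheaves $J$, so $\rdev\qhom(G,D) \cong \qhom(J,D)$ is a bounded complex whose terms $\qhom(J^i, D^j)$ are cotorsion flat by Lemma \ref{lemma_appendix_qhomcotorsion}(i); in particular they are flat, so $\rdev\qhom(G,D)$ has finite flat dimension.

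For the final assertion, the equivalence $\rdev\qhom(-,D)\colon \qderu{b}{\coh X}^{\textrm{op}} \xlto{\sim} \qderu{b}{\coh X}$ sends $\Perf(X)$ into $\Coperf(X)$ and $\Coperf(X)$ into $\Perf(X)$ (by the two directions just proved), hence sends the union $\Perf(X) \cup \Coperf(X)$ to itself. Any triangulated auto-equivalence preserves thick subcategories generated by a given class, so it carries $\Thick(\Perf(X), \Coperf(X))$ to itself, yielding the self-duality. The only step requiring any real care is the local reduction in the forward direction, where I need the bound on injective dimension to pass from local to global; this is routine because $X$ is noetherian and quasi-compact, so the local injective dimensions are bounded by a single integer. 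I do not anticipate a serious obstacle; the argument is essentially a direct consequence of $\K$-injectivity of $D$ together with the two parts of Lemma \ref{lemma_appendix_qhomcotorsion}.
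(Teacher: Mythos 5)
Your argument is correct and is essentially the paper's proof: the key points in both are that $D$, being a bounded complex of injectives, is $\K$-injective so $\rdev\qhom(-,D)=\qhom(-,D)$, and that Lemma \ref{lemma_appendix_qhomcotorsion} turns bounded complexes of flat sheaves into bounded complexes of injectives and vice versa, after which self-duality of $\Thick(\Perf(X),\Coperf(X))$ follows since the anti-equivalence swaps the two generating classes. The only difference is cosmetic: for the direction $\Perf(X)\to\Coperf(X)$ the paper works globally with a bounded complex of flat sheaves representing $G$ (which exists by the very definition of finite flat dimension), so your local reduction to finite free complexes, while valid, is not needed.
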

\begin{proof}
Here is a sketch: $D$ is $\K$-injective, so $\rdev\qhom(-,D) = \qhom(-,D)$. Applying $\qhom(-,D)$ to a bounded complex of flat sheaves produces, by Lemma \ref{lemma_appendix_qhomcotorsion}, a bounded complex of injective sheaves, and vice versa. It follows that the duality $\rdev\qhom(-,D)$ on $\qderu{b}{\coh X}$ restricts to a duality on $\Thick(\Perf(X), \Coperf(X))$.
\end{proof}

\begin{definition}\label{defn:our_thick_subcat} We define
\[
\cat{C} := \Phi\Thick(\Perf(X), \Coperf(X))
\]
to be the thick subcategory of $\kprofu{c}{X}$ corresponding, under the equivalence $\Phi$ of (\ref{eq:two_equivalences_compacts}), to the thick subcatgory $\Thick(\Perf(X), \Coperf(X))$ of $\qderu{b}{\coh X}$.
\end{definition}

\begin{proposition}\label{prop:right_orth_isktac} $\kprofl{\tac}{X} = \cat{C}^{\perp}$ as subcategories of $\kprof{X}$.
\end{proposition}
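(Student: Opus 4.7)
The plan is to reduce the statement to an orthogonality inside $\kinj{X}$ via the Grothendieck duality equivalence (\ref{eq:grothendieck_duality}), and then to verify the resulting claim using the compact generation of $\kinj{X}$ from \cite{Krause05}. I first write $\cat{C}^{\perp} = (\Phi\Perf(X))^{\perp} \cap (\Phi\Coperf(X))^{\perp}$; Lemma \ref{lemma:right_orthog_ac} identifies the first factor with $\kprofl{\ac}{X}$, while Proposition \ref{prop:tac_iff_twoac} characterises $\kprofl{\tac}{X}$ as those $F \in \kprofl{\ac}{X}$ for which $F \otimes D$ is acyclic. The claim therefore reduces to showing that, for an acyclic $F$, one has $F \in (\Phi\Coperf(X))^{\perp}$ if and only if $F \otimes D$ is acyclic.

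I next transport this through the equivalence $- \otimes D: \kprof{X} \xlto{\sim} \kinj{X}$. For $G \in \Coperf(X)$, the diagram (\ref{eq:grothendieck_duality_2}) yields $\Phi(G) \otimes D \cong \mathbf{i}(\rdev\qhom(G,D))$, and by Lemma \ref{lemma:coperfect} the complex $\rdev\qhom(G,D)$ lies in $\Perf(X)$; moreover, as $G$ ranges over $\Coperf(X)$, $\rdev\qhom(G,D)$ ranges over all of $\Perf(X)$. Thus $F \in (\Phi\Coperf(X))^{\perp}$ is equivalent to $F \otimes D$ lying in $\mathbf{i}(\Perf(X))^{\perp}$ inside $\kinj{X}$, and the problem is reduced to showing: for any $J \in \kinj{X}$, we have $J \in \mathbf{i}(\Perf(X))^{\perp}$ exactly when $J$ is acyclic.

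The main technical step, and what I expect to be the hard part, is the identification
\[
[\mathbf{i}(P), J]_{\kinj{X}} \cong \Hom_{\qder{X}}(P, J)
\]
for $P \in \Perf(X)$ and arbitrary $J \in \kinj{X}$. I would establish this by writing $J$ as a filtered homotopy colimit of compact objects $\mathbf{i}(H_n)$ with $H_n \in \qderu{b}{\cmodn{X}}$, which is possible because the results of \cite{Krause05} realise $\kinju{c}{X}$ as $\mathbf{i}(\qderu{b}{\cmodn{X}})$; compactness of $\mathbf{i}(P)$ in $\kinj{X}$ and of $P$ in $\qder{X}$, combined with the equivalence $\mathbf{i}$ induces on compacts, lets the colimit commute past both sides of the formula and yields the identification. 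With this in place, $J \in \mathbf{i}(\Perf(X))^{\perp}$ becomes the vanishing of $\Hom_{\qder{X}}(P, J)$ for every perfect $P$ and every shift; compact generation of $\qder{X}$ by perfect complexes then forces $J$ to be acyclic, and the converse implication is immediate, closing the proof.
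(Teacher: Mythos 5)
Your proposal is correct and follows essentially the same route as the paper: the same decomposition $\cat{C}^{\perp} = (\Phi\Perf(X))^{\perp}\cap(\Phi\Coperf(X))^{\perp}$, the same use of Lemma \ref{lemma:right_orthog_ac} and Proposition \ref{prop:tac_iff_twoac} to reduce to the $\Coperf$ factor, and the same transport through $-\otimes D$ and Lemma \ref{lemma:coperfect}. The only divergence is at the step you correctly single out as the crux, the isomorphism $\Hom_{\kinj{X}}(\mathbf{i}G, J)\cong\Hom_{\qder{X}}(G,J)$ for $G$ perfect and $J$ an arbitrary complex of injectives: the paper simply cites \cite[Lemma 5.2]{Krause05} for this, whereas you propose to re-derive it. Your derivation as written is the one shaky point: ``filtered homotopy colimits of compact objects'' do not literally exist in a triangulated category, and an arbitrary $J\in\kinj{X}$ is not a sequential homotopy colimit of single compacts but of objects built from coproducts of compacts via the standard compact-generation construction. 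One can push your argument through in that form (compactness of $\mathbf{i}G$ and of $G$, plus the fact that the quotient $\kinj{X}\lto\qder{X}$ preserves coproducts and hence homotopy colimits, handles each stage), but this amounts to reproving Krause's lemma; citing it directly is cleaner.
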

\begin{proof}
By Proposition \ref{prop:tac_iff_twoac}, a complex of flat sheaves $F$ is N-totally acyclic if and only if both $F$ and $F \otimes D$ are acyclic. We know from Lemma \ref{lemma:right_orthog_ac} that $\kprofl{\ac}{X} = (\Phi \Perf(X))^{\perp}$, so to complete the proof it suffices to show that $F$ belongs to $\Phi\Coperf(X)^{\perp}$ if and only if $F \otimes D$ is acyclic. The perfect complexes compactly generate $\qder{X}$, so $F \otimes D$ is acyclic if and only if $\Hom_{\qder{X}}(G, F \otimes D) = 0$ for every perfect complex $G$. Using \cite[Lemma 5.2]{Krause05} we see that $F \otimes D$ is acyclic if and only if
\[
\Hom_{\qder{X}}(G, F \otimes D) \cong \Hom_{\kinj{X}}(\mathbf{i}G, F \otimes D)
\]
vanishes for every perfect complex $G$, where $\mathbf{i}G$ denotes the injective resolution. Applying the inverse of the equivalence (\ref{eq:grothendieck_duality}) given at the beginning of this section and using commutativity of (\ref{eq:grothendieck_duality_2}), we obtain
\begin{align*}
\Hom_{\kinj{X}}(\mathbf{i}G, F \otimes D) &\cong \Hom_{\kprof{X}}(\qhom(D, \mathbf{i}G), F)\\
&\cong \Hom_{\kprof{X}}(\Phi\rdev\qhom(G, D), F).
\end{align*}
By Lemma \ref{lemma:coperfect} the objects of the form $\rdev\qhom(G, D)$ for some perfect complex $G$ are precisely the objects of $\Coperf(X)$, whence the claim.
\end{proof}

Taking $\cat{T} = \kprof{X}$ in Construction \ref{remark:general_construction} and using Proposition \ref{prop:right_orth_isktac}, we have
\begin{align*}
\cat{C}^{\perp} &= \kprofl{\tac}{X},
\end{align*}
and by Proposition \ref{prop:locseq_dg} there is a canonical isomorphism
\begin{align*}
\cat{T}/(\cat{C}^{\perp}) &\cong \qderl{\bold{G}}{X},
\end{align*}
so as a special case of the construction we obtain the following three propositions. In the statements we make use of the functors $j, \omega, \iota,\nu$ defined in Section \ref{section:two_loc_seq}, and adopt the convention that given a functor $F$, the left adjoint (if it exists) is denoted $F_\lambda$.

\begin{proposition}\label{prop:compacts_ktac} $\kprofl{\tac}{X}$ is compactly generated, and the composite
\[
\xymatrix{
\qderu{b}{\coh X}^{\textrm{op}} \ar[r]^{\Phi}_{\sim} & \kprofu{c}{X} \ar[r]^(0.46){j_\lambda} & \kproful{c}{\tac}{X}
}
\]
induces an equivalence up to direct summands
\begin{equation}\label{eq:compacts_ktac}
\qderu{b}{\cmodn{X}}/\Thick(\Perf(X), \Coperf(X))^{\textrm{op}} \lto \kproful{c}{\tac}{X}.
\end{equation}
\end{proposition}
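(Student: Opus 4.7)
The plan is to invoke Construction~\ref{remark:general_construction} with $\cat{T} = \kprof{X}$ and the class of compact objects $\cat{C}$ of Definition~\ref{defn:our_thick_subcat}. Every prerequisite is in place: $\kprof{X}$ is compactly generated by \cite[Theorem 4.10]{Murfet07}; the objects of $\cat{C}$ are compact because $\Phi$ is an equivalence onto $\kprofu{c}{X}$; and, most importantly, Proposition~\ref{prop:right_orth_isktac} identifies
\[
\cat{C}^{\perp} = \kprofl{\tac}{X}.
\]
Compact generation of $\kprofl{\tac}{X}$ is then the first statement delivered by Construction~\ref{remark:general_construction}.

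For the second statement, Construction~\ref{remark:general_construction} produces an equivalence up to direct summands
\[
\mathbf{s}: \kprof{X}^c/\Thick(\cat{C}) \lto (\cat{C}^{\perp})^c,
\]
where $\mathbf{s}$ is induced by restricting the left adjoint $I_\lambda$ of the inclusion $I: \cat{C}^{\perp} \lto \kprof{X}$ to compact objects. In our setting $I = j$, hence $I_\lambda = j_\lambda$; and since $\cat{C}$ is by construction already a thick subcategory of $\kprofu{c}{X}$, we have $\Thick(\cat{C}) = \cat{C}$, while $(\cat{C}^{\perp})^c = \kproful{c}{\tac}{X}$. So $\mathbf{s}$ is an equivalence up to direct summands
\[
\mathbf{s}: \kprofu{c}{X}/\cat{C} \lto \kproful{c}{\tac}{X}.
\]

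To conclude, I would transport $\mathbf{s}$ along $\Phi$: the equivalence $\Phi: \qderu{b}{\coh X}^{\textrm{op}} \xlto{\sim} \kprofu{c}{X}$ sends $\Thick(\Perf(X),\Coperf(X))^{\textrm{op}}$ onto $\cat{C}$, so it descends to an equivalence of Verdier quotients $\qderu{b}{\cmodn{X}}/\Thick(\Perf(X),\Coperf(X))^{\textrm{op}} \xlto{\sim} \kprofu{c}{X}/\cat{C}$, and composing with $\mathbf{s}$ yields the equivalence up to direct summands asserted in (\ref{eq:compacts_ktac}). A final check identifies this composite with the descent of $j_\lambda \circ \Phi$ claimed in the statement: for $G$ in $\Thick(\Perf(X),\Coperf(X))^{\textrm{op}}$ and $T \in \kprofl{\tac}{X}$, the adjunction gives $[j_\lambda \Phi G, T] \cong [\Phi G, T]_{\kprof{X}} = 0$ because $\Phi G \in \cat{C}$ and $T \in \cat{C}^{\perp}$, so $j_\lambda \Phi$ vanishes on $\Thick(\Perf(X),\Coperf(X))^{\textrm{op}}$, and the induced functor agrees with $\mathbf{s}$ composed with the descent of $\Phi$. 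The argument is thus purely formal once Proposition~\ref{prop:right_orth_isktac} is in hand; the main content of the present proposition really lies upstream in that identification, which in turn rests on Grothendieck duality via Proposition~\ref{prop:tac_iff_twoac}.
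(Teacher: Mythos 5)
Your proposal is correct and follows exactly the paper's route: the paper likewise obtains this proposition as a direct special case of Construction~\ref{remark:general_construction} applied to $\cat{T} = \kprof{X}$ with $\cat{C} = \Phi\Thick(\Perf(X),\Coperf(X))$, using Proposition~\ref{prop:right_orth_isktac} to identify $\cat{C}^{\perp} = \kprofl{\tac}{X}$. Your additional checks (that $\Thick(\cat{C})=\cat{C}$, that $I_\lambda = j_\lambda$, and that the induced functor agrees with $\mathbf{s}$ transported along $\Phi$) are details the paper leaves implicit, and they are all correct.
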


Observe that (\ref{eq:compacts_ktac}) is defined in terms of the adjoint $j_\lambda$, which exists by a representability argument and is therefore somewhat mysterious. In general, one probably cannot hope to write down explicitly the N-totally acyclic complex corresponding to any given complex of coherent sheaves, but in certain cases, for example over Gorenstein schemes, there is a description in terms of the complete resolution.

\begin{proposition}\label{prop:qderlg_compact} $\qderl{\bold{G}}{X}$ is compactly generated, and the composite
\[
\xymatrix{
\qderu{b}{\coh X}^{\textrm{op}} \ar[r]^{\Phi}_{\sim} & \kprofu{c}{X} \ar[r]^{\inc} & \kprof{X} \ar[r]^(0.55){\omega} & \qderl{\bold{G}}{X}
}
\]
restricts to an equivalence $\Thick(\Perf(X), \Coperf(X))^{\textrm{op}} \xlto{\sim} \qderul{c}{\bold{G}}{X}$.
\end{proposition}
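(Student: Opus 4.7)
The plan is to apply Construction \ref{remark:general_construction} to the compactly generated triangulated category $\cat{T} = \kprof{X}$ with the class $\cat{C} = \Phi\Thick(\Perf(X), \Coperf(X))$, which consists of compact objects because $\Phi$, by the equivalence (\ref{eq:two_equivalences_compacts}), lands in $\kprofu{c}{X}$. First I would invoke Proposition \ref{prop:right_orth_isktac} to identify $\cat{C}^{\perp} = \kprofl{\tac}{X}$, and Proposition \ref{prop:locseq_dg} to identify $\cat{T}/\cat{C}^{\perp} = \kprof{X}/\kprofl{\tac}{X}$ with $\qderl{\bold{G}}{X}$ in such a way that the canonical quotient functor becomes $\omega$. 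Construction \ref{remark:general_construction} then yields without further work both the compact generation of $\qderl{\bold{G}}{X}$ and an equivalence $\bold{t}: \Thick(\cat{C}) \xlto{\sim} \qderul{c}{\bold{G}}{X}$, which is nothing other than the restriction of $\omega$ to $\Thick(\cat{C})$.

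Next I would observe that $\cat{C}$ is already thick. Since $\Phi: \qderu{b}{\coh X}^{\textrm{op}} \xlto{\sim} \kprofu{c}{X}$ is an equivalence of triangulated categories it carries thick subcategories to thick subcategories, so $\cat{C} = \Phi\Thick(\Perf(X), \Coperf(X))$ is thick in $\kprofu{c}{X}$ and consequently $\Thick(\cat{C}) = \cat{C}$. Precomposing the restricted equivalence $\omega|_{\cat{C}}: \cat{C} \xlto{\sim} \qderul{c}{\bold{G}}{X}$ with $\Phi$ then produces the desired equivalence $\Thick(\Perf(X), \Coperf(X))^{\textrm{op}} \xlto{\sim} \qderul{c}{\bold{G}}{X}$, realised as the composite $\omega \circ \inc \circ \Phi$ displayed in the proposition.

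There is essentially no substantive obstacle here: all the real work has already been absorbed into Proposition \ref{prop:right_orth_isktac} (which is where Grothendieck duality in the form of the equivalence (\ref{eq:grothendieck_duality}) enters) and into Construction \ref{remark:general_construction} (which ultimately rests on Neeman's Bousfield-localisation and localisation theorems). The only point worth double-checking is that the abstract quotient functor appearing in Construction \ref{remark:general_construction} agrees, under the identification furnished by Proposition \ref{prop:locseq_dg}, with the concrete functor $\omega$ defined in Section \ref{section:two_loc_seq}; this, however, is immediate from the universal property of Verdier quotients.
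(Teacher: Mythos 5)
Your proposal is correct and is essentially identical to the paper's argument: the paper also obtains Proposition \ref{prop:qderlg_compact} as a special case of Construction \ref{remark:general_construction} applied to $\cat{T} = \kprof{X}$ and $\cat{C} = \Phi\Thick(\Perf(X),\Coperf(X))$, using Proposition \ref{prop:right_orth_isktac} to identify $\cat{C}^{\perp} = \kprofl{\tac}{X}$ and Proposition \ref{prop:locseq_dg} to identify the quotient with $\qderl{\bold{G}}{X}$ via $\omega$. Your explicit remark that $\cat{C}$ is already thick (so $\Thick(\cat{C}) = \cat{C}$) is a point the paper leaves implicit in Definition \ref{defn:our_thick_subcat}, and it is correct.
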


Implicit in the statement of the next proposition is the existence of both left and right adjoints to the inclusion $j: \kprofl{\tac}{X} \lto \kprof{X}$. We have already established the existence of a right adjoint in Section \ref{section:existence_adjoints}, in greater generality, but the existence of the left adjoint is new. The construction of the left adjoint given here depends crucially on the existence of a dualising complex, but in Section \ref{section:without_dualising} we will give a proof which avoids this hypothesis, for a special class of schemes. It would be interesting to know if the left adjoint exists in general.

\begin{proposition}\label{prop:recollement_dg} The pair $(\omega, j)$ is a recollement
\[
\xymatrix@C+1pc{
\kprofl{\tac}{X} \ar[r]|(0.55){j} & \kprof{X} \ar@<1.2ex>[l]\ar@<-1.2ex>[l] \ar[r]|(0.55){\omega} & \qderl{\bold{G}}{X}. \ar@<1.2ex>[l] \ar@<-1.2ex>[l]
}
\]
\end{proposition}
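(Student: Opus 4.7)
The plan is to invoke Construction \ref{remark:general_construction} directly, with $\cat{T} = \kprof{X}$ and $\cat{C} = \Phi\Thick(\Perf(X), \Coperf(X))$ as in Definition \ref{defn:our_thick_subcat}. The category $\kprof{X}$ is compactly generated by \cite[Theorem 4.10]{Murfet07}, and by construction $\cat{C}$ consists of compact objects of $\kprof{X}$, so both hypotheses of Construction \ref{remark:general_construction} are in place.

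The construction then produces a recollement whose left-hand term is $\cat{C}^{\perp}$, middle term is $\kprof{X}$, and right-hand term is the Verdier quotient $\kprof{X}/\cat{C}^{\perp}$. To match the diagram in the statement I would identify the corners: by Proposition \ref{prop:right_orth_isktac} we have $\cat{C}^{\perp} = \kprofl{\tac}{X}$, so the inclusion $\cat{C}^{\perp} \hookrightarrow \kprof{X}$ is $j$; and by Proposition \ref{prop:locseq_dg} the functor $\omega$ is, up to canonical isomorphism, the quotient functor $\kprof{X} \lto \kprof{X}/\kprofl{\tac}{X}$, with target identified with $\qderl{\bold{G}}{X}$. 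This yields the diagram in the statement, and completes the proof.

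There is no real obstacle at this stage, since all of the substantive work has already been done. The genuinely new content relative to the localisation sequence of Proposition \ref{prop:locseq_dg} is the existence of a \emph{left} adjoint to $j$, which is supplied by Construction \ref{remark:general_construction} and rests ultimately on Brown representability via \cite[Theorem 4.1]{Neeman96}. The reason the dualising hypothesis intervenes here at all is that the identification $\cat{C}^{\perp} = \kprofl{\tac}{X}$ (Proposition \ref{prop:right_orth_isktac}) was established via the tensor equivalence $\kprof{X} \cong \kinj{X}$ induced by $D$, together with Proposition \ref{prop:tac_iff_twoac}; without a dualising complex it is unclear whether $\kprofl{\tac}{X}$ can be written as $\cat{C}^{\perp}$ for a class $\cat{C}$ of compact objects, which (as noted in the remarks immediately preceding the proposition) leaves the existence of the left adjoint to $j$ open in the general case.
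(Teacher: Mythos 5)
Your proposal is correct and is essentially identical to the paper's own argument: the paper also obtains this proposition as a special case of Construction \ref{remark:general_construction} applied to $\cat{T} = \kprof{X}$ and $\cat{C} = \Phi\Thick(\Perf(X), \Coperf(X))$, using Proposition \ref{prop:right_orth_isktac} to identify $\cat{C}^{\perp}$ with $\kprofl{\tac}{X}$ and Proposition \ref{prop:locseq_dg} to identify the quotient with $\qderl{\bold{G}}{X}$. Your closing remarks on where the dualising complex enters, and on the left adjoint being the only new content beyond Proposition \ref{prop:locseq_dg}, match the paper's own commentary preceding the statement.
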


Next, we turn our attention to the Verdier quotient $\kprofl{\ac}{X}/\kprofl{\tac}{X}$.

\begin{proposition}\label{prop:g_recollement} The pair $(\nu, \iota)$ is a recollement
\[
\xymatrix@C+1pc{ 
\kprofl{\ac}{X}/\kprofl{\tac}{X} \ar[r]|(0.7){\iota} & \qderl{\bold{G}}{X} \ar@<1.2ex>[l]\ar@<-1.2ex>[l] \ar[r]|(0.55)\nu & \qder{X} \ar@<1.2ex>[l] \ar@<-1.2ex>[l]
}
\]
\end{proposition}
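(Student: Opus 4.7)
The localisation sequence in Proposition \ref{prop:g_locseq} has already done most of the work; to upgrade $(\nu,\iota)$ to a recollement it remains only to produce a left adjoint to $\iota$, or equivalently to $\nu$. The plan is to apply Construction \ref{remark:general_construction} to the compactly generated triangulated category $\cat{T} = \qderl{\bold{G}}{X}$ (Proposition \ref{prop:qderlg_compact}) with
\[
\cat{C} := \omega\Phi(\Perf(X)) \subseteq \qderul{c}{\bold{G}}{X},
\]
where the compactness of the objects in $\cat{C}$ is immediate from Proposition \ref{prop:qderlg_compact}, which realises $\omega \circ \inc \circ \Phi$ as an equivalence $\Thick(\Perf(X),\Coperf(X))^{\mathrm{op}} \xlto{\sim} \qderul{c}{\bold{G}}{X}$.

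The key technical step is to identify $\cat{C}^{\perp}$ (inside $\qderl{\bold{G}}{X}$) with the essential image of $\iota$. First I would verify that for each perfect complex $P$ the object $\Phi(P)$ lies in ${}^{\perp}\kprofl{\tac}{X} \subseteq \kprof{X}$: by Lemma \ref{lemma:phi_of_perfect}, $\Phi(P)$ is isomorphic in $\kprof{X}$ to a bounded complex of flat sheaves, and by Theorem \ref{theorem:description_tac}(iii) every shift of a flat sheaf is left orthogonal to $\kprofl{\tac}{X}$, so a devissage over the length of the bounded complex yields the claim. Section \ref{remark:orthogonals} then shows that the canonical map
\[
\Hom_{\kprof{X}}(\Sigma^n \Phi(P), Z) \lto \Hom_{\qderl{\bold{G}}{X}}(\Sigma^n \omega\Phi(P), \omega Z)
\]
is an isomorphism for every $Z \in \kprof{X}$ and every $n \in \mathbb{Z}$. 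Since $\omega$ is essentially surjective, every object of $\qderl{\bold{G}}{X}$ has the form $Y = \omega Z$, so it follows that $Y \in \cat{C}^{\perp}$ if and only if $Z \in (\Phi\Perf(X))^{\perp}$, which by Lemma \ref{lemma:right_orthog_ac} is equivalent to $Z \in \kprofl{\ac}{X}$. By Remark \ref{remark:compact_notation}, this condition on $Y$ amounts to $Y$ lying in the essential image of $\iota$.

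Construction \ref{remark:general_construction} then produces a recollement with left-hand term $\cat{C}^{\perp} \cong \kprofl{\ac}{X}/\kprofl{\tac}{X}$, middle term $\qderl{\bold{G}}{X}$, and right-hand term the Verdier quotient $\qderl{\bold{G}}{X}/\cat{C}^{\perp}$. This last quotient is, by Proposition \ref{prop:g_locseq} and the identifications just made, canonically isomorphic to $\qder{X}$ via $\nu$, so the recollement produced by Construction \ref{remark:general_construction} is exactly the one in the statement. The main obstacle is the orthogonality claim $\Phi(P) \in {}^{\perp}\kprofl{\tac}{X}$; once this is established the rest is a formal manipulation of adjoints and Verdier quotients, combined with facts already in hand.
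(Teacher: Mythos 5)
Your argument is correct, but it takes a genuinely different route from the paper's. The paper's proof is a two-line trick: it factors the canonical functor $\mu = \nu \circ \omega : \kprof{X} \lto \qder{X}$, notes that $\mu$ has a left adjoint by \cite[Theorem 5.5]{Murfet07} and that $\omega$ is a Verdier quotient (Proposition \ref{prop:locseq_dg}), and then invokes \cite[Lemma 5.5]{Tarrio00} to conclude that $\nu$ itself has a left adjoint; this upgrades the localisation sequence of Proposition \ref{prop:g_locseq} to a recollement. You instead run the Neeman--Ravenel--Thomason machinery of Construction \ref{remark:general_construction} inside $\cat{T} = \qderl{\bold{G}}{X}$ with $\cat{C} = \omega\Phi(\Perf(X))$, and your key step --- that $\Phi(P) \in {}^{\perp}\kprofl{\tac}{X}$ by devissage from Theorem \ref{theorem:description_tac}(iii), so that Section \ref{remark:orthogonals} lets you compute $\cat{C}^{\perp}$ in the quotient and match it with the image of $\iota$ via Lemma \ref{lemma:right_orthog_ac} and Remark \ref{remark:compact_notation} --- is sound; the identification of $\cat{T}/\cat{C}^{\perp}$ with $\qder{X}$ via $\nu$ then follows from Proposition \ref{prop:g_locseq}. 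The trade-off: your route leans on Proposition \ref{prop:qderlg_compact}, hence on the dualising complex (harmless here, since it is a standing assumption of the section, but the paper's argument for this particular proposition is dualising-complex-free), while in exchange it delivers for free the compact generation of $\kprofl{\ac}{X}/\kprofl{\tac}{X}$ and essentially pre-packages the computation of its compacts, which the paper carries out separately in Proposition \ref{prop:compacts_dgac} using $\cat{B} = \Phi\Perf(X)$ inside $\kprof{X}$.
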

\begin{proof}
The localisation sequence $(\nu,\iota)$ was defined in Section \ref{section:two_loc_seq}. To prove that this pair is a recollement, we produce a left adjoint for $\nu$. Consider the following pair of functors
\[
\kprof{X} \xlto{\omega} \qderl{\bold{G}}{X} \xlto{\nu} \qder{X},
\]
using the notation introduced in Section \ref{section:two_loc_seq}. The first is a Verdier quotient by Proposition \ref{prop:locseq_dg}, so $\nu \circ \omega$ has a left or right adjoint if and only if $\nu$ does; see \cite[Lemma 5.5]{Tarrio00}. But by \cite[Theorem 5.5]{Murfet07} the functor $\mu = \nu \circ \omega$ has a left adjoint.
\end{proof}

\begin{proposition}\label{prop:compacts_dgac} The quotient $\kprofl{\ac}{X}/\kprofl{\tac}{X}$ is compactly generated, and the composite
\[
\xymatrix{
\Thick(\Perf(X),\Coperf(X))^{\textrm{op}} \ar[r]^(0.65){\sim}_(0.65){(\ref{prop:qderlg_compact})} & \qderul{c}{\bold{G}}{X} \ar[r]^(0.28){\iota_\lambda} & \left( \kprofl{\ac}{X}/\kprofl{\tac}{X} \right)^c 
}
\]
induces an equivalence up to direct summands
\[
\Big( \Thick(\Perf(X), \Coperf(X))/\Perf(X) \Big)^{\textrm{op}} \lto \left( \kprofl{\ac}{X}/\kprofl{\tac}{X} \right)^c.
\]
\end{proposition}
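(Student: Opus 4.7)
The plan is to apply Construction \ref{remark:general_construction} to the compactly generated triangulated category $\cat{T} = \qderl{\bold{G}}{X}$ (compactly generated by Proposition \ref{prop:qderlg_compact}), with a carefully chosen class of compact objects. Since $\nu$ is a Verdier quotient, it preserves coproducts, so the left adjoint $\nu_\lambda$ from the recollement of Proposition \ref{prop:g_recollement} preserves compactness. I would take
\[
\cat{C} := \nu_\lambda(\qderu{c}{X}) = \nu_\lambda(\Perf(X)) \subseteq \qderul{c}{\bold{G}}{X}.
\]

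First I would identify the right orthogonal $\cat{C}^{\perp}$ inside $\qderl{\bold{G}}{X}$. For any $Y \in \qderl{\bold{G}}{X}$ and $K \in \Perf(X)$, adjunction gives
\[
\Hom_{\qderl{\bold{G}}{X}}(\Sigma^n \nu_\lambda K, Y) \cong \Hom_{\qder{X}}(\Sigma^n K, \nu Y),
\]
and since perfect complexes compactly generate $\qder{X}$, these groups vanish for all such $K$ and $n$ precisely when $\nu Y = 0$, i.e.\ when $Y$ lies in $\iota(\kprofl{\ac}{X}/\kprofl{\tac}{X})$. Hence $\cat{C}^{\perp} = \iota(\kprofl{\ac}{X}/\kprofl{\tac}{X})$.

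Next I would identify $\cat{C}$ under the equivalence $\omega \Phi : \Thick(\Perf(X),\Coperf(X))^{\textrm{op}} \xlto{\sim} \qderul{c}{\bold{G}}{X}$ of Proposition \ref{prop:qderlg_compact}. For $K \in \Perf(X)$ with $\K$-flat resolution $P$, \cite[Theorem 5.5]{Murfet07} identifies $P$ with $\mu_\lambda(K)$ for $\mu = \nu \circ \omega$, and \cite[Lemma 7.8]{Murfet07} identifies $P$ with $\Phi(K^{\vee})$, where $K^{\vee} = \rdev\qhom(K,\cat{O}_X)$. Since the left adjoint $\omega_\lambda$ of $\omega$ from Proposition \ref{prop:recollement_dg} is fully faithful, so that $\omega \omega_\lambda \cong \mathrm{id}$, applying $\omega$ to $\mu_\lambda(K) = \omega_\lambda \nu_\lambda(K)$ yields
\[
\nu_\lambda(K) \cong \omega(P) \cong \omega\Phi(K^{\vee}).
\]
Because duality supplies a self-equivalence $\Perf(X)^{\textrm{op}} \cong \Perf(X)$ via $K \mapsto K^{\vee}$, the subcategory $\cat{C}$ corresponds precisely to $\Perf(X)^{\textrm{op}}$ under $\omega\Phi$; in particular $\Thick(\cat{C}) = \cat{C}$.

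Finally, Construction \ref{remark:general_construction} concludes that $\cat{C}^{\perp} \cong \kprofl{\ac}{X}/\kprofl{\tac}{X}$ is compactly generated and supplies an equivalence up to direct summands
\[
\qderul{c}{\bold{G}}{X}/\Thick(\cat{C}) \lto \big(\kprofl{\ac}{X}/\kprofl{\tac}{X}\big)^c
\]
induced by $\iota_\lambda$, the left adjoint of the inclusion. Rewriting the source via Proposition \ref{prop:qderlg_compact} produces the asserted equivalence
\[
\big(\Thick(\Perf(X),\Coperf(X))/\Perf(X)\big)^{\textrm{op}} \lto \big(\kprofl{\ac}{X}/\kprofl{\tac}{X}\big)^c.
\]
The main technical hurdle I expect is the identification $\nu_\lambda(K) \cong \omega\Phi(K^{\vee})$, which entangles adjoints from the two recollements $(\omega,j)$ and $(\nu,\iota)$ with the explicit duality equivalence $\Phi$; once that is secure, the remaining steps are formal bookkeeping inside Construction \ref{remark:general_construction}.
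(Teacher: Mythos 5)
Your argument is correct, and it uses the same underlying engine as the paper (the Neeman--Ravenel--Thomason localisation theorem, packaged here as Construction \ref{remark:general_construction} and in \cite[Proposition 1.7]{Krause06}), but it runs the machine in a different ambient category. The paper stays inside $\cat{T} = \kprof{X}$ and uses the nested pair of classes of compacts $\cat{B} = \Phi\Perf(X) \subseteq \cat{C} = \Phi\Thick(\Perf(X),\Coperf(X))$: by Lemma \ref{lemma:right_orthog_ac} and Proposition \ref{prop:right_orth_isktac} one has $\cat{B}^{\perp}/\cat{C}^{\perp} = \kprofl{\ac}{X}/\kprofl{\tac}{X}$, and then the two-class form of \cite[Proposition 1.7]{Krause06} delivers compact generation and the equivalence up to direct summands in one stroke. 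You instead work in $\qderl{\bold{G}}{X}$, which is already the quotient $\kprof{X}/\cat{C}^{\perp}$, with the single class $\nu_\lambda(\Perf(X))$; this forces you to carry out the extra identification $\nu_\lambda(K) \cong \omega\Phi(\rdev\qhom(K,\cat{O}_X))$ by chasing $\mu_\lambda = \omega_\lambda \circ \nu_\lambda$ through the $\K$-flat resolution and \cite[Lemma 7.8]{Murfet07}. That step is correct as you wrote it, and it is not wasted effort: it amounts to a proof of the left-hand square of the diagram in Remark \ref{remark:dg_sits_inbetween}, which the paper states without argument after the proposition. The trade-off is that the paper's route is shorter and needs only the already-established orthogonality computations, while yours makes explicit where $\Perf(X)$ sits inside $\qderul{c}{\bold{G}}{X}$ (note in passing that your claim $\Thick(\cat{C}) = \cat{C}$ should really read that $\Thick(\cat{C})$ is the essential image of $\Perf(X)^{\textrm{op}}$ under $\omega\Phi$, using that $\Perf(X)$ is closed under retracts in $\Thick(\Perf(X),\Coperf(X))$ by Lemma \ref{lemma:perfx_are_perfect}; this is harmless).
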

\begin{proof}
To be clear, $\iota_\lambda$ is the restriction to compact objects of the left adjoint of the fully faithful functor $\iota$ occurring in Proposition \ref{prop:g_recollement} above. The proof is an application of the Neeman-Ravenel-Thomason localisation theorem. Set $\cat{B} = \Phi \Perf(X)$. By Lemma \ref{lemma:right_orthog_ac} and Proposition \ref{prop:right_orth_isktac} we have
\[
\cat{B}^{\perp}/\cat{C}^{\perp} = \kprofl{\ac}{X}/\kprofl{\tac}{X}.
\]
We conclude from \cite[Proposition 1.7]{Krause06} that the quotient
is compactly generated, and we also obtain the desired equivalence up to direct summands.
\end{proof}

\begin{remark}\label{remark:dg_sits_inbetween} From Proposition \ref{prop:recollement_dg} and Proposition \ref{prop:g_recollement} we learn that the functors $\omega$ and $\nu$ have fully faithful left adjoints $\omega_\lambda$ and $\nu_\lambda$, which give a pair of inclusions
\[
\xymatrix@C+1pc{
\qder{X} \ar[r]^{\nu_\lambda} & \qderl{\bold{G}}{X} \ar[r]^(0.45){\omega_\lambda} & \kprof{X}.
}
\]
These functors preserve compactness, and there is a commutative diagram
\[
\xymatrix@C+1.8pc{
\qderu{c}{X} \ar[r]^{\nu_\lambda} & \qderul{c}{\bold{G}}{X} \ar[r]^{\omega_\lambda} & \kprofu{c}{X}\\
\Perf(X)^{\textrm{op}} \ar[u]_{\wr}^{\rdev\qhom(-,\cat{O}_X)} \ar[r]_(0.35){\inc} & \Thick(\Perf(X), \Coperf(X))^{\textrm{op}} \ar[u]_{\wr}^{(\ref{prop:qderlg_compact})} \ar[r]_(0.58){\inc} & \qderu{b}{\coh X}^{\textrm{op}} \ar[u]_{\wr}^{\Phi}
}
\]
\end{remark}

\subsection{Without a dualising complex}\label{section:without_dualising} In this subsection we drop the assumption that $X$ admits a dualising complex, which was used in Proposition \ref{prop:recollement_dg} to prove that the inclusion $\kprofl{\tac}{X} \lto \kprof{X}$ has a left adjoint. Here we prove, independently of the existence of a dualising complex, that this adjoint exists for certain schemes. By a standard argument, given in the next lemma, it suffices to prove that $\kprofl{\tac}{X}$ is closed under products in $\kprof{X}$. 

\begin{lemma}\label{lemma:left_adjoint_iff_products} The following conditions are equivalent:
\begin{itemize}
\item[(i)] $\kprofl{\tac}{X}$ is closed under small products in $\kprof{X}$.
\item[(ii)] The inclusion $j: \kprofl{\tac}{X} \lto \kprof{X}$ has a left adjoint.
\end{itemize}
If these equivalent conditions are satisfied, then $\kprofl{\tac}{X}$ is compactly generated.
\end{lemma}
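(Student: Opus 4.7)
The direction $(ii)\Rightarrow(i)$ is immediate. If $j$ admits a left adjoint then $j$ itself is a right adjoint, hence preserves all limits. By Corollary \ref{corollary:ktac_wellgen} the category $\kprofl{\tac}{X}$ is well-generated and therefore has products; since $j$ preserves them, the product in $\kprof{X}$ of any family of N-totally acyclic complexes agrees with their product computed inside $\kprofl{\tac}{X}$, and so is itself N-totally acyclic.

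For $(i)\Rightarrow(ii)$, the plan is to combine the right adjoint $R$ of $j$, provided by Theorem \ref{theorem:2}, with hypothesis (i) in order to force the localisation sequence of Proposition \ref{prop:locseq_dg} to extend to a recollement. Recall from that sequence that every $Y \in \kprof{X}$ sits in a canonical triangle
\[
jR(Y) \lto Y \lto \omega_\rho\omega(Y) \lto \Sigma jR(Y)
\]
with first term in $\kprofl{\tac}{X}$ and third term in $\kprofl{\tac}{X}^{\perp}$. Given a family $\{Y_i\}_{i\in I}$ in $\kprof{X}$, the product of these triangles (which exists, and is again a triangle, because $\kprof{X}$ is compactly generated and therefore has products) takes the form
\[
\textstyle\prod_i jR(Y_i) \lto \prod_i Y_i \lto \prod_i \omega_\rho\omega(Y_i) \lto \Sigma\prod_i jR(Y_i).
\]
Hypothesis (i) places $\prod_i jR(Y_i)$ in $\kprofl{\tac}{X}$, while $\prod_i \omega_\rho\omega(Y_i)$ always lies in the colocalising subcategory $\kprofl{\tac}{X}^{\perp}$. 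By uniqueness of the localisation triangle this realises the canonical decomposition of $\prod_i Y_i$, so that $\omega(\prod_i Y_i) \cong \prod_i \omega(Y_i)$; that is, $\omega$ preserves products.

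The step I expect to be the main obstacle is passing from this product-preservation to the actual existence of a left adjoint to $\omega$ (equivalently, of a left adjoint to $j$). The cleanest route is to invoke the standard fact, in the vein of \cite[Ch.~9]{NeemanBook}, that a localising subcategory of a compactly generated triangulated category admits a left adjoint to its inclusion precisely when it is closed under small products, via the equivalent characterisations of smashing Bousfield localisations; concretely, one shows (using compact generators of $\kprof{X}$ and the triangle above) that $\kprofl{\tac}{X}^{\perp}$ is then closed under coproducts, so that $\kprofl{\tac}{X}$ is smashing.

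Finally, once $L \dashv j$ has been produced, compact generation of $\kprofl{\tac}{X}$ is routine. Because $j$ preserves coproducts (as $\kprofl{\tac}{X}$ is localising), $L$ preserves compactness, so $\{L(C) : C \in \kprofu{c}{X}\}$ is a set of compact objects of $\kprofl{\tac}{X}$. If $Z \in \kprofl{\tac}{X}$ satisfies $\Hom(\Sigma^n L(C), Z) = 0$ for every compact $C$ in $\kprof{X}$ and every $n \in \mathbb{Z}$, then by adjunction $\Hom(\Sigma^n C, j(Z)) = 0$ for all such $C,n$; since $\kprof{X}$ is compactly generated \cite[Theorem 4.10]{Murfet07}, this forces $j(Z) = 0$, and fully faithfulness of $j$ yields $Z = 0$.
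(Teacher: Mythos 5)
Your direction $(ii)\Rightarrow(i)$, your product-of-triangles argument showing that hypothesis $(i)$ forces $\omega$ to preserve products, and your final paragraph on compact generation are all correct and essentially match the paper. The problem is the step you yourself flag as "the main obstacle": getting from product-preservation to the existence of the left adjoint. Here you lean on a "standard fact" that a localising subcategory of a compactly generated category has a left adjoint to its inclusion iff it is closed under products, and you propose to prove it by showing that $\kprofl{\tac}{X}^{\perp}$ is closed under coproducts, i.e.\ that the localisation is smashing. This is the wrong tool. Smashingness (closure of $\cat{S}^{\perp}$ under coproducts) is equivalent to the \emph{right} adjoint of $j$ preserving coproducts; it neither follows obviously from $\cat{S}$ being closed under products, nor does it produce the decomposition one actually needs for a left adjoint of $j$, namely a triangle $M'' \lto M \lto M' \lto \Sigma M''$ with $M'' \in {}^{\perp}\cat{S}$ and $M' \in \cat{S}$ for every $M$. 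So as written the key implication is not proved, and the sketch you give would not prove it.

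The argument the paper uses, and the one you should substitute, is the dual Brown representability theorem: having shown that $\omega: \kprof{X} \lto \qderl{\bold{G}}{X}$ preserves products, note that $\kprof{X}$ is compactly generated and $\qderl{\bold{G}}{X}$ has small Homs (Corollary \ref{corollary:dg_smallhoms}), so \cite[Theorem 8.6.1]{NeemanBook} gives a left adjoint to $\omega$; the dual of \cite[Lemma 3.2]{Krause05} then transfers this to a left adjoint of $j$. This also explains why your "standard fact" needs more hypotheses than you state: one needs the quotient by the localising subcategory to have small Homs (and here one already has the right adjoint of $j$ from Theorem \ref{theorem:2} to set up the localisation sequence), neither of which is automatic for an arbitrary localising subcategory.
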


\begin{remark}\label{remark:products_in_nflat} Products exist in $\kprof{X}$, because products exist in any compactly generated triangulated category, by \cite{Neeman98,NeemanBook}. Suppose that $X = \Spec(A)$ is affine. Then products of flat $A$-modules are flat and the quotient $\pi: \kflat{A} \lto \kprof{A}$ preserves products \cite{Neeman08}, so the product in $\kprof{A}$ is just the ordinary degree-wise product of complexes. Over general schemes the functor $\pi$ does not necessarily preserve products \cite[Remark A.15]{Murfet07}, so the product in $\kprof{X}$ is more complicated.
\end{remark}
\begin{proof}
Consider the localisation sequence $(\omega, j)$ of Proposition \ref{prop:locseq_dg}. If $\kprofl{\tac}{X}$ is closed under products then $\omega$ preserves products, and as $\kprof{X}$ is compactly generated we deduce from the dual Brown representability theorem \cite[Theorem 8.6.1]{NeemanBook} that $\omega$ has a left adjoint. It follows from the dual of \cite[Lemma 3.2]{Krause05} that $j$ has a left adjoint. Conversely, if $j$ has a left adjoint then $\omega$ has a left adjoint, so the kernel $\kprofl{\tac}{X}$ of $\omega$ is closed under products.

Finally, if the inclusion $j$ has a left adjoint $j_\lambda$, then this adjoint preserves compactness, as $j$ preserves coproducts \cite[Theorem 5.1]{Neeman96}. It is then easy to check that $j_\lambda$ sends a compact generating set of $\kprof{X}$ to a compact generating set for $\kprofl{\tac}{X}$.
\end{proof}

Recall that a point $x \in X$ is \emph{Gorenstein} if the local ring $\cat{O}_{X,x}$ is Gorenstein. 

\begin{proposition}\label{prop:isolated_gorenstein} If every non-closed point of $X$ is Gorenstein \textup{(}for example, if $X$ has only isolated singularities\textup{)} then $\kprofl{\tac}{X}$ is closed under products in $\kprof{X}$.
\end{proposition}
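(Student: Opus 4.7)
By Lemma \ref{lemma:left_adjoint_iff_products} it suffices to show that $\kprofl{\tac}{X}$ is closed under small products in $\kprof{X}$. So fix a family $\{F_i\}_{i \in I}$ of N-totally acyclic complexes and set $F = \prod_{i}^{\kprof} F_i$. The complex $F$ is acyclic because $\kprofl{\ac}{X}$ is a colocalising subcategory of $\kprof{X}$ (Remark \ref{remark:ac_colocalising}), and by Lemma \ref{lemma:total_acyc_pointwise} the task reduces to showing that the stalk $F_x$ is an N-totally acyclic complex of flat $\cat{O}_{X,x}$-modules for every $x \in X$.

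If $x$ is non-closed then $\cat{O}_{X,x}$ is Gorenstein by hypothesis, so Theorem \ref{theorem:gorenstein_iff_actac} shows the acyclic complex $F_x$ is automatically N-totally acyclic; thus only closed points require attention. Fix a closed $x \in X$, write $A = \cat{O}_{X,x}$ and $\mf{m} = \mf{m}_x$. Every injective $A$-module is a coproduct of copies of $E_A(A/\mf{p})$ for various $\mf{p} \in \Spec A$, so it suffices to prove $F_x \otimes_A E_A(A/\mf{p})$ is acyclic for each such $\mf{p}$. For $\mf{p} \subsetneq \mf{m}$, the corresponding point $y$ is a proper generalisation of $x$ and therefore non-closed in $X$; the local ring $A_\mf{p} = \cat{O}_{X,y}$ is Gorenstein, and the acyclicity of
\[
F_x \otimes_A E_A(A/\mf{p}) \cong (F_x)_\mf{p} \otimes_{A_\mf{p}} E_{A_\mf{p}}(k(\mf{p}))
\]
follows by applying Theorem \ref{theorem:gorenstein_iff_actac} to the acyclic complex $(F_x)_\mf{p}$ of flat $A_\mf{p}$-modules.

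The entire difficulty thus concentrates on the residual case $\mf{p} = \mf{m}$: one must show that $F_x \otimes_A E$ is acyclic, where $E = E_A(k(x))$. The strategy is to pass to the $\mf{m}$-adic completion $\widehat{A}$, which is complete local Noetherian and hence admits a dualising complex. Proposition \ref{prop:recollement_dg} applied to $\widehat{A}$ produces a left adjoint to $\kprofl{\tac}{\widehat{A}} \hookrightarrow \kprof{\widehat{A}}$, so via Lemma \ref{lemma:left_adjoint_iff_products} over $\widehat{A}$ the subcategory $\kprofl{\tac}{\widehat{A}}$ is closed under small products. Since $E$ is $\mf{m}$-torsion, $F_x \otimes_A E \cong (\widehat{A} \otimes_A F_x) \otimes_{\widehat{A}} E$, and each $\widehat{A} \otimes_A (F_i)_x$ is N-totally acyclic over $\widehat{A}$ by flat base change along $A \to \widehat{A}$ (see the argument of Proposition \ref{prop:descends_tac}(ii)). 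It remains to identify $\widehat{A} \otimes_A F_x$, at least for the purpose of tensoring with $E$, with the product $\prod_{i}^{\kprof \widehat{A}} (\widehat{A} \otimes_A (F_i)_x)$ in $\kprof{\widehat{A}}$, so that the product-closure over $\widehat{A}$ furnishes the required acyclicity.

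\textbf{Main obstacle.} The crucial and delicate step is to justify the above identification, since neither the stalk functor $(-)_x \colon \kprof{X} \to \kprof{A}$ nor the base-change functor $\widehat{A} \otimes_A - \colon \kprof{A} \to \kprof{\widehat{A}}$ preserves infinite products in general (cf.\ Remark \ref{remark:products_in_nflat}). One expects to bridge this gap by exploiting the filtered presentation $E = \varinjlim_n E[\mf{m}^n]$ of the Matlis injective as a colimit of finite length $A$-modules: tensoring with a finitely presented module commutes with arbitrary products of flat modules, and a careful interchange of filtered colimits and products (both exact in the module category) yields the desired acyclicity of $F_x \otimes_A E$ directly, without needing the product-identification in full generality.
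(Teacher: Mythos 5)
Your overall strategy---dispose of the non-closed points (equivalently, the non-maximal primes) using the Gorenstein hypothesis via Theorem \ref{theorem:gorenstein_iff_actac}, and then attack the remaining case $\mf{p} = \mf{m}$ by writing $E(A/\mf{m})$ as the filtered colimit of its finite-length submodules $E[\mf{m}^n]$ and commuting these finitely generated modules past the product---is exactly the skeleton of the paper's argument. But as written the proposal has two genuine gaps, one of which you flag but do not close, and one of which you do not address at all.

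First, the reduction to a computable product. You form $F = \prod_i F_i$ in $\kprof{X}$ and pass to stalks; the reduction to stalkwise N-total acyclicity is legitimate, but you then have no handle on $F_x$, because the product in $\kprof{X}$ is not the degree-wise product of complexes (Remark \ref{remark:products_in_nflat}) and neither the stalk functor nor completion preserves infinite products. Your proposed fix (the colimit $E = \varinjlim_n E[\mf{m}^n]$) only helps once you already have a degree-wise product of complexes of flat modules over a local ring in hand; it says nothing about how the abstract product in $\kprof{X}$ relates to $\prod_i (F_i)_x$. The paper avoids this entirely by a different reduction: using the \v{C}ech filtration of \cite[Proposition 3.13]{Murfet07}, each $F_i$ is an iterated extension of direct sums of $f_*(F_i|_U)$ for affine opens $U$, so $\prod_i F_i$ lies in the triangulated subcategory generated by $\prod_i f_*(F_i|_U) \cong f_*\prod_i(F_i|_U)$, where the inner product is now the honest degree-wise product in $\kprof{U}$ for $U$ affine; Lemma \ref{lemma:inclusion_affine} then reduces everything to the affine case. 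Some such step is indispensable. (The detour through $\widehat{A}$ and Proposition \ref{prop:recollement_dg} is then unnecessary.)

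Second, even in the affine case the interchange does not ``yield the desired acyclicity directly.'' The interchange gives $\bigl(\prod_i F_i\bigr) \otimes_A E(A/\mf{m}) \cong \varinjlim_n \prod_i (F_i \otimes_A E[\mf{m}^n]) \cong \Gamma_{\mf{m}}\prod_i (F_i \otimes_A E(A/\mf{m}))$, and the individual complexes $F_i \otimes_A E[\mf{m}^n]$ are \emph{not} acyclic ($E[\mf{m}^n]$ is merely finite length, not injective, and $A$ need not be Gorenstein at the closed point), so exactness of filtered colimits and products gets you nowhere. The missing step is that $I = \prod_i(F_i \otimes E(A/\mf{m}))$ is an acyclic complex of \emph{injective} modules, and $\Gamma_{\mf{m}}(I)$ is acyclic because its cohomology computes local cohomology of syzygies of $I$, which vanishes outside a fixed finite range. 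Without this boundedness argument the proof does not close.
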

\begin{proof}
We want reduce to the affine case, where we understand the product. Let $\{ F_i \}_{i \in I}$ be a family of N-totally acyclic complexes of flat sheaves. Using \v{C}ech complexes, each $F_i$ is an extension of direct sums of complexes $f_*(F_i|_U)$, where $f: U \lto X$ is the inclusion of an affine open subset \cite[Proposition 3.13]{Murfet07}. Taking the product, in $\kprof{X}$, of the triangles involved in these extensions over the index set $I$, we deduce that $\prod_i F_i$ belongs to the triangulated subcategory of $\kprof{X}$ generated by products of the form $\prod_i f_*(F_i|_U)$. Products and direct image commute, i.e. $\prod_i f_*(F_i|_U) \cong f_* \prod_i(F_i|_U)$, and $f_*$ preserves N-total acyclicity by Lemma \ref{lemma:inclusion_affine}, so we can reduce to the case where $X = \Spec(A)$ is affine. Keeping in mind Remark \ref{remark:products_in_nflat}, the affine case is handled by the subsequent two results.
\end{proof}

\begin{lemma}\label{lemma:products_cpxs_flats} Let $A$ be a noetherian ring and $\mf{m} \subseteq A$ a maximal ideal. Given a family $\{ F_i \}_{i \in I}$ of complexes of flat $A$-modules, there is a canonical homotopy equivalence
\begin{equation}\label{eq:isolated_gorenstein}
\left(\prod_i F_i\right) \otimes_A E(A/\mf{m}) \cong \Gamma_\mf{m}\prod_i( F_i \otimes_A E(A/\mf{m}) ).
\end{equation}
where $\Gamma_\mf{m}(M) = \{ x \in M \l \mf{m}^n x = 0 \text{ for some $n > 0$} \}$ is the usual support functor.
\end{lemma}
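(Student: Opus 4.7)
The plan is to prove the sharper statement that the two complexes are canonically \emph{isomorphic} (and hence \emph{a fortiori} homotopy equivalent) via a degreewise identification, so it suffices to work one cohomological degree at a time. Write $E = E(A/\mf{m})$ and $E_n = (0 :_E \mf{m}^n)$; because $E$ is supported only at $\mf{m}$, we have $E = \bigcup_n E_n$. Since $E$ is $\mf{m}$-torsion, so is $M \otimes_A E$ for every $A$-module $M$; in particular the left-hand side of (\ref{eq:isolated_gorenstein}) is $\mf{m}$-torsion, so the canonical map $\bigl(\prod_i F_i\bigr)\otimes_A E \lto \prod_i (F_i \otimes_A E)$ factors uniquely through the submodule $\Gamma_\mf{m}\prod_i(F_i \otimes_A E)$, yielding the canonical morphism $\Theta$ of the statement.

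Three facts underpin the argument. $(a)$ Each $E_n$ is finitely generated over $A$, and hence finitely presented since $A$ is noetherian: $E_n$ is annihilated by $\mf{m}^n$, so it is a module over the artinian local ring $A/\mf{m}^n$, and is artinian as a submodule of $E$, hence of finite length. $(b)$ For any finitely presented $A$-module $N$ and any family $\{G_i\}$, the canonical map $\bigl(\prod_i G_i\bigr) \otimes_A N \lto \prod_i(G_i \otimes_A N)$ is an isomorphism; this follows by applying both functors to a presentation $A^p \to A^q \to N \to 0$, using right-exactness of tensor together with the fact that finite products and finite coproducts agree in $A$-modules. $(c)$ For a flat module $F$, $(0 :_{F \otimes_A E} \mf{m}^n) = F \otimes_A E_n$: the statement is trivial when $F$ is finite free, and by Lazard's theorem $F$ is a filtered colimit of finite free modules, while both sides commute with filtered colimits (for the left side one uses that $\Gamma_\mf{m}$, and the subfunctor $(0:_{-}\mf{m}^n)$, commutes with filtered colimits).

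Assembling these ingredients, and using $\Gamma_\mf{m}(-) = \operatorname{colim}_n (0 :_{-} \mf{m}^n)$ together with $(0 :_{\prod_i M_i} \mf{m}^n) = \prod_i(0 :_{M_i} \mf{m}^n)$, we obtain
\begin{align*}
\Gamma_\mf{m}\prod_i (F_i \otimes_A E)
  &\cong \operatorname{colim}_n \prod_i (F_i \otimes_A E_n) && \text{by }(c)\\
  &\cong \operatorname{colim}_n \Bigl(\prod_i F_i\Bigr) \otimes_A E_n && \text{by }(b)\text{ with }N=E_n\\
  &\cong \Bigl(\prod_i F_i\Bigr) \otimes_A E,
\end{align*}
and naturality of each step ensures that this isomorphism inverts the canonical map $\Theta$.

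The only step that requires real care is $(c)$: tensoring with a flat module preserves the injection $E_n \hookrightarrow E$ but does not \emph{a priori} identify annihilators, and the fix is to reduce to finite free modules via Lazard. Everything else is a standard assembly of the exchange properties of $\otimes$, $\prod$ and $\Gamma_\mf{m}$, and the passage from modules to complexes is automatic since all three functors are applied degreewise.
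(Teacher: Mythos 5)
Your proof is correct, and at the one genuinely delicate point it takes a different route from the paper. Both arguments share the same computational core: write $E = \varinjlim_n E_n$ with $E_n = (0:_E\mf{m}^n)$ finitely generated (Matlis), identify the torsion submodule of the product degreewise as $\varinjlim_n \prod_i(F_i^j\otimes E_n)$, and commute $-\otimes E_n$ past the product using finite presentation of $E_n$. The difference is in how flatness is exploited. The paper first invokes Neeman's theorem that every complex of flat $A$-modules is isomorphic in $\mathbf{N}(\Flat A)$ to a complex of \emph{free} modules, which makes the identification $(0:_{F\otimes E}\mf{m}^n) = F\otimes E_n$ trivial ($F\otimes E\cong E^{\oplus\lambda}$) but means the reduction, and hence the final statement, only holds up to homotopy equivalence. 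You instead prove that identification for an arbitrary flat module directly, via Lazard's theorem and the observation that $(0:_{-}\mf{m}^n)=\Hom_A(A/\mf{m}^n,-)$ commutes with filtered colimits; this avoids a deep external input and yields a genuine degreewise isomorphism of complexes (trivially compatible with the differentials by naturality), which is strictly stronger than the homotopy equivalence asserted in the lemma and obtained in the paper. Your step $(a)$, establishing finite generation of $E_n$ by hand rather than citing Matlis, is also fine: $E_n$ is an artinian module over the artinian ring $A/\mf{m}^n$, hence of finite length. In short: same skeleton, but your substitution of Lazard for Neeman at step $(c)$ is a legitimate simplification that buys a sharper conclusion.
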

\begin{proof}
By \cite[Theorem 1.2, Lemma 5.3]{Neeman08} every $F_i$ is isomorphic in $\kprof{A}$ to a complex of free $A$-modules. Taking the tensor product with $E(A/\mf{m})$ defines a functor on $\kprof{A}$ \cite[Facts 2.17]{Neeman08} so in proving the lemma, we may assume that for $i \in I$ and $j \in \mathbb{Z}$, $F^j_i$ is free of rank $\lambda_{i,j}$. We write $E(A/\mf{m})$ as the direct limit of submodules
\[
A_n = \{ a \in E(A/\mf{m}) \l \mf{m}^n \cdot a = 0 \}.
\]
The $A_n$ are finitely generated $A$-modules \cite{Matlis58}, so the functors $A_n \otimes_A -$ commute with products. It follows that there is an isomorphism
\begin{align*}
\Gamma_\mf{m}\prod_i( F_i \otimes_A E(A/\mf{m}) )^j &= \{ \text{$\mf{m}$-torsion elements in } \prod_i F_i^j \otimes_A E(A/\mf{m}) \}\\
&\cong \{ \text{$\mf{m}$-torsion elements in } \prod_i E(A/\mf{m})^{\oplus (\lambda_{i,j})} \}\\
&= \varinjlim_{n \ge 1} \prod_i A_n^{\oplus (\lambda_{i,j})} \cong \varinjlim_{n \ge 1} \prod_i ( F_i^j \otimes_A A_n )\\
&\cong \varinjlim_{n \ge 1}\left(\prod_i F_i^j\right) \otimes_A A_n \cong \left(\prod_i F_i\right)^j \otimes_A E(A/\mf{m}).
\end{align*}
One checks that this isomorphism commutes with differentials, giving the desired isomorphism (\ref{eq:isolated_gorenstein}) in the homotopy category of complexes of $A$-modules.
\end{proof}

\begin{proposition} Let $A$ be a noetherian ring with the property that $A_{\mf{p}}$ is Gorenstein for every non-maximal prime ideal $\mf{p}$. If $\{ F_i \}_{i \in I}$ is a family of N-totally acyclic complexes of flat $A$-modules, then $\prod_i F_i$ is N-totally acyclic. In particular, an arbitrary product of Gorenstein flat $A$-modules is Gorenstein flat.
\end{proposition}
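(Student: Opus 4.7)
My plan is to verify the two defining conditions of N-total acyclicity for $\prod_i F_i$. By Remark \ref{remark:products_in_nflat}, this product in $\kprof{A}$ is just the degreewise product of flat $A$-modules, which is a complex of flat modules (arbitrary products of flats are flat over a noetherian, hence coherent, ring) and is acyclic since products are exact in the category of $A$-modules. For the tensor condition I must show $(\prod_i F_i) \otimes_A E(A/\mf{p})$ is acyclic for every prime $\mf{p}$, and I split into two cases depending on whether $\mf{p}$ is maximal.

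For $\mf{p}$ non-maximal, the Gorenstein hypothesis handles things: $E(A/\mf{p})$ is naturally an $A_\mf{p}$-module, so
\[
(\prod_i F_i) \otimes_A E(A/\mf{p}) \cong (\prod_i F_i)_\mf{p} \otimes_{A_\mf{p}} E(A/\mf{p}).
\]
The localisation $(\prod_i F_i)_\mf{p}$ is an acyclic complex of flat $A_\mf{p}$-modules, and by Theorem \ref{theorem:gorenstein_iff_actac} applied to the Gorenstein ring $A_\mf{p}$, it is N-totally acyclic over $A_\mf{p}$; tensoring with the injective $E(A/\mf{p})$ then yields an acyclic complex.

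The real work lies in the maximal case $\mf{p} = \mf{m}$. Lemma \ref{lemma:products_cpxs_flats} rewrites $(\prod_i F_i) \otimes_A E(A/\mf{m}) \cong \Gamma_\mf{m}X$, where $X := \prod_i(F_i \otimes_A E(A/\mf{m}))$. Each $F_i \otimes_A E(A/\mf{m})$ is acyclic by N-total acyclicity of $F_i$, and writing $F_i^n$ via Lazard as a filtered colimit of finite-free modules and using that filtered colimits (over the noetherian ring $A$) and arbitrary products of injectives are injective, $X$ is an acyclic complex of injective $A$-modules. The main obstacle is that $\Gamma_\mf{m}$ is only left exact, so $\Gamma_\mf{m}X$ is not obviously acyclic.

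I would overcome this obstacle via local cohomology. Writing $Z^n$ for the cocycles of $X$, the acyclicity of $X$ produces short exact sequences $0 \lto Z^n \lto X^n \lto Z^{n+1} \lto 0$, and the associated long exact sequences of local cohomology, combined with the vanishing $H^k_\mf{m}(X^n) = 0$ for $k > 0$ coming from injectivity of $X^n$, yield natural isomorphisms
\[
H^n(\Gamma_\mf{m}X) \cong H^1_\mf{m}(Z^{n-1}) \cong H^2_\mf{m}(Z^{n-2}) \cong \cdots \cong H^{1+j}_\mf{m}(Z^{n-1-j})
\]
for every $j \ge 0$. Taking $j$ larger than the number of generators of $\mf{m}$, the \v{C}ech description of local cohomology forces the right-hand side to vanish, so $H^n(\Gamma_\mf{m}X) = 0$ for all $n$, completing the maximal case. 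The final assertion about Gorenstein flats is then immediate: if each $M_i$ is a syzygy of some N-totally acyclic $G_i$, suitable shifts align the syzygies in degree zero, and $\prod_i M_i$ is the degree-zero syzygy of the N-totally acyclic complex $\prod_i G_i$.
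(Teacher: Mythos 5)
Your proposal is correct and follows essentially the same route as the paper's proof: reduce via the Gorenstein hypothesis at non-maximal primes to checking $(\prod_i F_i)\otimes_A E(A/\mf{m})$ for maximal $\mf{m}$, apply Lemma \ref{lemma:products_cpxs_flats} to rewrite this as $\Gamma_\mf{m}$ of an acyclic complex of injectives, and conclude by the finite cohomological dimension of $\Gamma_\mf{m}$. Your explicit dimension-shifting through the syzygies is just a spelled-out version of the paper's remark that the truncations compute local cohomology of the syzygies, which vanishes in a bounded range.
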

\begin{proof}
Applying Theorem \ref{theorem:gorenstein_iff_actac} at the non-maximal primes, we learn that a complex $F$ of flat $A$-modules is N-totally acyclic if and only if it is acyclic and $F \otimes_A E(A/\mf{m})$ is acyclic for every maximal ideal $\mf{m}$. Fix a maximal ideal $\mf{m}$. The product $\prod_i F_i$ is an acyclic complex of flat $A$-modules, and by Lemma \ref{lemma:products_cpxs_flats} there is a homotopy equivalence
\[
\left(\prod_i F_i\right) \otimes_A E(A/\mf{m}) \cong \Gamma_\mf{m}\prod_i( F_i \otimes_A E(A/\mf{m}) ).
\]
The $F_i$ are N-totally acyclic, so every $F_i \otimes_A E(A/\mf{m})$ is acyclic and consequently $I = \prod_i( F_i \otimes_A E(A/\mf{m}) )$ is an acyclic complex of injective $A$-modules. It follows that $\Gamma_\mf{m}(I)$ is acyclic: brutally truncating $I$ from below in any degree gives an injective resolution of some syzygy $M$, and the cohomology of $\Gamma_\mf{m}(I)$ above the truncation point calculates the local cohomology $H^i_\mf{m}(M)$, which vanishes outside a finite range (not depending on $M$). We deduce that $(\prod_i F_i) \otimes_A E(A/\mf{m}) \cong \Gamma_\mf{m}(I)$ is acyclic, and since $\mf{m}$ was an arbitrary maximal ideal we conclude that the product $\prod_i F_i$ is N-totally acyclic.
\end{proof}

\section{Invariants of singularities}\label{section:invariants}

In this section we prove that the construction of $\kprofl{\tac}{X}$ only depends, in a sense we will make precise, on a certain subset of the singular locus of $X$. The idea, originally developed by Krause \cite[\S 6]{Krause05} for the homotopy category of injective sheaves, is to use local cohomology in the guise of a (co)localisation sequence, in our case (\ref{eq:local_cohom_recoll}) below. To be clear, in this section we return to the standing assumption that $X$ is a semi-separated noetherian scheme; no assumption is made about the existence of a dualising complex. 

\begin{setup} Fix an open subset $U \subseteq X$ with inclusion $f: U \lto X$, and set $Z = X \setminus U$.
\end{setup}

First we discuss a colocalisation sequence from \cite{Murfet07}. Restriction of sheaves from $X$ to $U$ sends complexes of flat sheaves to complexes of flat sheaves, and preserves pure acyclicity of such complexes, so there is a functor $(-)|_U: \kprof{X} \lto \kprof{U}$; see \cite[Definition 3.9]{Murfet07}. We will need some notation for the kernel of this functor.

\begin{definition} Let $\kprofl{Z}{X}$ denote the kernel of $(-)|_U: \kprof{X} \lto \kprof{U}$. A complex of flat sheaves $F$ belongs to this kernel if and only if $F|_U$ is pure acyclic. Since $(-)|_U$ is coproduct preserving, $\kprofl{Z}{X}$ is a localising subcategory of $\kprof{X}$.
\end{definition}

The restriction functor $(-)|_U$ and the inclusion $\kprofl{Z}{X} \lto \kprof{X}$ both have right adjoints, which we denote respectively by $\ndev f_*$ and $\ndev \Gamma_Z$, and the pair $(\ndev \Gamma_Z, \ndev f_*)$ is a colocalisation sequence \cite[Theorem 9.3]{Murfet07}
\begin{equation}\label{eq:local_cohom_recoll}
\xymatrix@C+2pc{
\kprof{U} \ar@<0.6ex>[r]^{\ndev f_*} & \kprof{X} \ar@<0.6ex>[l]^{(-)|_U} \ar@<0.6ex>[r]^{\ndev \Gamma_Z} & \kprofl{Z}{X} \ar@<0.6ex>[l]^{\inc}.
}
\end{equation}
Let us recall, roughly, what this means, and refer the reader to \cite[Definition 3.1]{Krause05} for the details. To say that a pair $(G: \cat{T} \lto \cat{T}'', F: \cat{T}' \lto \cat{T})$ \emph{is a colocalisation sequence} means that $F$ is fully faithful, that (up to equivalence) $G$ is the Verdier quotient of $\cat{T}$ by the image of $F$, and that $F$ has a left adjoint (equivalently, $G$ has a left adjoint). In this situation $G,F$ and the two left adjoints are arranged in a diagram of the form (\ref{eq:local_cohom_recoll}). In fact, (\ref{eq:local_cohom_recoll}) is even a recollement, but this fact will not be needed here.

There is an analogous recollement for the derived category, which relates $\qder{U}, \qder{X}$, and the derived category $\qderl{Z}{X}$ of complexes with cohomology supported on $Z$. The right adjoint of the inclusion $\qderl{Z}{X} \lto \qder{X}$ is the local cohomology functor $\rdev \Gamma_Z$ of Grothendieck, so (\ref{eq:local_cohom_recoll}) serves as an extension of local cohomology to $\kprof{X}$. The point of this section is that in certain sequences derived from (\ref{eq:local_cohom_recoll}), the third term (which is either a subcategory, or quotient of a subcategory, of $\kprofl{Z}{X}$) vanishes, and we deduce a relationship between an invariant of $U$ and $X$.

\begin{remark} When $U$ is affine, $\ndev f_*: \kprof{U} \lto \kprof{X}$ is just the usual direct image $f_*$ evaluated on a complex of flat sheaves, because the latter functor extends to the pure derived category of flat sheaves in this case \cite[Definition 3.9]{Murfet07}.
\end{remark}


\begin{lemma}\label{lemma:direct_image_tac} The functors $\ndev f_*$ and $\ndev \Gamma_Z$ preserve both acyclicity and N-total acyclicity of complexes of flat sheaves.
\end{lemma}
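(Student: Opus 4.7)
The plan is to reduce to the affine case (where the claim is essentially Lemma \ref{lemma:inclusion_affine}) and then deduce the $\ndev \Gamma_Z$ assertion from the canonical triangle attached to the colocalisation sequence (\ref{eq:local_cohom_recoll}). Throughout I use that the acyclic complexes in $\kprof{X}$ form a localising subcategory, as do the N-totally acyclic complexes (Definition \ref{defn:ntac}).

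First I would handle $\ndev f_*$. When $U$ is affine, the remark preceding the lemma identifies $\ndev f_*$ with the ordinary direct image $f_*$; since $f$ is then an affine morphism (using semi-separatedness of $X$), $f_*$ is exact, which gives preservation of acyclicity, while preservation of N-total acyclicity is exactly Lemma \ref{lemma:inclusion_affine}. For general $U$, fix a finite affine open cover $\{V_\alpha\}_{\alpha \in \Lambda}$ of $U$, noting that $U$ inherits semi-separatedness from $X$, so each intersection $V_{\alpha_0} \cap \cdots \cap V_{\alpha_p}$ is affine. By \cite[Proposition 3.13]{Murfet07} applied to $U$, any $G \in \kprof{U}$ is a finite iterated extension of direct sums of complexes of the form $g_{\alpha_0 \cdots \alpha_p *}(G|_{V_{\alpha_0} \cap \cdots \cap V_{\alpha_p}})$, where $g_{\alpha_0 \cdots \alpha_p}$ denotes the inclusion into $U$. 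Applying the triangulated functor $\ndev f_*$ yields an analogous description of $\ndev f_* G$ in $\kprof{X}$. Since both $\ndev f_* \circ g_{\alpha_0 \cdots \alpha_p *}$ and $(fg_{\alpha_0 \cdots \alpha_p})_*$ are right adjoint to restriction from $\kprof{X}$ to the affine open $V_{\alpha_0} \cap \cdots \cap V_{\alpha_p}$, they are naturally isomorphic, so each building block of $\ndev f_* G$ is a pushforward from an affine open of $X$. By Lemma \ref{lemma:restriction_tac} and the affine case already handled, these building blocks are acyclic (resp. N-totally acyclic) whenever $G$ is; closure of the relevant localising subcategory under extensions and finite direct sums then gives the conclusion for $\ndev f_* G$.

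For $\ndev \Gamma_Z$, I would invoke the triangle attached to the colocalisation sequence (\ref{eq:local_cohom_recoll}): for each $F \in \kprof{X}$,
\[
\ndev \Gamma_Z F \lto F \lto \ndev f_*(F|_U) \lto \Sigma \ndev \Gamma_Z F,
\]
with the maps induced by the counit for the inclusion $\kprofl{Z}{X} \hookrightarrow \kprof{X}$ and the unit for $(-)|_U \dashv \ndev f_*$. When $F$ is acyclic or N-totally acyclic, so is $F|_U$ (Lemma \ref{lemma:restriction_tac}), hence so is $\ndev f_*(F|_U)$ by the previous paragraph; two-out-of-three in the triangle then forces the same property on $\ndev \Gamma_Z F$. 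The main obstacle is the \v{C}ech-type step in the treatment of $\ndev f_*$: one must cite \cite[Proposition 3.13]{Murfet07} to write an arbitrary complex of flat sheaves on $U$ as an iterated extension of pushforwards from affine opens, and identify the composite $\ndev f_* \circ g_{\alpha_0 \cdots \alpha_p *}$ with $(fg_{\alpha_0 \cdots \alpha_p})_*$. The rest is formal.
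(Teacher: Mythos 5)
Your proposal is correct and follows essentially the same route as the paper's proof: the \v{C}ech decomposition of \cite[Proposition 3.13]{Murfet07} into pushforwards from affine opens, the identification $\ndev f_* \circ h_* \cong (fh)_*$ by uniqueness of adjoints together with Lemma \ref{lemma:inclusion_affine}, and the two-out-of-three argument in the colocalisation triangle for $\ndev \Gamma_Z$. The only (harmless) difference is that you spell out the acyclicity case explicitly where the paper dispatches it with ``similarly.''
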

\begin{proof}
Let $F$ be an N-totally acyclic complex on $U$. Using \v{C}ech complexes we may write $F$ as an extension of direct sums of complexes of the form $h_*(F|_W)$, where $h: W \lto U$ is the inclusion of an affine open subset \cite[Proposition 3.13]{Murfet07}. To prove that $\ndev f_* F$ is N-totally acyclic, it therefore suffices to prove that each $\ndev f_*( h_* F|_W)$ is N-totally acyclic. Since $h$ is affine $\ndev h_* = h_*$, and the composite of right adjoints
\[
\kprof{W} \xlto{h_*} \kprof{U} \xlto{\ndev f_*} \kprof{X}
\]
is right adjoint to the composite $(-)|_W = (-)|^U_W \circ (-)|_U$. By the uniqueness of adjoints we have a natural equivalence $\ndev f_* \circ h_* \cong \ndev (fh)_*$, and the latter functor is just $(fh)_*$ since $fh$ is again affine. We already proved in Lemma \ref{lemma:inclusion_affine} that $(fh)_*$ preserves N-total acyclicity, whence the complex $\ndev f_*( h_* F|_W ) \cong (fh)_*(F|_W)$ is N-totally acyclic. 

Now, let $F$ be an N-totally acyclic complex of flat sheaves on $X$. Part of the data of the colocalisation sequence (\ref{eq:local_cohom_recoll}) is a triangle $\ndev \Gamma_Z(F) \lto F \lto \ndev f_*(F|_U) \lto \Sigma \ndev\Gamma_Z(F)$ in $\kprof{X}$. Both $F$ and $\ndev f_*(F|_U)$ are N-totally acyclic, so we infer from this triangle that $\ndev \Gamma_Z(F)$ is N-totally acyclic. Similarly, $\ndev f_*$ and $\ndev \Gamma_Z$ preserve acyclicity.
\end{proof}

\begin{definition} We write
\begin{align*}
\kprofl{Z,\ac}{X} := \kprofl{\ac}{X} \cap \kprofl{Z}{X},\\
\kprofl{Z,\tac}{X} := \kprofl{\tac}{X} \cap \kprofl{Z}{X}
\end{align*}
for the full subcategories of $\kprof{X}$ whose objects are, respectively, the acyclic and N-totally acyclic complexes that are ``supported'' on $Z$, in the sense that they restrict to pure acyclic complexes on $U$. Both are localising subcategories of $\kprof{X}$.
\end{definition}

From Lemma \ref{lemma:direct_image_tac}, we learn that $\ndev f_*$ and $\ndev \Gamma_Z$ restrict to functors
\begin{align*}
\ndev f_*|_{\tac} &: \kprofl{\tac}{U} \lto \kprofl{\tac}{X},\\
\ndev \Gamma_Z|_{\tac}&: \kprofl{\tac}{X} \lto \kprofl{Z,\tac}{X}.
\end{align*}
Similarly, there are restricted functors $\ndev f_*|_{\ac}$ and $\ndev \Gamma_Z|_{\ac}$. It is shown in \cite[Proposition 9.5]{Murfet07} that the pair $(\ndev \Gamma_Z|_{\ac}, \ndev f_*|_{\ac})$ is a colocalisation sequence, and we want to record here the analogue for N-total acyclicity.

\begin{proposition}\label{prop:recollement_tac_support} The pair $(\ndev \Gamma_Z|_{\tac}, \ndev f_*|_{\tac})$ is a colocalisation sequence
\[
\xymatrix@C+2pc{
\kprofl{\tac}{U} \ar@<0.6ex>[r]^{\ndev f_*|_{\tac}} & \kprofl{\tac}{X} \ar@<0.6ex>[l]^{(-)|_U} \ar@<0.6ex>[r]^{\ndev \Gamma_Z|_{\tac}} & \kprofl{Z,\tac}{X} \ar@<0.6ex>[l]^{\inc}.
}
\]
\end{proposition}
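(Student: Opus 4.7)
The plan is to transport the structure of the colocalisation sequence (\ref{eq:local_cohom_recoll}) down to the totally acyclic subcategories. The crucial input is already in hand: all four functors appearing in (\ref{eq:local_cohom_recoll}) preserve N-total acyclicity, namely $(-)|_U$ by Lemma \ref{lemma:restriction_tac}(i) and both $\ndev f_*$ and $\ndev\Gamma_Z$ by Lemma \ref{lemma:direct_image_tac}. Since $\ndev\Gamma_Z$ takes values in $\kprofl{Z}{X}$ by construction, its restriction to $\kprofl{\tac}{X}$ lands in $\kprofl{Z,\tac}{X}$, so all arrows in the asserted sequence are well-defined.

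First I would dispose of the easy half of the structure. Full faithfulness of $\ndev f_*|_{\tac}$ is automatic, as $\kprofl{\tac}{U} \subseteq \kprof{U}$ and $\kprofl{\tac}{X} \subseteq \kprof{X}$ are full subcategories. The adjunction $(-)|_U \dashv \ndev f_*$ restricts in the obvious way: for $F \in \kprofl{\tac}{X}$ and $G \in \kprofl{\tac}{U}$ the isomorphism
\[
\Hom_{\kprof{X}}(F, \ndev f_*(G)) \cong \Hom_{\kprof{U}}(F|_U, G)
\]
is an isomorphism of Hom sets in the corresponding full subcategories, giving $(-)|_U|_{\tac} \dashv \ndev f_*|_{\tac}$ and hence the left adjoint required by the definition of a colocalisation sequence.

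The substantive step is to identify $\ndev\Gamma_Z|_{\tac}$ with the Verdier quotient of $\kprofl{\tac}{X}$ by the essential image of $\ndev f_*|_{\tac}$. First, $\ndev\Gamma_Z \circ \ndev f_* = 0$ on all of $\kprof{U}$: applying the defining triangle of (\ref{eq:local_cohom_recoll}) to $\ndev f_*(G)$, and using that the counit $\ndev f_*(G)|_U \lto G$ is an isomorphism (because $\ndev f_*$ is a fully faithful right adjoint), the third vertex $\ndev\Gamma_Z(\ndev f_*(G))$ must vanish. Hence $\ndev\Gamma_Z|_{\tac}$ factors through the Verdier quotient of $\kprofl{\tac}{X}$ by the essential image of $\ndev f_*|_{\tac}$, and I would show the induced functor is an equivalence by observing that the adjunction $\inc \dashv \ndev\Gamma_Z$ restricts to a fully faithful left adjoint $\kprofl{Z,\tac}{X} \hookrightarrow \kprofl{\tac}{X}$ of $\ndev\Gamma_Z|_{\tac}$. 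Equivalently, for each $F \in \kprofl{\tac}{X}$ the triangle
\[
\ndev\Gamma_Z(F) \lto F \lto \ndev f_*(F|_U) \lto \Sigma \ndev\Gamma_Z(F)
\]
lives entirely in $\kprofl{\tac}{X}$, exhibiting a semi-orthogonal decomposition from which the colocalisation sequence structure follows formally.

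There is no real obstacle here once Lemma \ref{lemma:direct_image_tac} and Lemma \ref{lemma:restriction_tac} are in hand: the argument is a routine transport of the unrestricted sequence (\ref{eq:local_cohom_recoll}) along the inclusions of the totally acyclic subcategories. The only point that actually requires the preservation lemmas, rather than purely formal nonsense, is that the three vertices of the defining triangle of (\ref{eq:local_cohom_recoll}) all remain N-totally acyclic when $F$ is.
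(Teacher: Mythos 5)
Your proposal is correct and follows essentially the same route as the paper: restrict the two adjunctions of the ambient colocalisation sequence using the preservation lemmas, note that full faithfulness of $\ndev f_*$ passes to the full subcategory $\kprofl{\tac}{U}$, and identify $\ndev\Gamma_Z|_{\tac}$ as the Verdier quotient by its kernel via its fully faithful left adjoint. The only cosmetic difference is that you make explicit the vanishing $\ndev\Gamma_Z \circ \ndev f_* = 0$ and the fact that the defining triangle stays inside $\kprofl{\tac}{X}$, which the paper leaves implicit.
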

\begin{proof}
The functor $(-)|_U$ and its right adjoint $\ndev f_*$ restrict to an adjoint pair of functors between $\kprofl{\tac}{U}$ and $\kprofl{\tac}{X}$, and similarly $\ndev \Gamma_Z|_{\tac}$ remains the right adjoint of the inclusion $\kprofl{Z,\tac}{X} \lto \kprofl{\tac}{X}$. It is proven in \cite[Theorem 9.3]{Murfet07} that $\ndev f_*$ is fully faithful, so the restriction to $\kprofl{\tac}{U}$ is fully faithful. Finally, $\ndev \Gamma_Z|_{\tac}$ is a functor with a fully faithful left adjoint, and any such functor is (up to equivalence) the Verdier quotient of its kernel, which is $\kprofl{\tac}{U}$.
\end{proof}

Both $\ndev f_*|_{\ac}$ and $\ndev \Gamma_Z|_{\ac}$ preserve N-total acyclicity, so there are induced functors
\begin{align*}
\overline{\ndev} f_*&: \kprofl{\ac}{U}/\kprofl{\tac}{U} \lto \kprofl{\ac}{X}/\kprofl{\tac}{X},\\
\overline{\ndev} \Gamma_Z &: \kprofl{\ac}{X}/\kprofl{\tac}{X} \lto \kprofl{Z,\ac}{X}/\kprofl{Z,\tac}{X}.
\end{align*}
Allowing ourselves to abbreviate $\bold{N}_{\star}(\Flat -)$ to $\bold{N}_{\star}(-)$ for readability, we have:

\begin{proposition}\label{prop:recollement_actac_support} The pair $(\overline{\ndev} \Gamma_Z, \overline{\ndev} f_*)$ is a colocalisation sequence
\[
\xymatrix@C+2pc{
\bold{N}_{\ac}(U)/\bold{N}_{\tac}(U) \ar@<0.6ex>[r]^{\overline{\ndev} f_*} & \bold{N}_{\ac}(X)/\bold{N}_{\tac}(X) \ar@<0.6ex>[l]^{(-)|_U} \ar@<0.6ex>[r]^{\overline{\ndev} \Gamma_Z} & \bold{N}_{Z,\ac}(X)/\bold{N}_{Z,\tac}(X) \ar@<0.6ex>[l]^{\inc}.
}
\]
\end{proposition}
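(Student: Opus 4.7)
The plan is to descend the colocalisation sequence of acyclic complexes from \cite[Proposition 9.5]{Murfet07},
\[
\xymatrix@C+2pc{
\kprofl{\ac}{U} \ar@<0.6ex>[r]^{\ndev f_*|_{\ac}} & \kprofl{\ac}{X} \ar@<0.6ex>[l]^{(-)|_U} \ar@<0.6ex>[r]^{\ndev \Gamma_Z|_{\ac}} & \kprofl{Z,\ac}{X} \ar@<0.6ex>[l]^{\inc},
}
\]
to Verdier quotients by the respective subcategories of N-totally acyclic complexes. Each of the five functors preserves N-total acyclicity: this is Lemma \ref{lemma:direct_image_tac} for $\ndev f_*|_{\ac}$ and $\ndev\Gamma_Z|_{\ac}$, Lemma \ref{lemma:restriction_tac} for $(-)|_U$, and trivial for $\inc$. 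Consequently all five functors, and both adjunctions, descend to functors between the Verdier quotients, since units and counits automatically pass to the quotient categories.

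To obtain a colocalisation sequence it suffices to verify that the two fully faithful left adjoints of the underlying sequence, namely $\ndev f_*|_{\ac}$ and $\inc: \kprofl{Z,\ac}{X} \lto \kprofl{\ac}{X}$, remain fully faithful after descent; once this is known, the dual of \cite[Lemma 3.2]{Krause05} supplies the remaining structure automatically, with $\overline{\ndev}\Gamma_Z$ identified, up to equivalence, as the Verdier quotient by the image of $\overline{\ndev}f_*$. Full faithfulness of the descent reduces, by the standard criterion \cite[Lemma 9.1.5]{NeemanBook}, to the pair of intersection identities
\[
\ndev f_*(\kprofl{\ac}{U}) \cap \kprofl{\tac}{X} = \ndev f_*(\kprofl{\tac}{U}), \qquad \kprofl{Z,\ac}{X} \cap \kprofl{\tac}{X} = \kprofl{Z,\tac}{X}.
\]
The second identity is the very definition of $\kprofl{Z,\tac}{X}$. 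For the first, given $F \in \kprofl{\ac}{U}$ with $\ndev f_*(F) \in \kprofl{\tac}{X}$, the counit of the fully faithful $\ndev f_*$ yields an isomorphism $F \cong \ndev f_*(F)|_U$, and Lemma \ref{lemma:restriction_tac} then forces $F$ to be N-totally acyclic on $U$.

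The first intersection identity is the main—and really only—step of substance; everything else is formal descent. As a sanity check one can confirm directly that $\ker(\overline{\ndev}\Gamma_Z)$ agrees with the essential image of $\overline{\ndev}f_*$ by applying the triangle
\[
\ndev\Gamma_Z(F) \lto F \lto \ndev f_*(F|_U) \lto \Sigma\ndev\Gamma_Z(F)
\]
from \eqref{eq:local_cohom_recoll}: this becomes an isomorphism $F \cong \overline{\ndev}f_*(F|_U)$ in $\kprofl{\ac}{X}/\kprofl{\tac}{X}$ precisely when $\ndev\Gamma_Z(F) \in \kprofl{Z,\tac}{X}$, which is exactly the condition that $[F]$ lies in $\ker(\overline{\ndev}\Gamma_Z)$.
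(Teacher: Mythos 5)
Your overall architecture matches the paper's: descend the colocalisation sequence for acyclic complexes from \cite[Proposition 9.5]{Murfet07} to the quotients by the N-totally acyclic subcategories, and check that the two fully faithful functors remain fully faithful. The gap is in how you establish that full faithfulness. You claim it ``reduces, by the standard criterion \cite[Lemma 9.1.5]{NeemanBook}, to the pair of intersection identities.'' That lemma says nothing of the sort --- it computes $\Hom$ groups in a \emph{single} Verdier quotient when one argument lies in an orthogonal of the subcategory being killed --- and the reduction itself is false as a general principle. If $F\colon \cat{T} \to \cat{T}'$ is fully faithful with $F(\cat{S}) \subseteq \cat{S}'$, the identity $F(\cat{T}) \cap \cat{S}' = F(\cat{S})$ only guarantees that the induced functor $\cat{T}/\cat{S} \to \cat{T}'/\cat{S}'$ reflects zero objects; morphisms in the target quotient are roofs through arbitrary objects of $\cat{T}'$, so the induced functor can fail to be full or faithful even when the intersection identity holds. (The standard sufficient condition is a factorisation property for morphisms between $\cat{S}'$ and the image of $F$, not an intersection of subcategories.) So the step you yourself flag as ``the main --- and really only --- step of substance'' is unsupported as written.

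The repair is immediate from ingredients you already list, and it is exactly what the paper does. You note that both adjunctions descend with the same units and counits. Full faithfulness of the right adjoint $\ndev f_*|_{\ac}$ is equivalent to its counit $(\ndev f_*(-))|_U \to 1$ being an isomorphism, which holds by \cite[Theorem 9.3]{Murfet07}; the counit of the descended adjunction is the image of this one under the quotient functors and is therefore still an isomorphism, so $\overline{\ndev} f_*$ is fully faithful. The same argument applied to the unit of $\inc \dashv \ndev\Gamma_Z|_{\ac}$ handles the induced inclusion $\bold{N}_{Z,\ac}(X)/\bold{N}_{Z,\tac}(X) \to \bold{N}_{\ac}(X)/\bold{N}_{\tac}(X)$. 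With full faithfulness obtained this way, your identification of $\Ker(\overline{\ndev}\Gamma_Z)$ with the essential image of $\overline{\ndev} f_*$ via the triangle $\ndev\Gamma_Z(F) \to F \to \ndev f_*(F|_U) \to \Sigma\ndev\Gamma_Z(F)$ is fine, and the intersection identities become superfluous. (A minor slip: $\ndev f_*|_{\ac}$ is the fully faithful \emph{right} adjoint of $(-)|_U$, not a left adjoint.)
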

\begin{proof}
The functors $(-)|_U$ and $\ndev f_*|_{\ac}$ pass to the quotients and form an adjoint pair between $\bold{N}_{\ac}(U)/\bold{N}_{\tac}(U)$ and $\bold{N}_{\ac}(X)/\bold{N}_{\tac}(X)$. Moreover, because the unit and counit morphisms remain the same, and the counit is an isomorphism in $\kprofl{\ac}{U}$ by \cite[Theorem 9.3]{Murfet07}, we conclude that the same is true on the level of the quotients; in particular, the functor $\overline{\ndev} f_*$ is fully faithful. Similarly, $\overline{\ndev} \Gamma_Z$ is right adjoint to the fully faithful functor
\[
\bold{N}_{Z,\ac}(X)/\bold{N}_{Z,\tac}(X) \lto \bold{N}_{\ac}(X)/\bold{N}_{\tac}(X)
\]
induced by the inclusion $\bold{N}_{Z,ac}(X) \lto \bold{N}_{\ac}(X)$. It follows that $\overline{\ndev} \Gamma_Z$ is (up to equivalence) the Verdier quotient of $\bold{N}_{\ac}(X)/\bold{N}_{\tac}(X)$ by its kernel, which is clearly the image of $\overline{\ndev} f_*$, whence the pair $(\overline{\ndev} \Gamma_Z, \overline{\ndev} f_*)$ is a colocalisation sequence.
\end{proof}

\begin{remark}\label{remark:on_coloc_seq} In a colocalisation sequence, each row expresses the third category (reading in the direction of the arrows) as the quotient of the second by the first; for example, from Proposition \ref{prop:recollement_tac_support} we deduce that restriction $(-)|_U$ induces an equivalence
\begin{equation}\label{eq:left_hand_fraction}
\kprofl{\tac}{X}/\kprofl{Z,\tac}{X} \xlto{\sim} \kprofl{\tac}{U}.
\end{equation}
A similar observation applies in the context of Proposition \ref{prop:recollement_actac_support}.
\end{remark}

Next we examine conditions under which the fraction in (\ref{eq:left_hand_fraction}) has a vanishing denominator. We say that a local noetherian ring $A$ is a \emph{symmetric singularity} if it satisfies the equivalent conditions $(i)-(iv)$ of the following lemma.

\begin{lemma} Given a local noetherian ring $A$, the following are equivalent:
\begin{itemize}
\item[(i)] There exists a totally acyclic complex of projective $A$-modules which is not contractible. That is to say, $\kprojl{\tac}{A} \neq 0$.
\item[(ii)] There exists a non-projective Gorenstein projective $A$-module.
\item[(iii)] There exists an N-totally acyclic complex of flat $A$-modules which is not pure acyclic. That is to say, $\kprofl{\tac}{A} \neq 0$.
\item[(iv)] There exists a non-flat Gorenstein flat $A$-module.
\end{itemize}
If $A$ admits a dualising complex $D$, then the above are equivalent to:
\begin{itemize}
\item[(v)] The inclusion $\Thick(A, D) \subseteq \qderu{b}{\modd A}$ is proper.
\end{itemize}
\end{lemma}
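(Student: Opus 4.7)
The plan is to prove (i)$\Leftrightarrow$(ii), (iii)$\Leftrightarrow$(iv), (i)$\Leftrightarrow$(iii), and---when $D$ exists---(iii)$\Leftrightarrow$(v). The first two pairs rest on a common syzygy observation. For (i)$\Leftrightarrow$(ii), I would use the classical fact that an acyclic complex of projectives whose syzygies are all projective splits into short exact sequences that each split off, and so is contractible; hence a non-contractible totally acyclic $P$ must have some non-projective syzygy, which is by definition a non-projective Gorenstein projective module. Conversely, the complete projective resolution of such a module cannot be contractible. The equivalence (iii)$\Leftrightarrow$(iv) goes through in identical fashion, replacing ``projective'' by ``flat'' and invoking \cite[Proposition 3.4]{Murfet07} (recalled in Section \ref{section:review_of_pure}), which states that an acyclic complex of flats is pure acyclic iff every syzygy is flat.

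For (i)$\Leftrightarrow$(iii) I would apply Lemma \ref{lemma:equiv_ktac_ntac} directly: it supplies an equivalence $\kprojl{\tac}{A} \xlto{\sim} \kprofl{\tac}{A}$ whenever $A$ has finite Krull dimension. The only thing to verify is that a local noetherian ring automatically has finite Krull dimension, which follows because $\dim A = \mathrm{ht}(\mf{m})$ and Krull's height theorem bounds the height of $\mf{m}$ by the number of its generators.

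The main work, and the one step requiring more than classical bookkeeping, is the equivalence (iii)$\Leftrightarrow$(v) under the dualising complex hypothesis. I would first identify $\Thick(\Perf(A), \Coperf(A)) = \Thick(A, D)$: over a noetherian ring $\Perf(A) = \Thick(A)$, and the duality $\rdev\qhom(-, D)\colon \qderu{b}{\modd A}^{\mathrm{op}} \xlto{\sim} \qderu{b}{\modd A}$ carries $\Perf(A)$ onto $\Coperf(A)$ and $A$ to $D$, whence $\Coperf(A) = \Thick(D)$. Proposition \ref{prop:compacts_ktac} then simultaneously yields compact generation of $\kprofl{\tac}{A}$ and an equivalence up to direct summands
\[
\qderu{b}{\modd A}/\Thick(A, D)^{\mathrm{op}} \lto \kproful{c}{\tac}{A}.
\]
Since equivalences up to direct summands preserve and reflect the zero category, we conclude $\kprofl{\tac}{A} = 0$ iff $\kproful{c}{\tac}{A} = 0$ iff $\Thick(A, D) = \qderu{b}{\modd A}$; contrapositively, this is exactly (iii)$\Leftrightarrow$(v). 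The delicate point is to use the compactly generated structure developed in Section \ref{section:compact} to pass between the triangulated category $\kprofl{\tac}{A}$ and its subcategory of compact objects; once this is done, everything else reduces to the syzygy arguments above and to the general formalism of \cite{Neeman08, Murfet07}.
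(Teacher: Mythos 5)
Your proof is correct and, for items (i)--(iv), follows exactly the paper's route: the two syzygy observations (contractible iff all syzygies projective, pure acyclic iff all syzygies flat) and the equivalence of Lemma \ref{lemma:equiv_ktac_ntac}; your remark that a local noetherian ring automatically has finite Krull dimension is precisely what makes that lemma applicable, and is worth making explicit. The only divergence is in (v): the paper simply cites Iyengar--Krause \cite[Theorem 5.3]{Krause06}, whereas you rederive the equivalence from the paper's own Proposition \ref{prop:compacts_ktac}, via the identifications $\Perf(A) = \Thick(A)$, $\Coperf(A) = \Thick(D)$ (using the duality $\rdev\qhom(-,D)$), hence $\Thick(\Perf(A),\Coperf(A)) = \Thick(A,D)$, together with the facts that a compactly generated triangulated category vanishes iff its subcategory of compacts vanishes, that an equivalence up to direct summands reflects the zero category, and that a Verdier quotient by a thick subcategory vanishes iff the subcategory is everything. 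All of these steps check out, so your argument is sound and has the virtue of staying internal to the paper's formalism; the only caveat is that Proposition \ref{prop:compacts_ktac} is itself built on \cite[Proposition 1.7]{Krause06} via Construction \ref{remark:general_construction}, so logically nothing is saved over the paper's direct citation --- you have simply made the derivation explicit.
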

\begin{proof}
The equivalence $(i) \Leftrightarrow (iii)$ follows from Lemma \ref{lemma:equiv_ktac_ntac}. For $(i) \Leftrightarrow (ii)$ we simply observe that an acyclic complex of projectives is contractible if and only if all its syzygies are projective, while $(iii) \Leftrightarrow (iv)$ follows from the fact that an acyclic complex of flats is pure acyclic if and only if all its syzygies are flat \cite[Theorem 8.6]{Neeman08}. The equivalence of $(v)$ with the other conditions in the presence of a dualising complex follows from \cite[Theorem 5.3]{Krause06}.
\end{proof}

\begin{definition} We say that a point $x \in X$ is a \emph{symmetric singularity} of $X$ if the local ring $\cat{O}_{X,x}$ is a symmetric singularity, in the above sense. 
\end{definition}

\begin{remark} Let $A$ be a noetherian local ring. There is a sequence of inclusions
\[
0 \subseteq \kprojl{\tac}{A} \subseteq \kprojl{\ac}{A},
\]
and it follows from the work of Iyengar-Krause \cite[Theorem 5.3]{Krause06} that $\kprojl{\ac}{A} = 0$ if and only if $A$ is regular, so a symmetric singularity is not regular. Any non-regular Gorenstein local ring $A$ is a symmetric singularity; in fact, for such $A$ we have
\[
0 \neq \kprojl{\ac}{A} = \kprojl{\tac}{A}.
\]
The inequality arises because $A$ is not regular, and the equality because $A$ is Gorenstein; see \cite[Corollary 5.5]{Krause06} and Corollary \ref{corollary:gorenstein_tac_ac_local}. 
\end{remark}


\begin{example}\label{example:non_gorenstein_symmetric} Let $S$ be a non-Gorenstein local ring. Then $R = S[[X,Y]]/(XY)$ is a non-Gorenstein local ring, and the $R$-module $R/YR$ is Gorenstein projective and not free, cf. \cite[Example 4.1.5]{Christensen00}, so $R$ is a symmetric singularity.
\end{example}

In the introduction we defined $\Perf(X)$ (resp. $\Coperf(X)$) to be the full subcategory of objects of finite flat dimension (resp. finite injective dimension) in $\qderu{b}{\coh X}$. We also defined
\begin{equation*}
\begin{split}
\SSg(X) &:= \qderu{b}{\cmodn{X}}/\Thick(\Perf(X), \Coperf(X)),\\
\NSg(X) &:= \Thick(\Perf(X), \Coperf(X))/\Perf(X).
\end{split}
\end{equation*}
For an open subset $U \subseteq X$, the restriction functor $(-)|_U: \qderu{b}{\coh X} \lto \qderu{b}{\coh U}$ sends $\Perf(X)$ into $\Perf(U)$, and $\Coperf(X)$ into $\Coperf(U)$, so we obtain canonical functors
\begin{gather*}
(-)|_U: \SSg(X) \lto \SSg(U), \text{ and } (-)|_U: \NSg(X) \lto \NSg(U).
\end{gather*}

\begin{proposition}\label{prop:invariance_ntac} If $U$ contains every symmetric singularity of $X$ then the functor
\begin{equation}\label{eq:invariance_ntac}
(-)|_U: \kprofl{\tac}{X} \lto \kprofl{\tac}{U}
\end{equation}
is an equivalence. When $X$ admits a dualising complex, it follows that the functor 
\begin{equation}\label{eq:invariance_ntac_2}
(-)|_U: \SSg(X) \lto \SSg(U)
\end{equation}
is an equivalence.
\end{proposition}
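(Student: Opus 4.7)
The plan is to reduce both statements to Proposition~\ref{prop:recollement_tac_support} together with the characterisation of symmetric singularities. By that colocalisation sequence and Remark~\ref{remark:on_coloc_seq}, restriction induces an equivalence
\[
\kprofl{\tac}{X}/\kprofl{Z,\tac}{X} \xlto{\sim} \kprofl{\tac}{U},
\]
so (\ref{eq:invariance_ntac}) is an equivalence precisely when $\kprofl{Z,\tac}{X} = 0$. This is the first thing I would establish.

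To see the vanishing, take any $F \in \kprofl{Z,\tac}{X}$ and argue stalkwise. Pure acyclicity of a complex of flat sheaves is a stalkwise condition, since by \cite[Proposition 3.4]{Murfet07} it is equivalent to acyclicity together with flatness of every syzygy, and both of these properties are local. For $x \in U$, the stalk $F_x = (F|_U)_x$ is pure acyclic because $F|_U$ is pure acyclic. For $x \in Z$, Lemma~\ref{lemma:total_acyc_pointwise} shows that $F_x$ is an N-totally acyclic complex of flat $\cat{O}_{X,x}$-modules; since $x$ is not a symmetric singularity of $X$, we have $\kprofl{\tac}{\cat{O}_{X,x}} = 0$, so $F_x$ is pure acyclic. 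Thus $F$ is pure acyclic, i.e.\ zero in $\kprof{X}$, and the first assertion follows.

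For the $\SSg$ claim, note that $D|_U$ is a dualising complex on $U$, so Proposition~\ref{prop:compacts_ktac} applies on both sides. The equivalence (\ref{eq:invariance_ntac}) preserves coproducts, hence compact objects, giving an equivalence $\kproful{c}{\tac}{X} \cong \kproful{c}{\tac}{U}$. Restriction preserves N-total acyclicity (Lemma~\ref{lemma:restriction_tac}) and flat sheaves, so it commutes with $\Phi$ and with the adjoint $j_\lambda$ up to natural isomorphism; consequently there is a commutative square relating $(-)|_U: \SSg(X)^{\mathrm{op}} \lto \SSg(U)^{\mathrm{op}}$ to the equivalence on compacts via the two equivalences-up-to-direct-summands of Proposition~\ref{prop:compacts_ktac}. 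Fully faithfulness of $\SSg(X) \lto \SSg(U)$ follows from this diagram and the fully faithful halves of those equivalences-up-to-direct-summands; essential surjectivity follows because every coherent sheaf on $U$ extends to a coherent sheaf on $X$ (a standard noetherian scheme fact), so restriction $\qderu{b}{\cmodn{X}} \lto \qderu{b}{\cmodn{U}}$ is essentially surjective and hence so is its quotient.

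The main obstacle, I expect, is the second paragraph: verifying that the triangulated subcategory $\kprofl{Z,\tac}{X}$ is killed by the symmetric-singularity hypothesis. Everything else amounts to formal bookkeeping with the colocalisation sequence and the identification of compacts; the stalkwise argument is where the geometric content of the symmetric singularity definition enters.
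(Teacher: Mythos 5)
Your proposal is correct and follows essentially the same route as the paper: reduce via the colocalisation sequence of Proposition \ref{prop:recollement_tac_support} and Remark \ref{remark:on_coloc_seq} to the vanishing of $\kprofl{Z,\tac}{X}$, establish that vanishing stalkwise using Lemma \ref{lemma:total_acyc_pointwise} and the symmetric-singularity hypothesis (a step the paper leaves implicit but which you spell out, exactly as the paper does for the analogous Proposition \ref{prop:invariance_nsg}), and then pass to compacts via Proposition \ref{prop:compacts_ktac} with essential surjectivity supplied by extending coherent complexes from $U$ to $X$. No gaps.
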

\begin{proof}
To say that $U$ contains every symmetric singularity means that $\kprofl{\tac}{\cat{O}_{X,z}}$ vanishes for every $z \in Z$. It follows that $\kprofl{Z,\tac}{X}$ is the zero category, and from the colocalisation sequence of Proposition \ref{prop:recollement_tac_support} we deduce the desired equivalence (\ref{eq:invariance_ntac}), via the observation of Remark \ref{remark:on_coloc_seq}. This restricts to an equivalence on compact objects and, when $X$ has a dualising complex, we infer from Proposition \ref{prop:compacts_ktac} that (\ref{eq:invariance_ntac_2}) is an equivalence up to direct summands. To see that (\ref{eq:invariance_ntac_2}) is essentially surjective, one uses a standard technique to extend a bounded complex of coherent sheaves on $U$ to $X$.
\end{proof}

\begin{proposition}\label{prop:invariance_nsg} If $U$ contains every singularity of $X$ that is not Gorenstein, then
\begin{equation}\label{eq:invariange_nsg}
(-)|_U: \kprofl{\ac}{X}/\kprofl{\tac}{X} \lto \kprofl{\ac}{U}/\kprofl{\tac}{U}
\end{equation}
is an equivalence. When $X$ admits a dualising complex, it follows that the functor
\begin{equation}\label{eq:invariange_nsg_2}
(-)|_U: \NSg(X) \lto \NSg(U)
\end{equation}
is an equivalence up to direct summands.
\end{proposition}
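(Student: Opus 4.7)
Write $Z = X \setminus U$. The plan is to apply the colocalisation sequence of Proposition \ref{prop:recollement_actac_support}: by Remark \ref{remark:on_coloc_seq}, the restriction functor (\ref{eq:invariange_nsg}) is an equivalence as soon as the third term $\kprofl{Z,\ac}{X}/\kprofl{Z,\tac}{X}$ vanishes, that is, as soon as every object of $\kprofl{Z,\ac}{X}$ is already N-totally acyclic. This mirrors the role played by Proposition \ref{prop:recollement_tac_support} in the proof of Proposition \ref{prop:invariance_ntac}.

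To establish this vanishing I would argue stalkwise using Lemma \ref{lemma:total_acyc_pointwise}. Let $F \in \kprofl{Z,\ac}{X}$ and let $x \in X$. If $x \in U$ then $F|_U$ is pure acyclic, which by \cite[Proposition 3.4]{Murfet07} means it is acyclic with all syzygies flat; this property is preserved by taking stalks, so $F_x \cong (F|_U)_x$ is itself pure acyclic and therefore N-totally acyclic. If instead $x \in Z$, the hypothesis that every non-Gorenstein singularity of $X$ lies in $U$ forces $\cat{O}_{X,x}$ to be Gorenstein, and Theorem \ref{theorem:gorenstein_iff_actac} applied to $\Spec(\cat{O}_{X,x})$ upgrades the acyclic complex $F_x$ of flat $\cat{O}_{X,x}$-modules to an N-totally acyclic one. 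Combining the two cases gives (\ref{eq:invariange_nsg}).

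For the second assertion, recall that the restriction of a dualising complex to an open subscheme is again dualising, so Proposition \ref{prop:compacts_dgac} applies to both $X$ and $U$ and yields equivalences up to direct summands $\NSg(-)^{\textrm{op}} \lto (\kprofl{\ac}{-}/\kprofl{\tac}{-})^c$. Since (\ref{eq:invariange_nsg}) restricts to an equivalence between the right-hand sides on compact objects, a naturality check -- that the functors $\bold{f}(-)$, $\Phi$ and $\omega$ used in the construction of these equivalences all commute with $(-)|_U$ up to canonical isomorphism -- produces an equivalence up to direct summands $\NSg(X)^{\textrm{op}} \lto \NSg(U)^{\textrm{op}}$, and dualising gives (\ref{eq:invariange_nsg_2}). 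The only mildly delicate point of the whole argument is this final naturality check; the remainder is a direct combination of the local characterisation of N-total acyclicity with the Gorenstein criterion.
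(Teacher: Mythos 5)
Your proposal is correct and follows essentially the same route as the paper: both reduce via the colocalisation sequence of Proposition \ref{prop:recollement_actac_support} to showing $\kprofl{Z,\ac}{X}/\kprofl{Z,\tac}{X} = 0$, prove this stalkwise (pure acyclicity over $U$, Gorensteinness over $Z$ via Theorem \ref{theorem:gorenstein_iff_actac}, then Lemma \ref{lemma:total_acyc_pointwise}), and deduce the statement on $\NSg$ from Proposition \ref{prop:compacts_dgac}.
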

\begin{proof}
Under the stated conditions, the local ring at every point $z \in Z$ is Gorenstein, and thus $\kprofl{\tac}{\cat{O}_{X,z}} = \kprofl{\ac}{\cat{O}_{X,z}}$ by Theorem \ref{theorem:gorenstein_iff_actac}. We claim that the category
\[
\cat{S} := \kprofl{Z,\ac}{X}/\kprofl{Z,\tac}{X}
\]
vanishes under these conditions. By definition any complex $F$ in $\cat{S}$ restricts to a pure acyclic complex on $U$, so $F_x$ is N-totally acyclic for every $x \in U$. For every $z \in Z$ the complex $F_z$ is acyclic and therefore N-totally acyclic, as $\cat{O}_{X,z}$ is Gorenstein. Since $F$ is N-totally acyclic at every point, it is globally N-totally acyclic, and therefore zero in $\cat{S}$. This proves that $\cat{S} = 0$.

We may now use the colocalisation sequence of Proposition \ref{prop:recollement_actac_support} to argue that (\ref{eq:invariange_nsg}) is an equivalence. From Proposition \ref{prop:compacts_dgac} we infer that when $X$ has a dualising complex, (\ref{eq:invariange_nsg_2}) is an equivalence up to direct summands.
\end{proof}



\section*{Acknowledgements}

This project was initiated in June 2007, when Shokrollah Salarian was visiting the Australian National University. The authors thank the ANU, and in particular Professor Neeman, for providing a stimulating research environment.

\bibliographystyle{amsalpha}

\end{document}